\documentclass{amsart}
\usepackage[T1]{fontenc}
\usepackage[utf8]{inputenc}
\usepackage{amsmath}
\usepackage{array}
\usepackage{amsthm}
\usepackage{amssymb}
\input xy
\xyoption{all}

\usepackage[cmtip,all]{xy}

\usepackage{stmaryrd}

\usepackage{tikz}
\usetikzlibrary{matrix,arrows,decorations.pathmorphing}
\usepackage{tikz-cd}

\usepackage{enumerate}

\usepackage[french, english]{babel}

\usepackage{xcolor}

\newtheorem{thmintro}{Theorem}
\newtheorem{corintro}[thmintro]{Corollary}
\newtheorem{app}[thmintro]{Application}

\newtheorem{thm}{Theorem}[section]
\newtheorem{pr}[thm]{Proposition}
\newtheorem{cor}[thm]{Corollary}
\newtheorem{lm}[thm]{Lemma}

\theoremstyle{definition}
\newtheorem{defi}[thm]{Definition}
\newtheorem{defi-prop}[thm]{Proposition-Definition}
\newtheorem{nota}[thm]{Notation}

\newtheorem{ex}[thm]{Example}
\newtheorem{exs}[thm]{Examples}

\newtheorem{conclusion}[thm]{Conclusion}
\newtheorem{exintro}[thmintro]{Example}

\theoremstyle{remark}
\newtheorem{rk}[thm]{Remark}
\newtheorem{warning}[thm]{Warning}

\newcommand{\A}{{\mathcal{A}}}
\newcommand{\B}{{\mathcal{B}}}
\newcommand{\C}{{\mathcal{C}}}
\newcommand{\D}{{\mathcal{D}}}
\newcommand{\F}{{\mathcal{F}}}
\newcommand{\Pp}{{\mathcal{P}}}
\newcommand{\add}{\mathbf{Add}}

\newcommand{\I}{{\mathcal{I}}}
\newcommand{\K}{{\mathcal{K}}}
\newcommand{\LL}{{\mathcal{L}}}
\newcommand{\M}{{\mathcal{M}}}

\newcommand{\G}{{\mathcal{G}}}

\newcommand{\red}{\mathrm{red}}

\newcommand{\Md}{\text{-}\mathbf{Mod}}
\newcommand{\md}{\text{-}\mathbf{mod}}
\newcommand{\Mdd}{\mathbf{Mod}\text{-}}
\newcommand{\Si}{\mathfrak{S}}
\newcommand{\id}{\mathrm{id}}
\newcommand{\FF}{\mathbb{F}}
\newcommand{\simpl}{\mathrm{ss}}
\newcommand{\EML}{\mathrm{EML}}
\newcommand{\res}{{\mathrm{res}}}
\newcommand{\gen}{{\mathrm{gen}}}

\newcommand{\op}{{\mathrm{op}}}
\newcommand{\Fp}{{\mathbb{F}_p}}
\newcommand{\Fq}{{\mathbb{F}_q}}
\newcommand{\KK}{{\mathbb{K}}}

\newcommand{\Lan}{\mathrm{Lan}}
\newcommand{\colim}{\mathop{\mathrm{colim}}}
\newcommand{\Proj}{\mathbf{P}}
\newcommand{\cross}{\mathrm{cr}}
\newcommand{\Tot}{\mathrm{Tot}\,}
\newcommand{\stdeg}{\deg}   
\newcommand{\GL}{\operatorname{GL}}
\newcommand{\Sp}{\operatorname{Sp}}
\newcommand{\orth}{\operatorname{O}}
\newcommand{\kA}{{_{\,k\mkern -2mu}\A}}
\newcommand{\FA}{{_{\,\mathbb{F}\mkern -2mu}\A}}
\newcommand{\KA}{{_{\,\mathbb{K}\mkern -2mu}\A}}
\newcommand{\kB}{{_{\,k\mkern -2mu}\B}}
\newcommand{\kATor}{{_{\,k\mkern -3mu}\A}}
\newcommand{\HH}{\underline{\mathrm{H}}}
\newcommand{\rmH}{\mathrm{H}}
\newcommand{\EExt}{\underline{\mathrm{Ext}}}
\newcommand{\Ext}{\mathrm{Ext}}
\newcommand{\Tor}{\mathrm{Tor}}
\newcommand{\Hom}{\mathrm{Hom}}
\newcommand{\End}{\mathrm{End}}
\newcommand{\poly}{\mathrm{pol}}
\newcommand{\anti}{\mathrm{anti}}
\newcommand{\diag}{\mathrm{diag}}
\newcommand{\summ}{\mathrm{sum}}

\begin{document}

\title{Functor homology over an additive category}

\author[A. Djament]{Aur\'elien Djament}
\address{CNRS, Univ. Lille, UMR 8524 - Laboratoire Paul Painlev\'e, F-59000 Lille, France}
\email{aurelien.djament@univ-lille.fr}
\urladdr{https://math.univ-lille1.fr/~djament/}
\thanks{This author is partly supported by the projects ChroK (ANR-16-CE40-0003), AlMaRe (ANR-19-CE40-0001-01) and Labex CEMPI (ANR-11-LABX-0007-01)}

\author[A. Touz\'e]{Antoine Touz\'e}
\address{Univ. Lille, CNRS, UMR 8524 - Laboratoire Paul Painlev\'e, F-59000 Lille, France}
\email{antoine.touze@univ-lille.fr}
\urladdr{https://pro.univ-lille.fr/antoine-touze/}
\thanks{This author is partly supported by the project ChroK (ANR-16-CE40-0003) and Labex CEMPI (ANR-11-LABX-0007-01)}

\subjclass[2020]{Primary 18A25, 18G15; Secondary 18E05, 18G31, 20G10, 20G15, 20J06}

\keywords{Functor homology; polynomial functor; linear algebraic groups}

\begin{abstract}
We uncover several general phenomenas governing functor homology over additive categories. In particular, we generalize the strong comparison theorem of Franjou Friedlander Scorichenko and Suslin to the setting of $\mathbb{F}_p$-linear additive categories. Our results have a strong impact in terms of explicit computations of functor homology, and they open the way to new applications to stable homology of groups or to $K$-theory. As an illustration, we prove comparison theorems between cohomologies of classical algebraic groups over infinite perfect fields, in the spirit of a celebrated result of Cline, Parshall, Scott et van der Kallen for finite fields.
\end{abstract}

\maketitle

\selectlanguage{french}
\renewcommand{\abstractname}{R\'esum\'e}
\begin{abstract}
Nous r\'ev\'elons plusieurs ph\'enom\`enes g\'en\'eraux contr\^olant l'homologie des foncteurs sur les cat\'egories additives. En particulier, nous g\'en\'eralisons le th\'eor\`eme de comparaison forte de Franjou, Friedlander, Scorichenko et Suslin au contexte des cat\'egories additives $\Fp$-lin\'eaires. Nos r\'esultats ont un fort impact en termes de calculs explicites d'homologie des foncteurs, et ils ouvrent la voie \`a de nouvelles applications en homologie stable des groupes et en $K$-th\'eorie. Nous illustrons cela en d\'emontrant des th\'eor\`emes de comparaison entre cohomologies de groupes alg\'ebriques classiques sur des corps parfaits infinis, dans l'esprit d'un c\'el\`ebre th\'eor\`eme de Cline, Parshall, Scott et van der Kallen pour les corps finis.
\end{abstract}

\selectlanguage{english}

\section{Introduction}

Given a small category $\C$ and a commutative ring $k$, we denote by $k[\C]\Md$ (resp. $\Mdd k[\C]$) the category of covariant (resp. contravariant) functors from $\C$ to $k$-modules, and natural transformations between such functors. As the notation suggests, $k[\C]\Md$ 
behaves very much like a category of modules over a ring $R$. In particular, one can compute $\Ext$ in these functor categories, and there is also a tensor product
\[-\otimes_{k[\C]}-:\Mdd k[\C]\times k[\C]\Md\to k\Md\]
which can be derived to define $\Tor$-modules. Such $\Ext$ and $\Tor$-modules were first considered in Mitchell's influential article \cite{Mi72} in the early seventies, and  we refer to them as `functor homology'. 
We are chiefly interested in functor homology when $\C=\Proj_R$, the category of finitely generated projective modules over a ring $R$. Our interest stems from the close relations with stable $K$-theory \cite{Sco,FranjouPira}, with topological Hochschild homology \cite{PiraWald} and with the stable homology of groups with twisted coefficients \cite{FFSS,DjaR,Ku-adv}. These close relations make explicit calculations of functor homology over $\Proj_R$ highly desirable. 

During the nineties, a series of breakthrough articles \cite{FLS,FS,FFSS} uncovered general phenomena governing functor homology when $R$ is a finite field $\Fq$.
A key result is the \emph{strong comparison theorem} \cite[Thm 3.10]{FFSS}, which bridges the $\Ext$-computations in $k[\Proj_\Fq]\Md$ with $\Ext$-computations in the much nicer category $\Pp_\Fq$ of strict polynomial functors in the sense of Friedlander and Suslin \cite{FS}. The latter category is much more amenable to computations.
Together with the results of \cite{Chalupnik,TouzeUnivNew,Touze-Survey} on strict polynomial functors, the strong comparison theorem provides an effective way of performing  many  homological computations in $k[\Proj_\Fq]\Md$.
In sharp contrast to the situation of finite fields,
very little was known for more general rings $R$ up to today. An analogue of the strong comparison theorem was lacking, and only a few isolated computations were known \cite{FPmaclane,PiraZnZ}, for $R=\mathbb{Z}$ or $\mathbb{Z}/n\mathbb{Z}$. Suslin wrote in \cite[Appendix p.717]{FFSS} that the $\Ext$-modules in $k[\Proj_R]\Md$ `do not seem to be computable unless we are dealing with finite fields'. 

The purpose of the present article is to remedy the lack of understanding of functor homology over general rings. We provide a series of new results which allow to compute functor homology (much) beyond the case of finite fields -- in fact, many of these results hold when $\Proj_R$ is replaced by a suitable additive category $\A$. If $\A=\Proj_R$ and $k$ is an infinite perfect field, our results show that functor homology is essentially controlled by three simpler quantities, namely:
\begin{enumerate}[(A)]
\item\label{intro-quant-1} functor homology over $\Proj_Q$ where $Q$ is a \emph{finite} quotient ring of $R$ with \emph{cardinal invertible} in $k$,
\item\label{intro-quant-2} Ext and Tor between $R\otimes_{\mathbb{Z}}k$-modules, 
\item\label{intro-quant-3} generic homology of strict polynomial functors, that is, certain Ext and Tor in the category $\Pp_k$ of strict polynomial functors, or equivalently \cite[Thm 3.2]{FS} between modules over classical Schur algebras.
\end{enumerate}
The first quantity is more exotic than the last two ones, but it does not have to be considered if $R$ has no finite quotient of cardinal invertible in $k$ (which is the case for many rings of interest). The second quantity is classical and related to the Hochschild homology of $R$, and the last quantity is also classical and relatively well understood \cite{Chalupnik,TouzeUnivNew}, see \cite{Touze-Survey} for a survey. 

In order to obtain our results over general rings, we perfect some techniques introduced in \cite{FFSS} (see section \ref{sec-Pol-Fq}), and we also rely on more classical techniques such as Hurewicz theorems (see section \ref{sec-simplicial}) or exact Kan extensions (see section \ref{subsec-completion}), which we use in a way that is new to the subject of functor homology.

An important aspect of our results is their computational applicability. We provide closed formulas for functor homology, which provide a wealth of new explicit computations. As an example, we give in section \ref{subsec-sample} a quick generalization over infinite perfect fields of the functor homology computations of \cite{FFSS}. 

The present work gives a control on functor homology which allows to attack some problems related to $K$-theory or group homology. To demonstrate this, we finish the article by establishing a comparison theorem between the stable cohomology of classical groups over an infinite perfect field $k$ and the stable cohomology of their underlying groups of $k$-points, in the spirit of the celebrated theorem of Cline Parshall Scott and van der Kallen \cite{CPSVdK} for finite fields. Further applications will be developped in future work.

%
%
\subsection{The main results}

We now give an overview of the main results of the paper. In order to have a feeling of functor homology, the reader should keep in mind the analogy with group homology. Namely, the category $k[\Proj_R]\Md$ is similar to the category $k[\GL_n(R)]\Md$ of $k$-linear representations over a general linear group of large rank $n$, and therefore functor homology over $\Proj_R$ is similar to group homology of $\GL_n(R)$ -- this is actually more than a mere similarity since functor homology \emph{is} group homology in good cases \cite{FFSS,DjaR,Ku-adv}. This analogy tells us that changing the ring $R$ has a drastic effect on functor homology, which is hard to control. In sharp contrast, the effect of changing $k$ is well understood and controled by a standard universal coefficient theorem. 

Although several of our results are valid for arbitrary commutative rings $k$, to unify and simplify the exposition of the main results, we make the assumption that $k$ is an infinite perfect field in this introduction. Unadorned tensor products are taken over $k$. We fix a small additive category $\A$ (that is a small $\mathbb{Z}$-linear category $\A$ having biproducts),  and we let $k[\A]\Md$ stand for the category of (non necessarily additive) functors $\A\to k\Md$, and natural transformations between them.
\medskip

It is proved in \cite[Cor 4.11]{DTV} that functors $F:\A\to k\Md$ satisfying reasonnable finiteness hypotheses have a very rigid structure, namely they can be written as 
$F(a)=B(a,a)$, for a unique bifunctor $B:\A\times\A\to k\Md$ of \emph{AP-type}. By this, we mean a bifunctor $B$ which is \emph{Antipolynomial} with respect to its first variable and \emph{Polynomial} in the sense of Eilenberg and Mac Lane \cite{EML} with respect to its second variable. Typical polynomial functors $\A\to k\Md$ are tensor products of additive functors, while the antipolynomial functors are those which factor through a \emph{$k$-trivial category} $\B$, i.e. a small additive category with finite Hom-sets of cardinal invertible in $k$.
Our first main theorem allows to exploit this rigid structure in homological computations. 
Let $\Delta:\A\to \A\times\A$ be the diagonal functor, i.e. $\Delta(a)=(a,a)$, and let $\Delta^*B$ denote the composition $B\circ\Delta$. Thus every reasonable functor $F:\A\to k\Md$ is of the form $\Delta^*B$. 
\begin{thmintro}\label{thm-AP-type}
Let $B$, $B'$ and $C$ be three bifunctors of AP-type, with $B$ contravariant in both variables. Restriction along the diagonal $\Delta$ yields isomorphisms:
\begin{align*}
&\Ext^*_{k[\A]}(\Delta^*B',\Delta^*C)\simeq \Ext^*_{k[\A\times\A]}(B',C)\;,\\
&\Tor_*^{k[\A]}(\Delta^*B,\Delta^*C)\simeq \Tor_*^{k[\A\times \A]}(B,C)\;.
\end{align*}
\end{thmintro}

Functors of the form $F(a)= F^{\anti}(a)\otimes F^{\poly}(a)$, where the first tensorand is an antipolynomial functor and the second tensorand is a polynomial functor, appear frequently. For instance, simple functors with finite-dimensional values are of this form, at least if $k$ is big enough \cite[Cor 4.13]{DTV}. For such a tensor product, we have $F=\Delta^*B$ with $B(a,b)=F^\anti(a)\otimes F^\poly(b)$, and the classical K\"unneth formula yields the following consequence of theorem \ref{thm-AP-type}.
\begin{corintro}\label{corintro-separation}Assume that $F=F^\anti\otimes F^\poly$ and $G=G^\anti\otimes G^\poly$. There is an isomorphism of graded vector spaces:
\[\Tor_*^{k[\A]}(F,G)\simeq \Tor_*^{k[\A]}(F^\anti,G^\anti)\otimes \Tor_*^{k[\A]}(F^\poly,G^\poly)\;.\]
\end{corintro}
There is a similar formula for $\Ext$ under some finiteness hypotheses (see proposition \ref{prop-Kunneth-ext} and remark \ref{rk-pfinfty}). Thus, computing functor homology essentially reduces to computing functor homology in the special situation where the arguments of $\Ext$ or $\Tor$ are simultaneously antipolynomial or polynomial.

We first examine \emph{antipolynomial homology}, that is, the graded vector spaces  
\begin{equation}\Tor_*^{k[\A]}(F^\anti,G^\anti)\quad\text{ and }\quad\Ext^*_{k[\A]}(F'{}^\anti,G^\anti)\;.
\label{eq-intro-anti}
\end{equation}
For every pair of antipolynomial functors $(F^\anti,G^\anti)$, we can always find such a full, additive, and essentially surjective $\pi:\A\to \B$ with $k$-trivial codomain such that $F^\anti$ and $G^\anti$ both factor through $\pi$. Our second main theorem allows to transport the computation of antipolynomial homology in the category $k[\B]\Md$. We actually prove it as special case of a more general theorem, which can be seen as an analogue in functor homology of the excision theorem of Suslin and Wodzicki in $K$-theory \cite{Suslin-Wodz}, see theorem \ref{thm-magique-general} and remark \ref{rk-excis}.

\begin{thmintro}\label{thm-intro-2}
Let $\pi:\A\to \B$ be a full, additive, and essentially surjective functor, with $k$-trivial codomain $\B$. Let $F$, $F'$ and $G$ be three functors from $\B$ to $k\Md$, with $F$ contravariant. Let $\pi^*F$ denote the composition $F\circ \pi$. Restriction along $\pi$ yields isomorphisms:
\begin{align*}
&\Ext^*_{k[\A]}(\pi^*F',\pi^*G)\simeq \Ext^*_{k[\B]}(F',G)\;,\\
&\Tor_*^{k[\A]}(\pi^*F,\pi^*G)\simeq \Tor_*^{k[\B]}(F,G)\;.
\end{align*}
\end{thmintro}

The practical interest of theorem \ref{thm-intro-2} lies in the fact that the category $k[\B]\Md$ has a much nicer structure than $k[\A]\Md$, see e.g. \cite[Prop 11.7]{DTV}, and it is potentially more accessible via combinatorial methods since its has finite Hom-sets.

\begin{exintro}
In the situation of theorem \ref{thm-intro-2}, if $\A$ is the category $\Proj_R$ of finitely generated projective modules over a ring $R$ then $\B$ is necessarily the category $\Proj_Q$ for some finite quotient ring $Q$ of $R$ with cardinal invertible in $k$, see \cite[Ex 4.4]{DTV}. Hence theorem \ref{thm-intro-2} reduces the computation of $\Ext$ and $\Tor$ over $k[\Proj_R]$ to that of $\Ext$ and $\Tor$ over $k[\Proj_Q]$ -- this is quantity \eqref{intro-quant-1} from the begining of the introduction.
\end{exintro}

We provide the following example of computational application of theorem \ref{thm-intro-2} in corollary \ref{cor-vanish-Kuhn}. 

\begin{app}
In the situation of theorem \ref{thm-intro-2}, if $\B=\Proj_R$ for a finite semi-simple ring $R$ and $k$ has characteristic zero, the graded vector spaces $\Ext^*_{k[\A]}(\pi^*F',\pi^*G)$ and $\Tor_*^{k[\A]}(\pi^*F,\pi^*G)$ vanish in positive degrees.
\end{app}

Next, we examine \emph{polynomial homology}, that is, the graded vector spaces 
\begin{equation}
\Tor_*^{k[\A]}(F^\poly,G^\poly)\quad\text{ and }\quad\Ext^*_{k[\A]}({F'}^\poly,G^\poly)\;.\label{eq-intro-poly}
\end{equation}
Many polynomial functors of interest -- e.g. simple functors with finite-dimensional values, at least if the field is big enough \cite[Thm 5.5]{DTV} -- are given by tensor products $F^\poly=\pi_1^*F_1\otimes\cdots\otimes\pi_m^*F_m$ where each factor $\pi_i^*F_i=F_i\circ\pi_i$ is the composition of an additive functor $\pi_i$ from $\A$ to finite dimensional vector spaces with a strict polynomial functor $F_i$ from finite dimensional vector spaces to $k\Md$, such as a symmetric power $S^d$, an exterior power $\Lambda^d$, a Schur functor or a Weyl functor associated to a partition \cite{ABW}. We restrict ourselves to computing polynomial functor homology when $F^\poly$, $F'{}^\poly$ and $G^\poly$ are such tensor products. We distinguish three cases of increasing difficulty, according to the characteristic of the field $k$.

\subsubsection*{Case 1: characteristic zero} If $k$ has characteristic zero, standard techniques recalled in section \ref{sec-prelim-polyn-hom} reduce the computation of the functor homology quantities \eqref{eq-intro-poly} to the much simpler computation of graded vector spaces 
\begin{equation}
\Tor_*^{k[\A]}(\pi,\rho)\quad\text{ and }\quad\Ext^*_{k[\A]}(\pi',\rho)\;,\label{eq-intro-add}
\end{equation}
in which $\pi$, $\pi'$ and $\rho$ are \emph{additive}. Let $\kA\Md\subset k[\A]\Md$ denote the full abelian subcategory on the \emph{additive} functors. It is known \cite[Thm 1.2]{Dja-Ext-Pol} that the embedding $\kA\Md\hookrightarrow k[\A]\Md$ providentially preserves $\Ext$ and $\Tor$ in characteristic zero. This is particularly interesting
because computations in  $\kA\Md$ are (regardless the characteristic of $k$) easier than computations in $k[\A]\Md$, and closer to classical computations, as the following example shows it. 
\begin{exintro}\label{fact-intro}
If $\A=\Proj_R$ and $k$ is a commutative ring, every additive functor from $\A$ to $k$-vector spaces is given by tensoring with an $(R,k)$-bimodule, and this yields an equivalence of categories $\kA\Md\simeq R\otimes_\mathbb{Z}k\Md$. Therefore computing $\Tor$ and $\Ext$ in $\kA\Md$ amounts to computing $\Tor$ and $\Ext$ between $R\otimes_\mathbb{Z}k$-modules -- this is quantity \eqref{intro-quant-2} in the beginning of the introduction. 
\end{exintro}

\subsubsection*{Case 2: characteristic $p\gg 0$} The standard techniques of characteristic zero work equally well in positive characteristic $p$, provided $p$ is big enough with respect to the arguments of $\Tor$ and $\Ext$ (a precise bound is given in definition \ref{defi-char-large}). Thus we reduce ourselves to computing graded vector spaces of the form \eqref{eq-intro-add}. 

However, the situation becomes quite different from characteristic zero when we try to compute functor homology between additive functors. Indeed, it is known \cite{Dja-Ext-Pol} that the embedding $\kA\Md\hookrightarrow k[\A]\Md$ does not preserve $\Ext$ and $\Tor$ in positive characteristic. Our third main result bypasses this problem.
 Although technically very different, this result is similar in spirit to the main theorems of \cite{LL,LLbis} (see also \cite[Cor 4.2]{PiraSpectral}) which compare Hochschild homology and topological Hochschild homology over smooth $\Fp$-algebras. 
\begin{thmintro}\label{thm-intro-3}
Let $k$ be an infinite perfect field of positive characteristic $p$, and let $\pi,\pi',\rho$ be three additive functors from $\A$ to $k\Md$, with $\pi$ contravariant. If $\A$ is $\Fp$-linear, there are graded isomorphisms, natural with respect to $\pi$, $\pi'$ and $\rho$:
\begin{align*}
\Ext^*_{\kATor}(\pi',\rho)\otimes E_\infty^*\simeq \Ext^*_{k[\A]}(\pi',\rho)\;,\\
\Tor_*^{\kATor}(\pi,\rho)\otimes T^\infty_*\simeq \Tor_*^{k[\A]}(\pi,\rho)\;,
\end{align*}
where $E_\infty^*$ and $T^\infty_*$ denote the graded vector spaces equal to $k$ if $*$ is even and non-negative, and to zero in the other degrees.
\end{thmintro}

\subsubsection*{Case 3: small positive characteristic $p$}
If the characteristic of $k$ is positive but not big enough, the situation is more complicated and we cannot reduce the computation of the functor homology quantities \eqref{eq-intro-poly} to functor homology between additive functors. Instead, we can reduce ourselves to computing terms of the form 
\begin{equation}
\Tor_*^{k[\A]}(\pi^*F,\rho^*G)\quad\text{ and }\quad\Ext^*_{k[\A]}(\pi'{}^*F',\rho^*G)\;,\label{eq-intro-polyadd}
\end{equation} 
for additive functors $\pi$, $\pi'$, $\rho$ and  strict polynomial functors $F$, $F'$ and $G$. Computing such functor homology terms is much harder than computing the functor homology terms \eqref{eq-intro-add}.

The strong comparison theorem \cite[Thm 3.10]{FFSS} was so far the key result to compute the quantities \eqref{eq-intro-polyadd}, but it only works when $\A=\Proj_\Fq$ and if $\Fq$ is a big finite field of characteristic $p$. Our fourth main result is the \emph{generalized comparison theorem}, which generalizes it to all additive $\Fp$-linear categories $\A$. Namely, under a certain homological vanishing condition regarding the functors $\pi$ and $\rho$, our generalized comparison theorem gives an isomorphism in all degrees $i$:
\[\Tor_i^{k[\A]}(\pi^*F,\rho^*G)\simeq \Tor_i^{\gen}(F^\dag,G)\]
where $F^\dag$ is the precomposition of $F$ by an explicit additive functor, and $\Tor^\gen_*(F^\dag,G)$ denotes the \emph{generic homology of strict polynomial functors}, that is, the stabilization along iterated Frobenius twists of the $\Tor$ between $F^\dag$ and $G$, computed in the category of strict polynomial functors (see section \ref{sec-Pol-Fq} for more details). This generic homology already appears (in its $\Ext$ form) in \cite{FFSS} and it is the quantity \eqref{intro-quant-3} in the beginning of the introduction. It is the functor homology analogue of the generic cohomology of algebraic groups used e.g. in \cite{CPSVdK}.

We refer the reader to theorem \ref{thm-gen-comp} for a precise statement of the generalized comparison theorem, in particular for the formula defining $F^\dag$ and the precise homological vanishing condition involving $\rho$ and $\pi$. When the context is $\FF$-linear over a big subfield of $k$, the formula of $F^\dag$ and the homological vanishing condition both simplify and we obtain the following result in theorem \ref{thm-F-lin-case}. (We refer the reader to section \ref{subsec-nonhomogeneous} for the notion of degree of a strict polynomial functor, and we only mention here that typical functors of degree less or equal to $d$ are the symmetric powers $S^e$ and the exterior powers $\Lambda^e$ for $e\le d$, or direct sums of such functors such as $S^1\oplus S^d$.)  
\begin{thmintro}[The generalized comparison theorem - $\FF$-linear case]\label{thm-intro-F-lin-case}
Let $k$ be an infinite perfect field and let $\A$ be an additive $\FF$-linear category, where $\FF$ is a subfield of $k$. Let $\pi$ and $\rho$ be two $\FF$-linear functors from $\A$ to $k$-modules, respectively contravariant and covariant, and let $F$ and $G$ be two strict polynomial functors of degree less or equal to the cardinal of $\FF$.
Assume furthermore that 
\[\Tor_i^{\kATor}(\pi,\rho)=0\quad \text{for $0<i<e$.}\]
Then for $0\le i<e$ there are $k$-linear isomorphisms
\[\Tor^{k[\A]}_i(\pi^*F, \rho^*G)\simeq  \Tor_i^{\gen}(D_{\pi,\rho}^*F,G)\]
where $D_{\pi,\rho}$ refers to the contravariant functor $D_{\pi,\rho}(v)=\Hom_k(v,\pi\otimes_{k[\A]}\rho)$.
\end{thmintro}

The generalized comparison theorem and its $\FF$-linear version both have analogues for $\Ext$, which require some finiteness conditions on the additive functors in play, see corollaries \ref{cor-thm-F-lin} and \ref{cor-thm-gencomp}.
Note that when $\A=\Proj_R$, the vanishing condition on $\Tor$ can be reformulated in terms of $R\otimes_\mathbb{Z}k$-modules by example \ref{fact-intro}, thus in terms of the quantity \eqref{intro-quant-2} from the beginning of the introduction. The next two examples are special cases of theorem \ref{thm-intro-F-lin-case} when $\A=\Proj_R$ for a perfect field $R$, and show that the generalized comparison theorem is interesting even in this very simple case.

\begin{exintro}\label{ex-intro-1}
If $\FF=\Fq$ is a finite subfield of $k$ and $\A=\Proj_\Fq$ then the category $_k\A\Md$ is equivalent to the category of $\Fq\otimes_\mathbb{Z}k$-modules, hence semi-simple. Therefore the vanishing condition of theorem \ref{thm-intro-F-lin-case} is automatically satisfied for all $\pi$ and $\rho$. If we take
$\pi=\Hom_\Fq(-,k)$ and $\rho=-\otimes_\Fq k$, then theorem \ref{thm-intro-F-lin-case} yields a $\Tor$-version of the strong comparison theorem of \cite{FFSS}. And taking $\pi'=\rho=-\otimes_\Fq k$ in the $\Ext$-version of theorem  \ref{thm-intro-F-lin-case} gives back the strong comparison theorem of \cite{FFSS}.
\end{exintro}

Example \ref{ex-intro-1} shows that theorem \ref{thm-intro-F-lin-case} (hence our generalized comparison theorem \ref{thm-gen-comp}) subsumes the strong comparison theorem \cite[Thm 3.10]{FFSS}. However, our work does not provide a new proof of the strong comparison theorem. Our results rather rely on the strong comparison theorem of \cite{FFSS}, that we can generalize by using a non-trivial combination of other techniques, including simplicial resolutions and exact Kan extensions.

\begin{exintro}\label{exintro-perfectfield}
Assume that $\FF=k$, that $\A=\Proj_k$, and take $\pi:\Proj_k^\op\to \Proj_k$ the $k$-linear duality functor and $\rho:\Proj_k\to \Proj_k$ the identity functor. Then the vanishing condition of theorem \ref{thm-intro-F-lin-case} is satisfied in all degrees by the Hochschild-Kostant-Rosenberg theorem, and one obtains (replacing $\pi^*F$ by $F$) that for all contravariant strict polynomial functors $F$ and all strict polynomial functors $G$ there is a graded isomorphism:
\[\Tor_*^{k[\Proj_k]}(F,G)\simeq \Tor_*^{\gen}(F,G)\;.\]
It dualizes into a graded isomorphism, for all strict polynomial functors $F$ and $G$:
\[\Ext_*^{k[\Proj_k]}(F,G)\simeq \Ext_*^{\gen}(F,G)\;.\]
\end{exintro}

Example \ref{exintro-perfectfield} is treated into details in corollary \ref{cor-infinite-perfect-field}, in particular the isomorphisms are induced by simple explicit maps. This makes it easy to verify the compatibility of the isomorphisms with extra structure, for example the $\Ext$ isomorphism is compatible with tensor products and with the Yoneda composition of extensions. Functor homology over $k[\Proj_k]$ was out of reach before our work, while the generic homology of strict polynomial functors is well understood. 
As an application, we achieve the following computation in corollary \ref{cor-FFSS-infinite}, compare \cite[Thm 6.3]{FFSS}. (The algebra structure on these $\Ext$ is given by the convolution product, recalled in section \ref{sec-applic}).

\begin{app}
Let $k$ be an infinite perfect field of positive characteristic $p$, and let $r$ be a nonnegative integer. Let $V_{s,r}$ denote the trigraded vector space with homogeneous basis $(e_i)_{i\ge 0}$ where each $e_i$ is placed in tridegree $(2ip^r+sp^r-s,1,p^r)$.
Then we have isomorphisms of trigraded algebras:
\begin{align*}
&\Ext_{k[\Proj_k]}^*(\Gamma^{*(r)},S^*)\simeq S(V_{0,r})\;,
&&
\Ext_{k[\Proj_k]}^*(\Gamma^{*(r)},\Lambda^*)\simeq \Lambda(V_{1,r})
\;, \\
&\Ext_{k[\Proj_k]}^*(\Lambda^{*(r)},S^*)\simeq \Lambda(V_{0,r})\;,
&&
\Ext_{k[\Proj_k]}^*(\Lambda^{*(r)},\Lambda^*)\simeq \Gamma(V_{1,r})
\;, \\
&\Ext_{k[\Proj_k]}^*(S^{*(r)},S^*)\simeq \Gamma(V_{0,r})\;,
&&
\Ext_{k[\Proj_k]}^*(\Gamma^{*(r)},\Gamma^*)\simeq \Gamma(V_{2,r})
\;. 
\end{align*}
\end{app}

In the same fashion, we provide explicit bifunctor homology computations in the spirit of \cite{FF}, in corollary \ref{cor-calculbif}. For simplicity, all the computations that we perform are made for the additive category $\A=\Proj_k$, which is the simplest case for which computations were out of reach before the present article. But similar computations over more general additive categories $\A$ can be easily performed in the same way, provided the $\Tor$-vanishing condition of theorem \ref{thm-intro-F-lin-case} is satisfied.

\subsection{An application to the cohomology of classical groups}
%
Let $G$ be an algebraic group over a field $k$. We denote by $\EExt_G^*(V,W)$ its cohomology as in \cite{Jantzen}, that is, the extensions in the category of comodules of the cogebra of regular functions on $G$. We denote by $\Ext_{G}^*(V,W)$ the cohomology of its underlying discrete group of $k$-points as in \cite{Brown}, that is, the extensions in the category of $k$-linear representations of the discrete group $G$. The two cohomologies are related by a comparison map:
\[\EExt^*_{G}(V,W)\to \Ext_{G}^*(V,W)\;.\]
Assume that $k$ has positive characteristic $p$ and that $G$ is defined over the prime subfield of $k$. Restricting a representation $V$ along the Frobenius group morphism $\phi:G\to G$
yields the \emph{twisted representation} $V^{[r]}$. If $k$ is perfect, $\phi$ is an isomorphism in the category of discrete groups, so the comparison map becomes:
\[\EExt^*_{G}(V^{[r]},W^{[r]})\to \Ext^*_{G}(V^{[r]},W^{[r]})\simeq \Ext^*_{G}(V,W)\;.\qquad(*)\]
If $G$ is reductive (e.g. $G=\GL_n(k)$), the left hand-side does not depend on $r$ in low degrees if $r$ is big enough, and it is called the \emph{generic extensions} of $G$. 
A celebrated theorem of Cline, Parshall, Scott and van der Kallen \cite[Main Thm (6.6)]{CPSVdK} shows that if $k$ is a big finite field, $r$ is big enough and $G$ is reductive  and split over the prime subfield, the map $(*)$ is an isomorphism in low degrees (the isomorphism range grows with the size of $k$).

As an application of our main results, we prove a similar result for $\GL_n(k)$ over an infinite perfect field $k$  in theorem \ref{thm-comp-GL}. Namely, if $V$ and $W$ are two finite-dimensional polynomial representations of degree $d$ of $\GL_n(k)$, if $r$ is big enough and if $n$ is big enough, the map $(*)$ is an isomorphism in low degrees
(an explicit isomorphism range is given in our theorem, this range grows with $n$). Analogous results for symplectic and orthogonal groups are given in theorem \ref{thm-comp-OSp}. 

Let us emphasize two points regarding our comparison theorems. Firstly, many perfect fields do not contain big finite subfields, so there is no hope to recover our results from the results of \cite{CPSVdK} by taking colimits over finite subfields. Actually, \cite{CPSVdK} relies on the fact that when $k$ is a big finite field, the polynomials on a $k$-vector space $V$ are a good approximation of discrete maps from $V$ to $k$, namely the natural forgetful map:
$S(V^*)\to \mathrm{Map}(V,k)$
is surjective, and it is an isomorphism in polynomial degrees less than the cardinality of $k$. This fact obviously fails for infinite perfect fields, which makes our results rather unexpected.
Secondly, our comparison results deal with \emph{stable} cohomology, in the sense that $n$ is large. One can wonder what happens when $n$ is small. It is a complicated question, which goes beyond the current understanding of homology of groups, even in the simplest case $W=V=k$ and $G=\GL_n(k)$. In this case, it is well-known that the left hand-side of $(*)$ is zero in positive degrees \cite[II 4.11]{Jantzen}. In sharp contrast, the right hand-side of $(*)$ is mysterious for small values of $n$, and still under investigation, see e.g. \cite{Mirzaii,GKRW}.

\subsection{Organization of the paper}
We start by two sections recalling the basic facts and notations needed for our results. To be more specific, section \ref{sec-recoll} deals with the homological properties of functor categories and can be read independently of the rest of the paper, as a survey of the fundamental techniques of the subject. Most of our results rely not only on homological techniques, but also on \emph{homotopical} techniques, via the introduction of simplicial objects. Section \ref{sec-simplicial} briefly recalls the material needed, in particular various forms of the Hurewicz theorem. 

In section \ref{sec-AP} we prove theorem \ref{thm-AP-type}. Section \ref{sec-excis} presents a general excision result for functor homology, which implies theorem \ref{thm-intro-2} relative to antipolynomial homology. In these two sections, we work over an arbitrary commutative ring $k$.

The remaining sections deal with polynomial homology over a field $k$. Section \ref{sec-prelim-polyn-hom} describe the manipulations which allow to reduce the computation of polynomial functor homology to the case of an infinite \emph{perfect} field $k$, a small additive \emph{$\Fp$-linear} category $\A$, and arguments of $\Tor$ and $\Ext$ of the form $\pi^*F$, where $F$ is strict polynomial and $\pi$ is additive. These manipulations are well known to experts, but scattered in the literature. The only new result is the polynomial excision theorem \ref{thm-poly-excis}. The results of section \ref{sec-prelim-polyn-hom} justify the hypotheses in the theorems dealing with polynomial homology in the subsequent sections. 

In section \ref{sec-add} we prove theorem \ref{thm-intro-3}, which describes the homology of additive functors. The proof can be considered as a appetizer for the proof of the generalized comparison theorem: the two proofs use similar ideas, but the proof of the generalized comparison theorem is significantly more involved. 

The statement and the proof of the generalized comparison theorem occupies sections  \ref{sec-Pol-Fq}, \ref{sec-gen-prelim-comparison} and \ref{sec-generalized}. In section \ref{sec-Pol-Fq} we elaborate on the results of \cite{FFSS} to remove the harsh restriction on the size of the field in the strong comparison theorem \cite[Thm 3.10]{FFSS}. The resulting theorem \ref{thm-verystrong} is the case $\A=\Proj_\Fq$ of our generalized comparison theorem, and it is a key ingredient in the proof of the generalized comparison theorem. 
In section \ref{sec-gen-prelim-comparison} we define a certain comparison map $\Theta_k$ and we study its properties. This map is a second key ingredient in the proof of our generalized comparison theorem. In section \ref{sec-generalized}, we state and prove the generalized comparison theorem. The idea of the proof is roughly to use $\Theta_k$ to reduce the proof to the case $\A=\Proj_\Fp$, already established in section \ref{sec-Pol-Fq}. 

Finally in section \ref{sec-applic}, we develop quick applications of the generalized comparison theorem. Namely, we use it to obtain some explicit computations of functor homology, and we prove our comparison theorems for the cohomology of classical algebraic groups.

\subsection{Some general notations and terminology}
For the convenience of the reader, we gather here some notations and terminology which are used throughout the article. 
\begin{enumerate}[$\bullet$]
\item The letter $k$ denotes a commutative ring, unadorned tensor products are taken over $k$, $k[X]$ denotes the free $k$-module on a set $X$.
\item Categories have a class of objects, and we require that the morphisms between any two objects form a set. We say that a category $\C$ is \emph{small} if the isomorphism classes of objects of $\C$ form a set (those categories are often called `essentially small' in the literature, we prefer to call them `small' to avoid heavy terminology). We say that a category $\C$ is \emph{additive} if it is a $\mathbb{Z}$-category with a zero object and finite biproducts \cite[VIII.2]{ML}.
\item The letter $\A$ denotes a small additive category, $k[\A]$ is the free $k$-category on $\A$, and $\kA=k\otimes_\mathbb{Z}\A$ is the $k$-category obtained by base change. If $k=\mathbb{Z}/n\mathbb{Z}$, the latter is also denoted by $\A/n$.
\item The letter $R$ denotes a ring, and $\Proj_R$ denotes the category of finitely generated projective left $R^\op$-modules and $R^\op$-linear morphisms (or equivalently finitely generated right modules over $R$). When $R$ is a commutative ring, we identify $\Proj_R$ with a full subcategory of $R\Md$ in the obvious way.
\item Given a functor $\phi:\C\to \D$, we use the same notation to denote the induced functor on the opposite categories $\phi:\C^\op\to \D^\op$.
\end{enumerate}

\section{Prerequisites of functor homology}\label{sec-recoll}

\subsection{Functor categories}
We recall some standard notations and terminology from \cite{Mi72}. In particular, the term \emph{$k$-category} refers to a $k$-linear category, that is, a category whose homomorphism sets are equipped with a $k$-module structure and such that the composition is $k$-bilinear. The $k$-functors are the $k$-linear functors, that is the functors whose action on morphisms is given by a $k$-linear map.

Given a small $k$-category $\K$, we denote by $\K\Md$ the category whose objects are the $k$-functors $F:\K\to k\Md$, whose morphisms are the natural transformations between such functors, the composition being the usual composition of natural transformations. 
We let $\Mdd\K=\K^\op\Md$. Throughout the article, we will use the following notation, which makes a typographical distinction between the level of the source category $\K$ and the level of the functor category $\K\Md$. 
\begin{nota}
Objects of $\K$ are denoted by lowercase letters $x,y,\dots$ and homomorphism groups in $\K$ are denoted by $\K(x,y)$. Objects of $\K\Md$ are denoted by uppercase letters $F,G,\dots$ and $k$-modules of homomorphism in $\K\Md$ are denoted by $\Hom_\K(F,G)$.
\end{nota}

\begin{rk}[smallness]
Smallness of $\K$ ensures that the morphisms between two objects $\K\Md$ form a set.
\end{rk}

The category $\K\Md$ has the structure of a $k$-category, and it is a Grothendieck category (i.e. an AB5 abelian category with a set of generators) with enough projectives and injectives. To be more specific, limits and colimits in $\K\Md$ are computed objectwise, in particular a sequence $0\to F'\to F\to F''\to 0$ is exact if and only if the sequence of $k$-modules $0\to F'(x)\to F(x)\to F''(x)\to 0$ is exact for all objects $x$ of $\K$. The representable functors $h^x_\K=\K(x,-)$ yield a set of projective generators, which are called the \emph{standard projectives}. Finally, if $M$ is an injective cogenerator of $k\Md$, the dual functors $\Hom_k(h^x_{\K^\op},M)$, which send an object $y$ to $\Hom_k(h^x_{\K^\op}(y),M)=\Hom_k(\K(y,x),M)$ yield a set of injective cogenerators, called the \emph{standard injectives}. 

We are chiefly interested in the following classical examples of functor categories. The third one, namely, the category of homogeneous strict polynomial functors, will not be used before section \ref{sec-prelim-polyn-hom}.
\begin{ex}{\bf Ordinary functors.} Let $\C$ be a small category, and let $k[\C]$ be the free $k$-category on $\C$, that is, the category with the same objects as $\C$, and such that $k[\C](a,b)=k[\C(a,b)]$ is the free $k$-module on $\C(a,b)$. 
The category $k[\C]\Md$ identifies with the category $\F(\C;k)$ of all functors $F:\C\to k\Md$ and natural transformations between such functors. The latter category is called the category of ordinary functors, the term `ordinary' refering to the fact that there is no $k$-linearity condition on the functors of this category.
\end{ex}

\begin{ex} {\bf Additive functors.} Let $\A$ be a small additive category, and let $\kA$ be the $k$-category obtained by base change. Thus $\kA$ has the same objects as $\A$, and $\kA(a,b)=k\otimes_\mathbb{Z}\A(a,b)$. 
The category $\kA\Md$ identifies with the category $\add(\A;k)$ of all additive functors $F:\A\to k\Md$ and natural transformations.
\end{ex}

\begin{ex}\label{ex-str-fct} {\bf Homogeneous strict polynomial functors.}
Let $\Gamma^d\Proj_k$ denote the \emph{Schur category}. This $k$-category is defined as follows. First, we denote by $\Gamma^d(v)=(v^{\otimes d})^{\Si_d}$ the $d$-th divided power of a finitely generated projective $k$-module $v$. Then $\Gamma^d\Proj_k$ has the same objects as $\Proj_k$ and 
\[\Gamma^d\Proj_k(v,w)=\Gamma^d(\Hom_k(v,w))=\Hom_{k\Si_d}(v^{\otimes d},w^{\otimes d})\] 
(i.e. the module of $k$-linear morphisms from $v^{\otimes d}$ to $w^{\otimes d}$ which are equivariant for the action of the symmetric group $\Si_d$ which permutes the factors of the tensor product). The composition is the usual composition of equivariant morphisms.
The category $\Gamma^d\Proj_k\Md$ is an avatar of representations of Schur algebras. Indeed, let $S(n,d)=\End_{\Gamma^d\Proj_k}(k^n)$ denote the classical Schur algebra as in \cite{Green}. Then evaluation on $k^n$ yields an equivalence of categories $\Gamma^d\Proj_k\Md\simeq S(n,d)\Md$.

In \cite{FS}, Friedlander and Suslin introduce the category  of $d$-homogeneous strict polynomial functors $\Pp_{d,k}$ over a field $k$. As observed in \cite{Pira-Pan}, this category $\Pp_{d,k}$ identifies with the full subcategory category $\Gamma^d\Proj_k\md$ of $\Gamma^d\Proj_k\Md$ on the functors $F$ such that $F(v)$ is finite-dimensional for all $v$. 

In the case of a field, the equivalence of categories between $\Gamma^d\Proj_k\Md$ and modules over Schur algebras restricts to an equivalence of categories $\Gamma^d\Proj_k\md\simeq S(n,d)\md$. Thus, just as in the case of modules over Schur algebras, the $\Ext$ computed in the category $\Gamma^d\Proj_k\md$ are the same as the $\Ext$ computed in the bigger category $\Gamma^d\Proj_k\Md$. Hence working with the former or the latter is rather a matter of taste. In this article, we choose to work in the bigger category $\Gamma^d\Proj_k\Md$, and we call its objects the \emph{$d$-homogeneous strict polynomial functors} over $k$.
\end{ex}

\subsection{Tensor products and $\Tor$ over a $k$-category}\label{subsec-tp}
Given a small $k$-category $\K$ there is a tensor product over $\K$:
$$-\otimes_{\K}-: \Mdd\K\times \K\Md\to k\Md$$
which is defined by the coend formula \cite[IX.6]{ML} (recall that unadorned tensor products are taken over $k$):
$$F\otimes_\K G=\int^{x}F(x)\otimes G(x)\;.$$
Thus, $F\otimes_\K G$ is the quotient module of $\bigoplus_{x\in\mathrm{Ob}(\K)}F(x)\otimes G(x)$ by the relations $F(f)(t)\otimes s=t\otimes G(f)(s)$ for all $t\otimes s\in F(y)\otimes G(x)$ and all $f\in\K(x,y)$.
\begin{ex}
If $R$ is a $k$-algebra, and $\K=*^R$ is the category with one object $*$ with endomorphism ring equal to $R$, then $\K\Md=R\Md$ and $\otimes_\K$ is nothing but the usual tensor product of $R$-modules.
\end{ex}
Usual properties of tensor products over a ring generalize to tensor products over a category. In particular, the tensor product over $\K$ is characterized by the fact that it is $k$-linear and preserves colimits with respect to each of its variables, together with the `Yoneda isomorphisms', natural with respect to $F$, $G$, $x$, $y$:
\begin{align}F\otimes_{\K} h^x_{\K}\simeq F(x)\quad\text{ and }\quad h^y_{\K^{\mathrm{op}}}\otimes_\K G\simeq G(y)\;.\label{eqn-Yoneda-tens}\end{align}
To be more specific, the first Yoneda isomorphism sends the class $\llbracket s\otimes f\rrbracket$ of an element $s\otimes f\in F(y)\otimes \K(x,y)$ to $F(f)(s)\in F(x)$. The second Yoneda isomorphism is given by a similar formula.

Alternatively, the tensor product over $\K$ is characterized by the isomorphism natural with respect to the functors $F$ and $G$ and the $k$-module $M$:
\begin{align}\alpha:\Hom_k(F\otimes_{\K}G,M)\simeq \Hom_\K(G,\Hom_k(F,M))\;,
\label{eqn-Formule-Cartan}\end{align}
where the functor $\Hom_k(F,M)$ is defined by $\Hom_k(F,M)(x)=\Hom_k(F(x),M)$.
To be more explicit, the natural isomorphism $\alpha$ is defined by sending a morphism $f:F\otimes_\K G\to M$ to the natural transformation $\alpha(f)$ such that the $k$-linear map $\alpha(f)_x:G(x)\to \Hom_k(F(x),M)$ sends an element $s$ to 
$s'\mapsto f(\llbracket s\otimes s'\rrbracket)$ where the brackets refer to the class of $s\otimes s'\in F(x)\otimes G(x)$ in the quotient $F\otimes_\K G$.

The tensor product over $\K$ is a left balanced bifunctor, hence can be left derived in  by taking a projective resolution of $F$ or  a projective resolution of $G$. We denote by $\Tor^\K_*(F,G)$ these derived functors. By deriving the adjunction isomorphism \eqref{eqn-Formule-Cartan} we obtain the following result.
\begin{lm}\label{lm-iso-dual}
For all $k$-functors $F$, $G$ and for all injective $k$-modules $M$, the adjunction morphism $\alpha$ derives to a graded isomorphism:
\[\Hom_k(\Tor_*^\K(F,G),M)\xrightarrow[\simeq]{\alpha} \Ext^*_\K(G,\Hom_k(F,M))\;.\]
\end{lm}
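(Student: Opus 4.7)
The plan is to derive the adjunction \eqref{eqn-Formule-Cartan} in the standard way, using a projective resolution of $G$ and the hypothesis that $M$ is injective.

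First I would choose a projective resolution $P_\bullet \to G$ in $\K\Md$. Since the tensor product over $\K$ is left balanced, the $\Tor$ groups can be computed as $\Tor_*^\K(F,G) = H_*(F \otimes_\K P_\bullet)$. Applying $\Hom_k(-,M)$ to this complex, and using the fact that $\Hom_k(-,M)$ is exact because $M$ is an injective $k$-module, I can commute $\Hom_k(-,M)$ with homology to get
\[
H^*\bigl(\Hom_k(F \otimes_\K P_\bullet,\, M)\bigr) \;\simeq\; \Hom_k\bigl(\Tor_*^\K(F,G),\, M\bigr).
\]

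Next I would invoke the adjunction isomorphism $\alpha$ from \eqref{eqn-Formule-Cartan} termwise. Since $\alpha$ is natural in both functor variables, it assembles into an isomorphism of cochain complexes
\[
\Hom_k(F \otimes_\K P_\bullet,\, M) \;\xrightarrow[\simeq]{\alpha}\; \Hom_\K(P_\bullet,\, \Hom_k(F,M)).
\]
Taking cohomology on both sides and using that $P_\bullet$ is a projective resolution of $G$, the right-hand side computes $\Ext^*_\K(G, \Hom_k(F,M))$ by definition. Combining this with the identification from the first step yields the desired graded isomorphism, and naturality of $\alpha$ guarantees that the map is indeed the derived form of $\alpha$.

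There is no real obstacle here: the argument is the classical manipulation exchanging derived functors across an adjunction, and the injectivity hypothesis on $M$ is used precisely to ensure that $\Hom_k(-,M)$ commutes with passage to homology so that the cochain-level isomorphism $\alpha$ descends to the claimed isomorphism on (co)homology. The only small point to keep in mind is that we need not resolve $F$ at all: one resolution of $G$ suffices, since the exactness of $\Hom_k(-,M)$ absorbs the role that a resolution of $F$ would otherwise have played.
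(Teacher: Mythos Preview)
Your argument is correct and is precisely the standard derivation the paper has in mind when it says ``By deriving the adjunction isomorphism \eqref{eqn-Formule-Cartan} we obtain the following result''; the paper gives no further detail. There is nothing to add.
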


\subsection{Restriction functors}\label{subsec-restriction}

If $\phi:\K\to \LL$ is a $k$-functor, there is a $k$-functor called \emph{restriction along $\phi$} and denoted by $\phi^*$:
\[\begin{array}{cccc}
\phi^*:&\LL\Md& \to &\K\Md\\
& F & \mapsto & \phi^*F:=F\circ\phi
\end{array}\;.\] 
In this article, we mainly deal with the following examples of restriction functors. 
\begin{ex}
{\bf Ordinary functors.}
If $\phi:\C\to\D$ is a functor between two small categories, we still denote by $\phi:k[\C]\to k[\D]$ the induced $k$-functor. If 
we identify $k[\C]\Md$ with the category of ordinary functors $\F(\C;k)$, then the restriction functor $\phi^*:k[\D]\Md\to k[\C]\Md$
identifies with the functor $\F(\D;k)\to \F(\C;k)$ given by precomposition by $\phi$.
\end{ex}

\begin{ex} {\bf Additive functors.} If $\phi:\A\to\B$ is an additive functor between two small additive categories, we also denote by $\phi:\kA\to \kB$ the induced $k$-functor. If 
we identify $\kA\Md$ with the category of additive functors $\add(\A;k)$, then the restriction functor $\phi^*:\kB\Md\to \kA\Md$
identifies with the functor $\add(\B;k)\to \add(\A;k)$ given by precomposition by $\phi$.
\end{ex}

\begin{ex} 
{\bf Ordinary versus additive functors.} Let $\pi:k[\A]\to {}_k\A$ be the $k$-functor which is the identity on objects and which is defined on morphisms by $\pi(\sum \lambda_f f)=\sum \lambda_f\otimes f$. Then the restriction functor $\pi^*:\kA\Md\to k[\A]\Md$ identifies with the embedding $\add(\A;k)\hookrightarrow \F(\A;k)$. In order to avoid overloaded notations, the restriction functor $\pi^*$ will often be omitted, i.e. an additive functor will be denoted by the same letter $F$ when viewed as an object of $\kA\Md$ or as an object of $k[\A]\Md$.
\end{ex}

\begin{ex}\label{ex-ordvsstrict}
{\bf Ordinary versus strict polynomial functors.} Let $\gamma^d:k[\Proj_k]\to \Gamma^d\Proj_k$ be the $k$-functor which is the identity on objects and which is defined on morphisms by $\gamma^d(\sum \lambda_f f)=\sum \lambda_f f^{\otimes d}$. Then the restriction functor $\gamma^{d\,*}:\Gamma^d\Proj_k\md\to k[\Proj_k]\Md$ identifies with the forgetful functor $\Pp_{d,k}\to \F(\Proj_k;k)$. It is fully faithful if $k$ is field with at least $d$ elements \cite[Prop 1.4]{FFSS}. In order to avoid overloaded notations, the restriction functor $\gamma^{d\,*}$ will often be omitted, i.e. a strict polynomial functor will be denoted by the same letter $F$ when viewed as an object of $\Gamma^d\Proj_k\Md$ or as an object of $k[\Proj_k]\Md$.
\end{ex}

Restriction along $\phi$ yields $k$-linear morphisms 
$$\phi^*:\Hom_{\LL}(G,F)\to \Hom_{\K}(\phi^*G,\phi^*F)$$
that we also call restriction morphisms. Similarly there are $k$-linear restriction morphisms, natural with respect to $F$ and $G$:
\[\res^\phi:\phi^*F\otimes_\K \phi^*G\to F\otimes_\LL G\;.\]
To be more specific, $\res^\phi$ sends the class of an element $s\otimes s'\in \phi^*F(x)\otimes \phi^*G(x)$ to the class of the same element $s\otimes s'$ now viewed as an element of $F(y)\otimes G(y)$ with $y=\phi(x)$. These two restriction maps derive to morphisms of graded $k$-modules, which are also denoted by the same letters:
\begin{align}
&\phi^*:\Ext^*_{\LL}(G,F)\to \Ext^*_{\K}(\phi^*G,\phi^*F)\;,\label{eqn-res-1}\\
&\res^\phi:\Tor^{\K}_*(\phi^*F,\phi^*G)\to \Tor^{\LL}_*(F,G)\;.\label{eqn-res-2}
\end{align}
These two restriction maps are related by the following proposition.
\begin{pr}\label{prop-Tor-Ext}
For all objects $F$ and $G$ of $\Mdd\K$ and $\K\Md$, for all $k$-functors $\phi:\K\to \LL$, and for all injectives $k$-modules $M$,
there is a commutative square:
\[
\begin{tikzcd}[column sep=large]
\Ext^*_{\LL}(G,\Hom_k(F,M))\ar{d}{\phi^*}\ar{r}{\alpha}[swap]{\simeq}& \Hom_k(\Tor_*^\LL(F,G),M)\ar{d}{\Hom_k(\res^\phi,M)}\\
\Ext^*_{\K}(\phi^*G,\phi^*\Hom_k(F,M))\ar{r}{\alpha}[swap]{\simeq}&\Hom_k(\Tor_*^\K(\phi^*F,\phi^*G),M)
\end{tikzcd}.
\]
\end{pr}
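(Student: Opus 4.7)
The plan is to establish commutativity at the underived level first, and then derive via the standard projective-resolution description of both $\Ext^*$ and $\Tor_*$. At the underived level, I would first verify that the square
\[
\begin{tikzcd}[column sep=large]
\Hom_{\LL}(G,\Hom_k(F,M))\ar{d}{\phi^*}\ar{r}{\alpha} & \Hom_k(F\otimes_\LL G,M)\ar{d}{\Hom_k(\res^\phi,M)}\\
\Hom_{\K}(\phi^*G,\Hom_k(\phi^*F,M))\ar{r}{\alpha} & \Hom_k(\phi^*F\otimes_\K \phi^*G,M)
\end{tikzcd}
\]
commutes, using the canonical identification $\phi^*\Hom_k(F,M)\simeq \Hom_k(\phi^*F,M)$. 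This is a direct chase from the explicit formula for $\alpha$ below \eqref{eqn-Formule-Cartan}: given $f:F\otimes_\LL G\to M$, both composites send $f$ to the natural transformation whose component at $x\in\K$ takes $s\in G(\phi(x))$ to $s'\mapsto f(\llbracket s'\otimes s\rrbracket)$, because $\res^\phi$ is defined by the same class-to-class assignment on elementary tensors.

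To derive, I would choose a projective resolution $P_*\to G$ in $\LL\Md$, together with a projective resolution $Q_*\to \phi^*G$ in $\K\Md$, and lift the identity of $\phi^*G$ to a chain map $\psi:Q_*\to \phi^*P_*$ (which exists since each $Q_i$ is projective and $\phi^*P_*\to \phi^*G$ is exact by exactness of $\phi^*$). Applying the underived square termwise with $P_n$ in place of $G$ gives a commutative square of chain complexes. Precomposing its bottom row with $\psi$ and using functoriality of $\alpha$ in its first argument adds a third, commutative bottom level whose terms are $\Hom_\K(Q_*, \phi^*\Hom_k(F,M))$ and $\Hom_k(\phi^*F\otimes_\K Q_*, M)$. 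This assembles into a three-row commutative diagram of complexes.

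Now I would take cohomology. Because $M$ is injective, $\Hom_k(-,M)$ is exact, so the top row computes the isomorphism $\Ext^*_\LL(G,\Hom_k(F,M))\xrightarrow{\alpha}\Hom_k(\Tor_*^\LL(F,G),M)$ of Lemma \ref{lm-iso-dual}, and the bottom row computes the $\K$-analogue. By the standard construction of the restriction morphisms \eqref{eqn-res-1} and \eqref{eqn-res-2} via the lift $\psi$, the composite left vertical arrow induces $\phi^*$ on $\Ext^*$ and the composite right vertical arrow induces $\Hom_k(\res^\phi,M)$ on $\Tor_*$. The outer rectangle on cohomology is therefore exactly the desired square, and it commutes.

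The only real obstacle is the bookkeeping in the last step: verifying that the composite vertical arrows in the three-row complex-level diagram really represent the derived restriction $\phi^*$ on $\Ext$ and the dual of $\res^\phi$ on $\Tor$. This amounts to unpacking the definitions of \eqref{eqn-res-1} and \eqref{eqn-res-2} and noting that the lift $\psi$ appearing here is exactly the one used to define $\phi^*$ on $\Ext^*$, and that the chain map $\phi^*F\otimes_\K Q_*\to \phi^*F\otimes_\K\phi^*P_*\xrightarrow{\res^\phi} F\otimes_\LL P_*$ is precisely the one computing $\res^\phi$ on $\Tor_*$. Both checks are routine consequences of the fact that restriction along $\phi$ is an exact functor and the uniqueness-up-to-homotopy of lifts between projective resolutions.
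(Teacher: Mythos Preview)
Your proof is correct and follows the same approach as the paper: verify commutativity in degree zero directly from the explicit formula for $\alpha$, then pass to higher degrees by deriving. The paper's proof is simply a two-sentence sketch of this (``Commutativity of the diagram in degree $0$ is a straightforward verification from the definitions. The commutativity in higher degrees follows from the fact that the arrows are all obtained by deriving the arrows in degree zero.''), whereas you have carefully unpacked the bookkeeping with the lift $\psi$ between projective resolutions.
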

\begin{proof}
Commutativity of the diagram in degree $0$ is a straightforward verification from the definitions. The commutativity in higher degrees follows from the fact that the arrows are all obtained by deriving the arrows in degree zero.
\end{proof}

Several results in the article assert that under certain conditions, the two restriction maps \eqref{eqn-res-1} and \eqref{eqn-res-2} are isomorphisms.
We will use the next corollaries to reduce the proofs of these results to checking that \emph{one} of these maps is an isomorphism.

\begin{cor}\label{cor-memechose}
Given two functors $F$ and $G$ and an integer $i$, the restriction map \eqref{eq-resmap1} below is an isomorphism if the restriction map \eqref{eq-resmap2} is an isomorphism for all injective $k$-modules $M$. 
\begin{align}
&\phi^*:\Ext^i_\LL(G,\Hom_k(F,M))\to \Ext^i_\K(\phi^*G,\phi^*\Hom_k(F,M))\label{eq-resmap2}\\
&\res^\phi:\Tor^{\K}_i(\phi^*F,\phi^*G)\to \Tor^{\LL}_i(F,G)
\label{eq-resmap1}
\end{align}
\end{cor}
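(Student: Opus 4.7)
The plan is to deduce the statement directly from Proposition \ref{prop-Tor-Ext} together with an elementary duality argument. For every injective $k$-module $M$, the commutative square of Proposition \ref{prop-Tor-Ext} has horizontal arrows given by $\alpha$, which are isomorphisms by Lemma \ref{lm-iso-dual}. Hence the hypothesis that the left vertical arrow $\phi^*$ on $\Ext^i$ is an isomorphism for every injective $M$ immediately forces the right vertical arrow $\Hom_k(\res^\phi, M)$ to be an isomorphism for every injective $M$.

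It then remains to recover that $\res^\phi$ itself is an isomorphism. For this, my approach is to choose $M=E$ to be an injective cogenerator of $k\Md$ and to invoke the classical principle that the exact contravariant functor $\Hom_k(-,E)$ reflects isomorphisms between $k$-modules: if $\Hom_k(f,E)$ is an isomorphism, then applying $\Hom_k(-,E)$ to the short exact sequences defining $\ker f$ and $\coker f$ shows that $\Hom_k(\ker f,E)=0=\Hom_k(\coker f,E)$, and the cogenerator property of $E$ then forces both $\ker f$ and $\coker f$ to vanish. Applying this to $f=\res^\phi$ concludes the proof.

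Apart from ensuring the existence of an injective cogenerator in $k\Md$ (a classical fact, e.g.\ a suitable product of injective hulls of cyclic $k$-modules), I do not anticipate any serious obstacle; the whole argument is essentially a formal transposition through the adjunction $\alpha$ of Proposition \ref{prop-Tor-Ext}, applied with $M$ ranging over enough injectives to detect isomorphisms.
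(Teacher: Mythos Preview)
Your proof is correct and is essentially the argument the paper has in mind: the corollary is stated without proof there, as an immediate consequence of Proposition~\ref{prop-Tor-Ext}, and your use of the commutative square together with the fact that $\Hom_k(-,E)$ reflects isomorphisms for an injective cogenerator $E$ is exactly the intended justification.
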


The following standard terminology will be often used in the article.
\begin{defi}\label{def-connected-homology}
Let $e$ be a positive integer.
A morphism $f:H^*\to K^*$ between cohomologically graded $k$-modules is called \emph{$e$-connected} if it is an isomorphism in degrees $i<e$ and if it is injective in degree $i=e$. Similarly, a morphism $g:L_*\to M_*$ between homologically graded $k$-modules is called \emph{$e$-connected} if it is an isomorphism in degrees $i<e$ and if it is surjective in degree $i=e$. 

A morphism of graded $k$-modules is $\infty$-connected if it is $e$-connected for all positive integers $e$.
\end{defi}

The following global version of definition \ref{def-connected-homology} will be used in section \ref{sec-excis}.

\begin{defi-prop}\label{cor-memechose2}
 Let $e$ be a positive integer or $\infty$. The restriction functor $\phi^*:\LL\Md\to \K\Md$ is called \emph{$e$-excisive} if it satisfies one of the following equivalent assertions.
\begin{enumerate}
\item[(1)] For all $F,G$, the map \eqref{eqn-res-1} is an isomorphism in degrees $0\le *< e$.
\item[(1$^+$)] For all $F,G$, the map \eqref{eqn-res-1} is $e$-connected.
\item[(2)] For all $F,G$, the map \eqref{eqn-res-2} is an isomorphism in degrees $0\le *< e$.
\item[(2$^+$)] For all $F,G$, the map \eqref{eqn-res-2} is $e$-connected.
\item[(3)] The restriction functor $\phi^*:\LL\Md\to \K\Md$ is fully faithful and for all objects $x$, $y$ of $\LL$:
\[\bigoplus_{0<i< e}\Tor^\K_i(\phi^*h^x_{\LL^\op},\phi^*h^y_{\LL})=0\;.\] 
\end{enumerate}
\end{defi-prop}
\begin{proof}
It suffices to prove that the five assertions are equivalent for all positive integers $e$. 
It is clear that (1$^+$)$\Rightarrow$(1) and (2$^+$)$\Rightarrow$(2), and the converses follow from inspecting the long exact sequences associated to a short exact sequence $0\to K\to P\to G\to 0$ with $P$ projective. 

Let us prove (1)$\Leftrightarrow$(2). We claim that (1) is equivalent to the following assertion.
\begin{enumerate}
\item[(1')] For all standard injectives $F$ and for all $G$, the map \eqref{eqn-res-1} is an isomorphism in degrees $0\le *< e$.
\end{enumerate}
Indeed, it is clear that (1)$\Rightarrow$(1'). Conversely, every $F$ has a coresolution $J$ by direct products of standard injectives. We have two spectral sequences:
\begin{align*}
&E_1^{p,q}=\Ext_{\K}^q(G,J^p))\Rightarrow \Ext^{p+q}_{\K}(G,F)\;,\\
&'E_1^{p,q}=\Ext_{\LL}^q(\phi^*G,\phi^*J^p)\Rightarrow \Ext^{p+q}_{\LL}(\phi^*G,\phi^*F)\;,
\end{align*}
and $\phi^*$ induces a morphism of spectral sequences. If (1') holds then the two spectral sequences have isomorphic first pages, hence isomorphic abutments, hence (1) holds. Now (1') is equivalent to (2) by corollary \ref{cor-memechose}. 

Finally, let us prove (2)$\Leftrightarrow$(3). If (2) holds, then $\phi^*$ is fully faithful by (1), and moreover  
$\Tor^\K_i(\phi^*h^x_{\LL^\op},\phi^*h^y_{\LL})$ is isomorphic to $\Tor^\LL_i(h^x_{\LL^\op},h^y_{\LL})$ for all $0<i< e$, hence it is zero by projectivity of $h^x_{\LL^\op}$, which proves (3). Conversely, if $\phi^*$ is fully faithful then (2) holds for $e=1$, hence $\res^\phi$ is an isomorphism in degree $0$ by (1). If in addition the $\Tor$-vanishing is satisfied, then the map \eqref{eqn-res-2} is an isomorphism in degrees $0\le *< e$ for all projective objects $F$ and $G$, hence for all objects $F$ and $G$ by a d\'ecalage argument, which proves (2).
\end{proof}

\begin{ex}
If $\phi:\K\to \LL$ is full and essentially surjective, then $\phi^*:\LL\Md\to \K\Md$ is $1$-excisive. Indeed, $\phi^*$ is easily checked to be fully faithful.
\end{ex}

\subsection{Adjoint functors in homology}\label{subsec-Kan-homology}

We now recall from \cite[Lm 1.3 and Lm 1.5]{Pira-Pan} the good homological properties that restriction along $\phi:\K\to \LL$ enjoys when $\phi$ admits a right adjoint.
\begin{pr}\label{pr-iso-Ext-explicit}
Let $\phi:\K\leftrightarrows \LL:\psi$ be an adjoint pair and let $u:\id\to \psi\circ\phi$ and $e:\phi\circ\psi\to\id$ denote the unit and the counit of an adjunction. Then the following composition is an isomorphism:
\[\Ext^*_\LL(\psi^*F,G)\xrightarrow[]{\phi^*}\Ext^*_\K(\phi^*\psi^*F,\phi^*G)\xrightarrow[]{\Ext^*_\K(F(u),G)} \Ext^*_\K(F,\phi^*G)\;,\]
whose inverse is given by the composition:
\[\Ext^*_\K(F,\phi^*G)\xrightarrow[]{\psi^*}\Ext^*_\LL(\psi^*F,\psi^*\phi^*G)\xrightarrow[]{\Ext^*_\LL(F,G(e))} \Ext^*_\LL(\psi^*F,G)\;.\]
\end{pr}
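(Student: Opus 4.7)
The plan is to recognize the displayed composition as the isomorphism obtained by deriving a natural adjunction $\psi^* \dashv \phi^*$ between functor categories.

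First, I would check that precomposition converts the adjoint pair $\phi \dashv \psi$ into an adjoint pair $\psi^* \dashv \phi^*$, with unit $F(u): F \to \phi^*\psi^*F$ (having component $F(u_x)$ at an object $x$ of $\K$) and counit $G(e): \psi^*\phi^*G \to G$. Naturality in $F$ and $G$ is inherited from naturality of $u$ and $e$, and the two triangle identities evaluated on $\psi^*F$ and on $\phi^*G$ reduce componentwise to the triangle identities of $\phi \dashv \psi$. In degree zero this yields the natural isomorphism
\[\Hom_\LL(\psi^*F, G) \xrightarrow{\sim} \Hom_\K(F, \phi^*G),\quad \alpha \mapsto \phi^*\alpha \circ F(u),\]
with inverse $\beta \mapsto G(e) \circ \psi^*\beta$. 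These are exactly the forward and backward compositions of the proposition in cohomological degree zero.

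Next, I would derive this adjunction. The restriction functor $\phi^*$ is exact because exactness in $\LL\Md$ is tested objectwise, so its left adjoint $\psi^*$ preserves projectives. Since $\psi^*$ is also exact (again as a restriction functor), applying $\psi^*$ to a projective resolution $P_\bullet \to F$ in $\K\Md$ yields a projective resolution $\psi^*P_\bullet \to \psi^*F$ in $\LL\Md$. Applying the degree zero adjunction levelwise to $P_\bullet$ then gives an isomorphism of cochain complexes $\Hom_\LL(\psi^*P_\bullet, G) \cong \Hom_\K(P_\bullet, \phi^*G)$, whose cohomology is the desired isomorphism $\Ext^*_\LL(\psi^*F, G) \cong \Ext^*_\K(F, \phi^*G)$.

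Finally I must identify the derived isomorphism with the specific composition $\Ext^*_\K(F(u), G) \circ \phi^*$ of the statement. This is a matter of naturality: the two constituents $\phi^*$ and precomposition with $F(u)$ act term by term on the resolution and commute with passage to cohomology, so the induced map on $\Ext^*$ factors exactly as the stated composition. The same argument identifies the second display as its inverse. The only part requiring some care is the verification of the triangle identities at the functor category level, but this is purely formal; the remainder is the standard derivation of an adjunction between exact functors whose left adjoint preserves projectives.
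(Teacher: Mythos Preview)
Your proof is correct. The paper does not actually give its own proof of this proposition; it merely recalls the result from \cite[Lm~1.3 and Lm~1.5]{Pira-Pan}, so your argument supplies the standard details that the paper omits. One small remark: for the identification step at the end, rather than arguing that the constituents ``act term by term on the resolution,'' it is cleaner to observe that both the composition $\Ext^*_\K(F(u),G)\circ\phi^*$ and the derived adjunction isomorphism are natural transformations of $\delta$-functors in $G$ that agree in degree zero, and then invoke universality of $\Ext^*_\LL(\psi^*F,-)$; this avoids having to trace how the restriction map $\phi^*$ on $\Ext$ interacts with the specific resolution $\psi^*P_\bullet$.
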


There is a similar result for $\Tor$.
\begin{pr}\label{pr-iso-tens-explicit2}
Let $\phi:\K\leftrightarrows \LL:\psi$ be an adjoint pair and let $u:\id\to \psi\circ\phi$ and $e:\phi\circ\psi\to\id$ denote the unit and the counit of an adjunction. Then the following composition is an isomorphism:
\[\Tor_*^\K(\phi^*F,G)\xrightarrow[]{\Tor_*^\K(\phi^*F,G(u))}\Tor_*^\K(\phi^*F,\phi^*\psi^*G) \xrightarrow[]{\res^\phi}\Tor_*^\LL(F,\psi^*G)\;,\]
whose inverse is given by the composition:
\[\Tor_*^\LL(F,\psi^*G)\xrightarrow[]{\Tor_*^\LL(F(e),\psi^*G)}\Tor_*^\LL(\psi^{*}\phi^{*}F,\psi^*G)\xrightarrow[]{\res^\psi}\Tor_*^\K(\phi^{*}F, G)\;.\]
\end{pr}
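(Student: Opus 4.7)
The plan is to establish the statement first in degree zero by a direct coend computation using the triangle identities of the adjunction, and then lift to higher degrees via projective resolutions, exploiting the fact that $\psi^*$ preserves projective resolutions.

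For degree zero, let $\theta$ denote the degree-zero component of the composition displayed in the proposition: explicitly, $\theta$ sends the class $[s\otimes t]_x$ of an element $s\otimes t\in F(\phi x)\otimes G(x)$ in $\phi^*F\otimes_\K G$ to the class $[s\otimes G(u_x)(t)]_{\phi x}$ in $F\otimes_\LL \psi^*G$. Define $\theta'$ analogously from the reverse composition: $\theta'([s\otimes t]_y)=[F(e_y)(s)\otimes t]_{\psi y}$. To check $\theta'\circ\theta=\id$, apply $\theta'\theta$ to $[s\otimes t]_x$ and obtain $[F(e_{\phi x})(s)\otimes G(u_x)(t)]_{\psi\phi x}$. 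The coend relation for the morphism $u_x\colon x\to \psi\phi x$ in $\K$ rewrites this class as $[F(\phi u_x)(F(e_{\phi x})(s))\otimes t]_x$, which by contravariant functoriality equals $[F(e_{\phi x}\circ \phi u_x)(s)\otimes t]_x=[s\otimes t]_x$, thanks to the triangle identity $e\phi\circ \phi u=\id_\phi$. The opposite composition $\theta\theta'=\id$ is handled symmetrically, using the other triangle identity $\psi e\circ u\psi=\id_\psi$.

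To extend to higher degrees, I would observe that $\psi^*\colon \K\Md\to \LL\Md$ preserves projective resolutions. Its exactness is standard, since restriction functors are computed objectwise and both limits and colimits in functor categories are objectwise. For projectivity, the adjunction yields natural isomorphisms
\[\psi^*h^x_\K(y)=\K(x,\psi y)\simeq \LL(\phi x,y)=h^{\phi x}_\LL(y),\]
so $\psi^*h^x_\K\simeq h^{\phi x}_\LL$. Hence $\psi^*$ sends representables to representables, and since it also commutes with direct sums, it preserves arbitrary projectives. Taking a projective resolution $P_\bullet\to G$ in $\K\Md$, the complex $\psi^*P_\bullet\to \psi^*G$ is therefore a projective resolution in $\LL\Md$, so that
\[\Tor^\LL_*(F,\psi^*G)=H_*(F\otimes_\LL \psi^*P_\bullet).\]
Meanwhile $\Tor^\K_*(\phi^*F,G)=H_*(\phi^*F\otimes_\K P_\bullet)$. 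The degree-zero natural isomorphism $\theta$ of the first step, applied termwise to $P_\bullet$, yields a chain-level isomorphism whose induced map on homology is, by naturality, the composition appearing in the proposition. Similarly, $\theta'$ induces the claimed inverse.

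The main obstacle is the degree-zero verification, which requires carefully orchestrating the triangle identities with the coend relations. Once that is settled, the globalization to higher degrees is essentially formal, the only non-obvious ingredient being the preservation of projectives by $\psi^*$, which is the functor-categorical shadow of $\phi$ being a left adjoint.
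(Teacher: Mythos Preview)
Your argument is correct. The degree-zero verification via the triangle identities is clean, and the key observation that $\psi^*$ sends standard projectives to standard projectives (via $\psi^*h^x_\K\simeq h^{\phi x}_\LL$) is exactly what is needed to derive the isomorphism. The only place where a reader might want one more sentence is the final ``by naturality'' clause: to see that the homology map induced by the termwise $\theta$ coincides with the displayed composition, one can either invoke uniqueness of morphisms between universal $\delta$-functors extending a given degree-zero map, or alternatively resolve $F$ instead (noting that $\phi^*h^y_{\LL^\op}\simeq h^{\psi y}_{\K^\op}$, so $\phi^*$ on the contravariant side also preserves projectives) and observe that the chain-level composition is then manifestly termwise $\theta$.

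As for comparison with the paper: the paper does not supply its own proof of this proposition; it simply recalls the statement from \cite[Lm~1.3 and Lm~1.5]{Pira-Pan}. Your argument is the standard one and is essentially what one finds in that reference.
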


\begin{ex}\label{ex-sum-diagonal}
If $n$ is a positive integer, the $k$-functor $\Delta:k[\A]\to k[\A^{\times n}]$ such that $\Delta(x)=(x,\dots,x)$ is adjoint on both sides to the $k$-functor $\Sigma:k[\A^{\times n}]\to k[\A]$ such that $\Sigma(x_1,\dots,x_n)=x_1\oplus\cdots\oplus x_n$. The unit and counit for these two adjunctions are given by the morphisms
\begin{align*}
&\diag :a\to a^{\oplus n}\;, && \mathrm{proj}:  (\bigoplus_{1\le i\le n} {a_i},\dots,\bigoplus_{1\le i\le n} a_i)\to (a_1,\dots,a_n)\;,\\
&\summ:a^{\oplus n}\to a  \;, && \mathrm{incl}: (a_1,\dots,a_n)\to (\bigoplus_{1\le i\le n} {a_i},\dots,\bigoplus_{1\le i\le n} a_i)\;,
\end{align*}
such that the coordinate morphisms of $\diag$ and $\rho$ are equal to $\id_a$ and the coordinate morphisms of $\mathrm{proj}$ and $\mathrm{incl}$ are given by the canonical projections and inclusions.
The $\Ext$-isomorphisms 
\begin{align*}
\Ext^*_{k[\A]}(\Delta^*G,F)\simeq \Ext^*_{k[\A^{\times n}]}(G,\Sigma^*F)\;,\\
\Ext^*_{k[\A]}(F,\Delta^*G)\simeq \Ext^*_{k[\A^{\times n}]}(\Sigma^*F,G)\;,
\end{align*}
and the analogue $\Tor$-isomorphisms will be often refered to as \emph{sum-diagonal adjunction isomorphisms}.
\end{ex}

If the functor $\phi:\K\to \LL$ does not have adjoints, the isomorphisms of propositions \ref{pr-iso-Ext-explicit} and \ref{pr-iso-tens-explicit2} are replaced by spectral sequences. To be more explicit, for all integers $i$, let $L_i^\phi, R^i_\phi: \K\Md\to \LL\Md$
denote the functors such that 
\begin{align*}
(L_i^\phi F)(y)= \Tor_i^\K(\phi^*h^y_{\LL^\op}, F)\;,
&&&
(R^i_\phi F)(y)= \Ext^i_\K(\phi^*h^y_{\LL}, F)\;.
\end{align*}
Then $L_0^\phi$ and $R^0_\phi$ are respectively left ajoint and right adjoint to the restriction functor $\phi^*:\LL\Md\to \K\Md$, and the $L_i^\phi$ and $R^i_\phi$ are the derived functors of $L_0^\phi$ and $R^0_\phi$ respectively. The following proposition is proved in the same way as the usual base change spectral sequences \cite[Chap 5, Thm 5.6.6]{Weibel}. 

\begin{pr}\label{pr-base-change-ss}
There are base change spectral sequences: 
\begin{align*}
&E^{s,t}_2 = \Ext^s_\LL(F,R^t_\phi G)\Rightarrow \Ext^{s+t}_\K(F,\phi^*G)\;,\\
&'E^{s,t}_2 = \Ext^s_\LL(L_t^\phi F,G)\Rightarrow \Ext^{s+t}_\K(F,\phi^*G)\;,\\
&''E_{s,t}^2 = \Tor_s^\LL(F', L_t^\phi G)\Rightarrow \Tor_{s+t}^\K(\phi^*F',G)\;.
\end{align*}
\end{pr}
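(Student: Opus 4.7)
The plan is to construct, for each of the three spectral sequences, an appropriate double complex built from a projective resolution on one side and an injective/projective resolution on the other, and then read the two filtration spectral sequences: one will collapse to the claimed abutment, and the other will have the claimed $E_2$-page. This follows exactly the pattern of the Grothendieck base change spectral sequence, modulo the standard fact that tensor products and $\mathrm{Hom}$ in functor categories can be balanced. The essential input is that the adjoints $L_0^\phi$ and $R^0_\phi$ of $\phi^*$ behave well on representables. Explicitly, a double application of Yoneda gives
\[L_0^\phi(h^x_\K)(y)=\phi^*h^y_{\LL^\op}\otimes_\K h^x_\K\simeq \LL(\phi(x),y)=h^{\phi(x)}_\LL(y),\]
so $L_0^\phi$ sends standard projectives to standard projectives; dually $R^0_\phi$ sends standard injectives to standard injectives. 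Since $\phi^*$ is exact (limits and colimits being computed pointwise in $\K\Md$), $L_0^\phi$ preserves all projectives and $R^0_\phi$ preserves all injectives.

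For the second spectral sequence, I would fix a projective resolution $P_\bullet\to F$ in $\K\Md$ and an injective resolution $G\to I^\bullet$ in $\LL\Md$, and consider the double complex $C^{p,q}=\Hom_\LL(L_0^\phi P_p,I^q)$, which is isomorphic to $\Hom_\K(P_p,\phi^*I^q)$ by adjunction. Filtering by $p$ first: because $L_0^\phi P_p$ is projective in $\LL\Md$, the vertical cohomology of $C^{p,\bullet}$ is concentrated in degree $0$ and equals $\Hom_\LL(L_0^\phi P_p,G)\simeq\Hom_\K(P_p,\phi^*G)$, so the induced spectral sequence collapses to $\Ext^{p+q}_\K(F,\phi^*G)$. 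Filtering by $q$ first: because $I^q$ is injective, $\Hom_\LL(-,I^q)$ is exact, so the horizontal cohomology of $C^{\bullet,q}$ computes $\Hom_\LL(H_p(L_0^\phi P_\bullet),I^q)=\Hom_\LL(L_p^\phi F,I^q)$, and a further vertical cohomology yields the $E_2$ page $\Ext^q_\LL(L_p^\phi F,G)$. The two filtrations converge to the same abutment, proving the second spectral sequence. The first spectral sequence is entirely dual: take $G\to I^\bullet$ injective in $\K\Md$ and $P_\bullet\to F$ projective in $\LL\Md$, form $\Hom_\LL(P_p,R^0_\phi I^q)\simeq\Hom_\K(\phi^*P_p,I^q)$, and use that $R^0_\phi I^q$ is injective (hence one filtration collapses), while $\Hom_\LL(P_p,-)$ is exact (hence the other gives $\Ext^p_\LL(F,R^q_\phi G)$).

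The third, homological, spectral sequence is handled symmetrically: pick $P_\bullet\to F'$ in $\Mdd\LL$ and $Q_\bullet\to G$ in $\K\Md$, and form the double complex $D_{p,q}=P_p\otimes_\LL L_0^\phi Q_q\simeq \phi^*P_p\otimes_\K Q_q$ (the latter isomorphism follows from the natural adjunction already used in the sum-diagonal style arguments of \S\ref{subsec-Kan-homology}, and can be verified directly by coend manipulation). One filtration uses projectivity of $L_0^\phi Q_q$ to collapse to $\Tor^\K_{p+q}(\phi^*F',G)$, the other uses flatness of $P_p$ to compute horizontal homology as $P_p\otimes_\LL L_q^\phi G$ and then $\Tor^\LL_p(F',L_q^\phi G)$. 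The only mild technical point to check along the way is the verification that $L_0^\phi$ and $R^0_\phi$ preserve the relevant (co)resolutions termwise, which reduces to the preservation of (standard) projectives and injectives explained above; once this is in hand, the construction is a routine double complex exercise parallel to Weibel \cite{Weibel}, Chap.~5, Thm 5.6.6, as indicated in the statement.
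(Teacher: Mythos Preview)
Your proof is correct and is exactly the standard double complex/Grothendieck argument that the paper intends: the paper does not give its own proof but simply refers to Weibel, Chap.~5, Thm~5.6.6, and what you have written is precisely the unfolding of that argument in the functor category setting. The key verifications you isolate---that $\phi^*$ is exact, hence $L_0^\phi$ preserves projectives and $R^0_\phi$ preserves injectives, and that $L_0^\phi h^x_\K\simeq h^{\phi(x)}_\LL$---are the right ones, and the coend check of the isomorphism $\phi^*P\otimes_\K Q\simeq P\otimes_\LL L_0^\phi Q$ is standard (reduce to $Q$ a standard projective, where both sides are $P(\phi(x))$). One small remark: in the statement of the first spectral sequence the abutment should read $\Ext^{s+t}_\K(\phi^*F,G)$ with $F\in\LL\Md$ and $G\in\K\Md$ (as your resolution choices make clear, and as the paper's own application in the proof of Lemma~\ref{lm-annul} confirms); you have silently corrected this.
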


\subsection{External tensor products}\label{subsec-Kunneth}

We recall from \cite{Mi72} the tensor product $\K\otimes\LL$ of two $k$-categories $\K$ and $\LL$. The objects of this tensor category are the pairs $(x,y)$ where $x$ is an object of $\K$ and $y$ is an object of $\LL$, and its $k$-module morphisms are the tensor products $\K(x,x')\otimes \LL(y,y')$. There is an external tensor product operation
\[\boxtimes:\K\Md\times\LL\Md\to \K\otimes\LL\Md\]
which sends a pair $(F,G)$ to the $k$-functor $(F\boxtimes G)(x,y)=F(x)\otimes G(y)$. 
\begin{pr}[K\"unneth]\label{prop-Kunneth-tens}
Assume that $k$ is a field. There is an isomorphism of graded $k$-vector spaces, natural with respect to $F$, $G$, $H$, $K$:
\[\Tor_*^\K(F,H)\otimes\Tor_*^\LL(G,K)\simeq \Tor_*^{\K\otimes\LL}(F\boxtimes G,H\boxtimes K)\;.\]
\end{pr}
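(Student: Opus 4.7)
The plan is to follow the classical template for a Künneth formula, using resolutions by representables and reducing to the algebraic Künneth formula for complexes of vector spaces.

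First I would establish the degree-zero isomorphism: for any $F\in\Mdd\K$, $G\in\Mdd\LL$, $H\in\K\Md$, $K\in\LL\Md$ there is a natural $k$-linear isomorphism
\[(F\boxtimes G)\otimes_{\K\otimes\LL}(H\boxtimes K)\;\xrightarrow{\simeq}\;(F\otimes_\K H)\otimes(G\otimes_\LL K).\]
This is a formal consequence of the universal property of tensor products over a category: both sides are $k$-bilinear in $(F\boxtimes G, H\boxtimes K)$, commute with colimits in each variable, and by the Yoneda isomorphisms \eqref{eqn-Yoneda-tens} they agree on pairs of representables $(h^x_{\K^\op}\boxtimes h^y_{\LL^\op},\, h^{x'}_\K\boxtimes h^{y'}_\LL)$, both sides reducing to $\K(x',x)\otimes\LL(y',y)$. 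A standard resolution argument by representables then promotes the isomorphism to all pairs.

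Next I would take a projective resolution $P_\bullet\to F$ in $\Mdd\K$ and a projective resolution $Q_\bullet\to G$ in $\Mdd\LL$, by coproducts of standard projectives. Since $h^x_{\K^\op}\boxtimes h^y_{\LL^\op}\simeq h^{(x,y)}_{(\K\otimes\LL)^\op}$, the total complex $\Tot(P_\bullet\boxtimes Q_\bullet)$ is a complex of projectives in $\Mdd(\K\otimes\LL)$. To see it resolves $F\boxtimes G$, evaluate at any object $(x,y)$: the complex of $k$-vector spaces obtained is $\Tot(P_\bullet(x)\otimes Q_\bullet(y))$, and since $k$ is a field every $k$-module is flat, so the algebraic Künneth formula gives
\[H_*\bigl(\Tot(P_\bullet(x)\otimes Q_\bullet(y))\bigr)\simeq H_*(P_\bullet(x))\otimes H_*(Q_\bullet(y)),\]
which is concentrated in degree $0$ with value $F(x)\otimes G(y)=(F\boxtimes G)(x,y)$.

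Applying $-\otimes_{\K\otimes\LL}(H\boxtimes K)$ to this projective resolution and invoking the degree-zero isomorphism of the first step bidegreewise, I obtain
\[\Tor_*^{\K\otimes\LL}(F\boxtimes G,H\boxtimes K)\;\simeq\;H_*\bigl(\Tot((P_\bullet\otimes_\K H)\otimes(Q_\bullet\otimes_\LL K))\bigr).\]
The two complexes $P_\bullet\otimes_\K H$ and $Q_\bullet\otimes_\LL K$ are complexes of $k$-vector spaces, so a second application of the algebraic Künneth formula yields
\[H_*(P_\bullet\otimes_\K H)\otimes H_*(Q_\bullet\otimes_\LL K)\simeq \Tor_*^\K(F,H)\otimes\Tor_*^\LL(G,K),\]
and naturality in all four variables is visible from the construction. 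The only genuinely delicate point is the verification that $\Tot(P_\bullet\boxtimes Q_\bullet)$ really is a resolution; this is where the field hypothesis is essential, ensuring the flatness needed for the pointwise Künneth computation, and if one later wished to weaken the hypothesis on $k$ one would have to replace this step by a spectral sequence argument.
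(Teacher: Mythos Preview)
Your proposal is correct and follows essentially the same approach as the paper: identify $h^{(x,y)}_{\K\otimes\LL}$ with $h^x_\K\boxtimes h^y_\LL$, use the Yoneda isomorphisms to get the degree-zero identification, show that an external tensor product of projective resolutions is a projective resolution of the external tensor product, and apply the classical K\"unneth formula for complexes of vector spaces. The only cosmetic difference is that the paper resolves $H$ and $K$ rather than $F$ and $G$, and it asserts that $P\boxtimes Q$ is a resolution without spelling out the pointwise K\"unneth check that you (rightly) flag as the place where the field hypothesis enters.
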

\begin{proof}
Since $h_{\K\otimes\LL}^{(x,y)}$ is equal to $h^x_\K\boxtimes h^y_\LL$, there is an isomorphism 
\[(F\boxtimes G)\otimes_{\K\otimes\LL}(h^x_\K\boxtimes h^y_\LL)\simeq F(x)\otimes G(y)\simeq (F\otimes_{\K}h^x_\K)\otimes (G\otimes_\LL h^y_\LL)\;.\]
Tensor products preserve arbitrary direct sums with respect to each variable. So if $P$, resp. $Q$, is a projective resolution of $H$, resp. $K$, the complex $(F\boxtimes G)\otimes_{\K\otimes\LL} (P\boxtimes Q)$ is isomorphic to the complex $(F\otimes_\K P)\otimes (G\otimes_\LL Q)$. Now $P\boxtimes Q$ is a projective resolution of $H\boxtimes K$, hence the result follows from the usual K\"unneth isomorphism for complexes.
\end{proof}
There is a similar result on the level of extension groups. Namely, the tensor product induces a graded morphism:
\begin{align}\Ext^*_\K(F,H)\otimes\Ext^*_\LL(G,K)\to \Ext^*_{\K\otimes\LL}(F\boxtimes G,H\boxtimes K)\;.\label{eq-Kunneth}\end{align}
However $\Hom$ only preserve finite direct sums, so one needs an additional assumption to adapt the proof of proposition \ref{prop-Kunneth-tens} for $\Ext$. One says \cite[Section 2.3]{Pira-Pan} that a functor is \emph{of type ${fp}_\infty$} if it has a projective resolution by finite direct sums of standard projectives (or equivalently if it has a resolution by finitely generated projectives). With this additional hypothesis, one easily proves:
\begin{pr}[K\"unneth]\label{prop-Kunneth-ext}
Let $k$ be a field. Assume that $F$ and $G$ are of type ${fp}_\infty$, or assume that $F$ is of type ${fp}_\infty$ and that $H$ has only finite-dimensional values. Then morphism \eqref{eq-Kunneth} is an isomorphism. 
\end{pr}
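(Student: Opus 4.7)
The plan is to adapt the proof of proposition \ref{prop-Kunneth-tens}, replacing tensor products by $\Hom$-modules and using the $fp_\infty$ hypothesis to control how $\Hom$ interacts with direct sums. I will choose a resolution $P\to F$ by finite direct sums of standard projectives (available by the $fp_\infty$ assumption), together with a resolution $Q\to G$ which is also by finite direct sums of standard projectives in the first case, or is an arbitrary projective resolution in the second case. Since $h^x_\K\boxtimes h^y_\LL = h^{(x,y)}_{\K\otimes\LL}$, every external tensor product of standard projectives is standard projective in $\K\otimes\LL$, so each $P^i\boxtimes Q^j$ is projective. To see that $P\boxtimes Q\to F\boxtimes G$ is a resolution, I evaluate at an object $(x,y)$: the total complex $(P\boxtimes Q)(x,y)$ is the tensor product of the resolutions $P(x)\to F(x)$ and $Q(y)\to G(y)$ of $k$-vector spaces, hence resolves $F(x)\otimes G(y)$ by the classical K\"unneth formula over a field.

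Next I would identify the double complex of morphisms with a tensor product of $\Hom$-complexes, via the natural map
\[\Hom_\K(P^i,H)\otimes \Hom_\LL(Q^j,K) \longrightarrow \Hom_{\K\otimes\LL}(P^i\boxtimes Q^j, H\boxtimes K)\;.\]
The Yoneda isomorphisms give $\Hom_{\K\otimes\LL}(h^x_\K\boxtimes h^y_\LL, H\boxtimes K) = H(x)\otimes K(y)$, and in the first case the finiteness of the direct sum decompositions of $P^i$ and $Q^j$, combined with distributivity of tensor product over direct sums, makes this map an isomorphism term by term.

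For the second case, $Q^j$ may be an infinite direct sum of standard projectives, so I would reduce to proving that
\[\Hom_{\K\otimes\LL}(h^x_\K\boxtimes Q^j, H\boxtimes K) \;=\; \Hom_\LL(Q^j, H(x)\otimes K)\]
is naturally isomorphic to $H(x)\otimes \Hom_\LL(Q^j, K)$. This is exactly where the finite-dimensionality of $H(x)$ enters: when $H(x)$ is finite-dimensional, $H(x)\otimes K$ is a finite direct sum of copies of $K$, and $\Hom_\LL(Q^j,-)$ commutes with such finite direct sums. Combined with the finite decomposition of $P^i$, this yields the desired isomorphism of double complexes.

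Finally, once the identification of total complexes $\Hom_{\K\otimes\LL}(P\boxtimes Q, H\boxtimes K)\simeq \Hom_\K(P,H)\otimes \Hom_\LL(Q,K)$ is established, I conclude by the K\"unneth formula for cohomology of complexes of $k$-vector spaces, which yields the announced isomorphism on passing to $\Ext$. The main (and rather mild) obstacle is the second case: one must resist the temptation to use a finitely generated resolution of $G$ -- none exists in general -- and instead exploit the finite-dimensional values of $H$ to commute $\Hom$ past the arbitrary direct sums composing $Q^j$.
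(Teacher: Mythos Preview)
Your proposal is correct and follows precisely the approach the paper indicates: the paper does not actually write out a proof but says ``with this additional hypothesis, one easily proves'' after explaining that the obstacle is that $\Hom$ only preserves finite direct sums, and your argument is exactly the adaptation of the $\Tor$ proof they have in mind. The partial Yoneda isomorphism $\Hom_{\K\otimes\LL}(h^x_\K\boxtimes Q^j, H\boxtimes K)\simeq \Hom_\LL(Q^j, H(x)\otimes K)$ and the use of finite-dimensionality of $H(x)$ to pull the factor $H(x)$ out are the right details to fill in for the second case.
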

\begin{rk}\label{rk-pfinfty}
Being of type ${fp}_\infty$ is a rather strong property, which is usually very hard to check in an elementary way on a given functor. When $\K=\Proj_{\Fq}$, one may prove such properties by using Schwartz's ${fp}_\infty$-lemma \cite[Prop 10.1]{FLS}, or by using that $k[\Proj_\Fq]\Md$ is locally noetherian \cite{SamSn}. When $\K=k[\A]$ over more general additive categories $\A$, local noetherianity often fails but one can use generalizations of Schwartz's ${fp}_\infty$-lemma given in \cite{DTSchwartz}.
\end{rk}
\subsection{$\aleph$-additivization and exact Kan extensions}\label{subsec-completion}
Let $\aleph$ denote a cardinal.
A $k$-category is called \emph{$\aleph$-additive} if it has all \emph{$\aleph$-direct sums}, that is, if all direct sums indexed by sets of cardinality less or equal to $\aleph$ exist. A $k$-functor between two  such categories is called \emph{$\aleph$-additive} if it preserves $\aleph$-direct sums. The following construction will be used in sections \ref{sec-add} and \ref{sec-gen-prelim-comparison}.
\begin{defi}\label{def-kappa-completion}
The \emph{$\aleph$-additivization} of a small $k$-category $\K$ is the category  $\K^\aleph$ whose objects are the families of objects of $\K$ indexed by sets of cardinality less or equal to $\aleph$. Such an object is denoted by a formal direct sum $\bigoplus_{i\in\I}x_i$. The morphisms $f:\bigoplus_{j\in\mathcal{J}}x_j\to \bigoplus_{i\in\mathcal{I}}y_i$ are the `matrices' $[f_{ij}]_{(i,j)\in \I\times\mathcal{J}}$ such that each $f_{ij}\in \K(x_j,y_i)$, and such that for all $j_0$ only a finite number of morphisms $f_{ij_0}$ are nonzero. The composition of morphisms is given by matrix multiplication.

We identify $\K$ with the full $k$-subcategory of $\K^{\aleph}$ on the formal direct sums with only one object. 
\end{defi}

The definition of morphisms in $\K^\aleph$ shows that the formal direct sum $\bigoplus_{i\in\I}x_i$ is the categorical coproduct of the $x_i$ in $\K^\aleph$, and also the categorical product if $\I$ is finite, which justifies the direct sum notation.
The next elementary proposition gathers the basic properties of $\aleph$-additivization.
\begin{pr}\label{pr-elt-prop}
The $k$-category $\K^\aleph$ is small and $\aleph$-additive. Moreover:
\begin{enumerate}
\item\label{item-n1} An object $x$ of $\K^\aleph$ is isomorphic to a finite direct sum of objects of $\K$ if and only if the functor $\K^\aleph(x,-):\K^\aleph\to k\Md$ is $\aleph$-additive. 
\item\label{item-n2} Every $k$-functor $F:\K\to \LL$ whose codomain is a $\aleph$-additive extends to a unique (up to isomorphism) $\aleph$-additive functor $F^\aleph:\K^\aleph\to \LL$.
\end{enumerate}
\end{pr}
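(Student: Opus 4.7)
The plan is to verify the four claims (smallness, $\aleph$-additivity, item (1), item (2)) more or less in order, since each mostly rests on unpacking the matrix description of morphisms.

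For smallness and $\aleph$-additivity I would first observe that every object of $\K^\aleph$ is isomorphic (via reindexing, which gives a canonical matrix isomorphism permuting the entries) to one whose index set is a subset of a fixed set of cardinality $\aleph$. Since $\K$ is small, the isomorphism classes of objects of $\K^\aleph$ are then indexed by a set. For $\aleph$-additivity, given a family $(x_j)_{j\in J}$ of objects of $\K^\aleph$ with $x_j=\bigoplus_{i\in I_j}x_{ji}$ and $|J|\le \aleph$, I would form the disjoint union $\coprod_j I_j$, whose cardinality is still $\le\aleph$ (using $\aleph\cdot\aleph=\aleph$ for infinite $\aleph$), and define $\bigoplus_j x_j:=\bigoplus_{(j,i)\in\coprod_j I_j}x_{ji}$. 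The universal property follows directly from the matrix definition of morphisms: a morphism out of this object is the same data as a family of morphisms out of each $x_j$, and the finiteness condition on columns corresponds exactly to the finite-support condition defining matrix morphisms in $\K^\aleph$.

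For item (1), the easy direction uses that $\K^\aleph(y,-)$ is $\aleph$-additive for any $y\in\K$, because morphisms $y\to\bigoplus_{j\in J}z_j$ in $\K^\aleph$ are column vectors with finitely many nonzero entries, which is precisely $\bigoplus_{j\in J}\K(y,z_j)$. Since finite products coincide with finite direct sums in $k\Md$ and preserve $\aleph$-direct sums, $\K^\aleph(y_1\oplus\cdots\oplus y_n,-)\simeq\prod_i\K^\aleph(y_i,-)$ is also $\aleph$-additive. The harder converse direction, which I expect to be the main obstacle, proceeds as follows. Write $x=\bigoplus_{i\in I}x_i$ with $|I|\le\aleph$ and suppose $\K^\aleph(x,-)$ is $\aleph$-additive. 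Evaluating on the family $(x_i)$ gives a canonical isomorphism
\[
\bigoplus_{i\in I}\K^\aleph(x,x_i)\xrightarrow{\;\simeq\;}\K^\aleph(x,x),\qquad (f_i)_i\mapsto\sum_i\iota_i f_i,
\]
where $\iota_i:x_i\to x$ is the canonical inclusion. Writing $\id_x$ as an element of the right-hand side shows that $\id_x=\sum_{i\in I_0}\iota_i f_i$ for some finite $I_0\subset I$ and some $f_i:x\to x_i$. Post-composing with the projection $p_j:x\to x_j$ for $j\notin I_0$ and using $p_j\iota_i=\delta_{ij}\id_{x_i}$ yields $p_j=0$; but the matrix of $p_j$ has $\id_{x_j}$ in position $j$, hence $x_j=0$. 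Thus $x\simeq\bigoplus_{i\in I_0}x_i$ is a finite direct sum of objects of $\K$.

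For item (2), I would define $F^\aleph$ on objects by $F^\aleph(\bigoplus_{i\in I}x_i):=\bigoplus_{i\in I}F(x_i)$, using that $\LL$ is $\aleph$-additive and $|I|\le\aleph$, and on morphisms by sending the matrix $[f_{ij}]$ to the matrix $[F(f_{ij})]$ interpreted via the universal property of coproducts in $\LL$. Functoriality follows from $k$-linearity of composition, and $\aleph$-additivity is built into the definition. Uniqueness up to isomorphism is forced: any $\aleph$-additive extension $G:\K^\aleph\to\LL$ must carry the coproduct $\bigoplus_{i\in I}x_i$ in $\K^\aleph$ to a coproduct of the $G(x_i)=F(x_i)$ in $\LL$, hence to something canonically isomorphic to $F^\aleph(\bigoplus x_i)$, and its action on morphisms is then determined by the universal property.
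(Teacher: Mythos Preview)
Your proof is correct and follows essentially the same approach as the paper's. The paper treats smallness and $\aleph$-additivity as evident and jumps straight to items (1) and (2); your explicit treatment of these (via reindexing and disjoint unions of index sets) is a welcome addition. For item (1), the paper argues more tersely that $\id_x$ factors through a finite sub-sum $\bigoplus_{i\in I_0}x_i$, making $x$ a retract of it; your extra step showing $p_j=0$ (hence $x_j=0$) for $j\notin I_0$ makes explicit why this retract is actually an isomorphism onto the finite sub-sum. For item (2), the paper phrases uniqueness via the unique extension of natural transformations between $\aleph$-additive functors, whereas you argue directly from the universal property of coproducts; these are equivalent formulations of the same idea.
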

\begin{proof}
Let us prove \eqref{item-n1}. The objects of $\K$ (hence their finite direct sums) are $\aleph$-additive by the definition of morphisms in $\K^\aleph$. Conversely, if $x=\bigoplus x_i$ is an object such that $\K^\aleph(x,-)$ is $\aleph$-additive,  the isomorphism $\K^\aleph(x,\bigoplus x_i)\simeq\bigoplus \K^\aleph(x,x_i)$ shows that $\id_x$ factors through a finite direct sum of the $x_i$, hence $x$ is isomorphic to a finite direct sum of objects of $\K$. Now we prove \eqref{item-n2}. For all objects $x=\bigoplus x_i$ we choose a direct sum $\bigoplus F(x_i)$ in $\LL$. The assignment $F^\aleph(x)=\bigoplus F(x_i)$ defines an $\aleph$-additive functor such that $F^\aleph\circ \iota = F$. Uniqueness follows from the fact that given any pair of $\aleph$-additive functors $F',G':\K^\aleph\to \LL$, every natural transformation $\theta$ between their restrictions to $\K$ extends uniquely into a natural transformation $\theta':F'\to G'$. Indeed it suffices to define $\theta'_{\bigoplus x_i}$ as the unique morphism fitting in the commutative square in which the vertical arrows are the canonical isomorphisms:
\[
\begin{tikzcd}
F'(\bigoplus x_i) 
\ar{r}{\theta'_{\bigoplus x_i}} 
& G'(\bigoplus x_i)\\
\bigoplus F(x_i)
\ar{u}{\simeq}\ar{r}{\bigoplus \theta_{x_i}}
&  \bigoplus G(x_i)\ar{u}{\simeq}
\end{tikzcd}\;.
\]
\end{proof}

The following examples are easily checked by using the universal properties of the categories in play. 
\begin{exs}
\begin{enumerate}[1.] 
\item Let $\mathbf{F}_k$ denote the full subcategory of $k\Md$ on the free $k$-modules. Its $\aleph$-additivization $\mathbf{F}_k^\aleph$ is equivalent to $\mathbf{F}_k$ if $\aleph$ is finite, and to the full subcategory of $k\Md$ on free modules of rank less or equal to $\aleph$ if $\aleph$ is infinite.
\item We have canonical equivalences of categories $k[\A^\aleph]\simeq k[\A]^\aleph$ and ${_k\!}(\A^\aleph)\simeq (\kA)^\aleph$.
\end{enumerate}
\end{exs}

\begin{pr}\label{pr-Eadjoint}
Assume that for all pairs $(x,y)$ of objects of $\K$, the $k$-module $\K(x,y)$ belongs to $\mathbf{F}_k^\aleph$. Then for all objects $x$ of $\K$, the functor
$\K^\aleph(x,-):\K^\aleph\to \mathbf{F}_k^\aleph$ has a left adjoint $-\otimes x:\mathbf{F}_k^\aleph\to \K^\aleph$.
\end{pr}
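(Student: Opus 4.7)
The plan is to build the candidate left adjoint by sending the free module on a set $I$ of cardinality at most $\aleph$ to the formal coproduct $\bigoplus_{i\in I} x$ in $\K^\aleph$, and then to verify the adjunction by reducing both sides to universal properties of coproducts.

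First I would check that the functor $\K^\aleph(x,-)$ actually lands in $\mathbf{F}_k^\aleph$ -- this is the only step using the hypothesis. For $y=\bigoplus_{j\in J} y_j$ in $\K^\aleph$ with $|J|\le\aleph$, the definition of morphisms yields $\K^\aleph(x,y)=\bigoplus_{j\in J}\K(x,y_j)$ as $k$-modules, because the column-finiteness constraint is automatic when the source has a single summand. Each $\K(x,y_j)$ lies in $\mathbf{F}_k^\aleph$ by assumption, and $\mathbf{F}_k^\aleph$ is closed under $\aleph$-indexed coproducts (a coproduct of free modules is free, and the rank bound is preserved since $\aleph\cdot\aleph=\aleph$ for infinite $\aleph$, and trivially in the finite case), so the sum lies in $\mathbf{F}_k^\aleph$.

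Next I would define $-\otimes x$ on the equivalent full subcategory of $\mathbf{F}_k^\aleph$ spanned by the standard free modules $k^{(I)}$ with $|I|\le\aleph$, by setting $k^{(I)}\otimes x:=\bigoplus_{i\in I} x$. Because $\mathbf{F}_k^\aleph$ is a full subcategory of $k\Md$, a morphism $f:k^{(I)}\to k^{(J)}$ is just a $k$-linear map, encoded by a matrix $[a_{ji}]_{(j,i)\in J\times I}$ of scalars in which each column $i$ has only finitely many nonzero entries. I would then define $f\otimes x$ as the matrix $[a_{ji}\cdot\id_x]$, which satisfies the same column-finiteness condition and is therefore a legitimate morphism in $\K^\aleph$; functoriality is a routine matrix computation.

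To establish the adjunction
\[\Hom_{\K^\aleph}(k^{(I)}\otimes x,y)\simeq \Hom_{\mathbf{F}_k^\aleph}(k^{(I)},\K^\aleph(x,y)),\]
I would invoke the universal property of coproducts on each side. Since $\bigoplus_{i\in I} x$ is the categorical coproduct of $I$ copies of $x$ in $\K^\aleph$ (as remarked after Definition \ref{def-kappa-completion}), the left-hand side identifies with $\K^\aleph(x,y)^I$. Symmetrically, $k^{(I)}$ is the coproduct of $I$ copies of $k$ in $\mathbf{F}_k^\aleph$, and the fullness of $\mathbf{F}_k^\aleph$ in $k\Md$ together with the evaluation-at-$1$ isomorphism $\Hom_k(k,N)\simeq N$ for $N\in\mathbf{F}_k^\aleph$ identifies the right-hand side also with $\K^\aleph(x,y)^I$. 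The resulting bijection is natural in $y$ by construction, and naturality in the variable $k^{(I)}$ is checked directly against the matrix descriptions. The main piece of bookkeeping, and the only mildly delicate point, is matching the column-finiteness condition defining morphisms in $\K^\aleph$ with the finite-support description of $k$-linear maps between free modules; once this is in place, the adjunction is essentially formal.
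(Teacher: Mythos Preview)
Your proof is correct and follows essentially the same construction as the paper: the left adjoint sends a free module with basis indexed by $I$ to the formal coproduct $\bigoplus_{i\in I} x$ in $\K^\aleph$. The only difference is in how the adjunction is verified. You establish the hom-set bijection directly by identifying both sides with the product $\K^\aleph(x,y)^I$ via the universal property of coproducts, then check naturality by hand; the paper instead exhibits the unit $v\to\K^\aleph(x,v\otimes x)$ (sending each basis vector to the corresponding coprojection) as an initial object of the comma category $v\downarrow\K^\aleph(x,-)$ and invokes the standard criterion (Riehl, Lemma~4.6.1). The paper's route is marginally slicker in that it bypasses the explicit definition of $-\otimes x$ on morphisms and the naturality bookkeeping you flag at the end, but the content is the same.
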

\begin{proof}
Let $v$ be a free $k$-module with basis $(b_i)_{i\in I}$. We let $v\otimes x=\bigoplus_{i\in I}x_i$ where each $x_i$ denotes a copy of $x$. The $k$-linear map $v\mapsto \K^\aleph(x,v\otimes x)$, sending $b_i$ to the canonical inclusion of $x$ as the factor $x_i$ of $v\otimes x$ is initial in $v\downarrow \K^\aleph(x,-)$, and the result follows from 
\cite[Lm 4.6.1]{Riehl}. 
\end{proof}

\begin{pr}\label{pr-Kexact}
Let $x$ be an object of $\K^\aleph$ decomposed as a direct sum $x=\bigoplus_{i\in I} x_i$ of objects of $\K$. Let $\F$ denote the poset of finite direct summands $\bigoplus_{i\in J}x_i$ ordered by the canonical inclusions. The left Kan extension of a $k$-functor $F:\K\to k\Md$ along the inclusion $\iota:\K\hookrightarrow \K^\aleph$ is given by 
\[\Lan_\iota F(x)=\colim_{y\in \F}F(y)\;.\]
Thus the functor $\Lan_\iota: \K\Md\to \K^\aleph\Md$ is exact and restriction along $\iota$ yields isomorphisms
\begin{align*}
& \Ext^*_{\K}(F,\iota^* G)\simeq \Ext^*_{\K^\aleph}(\Lan_\iota F,G)\;,
& \Tor_*^{\K}(\iota^* G,F)\simeq \Tor_*^{\K^\aleph}(G,\Lan_\iota F)\;.
\end{align*}
\end{pr}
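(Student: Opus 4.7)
The plan is to define $\Lan_\iota F$ explicitly using the canonical decomposition of objects in $\K^\aleph$, and then derive each stated property in turn. Concretely, I define $\Lan_\iota F:\K^\aleph\to k\Md$ on objects by
\[\Lan_\iota F\bigl(\bigoplus_{i\in I}x_i\bigr)=\bigoplus_{i\in I}F(x_i)\;,\]
and on morphisms by sending a matrix $[f_{ij}]$ to the matrix $[F(f_{ij})]$. This is a well-defined $k$-functor because the finite-support condition on the columns of $[f_{ij}]$ transfers to $[F(f_{ij})]$. The colimit description $\Lan_\iota F(x)=\colim_{y\in \F}F(y)$ then follows immediately: extending $F$ to $\F$ by $F(\bigoplus_{i\in J}x_i):=\bigoplus_{i\in J}F(x_i)$ (the only reasonable choice), the infinite direct sum $\bigoplus_{i\in I}F(x_i)$ is the filtered colimit of its finite sub-direct-sums indexed by finite $J\subseteq I$.

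Second, to confirm that this is actually the left Kan extension, I would verify the adjunction $\Hom_{\K^\aleph}(\Lan_\iota F,G)\simeq \Hom_\K(F,\iota^*G)$ directly. Given $\eta:F\to\iota^*G$, define $\theta:\Lan_\iota F\to G$ by $\theta_{\bigoplus x_i}((t_i)_i)=\sum_i G(\sigma_i)\eta_{x_i}(t_i)$, where $\sigma_i:x_i\hookrightarrow\bigoplus x_i$ is the canonical inclusion; this is a finite sum by the support condition on $(t_i)_i$. Naturality of $\theta$ with respect to a morphism $f=[f_{ba}]:\bigoplus_a x_a\to\bigoplus_b y_b$ reduces to the identity $f\circ\sigma_a=\sum_b \sigma_b\circ f_{ba}$ in the morphism $k$-module $\K^\aleph(x_a,\bigoplus y_b)$, which is a finite sum; applying the $k$-functor $G$ and combining with the naturality of $\eta$ with respect to each $f_{ba}$ in $\K$ gives the required compatibility. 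The inverse assignment sends $\theta$ to its restriction to $\K\subset\K^\aleph$, and the two constructions are evidently mutually inverse.

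Third, exactness of $\Lan_\iota$ is immediate from the objectwise formula, since exactness in both $\K\Md$ and $\K^\aleph\Md$ is checked objectwise and arbitrary direct sums preserve exactness in $k\Md$. Moreover, $\Lan_\iota$ sends standard projectives to standard projectives: for each object $x$ of $\K$, Yoneda together with the just-established adjunction gives, for any $G$,
\[\Hom_{\K^\aleph}(\Lan_\iota h^x_\K,G)\simeq\Hom_\K(h^x_\K,\iota^*G)\simeq G(x)\simeq \Hom_{\K^\aleph}(h^x_{\K^\aleph},G)\;,\]
so $\Lan_\iota h^x_\K\simeq h^x_{\K^\aleph}$. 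Arbitrary projectives in $\K\Md$ are retracts of direct sums of standard projectives, hence they are preserved as well, since $\Lan_\iota$ commutes with direct sums.

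Finally, to obtain the derived isomorphisms, take a projective resolution $P_\bullet\to F$ in $\K\Md$. By exactness and preservation of projectives, $\Lan_\iota P_\bullet\to\Lan_\iota F$ is a projective resolution in $\K^\aleph\Md$. The $\Ext$-isomorphism follows by applying the Hom-adjunction levelwise and passing to cohomology. For $\Tor$, I would first establish the degree-zero tensor identity
\[\iota^*G\otimes_\K F'\simeq G\otimes_{\K^\aleph}\Lan_\iota F'\;,\]
either directly from the coend formula or by applying $\Hom_k(-,M)$ to both sides and using the Hom-adjunction plus Yoneda (so as to deduce the isomorphism in $k\Md$ from a natural isomorphism of their duals for varying injective $M$). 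Deriving this via the resolution $\Lan_\iota P_\bullet$ yields the $\Tor$-isomorphism. The only non-routine step in the entire proof is the naturality verification for $\theta$; once that matrix-theoretic check is done, all remaining items are standard derived-functor bookkeeping.
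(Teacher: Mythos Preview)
Your proof is correct and arrives at the same conclusions, but it proceeds along a more explicit path than the paper. The paper's argument is entirely conceptual: it invokes the pointwise colimit formula for left Kan extensions, observes that $\F$ is filtered and cofinal in the comma category $\K\downarrow x$ to obtain the colimit description, deduces exactness from exactness of filtered colimits of $k$-modules, and gets preservation of projectives in one line from the fact that $\Lan_\iota$ has an exact right adjoint (namely $\iota^*$). Your approach instead builds $\Lan_\iota F$ by hand via the matrix description of $\K^\aleph$, verifies the adjunction directly, and checks explicitly that $\Lan_\iota h^x_\K\simeq h^x_{\K^\aleph}$. What you gain is a self-contained argument that does not appeal to the general Kan-extension machinery and that makes the direct-sum formula completely transparent; this also sidesteps the slightly delicate point of regarding the finite sub-direct-sums as objects of the comma category when $\K$ itself may lack finite biproducts. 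What the paper's route buys is brevity: once one is willing to quote cofinality and the exact-right-adjoint criterion, the whole proof is three sentences. Your derivation of the $\Ext$ and $\Tor$ isomorphisms from exactness plus preservation of projectives is the same standard bookkeeping that the paper leaves implicit.
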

\begin{proof}
Since $\F$ is cofinal in the comma category $\K\downarrow x$, we have $\Lan_\iota F(x)=\colim_{y\in \F}F(y)$.
Since $\F$ is filtered, and since filtered colimits of $k$-modules are exact, this implies that $\Lan_\iota$ is exact. Moreover, $\Lan_\iota$ preserves projectives (because it has an exact right adjoint). This implies the $\Ext$ and $\Tor$ isomorphisms.
\end{proof}

\begin{rk}
We shall typically use the material of this section as follows.
One wants to compute $\Ext_\K^* (h^*F,G)$ where $F:k\Md\to k\Md$ preserves colimits and $h=\Hom_\K(x,-):\K\to \mathbf{F}_k$. One would like to use proposition \ref{pr-iso-Ext-explicit} to say that these extensions are isomorphic to $\Ext_{\mathbf{F}_k}^* (F,t^*G)$, where $t=x\otimes_k-:\mathbf{F}_k\to \K$. However we meet the obstruction that $t$ may not exist because the category $\K$ is too small. To bypass this difficulty, one enlarges $\K$. If $\aleph$ is a large enough cardinal, by proposition \ref{pr-Kexact} $\Ext_\K (h^*F,G)$ is isomorphic to $\Ext^*_{\K^\aleph}({h'}^*F,G')$ where $G'$ is any extension of $G$ to $K^\aleph$, and $h'=\K^\aleph(x,-)$. Over $\K^\aleph$, the desired adjoint $t$ exists by proposition \ref{pr-Eadjoint} hence the latter is isomorphic to  $\Ext_{\mathbf{F}_k^\aleph}^* (F,t^*G')$. And finally we preserve $\Ext$ if we restrict to $\mathbf{F}_k$ by proposition \ref{pr-Kexact} again. 
This technique will appear in sections \ref{sec-add} and \ref{sec-gen-prelim-comparison}.
\end{rk}

\section{Simplicial techniques}\label{sec-simplicial}

\subsection{Simplicial objects in abelian categories}\label{subsec-simpl-ab}
We denote by $s\M$ the category of simplicial objects in an abelian category $\M$. 
The Dold-Kan correspondence \cite[Cor 2.3]{GoerssJardine} or \cite[Section 8.4]{Weibel} yields two mutually inverse equivalences of categories
$$\mathbf{N}:s\M\rightleftarrows \mathrm{Ch}_{\ge 0}(\M):\mathbf{K}$$
between the category $s\M$ of simplicial objects in $\M$ and the category of non-negatively graded chain complexes in $\M$. The functor $\mathbf{N}$ is the functor of normalized chains, and the functor $\mathbf{K}$ is the Kan functor. 
Two simplicial morphisms $f,g:X\to Y$ are \emph{homotopic} if the morphisms of chain complexes $\mathbf{N}f$ and $\mathbf{N}g$ are homotopic.  
The Kan functor preserves homotopies.

The \emph{homotopy groups} $\pi_*X$ of an object $X$ of $s\M$ are defined as the homology groups of the normalized chains $\mathbf{N} X$. 

The category $\M$ is called \emph{concrete} if it is equipped with a faithful functor to the category of sets. If $\M$ is concrete, every object $X$ of $s\M$ has an underlying simplicial set and we let $H^\simpl_*(X;k)$ denote the \emph{homology with coefficients in $k$} of the underlying simplicial set of $X$, that is, $H^\simpl_*(X;k)=\pi_*k[X]$.
\begin{rk}\label{rk-ab}
If the faithful functor $\M\to \mathbf{Set}$ is the composition of an exact functor to the category of abelian groups $\M\to \mathbf{Ab}$ and the usual forgetful functor $\mathbf{Ab}\to \mathbf{Set}$, then the homotopy groups of $X$ coincide with the homotopy groups of its underlying simplicial set by \cite[Cor 2.7]{GoerssJardine} or \cite[Thm 8.3.8]{Weibel}.
\end{rk}

Let $e$ be a non-negative integer. An object $X$ of $s\M$ is \emph{$e$-connected} if $\pi_i X=0$ for all $i\le e$. A morphism $f:X\to Y$ is \emph{$e$-connected} if $\pi_i f: \pi_i X\to \pi_i Y$ is bijective for all $i< e$ and surjective for $i=e$. (Thus $f$ is $e$-connected if and only if its homotopy cofiber is $e$-connected).
A morphism $f:X\to Y$ is a \emph{weak equivalence} if it is $e$-connected for all $e\ge 0$.

A \emph{simplicial projective resolution} of an object $X$ of $s\M$ is a weak equivalence $f:P\to X$ where $P$ is degreewise projective in $\M$. As usual, we identify $\M$ with the full subcategory of $s\M$ on the constant simplicial objects. Hence a simplicial projective resolution of an object $X$ of $\M$ is a degreewise projective simplicial object $P$ such that $\pi_iP=0$ for $i>0$, equipped with an isomorphism $\pi_0P\simeq X$. If $\M$ has enough projectives then by the Dold-Kan equivalence every object of $s\M$ has a simplicial resolution, and every morphism between simplicial objects can be lifted to a morphism of simplicial resolutions, unique up to homotopy.

\subsection{Eilenberg MacLane spaces and Hurewicz theorems}
For all abelian groups $A$ and all $n\ge 0$, we denote by $K(A,n)$ any simplicial free abelian group such that $\pi_iK(A,n)=0$ for $i\ne n$ and $\pi_n K(A,n)\simeq A$. Such a simplicial free abelian group is called an \emph{Eilenberg-MacLane space} and is unique up to homotopy equivalence. The study of simplicial abelian groups often reduces to that of Eilenberg-MacLane spaces by the following classical lemma, see e.g. \cite[Prop 2.20]{GoerssJardine}. We impose that Eilenberg-MacLane spaces are degreewise free abelian groups by definition in order to have genuine maps rather than zig-zags in this lemma.
\begin{lm}\label{lm-decomp}
For all simplicial abelian groups $A$, there is a weak equivalence (unique up to homotopy)
\[\prod_{i\ge 0} K(\pi_i A,i)\to A\;.\]
Moreover for all morphisms of simplicial abelian groups $f:A\to B$, let $K(\pi_i f,i): K(\pi_i A,i)\to K(\pi_i B,i)$ denote a lift of $\pi_i f: \pi_iA\to \pi_i B$ to the level of the simplicial resolutions. Then the following diagram commutes up to homotopy:
\[
\begin{tikzcd}[column sep=huge]
\prod_{i\ge 0} K(\pi_i A,i)\ar{d}\ar{r}{\prod K(\pi_i f,i)}& \prod_{i\ge 0} K(\pi_i B,i)\ar{d}\\
A\ar{r}{f}& B
\end{tikzcd}\;.
\]
\end{lm}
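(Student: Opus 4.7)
The plan is to transport the statement across the Dold--Kan correspondence to a statement about chain complexes of abelian groups, where it follows from the formality of $D(\mathbb{Z})$. Since $\mathbf{N}$ is an equivalence of categories, it preserves products, so constructing a weak equivalence $\prod_{i\ge 0} K(\pi_i A,i)\to A$ is equivalent to constructing a quasi-isomorphism of chain complexes $\prod_{i\ge 0} \mathbf{N}K(\pi_i A,i)\to \mathbf{N}A$. Both sides have the same homology (namely $\pi_i A$ in degree $i$), so the content is to realize this abstract matching by an actual chain map.

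I would first choose each $K(\pi_i A,i)$ so that its normalized chain complex $\mathbf{N}K(\pi_i A,i)$ is concentrated in degrees $\{i,i+1\}$: this is possible because $\mathbb{Z}$ has global dimension one, so every abelian group admits a length-one free resolution, and applying $\mathbf{K}$ to such a resolution shifted into degree $i$ produces a model of $K(\pi_i A,i)$ with this property. With this choice, in each fixed chain degree only finitely many factors of the product $\prod_{i\ge 0}\mathbf{N}K(\pi_i A,i)$ contribute, so the product coincides termwise with the direct sum. Next I would establish the formality lemma: every non-negatively graded chain complex $C$ of abelian groups is isomorphic in $D(\mathbb{Z})$ to $\bigoplus_n H_n(C)[n]$. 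The argument is inductive on the good truncation tower, using that the boundary in the distinguished triangle $\tau_{\ge n+1}C \to \tau_{\ge n}C \to H_n(C)[n] \to \tau_{\ge n+1}C[1]$ lies in a group filtered by pieces of the form $\mathrm{Ext}^{\ge 2}_{\mathbb{Z}}(H_n(C),H_m(C))$, all of which vanish since $\mathbb{Z}$ is hereditary. Applying this formality to $\mathbf{N}A$ and lifting the resulting derived isomorphism to a chain-level quasi-isomorphism---which is possible since the source is a bounded-below complex of free abelian groups, hence cofibrant---gives, via $\mathbf{K}$, the desired weak equivalence. The uniqueness up to homotopy follows from the same cofibrancy together with the standard fact that two chain maps from a cofibrant source representing the same class in $D(\mathbb{Z})$ are chain-homotopic.

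For the naturality statement, given $f:A\to B$, both composites in the square produce chain maps $\prod \mathbf{N}K(\pi_i A,i)\to \mathbf{N}B$ inducing $\pi_\bullet f$ on homology. A careful analysis in $D(\mathbb{Z})$---controlling the off-diagonal contributions that a priori lie in $\mathrm{Ext}^1$-groups between the homotopy modules, and using the freedom to adjust the chosen weak equivalences within their homotopy classes---shows that the two composites coincide in the derived category. Since the source is cofibrant, derived-category equality implies chain homotopy, which via $\mathbf{K}$ yields the claimed simplicial homotopy.

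The main obstacle I anticipate is organizing the formality argument while keeping tight control on the product-versus-coproduct distinction; the trick of choosing length-one resolutions for $\mathbf{N}K(\pi_i A,i)$ ensures that products and sums coincide in each chain degree, which sidesteps the usual cardinality-type subtleties that otherwise arise when Eilenberg--MacLane spaces are modelled by larger complexes. A secondary delicate point is the naturality argument, where one must carefully navigate the $\mathrm{Ext}^1$-obstruction theory in $D(\mathbb{Z})$; but this is ultimately routine given the hereditarity of $\mathbb{Z}$.
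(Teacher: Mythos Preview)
Your argument for the \emph{existence} of the weak equivalence is correct and is the standard one (formality of chain complexes over the hereditary ring $\mathbb{Z}$); this is exactly what the paper's reference to Goerss--Jardine establishes, and the paper offers no proof beyond that citation.

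The gap is in your treatment of the uniqueness and naturality clauses --- and here the problem lies with the lemma itself, not merely with your argument: both clauses are \emph{false} as stated. Take $A=K(\mathbb{Z}/p,0)$ and $B=K(\mathbb{Z}/p,1)$, modelled so that $\mathbf{N}A=(\mathbb{Z}\xrightarrow{p}\mathbb{Z})$ sits in degrees $1,0$ and $\mathbf{N}B$ is the same complex shifted to degrees $2,1$. The chain map $f$ with $f_1=\mathrm{id}_{\mathbb{Z}}$ and $f_0=f_2=0$ represents the generator of $\mathrm{Ext}^1_{\mathbb{Z}}(\mathbb{Z}/p,\mathbb{Z}/p)$ and is not null-homotopic, yet $\pi_i f=0$ for every $i$. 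In the square of the lemma the vertical arrows must be self-equivalences of $A$ and $B$ (hence invertible in the homotopy category), the top arrow $\prod_i K(\pi_i f,i)$ is the zero map, and commutativity up to homotopy would force $f\simeq 0$. The same $\mathrm{Ext}^1$ obstruction defeats uniqueness: for $A=K(\mathbb{Z}/p,0)\times K(\mathbb{Z}/p,1)$ there is an entire $\mathrm{Ext}^1_{\mathbb{Z}}(\mathbb{Z}/p,\mathbb{Z}/p)$-torsor of homotopy classes of weak equivalences $\prod_i K(\pi_iA,i)\to A$. You correctly sensed the danger (the ``off-diagonal $\mathrm{Ext}^1$-contributions''), but your proposed remedy --- adjusting the chosen weak equivalences within their homotopy classes --- cannot kill a nonzero element of $[A,B]$ once the vertical maps are invertible. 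For the paper's purposes the overstatement is harmless: the only place the naturality clause is invoked is the relative Hurewicz theorem, which is classical and provable directly (e.g.\ via the cofibre of $f$ and the absolute statement) without appealing to the square.
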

If $A$ is a simplicial abelian group, the morphism of simplicial sets $A\to \mathbb{Z}[A]$ induces a natural morphism of graded abelian groups
\[h_*:\pi_*A\to H_*^\simpl(A;\mathbb{Z})\]
called the \emph{Hurewicz morphism}. The following well-known proposition recalls the classical Hurewicz theorems in the context of simplicial abelian groups (injectivity of $h_*$ as well as the fact that no hypothesis on fundamental groups is needed for the relative version are specific to the abelian group setting).
\begin{pr}[Classical Hurewicz Theorems]\label{pr-Hur-classical}Let $e$ be a nonnnegative integer. 
\begin{enumerate}
\item[(1)] (Absolute theorem)
The Hurewicz map $h_*$ is split injective. Moreover, if $A$ is $e$-connected then $h_i$ is an isomorphism for $i\le e+1$. 
\item[(2)] (Relative theorem)
Every $e$-connected morphism of simplicial abelian groups $f:A\to B$ induces an $e$-connected map $H_*^\simpl(f;\mathbb{Z}):H_*^\simpl(A;\mathbb{Z})\to H_*^\simpl(B;\mathbb{Z})$.  
\end{enumerate}
\end{pr}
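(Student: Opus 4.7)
The plan combines three ingredients: a natural retraction of the canonical inclusion $A \to \mathbb{Z}[A]$, which is specific to the abelian setting and yields the split injectivity in (1); the Eilenberg-MacLane decomposition of lemma \ref{lm-decomp}, which reduces the absolute isomorphism to the case of a single $K(G,n)$; and the long exact sequence of a cofiber, which derives the relative version from the absolute one.

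For (1), the summation map $r_A : \mathbb{Z}[A] \to A$ sending $\sum n_i[a_i]$ to $\sum n_i a_i$ is a natural morphism of simplicial abelian groups whose composition with the canonical map $A \to \mathbb{Z}[A]$ equals the identity of $A$; applying $\pi_*$ yields the desired retraction of $h_*$, which proves split injectivity. For the isomorphism part of (1), assume that $A$ is $e$-connected. Using lemma \ref{lm-decomp} together with remark \ref{rk-ab} to switch freely between the simplicial abelian and pointed simplicial set viewpoints, we may replace $A$ by $\prod_{i \ge e+1} K(\pi_i A, i)$ without changing either $\pi_*$ or $H^\simpl_*$. By naturality of $h_*$ and the Künneth formula for simplicial sets, whose Tor terms vanish in the relevant range since each factor $K(\pi_i A, i)$ is $(i-1)$-connected with $i \ge e+1$, the claim reduces to showing that $h_i$ is an isomorphism for $i \le n$ when $A = K(G,n)$. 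This is the classical Hurewicz theorem for Eilenberg-MacLane spaces, which follows from the Dold-Kan identification of $\mathbf{N}(K(G,n))$ with the chain complex $G[n]$ concentrated in degree $n$, combined with the standard computation of $H_*(K(G,n);\mathbb{Z})$ in the range $* \le n$, see \cite[Sec.~III.2]{GoerssJardine} or \cite[Sec.~8.3]{Weibel}.

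For (2), given an $e$-connected morphism $f : A \to B$, form its cofiber $C(f)$ in $s\mathbf{Ab}$ via Dold-Kan, say as $\mathbf{K}$ of the mapping cone of $\mathbf{N}(f)$. The long exact sequence of homotopy groups associated with $A \to B \to C(f)$ shows that $C(f)$ is $e$-connected. The absolute theorem applied to $C(f)$ then gives $\widetilde{H}^\simpl_i(C(f); \mathbb{Z}) = 0$ for $i \le e$. Since the free-abelian-group functor $\mathbb{Z}[-]$ from pointed simplicial sets to simplicial abelian groups sends cofiber sequences to cofiber sequences, we obtain a long exact sequence relating $H^\simpl_*(A;\mathbb{Z})$, $H^\simpl_*(B;\mathbb{Z})$ and $H^\simpl_*(C(f);\mathbb{Z})$, which forces $H^\simpl_*(f;\mathbb{Z})$ to be $e$-connected.

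The substantive content lies entirely in the classical Hurewicz computation for Eilenberg-MacLane spaces; everything else is a formal assembly from lemma \ref{lm-decomp} and the cofiber long exact sequence. The only bookkeeping point that requires care is the distinction between reduced and unreduced homology in degree zero, which is harmless in the simplicial abelian group setting thanks to the canonical basepoint $0 \in A_0$ and the explicit splitting constructed in (1).
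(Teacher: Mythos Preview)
Your argument for the split injectivity in (1) matches the paper exactly. For the isomorphism part of (1), the paper simply cites \cite[III Thm 3.7]{GoerssJardine} rather than reducing via lemma~\ref{lm-decomp}; your reduction works but is slightly circular, since the K\"unneth step already requires knowing that each $K(\pi_iA,i)$ with $i>e+1$ has vanishing reduced homology in degrees $\le e+1$, which is itself an instance of the Hurewicz theorem you are proving.

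Your argument for (2) has a genuine gap. You form the cofiber $C(f)$ \emph{in $s\mathbf{Ab}$} (via Dold--Kan, as the mapping cone of $\mathbf{N}(f)$), and then invoke that ``$\mathbb{Z}[-]$ sends cofiber sequences to cofiber sequences'' to get a long exact sequence in $H^\simpl_*(-;\mathbb{Z})$. But that preservation statement is about cofiber sequences of \emph{pointed simplicial sets}, and the underlying simplicial set of a cofiber sequence in $s\mathbf{Ab}$ is not a cofiber sequence of pointed simplicial sets: the abelian-group quotient $B/A$ is not the simplicial-set quotient collapsing $A$ to a point. Concretely, applying $\mathbb{Z}[-]$ degreewise to a short exact sequence of abelian groups does not yield a short exact sequence. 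So the long exact sequence you need does not follow from the reason you give. (A repair is possible: the sequence $A\to B\to C(f)$ is also a \emph{fiber} sequence of Kan complexes, and a Serre spectral sequence argument, using part (1) to kill the columns $1\le p\le e$, does yield the required connectivity. But this is a different and longer argument than the one you wrote.)

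The paper's proof of (2) avoids cofibers entirely. It first observes that weak equivalences between simplicial groups are homotopy equivalences of simplicial sets (fibrancy plus Whitehead), hence induce isomorphisms in homology. Then lemma~\ref{lm-decomp} and the K\"unneth theorem reduce directly to the case where $A$ and $B$ are single Eilenberg--Mac\,Lane spaces $K(-,i)$ with the same $i$: for $i<e$ the map is a weak equivalence, hence a homology isomorphism; for $i\ge e$ both spaces are $e$-connected and part (1) applies.
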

\begin{proof}
(1) The canonical morphism of abelian groups $\mathbb{Z}[A]\to A$ yields a retract of $h_*$. The isomorphism is given by \cite[III Thm 3.7]{GoerssJardine}.
(2) Since simplicial groups are fibrant simplicial sets \cite[I Lm 3.4]{GoerssJardine}, any weak equivalence between simplicial groups yields a homotopy equivalence of simplicial sets \cite[II Thm 1.10]{GoerssJardine}, hence it induces an isomorphism in homology. Therefore, lemma \ref{lm-decomp} and the K\"unneth theorem reduce the proof of the isomorphism to the case where $A$ and $B$ are Eilenberg-MacLane spaces, with nonzero homotopy groups placed in the same degree $i$. If $i<e$, $f$ is $e$-connected if and only if it is a weak equivalence, hence if and only if it induces an isomorphism in homology. If $i\ge e$, then $A$ and $B$ are $e$-connected hence the result follows from (1).  
\end{proof}

For our purposes, we need a $k$-local version of the absolute Hurewicz theorem. We shall derive it from the following presumably well-known property of Eilenberg-MacLane spaces, which we have not found in the literature -- though the case of a prime field $k$ is of course given by the  classical calculations of Cartan \cite{Cartan}. 

\begin{lm}\label{lm-EML-vanish}
Let $k$ be a commutative ring, let $A$ be an abelian group. If $k\otimes_\mathbb{Z}A=0$ and $\Tor_1^{\mathbb{Z}}(k,A)=0$, then $H_i^\simpl(K(A,n),k)=0$ for all positive integers $n$ and $i$.
\end{lm}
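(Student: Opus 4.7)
The plan is to prove the lemma by induction on $n\geq 1$. The induction step $n\Rightarrow n+1$ will use the Serre spectral sequence of the path-loop fibration, while the base case $n=1$ requires a separate argument because $K(A,1)$ is not simply connected; I would handle it with a bisimplicial spectral sequence coming from a free simplicial resolution of $A$.

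For the induction step, assume $H_i^\simpl(K(A,n);k)=0$ for all $i>0$. I would apply the Serre spectral sequence to the fibration $K(A,n)\to PK(A,n+1)\to K(A,n+1)$. Since $n+1\geq 2$, the base $K(A,n+1)$ is simply connected, so local coefficients are trivial. The inductive hypothesis forces $E^2_{p,q}=0$ for $q>0$, leaving only $E^2_{p,0}=H_p^\simpl(K(A,n+1);k)$. The spectral sequence collapses immediately and converges to $H_*^\simpl(PK(A,n+1);k)=k$ concentrated in degree $0$, giving $H_p^\simpl(K(A,n+1);k)=0$ for $p>0$.

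For the base case $n=1$, I would fix a free simplicial resolution $F_\bullet\to A$ of $A$ by free abelian groups (so each $F_p$ is free abelian, $\pi_0 F_\bullet\simeq A$ and $\pi_i F_\bullet=0$ for $i>0$). Applying the classifying space functor $B=K(-,1)$ levelwise yields a bisimplicial set $BF_\bullet$ whose diagonal is weakly equivalent to $K(A,1)$ (because $B$ preserves weak equivalences of simplicial abelian groups). The associated bisimplicial spectral sequence has
\[E^2_{p,q}=\pi_p\bigl(H_q^\simpl(BF_\bullet;k)\bigr)\Rightarrow H^\simpl_{p+q}(K(A,1);k).\]
For each $p$, since $F_p$ is free abelian, $H_q^\simpl(BF_p;k)\simeq \Lambda^q_k(F_p\otimes_\mathbb{Z} k)$ as a $k$-module, computed by the standard K\"unneth calculation for a product of circles. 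Thus $E^2_{p,q}=\pi_p(\Lambda^q_k(F_\bullet\otimes k))$. The hypothesis $A\otimes^L_\mathbb{Z} k=0$ means precisely that the simplicial $k$-module $F_\bullet\otimes_\mathbb{Z} k$ has $\pi_*=\Tor^\mathbb{Z}_*(A,k)=0$, i.e., is a contractible, degreewise free simplicial $k$-module. Since $\Lambda^q$ is a (non-additive) polynomial functor whose Dold-Puppe derived functor satisfies $L\Lambda^q(0)=0$ for $q\geq 1$, applying $\Lambda^q$ to this contractible cofibrant object yields another contractible simplicial $k$-module. Hence $E^2_{p,q}=0$ for $q\geq 1$, while for $q=0$ one gets the constant simplicial $k$-module $k$, with $\pi_p=k$ if $p=0$ and zero otherwise. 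The spectral sequence collapses onto $E^2_{0,0}=k$, concluding the base case.

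The main obstacle is the justification of the final step: showing that $\Lambda^q_k$ preserves contractibility of a degreewise free simplicial $k$-module. This is a statement about the non-linear derived functor $L\Lambda^q$ in the sense of Dold-Puppe; it can be handled by the homotopy invariance of $L\Lambda^q$ on the full subcategory of cofibrant simplicial $k$-modules combined with the trivial calculation $L\Lambda^q(0)=0$ for $q\geq 1$. One also needs the mild technical input that the diagonal of $BF_\bullet$ represents $K(A,1)$ up to weak equivalence, which follows from standard bisimplicial techniques (Bousfield-Friedlander or the fact that the bar construction on simplicial abelian groups preserves weak equivalences).
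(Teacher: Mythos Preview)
Your proof is correct, but both the base case and the passage to higher $n$ differ from the paper's argument.

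For $n=1$, the paper proceeds by a case analysis on $A$: it treats torsion-free $A$ directly via $H_i(K(A,1);k)\simeq k\otimes_\mathbb{Z}\Lambda^i_\mathbb{Z}(A)=\Lambda^i_k(k\otimes_\mathbb{Z}A)=0$; it treats finite torsion $A$ by a direct computation, then arbitrary torsion $A$ by writing it as a filtered union of finite subgroups (using that any subgroup of a $k$-negligible torsion group is again $k$-negligible); finally it handles general $A$ via the Hochschild--Serre spectral sequence of the fibration $K(A_{\mathrm{tors}},1)\to K(A,1)\to K(A/A_{\mathrm{tors}},1)$. Your argument replaces this case analysis by a single uniform step: a free simplicial resolution $F_\bullet\to A$ and the observation that the hypotheses force $F_\bullet\otimes_\mathbb{Z}k$ to be an acyclic, degreewise free simplicial $k$-module, hence simplicially null-homotopic, so that applying $\Lambda^q_k$ (which, like any functor, preserves simplicial homotopies) yields an acyclic simplicial $k$-module for $q\ge 1$. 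This is cleaner and avoids the auxiliary closure properties of $k$-negligible groups, at the cost of invoking the Dold--Puppe style homotopy invariance.

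For $n>1$, the paper does not induct. It uses the explicit model $K(A,n)=K(A,1)\otimes_\mathbb{Z}K(\mathbb{Z},n-1)$ with $K(\mathbb{Z},n-1)$ degreewise free of finite rank, and the bisimplicial spectral sequence of $k[K(\mathbb{Z},n-1)_q\otimes_\mathbb{Z}K(A,1)_p]$ reduces the computation in one step to $H^\simpl_*(K(A^{\times r(q)},1);k)$, which vanishes by the $n=1$ case. Your Serre spectral sequence induction on the path-loop fibration is the standard textbook route and equally valid; the paper's argument has the minor advantage of needing only the $n=1$ vanishing (for powers of $A$) rather than the full inductive hypothesis, and stays entirely within the bisimplicial framework of the paper rather than invoking the Serre spectral sequence with local coefficients.
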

\begin{proof}
We say that an abelian group $A$ is $k$-negligible if $\Tor_1^{\mathbb{Z}}(k,A)=0=k\otimes_{\mathbb{Z}}A$. 

We first take $n=1$. The lemma holds if $A$ is a $k$-negligible torsion-free group because
$H_i^{\simpl}(K(A,1);k)\simeq  k\otimes_\mathbb{Z}\Lambda^i_{\mathbb{Z}}(A)= \Lambda_k^i(k\otimes_{\mathbb{Z}}A)=0.$
The lemma also holds if $A$ is a $k$-negligible torsion group. Indeed, if $A$ is finite, the lemma holds by a direct computation. If $A$ is infinite, then $A$ is the filtered union of all its finite subgroups $A_\alpha$. And since any subgroup of a $k$-negligible torsion group is $k$-negligible, we have:
$H_i^{\simpl}(K(A,1);k)=\colim_\alpha H_i^{\simpl}(K(A_\alpha,1);k)=0$.
Now let $A$ be an arbitrary abelian group with torsion subgroup $A_{\mathrm{tors}}$. If $A$ is $k$-negligible, then so are $A_{\mathrm{tors}}$ and $A/A_{\mathrm{tors}}$. So the lemma holds for $A$ as a consequence of the Hochschild-Serre spectral sequence of the fibration $K(A_{\mathrm{tors}},1)\to K(A,1)\to K(A/A_{\mathrm{tors}},1)$.

Assume now that $n>1$. Then $K(A,1)\otimes_{\mathbb{Z}} K(\mathbb{Z},n-1)$ is an Eilenberg Mac Lane space $K(A,n)$. Thus $H^\simpl_*(K(A,n))$ is the abutment of the spectral sequence of the bisimplicial $k$-module $M_{pq}= k[K(\mathbb{Z},n-1)_q\otimes_{\mathbb{Z}}K(\pi,1)_p]$. Let us choose $K(\mathbb{Z},n-1)$ such that it is free of finite rank $r(q)$ in each degree $q$ (e.g. take the image of the complex $\mathbb{Z}[-n]$ by the Kan functor). 
Then for $q$ fixed, the simplicial $k$-module $M_{\bullet q}$ is isomorphic to $k[K(A^{\times r(q)},1)]$. Thus the simplicial spectral sequence of $M_{pq}$ can be rewriten as:
\[E^1_{pq}=H^\simpl_p\big(K(A^{\times r(q)},1)\big)\Longrightarrow H^\simpl_{p+q}(K(A,n))\;.\]
The first page is zero by the case $n=1$, whence the result.
\end{proof}

\begin{pr}[$k$-local absolute Hurewicz theorem]\label{pr-Hur}
Let $A$ be a simplicial abelian group, let $k$ be a commutative ring and let $e$ be a non-negative integer. Assume that for $0<i \le e$ and for $0<j<e$ we have $k\otimes_{\mathbb{Z}}\pi_iA=0=\mathrm{Tor}^{\mathbb{Z}}(k,\pi_jA)$. Then 
\begin{enumerate}
\item[(1)] $H^{\simpl}_0(A;k)=k[\pi_0 A]$;
\item[(2)] $H^{\simpl}_i(A;k)=0$ for $0<i\le e$;
\item[(3)] $H^{\simpl}_{e+1}(A;k)$ contains the following $k$-module as a direct summand: 
$$k[\pi_0 A]\otimes_{k}\left(k\otimes_{\mathbb{Z}}\pi_{e+1} A\,\oplus \,\mathrm{Tor}^{\mathbb{Z}}(k,\pi_{e}A)\right)\;.$$
\end{enumerate}
\end{pr}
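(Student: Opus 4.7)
The plan is to reduce via Lemma \ref{lm-decomp} to the case where $A$ is a product of Eilenberg--MacLane spaces, isolate the low-degree contributions, and combine them via finite K\"unneth arguments. First I would apply Lemma \ref{lm-decomp} to choose a weak equivalence $\prod_{i \ge 0} K(\pi_i A, i) \to A$; since simplicial abelian groups are Kan complexes (as used in the proof of Proposition \ref{pr-Hur-classical}(2)), this is a homotopy equivalence of underlying simplicial sets, and in particular preserves $k$-homology. Next I isolate the discrete factor $B := K(\pi_0 A, 0)$: for any simplicial set $Z$, one has $k[B \times Z] \cong k[B] \otimes_k k[Z]$ as simplicial $k$-modules, and $k[B] = k[\pi_0 A]$ is constant in the simplicial direction and $k$-free, so
\[ H^\simpl_n(A; k) \;\cong\; k[\pi_0 A] \otimes_k H^\simpl_n(C; k), \qquad C := \prod_{i \ge 1} K(\pi_i A, i). \]
Part (1) is then immediate, and the task reduces to computing $H^\simpl_n(C; k)$ for $n \le e+1$.

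I would then split $C = P \times X \times Y \times Q$ with $P = \prod_{1 \le i < e} K(\pi_i A, i)$, $X = K(\pi_e A, e)$, $Y = K(\pi_{e+1} A, e+1)$, and $Q = \prod_{i \ge e+2} K(\pi_i A, i)$. The hypothesis makes $\pi_i A$ $k$-negligible for $0 < i < e$, so Lemma \ref{lm-EML-vanish} gives $H^\simpl_j(K(\pi_i A, i); k) = 0$ for $j > 0$ in that range; as $P$ is a \emph{finite} product, iterated K\"unneth (clean, since each factor has $k$-homology equal to $k$ in degree $0$) yields $H^\simpl_*(P; k) = k$ concentrated in degree $0$. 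The simplicial abelian group $Q$ is $(e+1)$-connected, so Proposition \ref{pr-Hur-classical}(2) applied to $\ast \to Q$ gives $H^\simpl_j(Q; \mathbb{Z}) = 0$ for $j \le e+1$, and the universal coefficient theorem forces $H^\simpl_j(Q; k) = 0$ for $j \le e+1$. Since $k[C] \cong k[P] \otimes_k k[X] \otimes_k k[Y] \otimes_k k[Q]$ (products of simplicial sets becoming tensor products levelwise), the K\"unneth spectral sequence over $k$ shows $H^\simpl_n(C; k) \cong H^\simpl_n(X \times Y; k)$ for $n \le e+1$.

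For the remaining computation of $H^\simpl_*(X \times Y; k)$ in this range, the universal coefficient theorem together with classical Hurewicz (Proposition \ref{pr-Hur-classical}(1)) gives: $H^\simpl_j(X; k) = 0$ for $1 \le j \le e$ (the vanishing at $j = e$ uses $H^\simpl_e(X; \mathbb{Z}) = \pi_e A$ and the hypothesis $k \otimes_\mathbb{Z} \pi_e A = 0$) and $H^\simpl_{e+1}(X; k)$ admits $\Tor^\mathbb{Z}(k, \pi_e A)$ as a direct summand; similarly $H^\simpl_j(Y; k) = 0$ for $1 \le j \le e$ and $H^\simpl_{e+1}(Y; k) = k \otimes_\mathbb{Z} \pi_{e+1} A$. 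Feeding these into the K\"unneth spectral sequence $E^2_{pq} = \bigoplus_{i+j=p}\Tor^k_q(H^\simpl_i(X; k), H^\simpl_j(Y; k)) \Rightarrow H^\simpl_{p+q}(X \times Y; k)$, one checks that in total degree at most $e+1$ only $E^2_{0,0} = k$ and $E^2_{e+1,0} = H^\simpl_{e+1}(X; k) \oplus (k \otimes_\mathbb{Z} \pi_{e+1} A)$ survive (other entries vanish because one tensor factor is zero and $\Tor^k_q(k, -) = 0 = \Tor^k_q(-, k)$ for $q \ge 1$), and no nontrivial differential reaches them. This yields $H^\simpl_j(X \times Y; k) = 0$ for $1 \le j \le e$ and exhibits $\Tor^\mathbb{Z}(k, \pi_e A) \oplus (k \otimes_\mathbb{Z} \pi_{e+1} A)$ as a direct summand of $H^\simpl_{e+1}(X \times Y; k)$; tensoring with $k[\pi_0 A]$ gives (2) and (3). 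The main bookkeeping hurdle is the vanishing of the K\"unneth Tor corrections in the relevant range together with the non-naturality of the UCT splittings, but since the conclusion asks only for a direct summand, non-naturality causes no trouble.
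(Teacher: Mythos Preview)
Your proof is correct and follows essentially the same approach as the paper: decompose $A$ into a product of Eilenberg--MacLane spaces via Lemma \ref{lm-decomp}, then combine Lemma \ref{lm-EML-vanish} (for the factors in degrees $0<i<e$), the classical Hurewicz theorem with the universal coefficient theorem (for the factors in degrees $\ge e$), and K\"unneth. Your write-up is in fact more careful than the paper's on one point: you explicitly isolate the tail $Q=\prod_{i\ge e+2}K(\pi_iA,i)$ and use its connectivity, whereas the paper's phrase ``Lemma \ref{lm-decomp} and the K\"unneth theorem reduce the proof to the case of an Eilenberg--MacLane space'' glosses over the fact that the product may be infinite.
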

\begin{proof}
Lemma \ref{lm-decomp} and the K\"unneth theorem reduce the proof to the case of an Eilenberg-MacLane space $A$. Assume that the nonzero homotopy group of $A$ is placed in degree $i$. If $i=0$, the result holds by a direct computation. If $0<i<e$ then the result follows from lemma \ref{lm-EML-vanish}. If $i\ge e$, the result follows from the classical absolute Hurewicz theorem of proposition \ref{pr-Hur-classical} together with the universal coefficient theorem which says that the graded $k$-module 
$H_*^\simpl(A;k)$ is (non canonically) isomorphic to $k\otimes_{\mathbb{Z}} H_*^\simpl(A;\mathbb{Z})\oplus 
\Tor^{\mathbb{Z}}(k,H^\simpl_{*-1}(A;\mathbb{Z}))$.
\end{proof}
\begin{cor}\label{cor-Hur}
The $k$-modules $H^{\simpl}_i(A;k)$ vanish for $0<i\le e$ if and only if $k\otimes_{\mathbb{Z}}\pi_iA$ and $\Tor^\mathbb{Z}_1(k,\pi_jA)$ vanish for $0<i\le e$ and $0<j<e$. 
\end{cor}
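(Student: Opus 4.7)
The plan is to derive the corollary directly from Proposition \ref{pr-Hur}. The ``if'' direction is immediate: given the stated vanishing of $k\otimes_{\mathbb{Z}}\pi_iA$ and $\Tor^\mathbb{Z}_1(k,\pi_jA)$, part (2) of that proposition yields $H^\simpl_i(A;k)=0$ for $0<i\le e$.

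For the ``only if'' direction, I would proceed by induction on $e$, the case $e=0$ being vacuous. Assume the statement holds up to $e-1$, and suppose $H^\simpl_i(A;k)=0$ for $0<i\le e$. Applying the inductive hypothesis (using that $H^\simpl_i(A;k)$ also vanishes in the range $0<i\le e-1$) gives $k\otimes_{\mathbb{Z}}\pi_iA=0$ for $0<i\le e-1$ and $\Tor^{\mathbb{Z}}_1(k,\pi_jA)=0$ for $0<j\le e-2$. These are precisely the hypotheses needed to apply Proposition \ref{pr-Hur} with parameter $e-1$.

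Part (3) of that proposition then shows that $H^\simpl_e(A;k)$ contains, as a direct summand, the $k$-module
\[k[\pi_0 A]\otimes_{k}\bigl(k\otimes_{\mathbb{Z}}\pi_{e}A\,\oplus\,\Tor^{\mathbb{Z}}_1(k,\pi_{e-1}A)\bigr).\]
Since $\pi_0 A$ is nonempty (it contains the basepoint), $k[\pi_0 A]$ is a nonzero free $k$-module, so tensoring with it is a faithful operation. The vanishing $H^\simpl_e(A;k)=0$ then forces $k\otimes_{\mathbb{Z}}\pi_e A=0$ and $\Tor^{\mathbb{Z}}_1(k,\pi_{e-1}A)=0$, which combined with the inductive hypothesis completes the induction.

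The essential point, and the only mild subtlety, is that Proposition \ref{pr-Hur}(3) gives the relevant expression as a \emph{direct summand} rather than a subquotient or a term in a spectral sequence; this is what allows the reverse implication to be extracted cleanly, without needing any additional hypotheses on $\pi_0 A$ beyond its (automatic) non-emptiness.
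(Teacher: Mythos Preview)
Your proof is correct and follows the natural approach; the paper states the corollary without proof, treating it as an immediate consequence of Proposition~\ref{pr-Hur}, and your induction argument is precisely how one unpacks that consequence.
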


\subsection{Functors of simplicial objects}

Assume that $\M$ is an abelian category. Evaluating a functor $F:\M\to s(k\Md)$ on a simplicial object $M$ in $\M$ yields a bisimplicial object $F(M_p)_q$, and we denote by $F(M)$ the associated diagonal simplicial $k$-module. This construction is natural with respect to $F$ and $M$.

We shall say that a natural transformation $f:F\to F'$ is \emph{$e$-connected} if for all $M$ in $\M$, the morphism of simplicial $k$-modules $F(M)\to F'(M)$ is $e$-connected. 
\begin{rk}
If $\M$ is small, the category $k[\M]\Md$ is well-defined. In that case the functors $F:\M\to s(k\Md)$ (resp. the natural transformations between such functors) identify with the simplicial objects in $k[\M]\Md$ (resp. the morphisms between such simplicial objects), and the definition of $e$-connectedness given here coincides with the one given in section \ref{subsec-simpl-ab}.
\end{rk}

\begin{pr}\label{prop-simpl-connected}
Let $\M$ be an abelian category of global dimension $0$. 
For all $e$-connected natural transformations $f:F\to F'$ and for all $e$-connected simplicial morphisms $g:M\to M'$, the induced morphism $F(M)\to F'(M')$ is $e$-connected.
\end{pr}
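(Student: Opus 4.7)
The plan is to factor the induced morphism as the composite $F(M)\xrightarrow{f(M)} F'(M)\xrightarrow{F'(g)} F'(M')$ and to prove that each of the two factors is $e$-connected separately.

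For $f(M)$, the strategy is to observe that $f(M)$ is the diagonal of the bisimplicial morphism $(p,q)\mapsto f(M_p)_q$, each column $F(M_p)\to F'(M_p)$ of which is $e$-connected by the hypothesis on $f$. The spectral sequence for the homotopy groups of the diagonal of a bisimplicial $k$-module (equivalently, the generalised Eilenberg--Zilber theorem) then yields that $f(M)$ is $e$-connected. I emphasise that this step does not use the global dimension zero hypothesis on $\M$.

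For the second factor $F'(g)$, the key idea is that a simplicial homotopy between morphisms in $s\M$ is encoded by a collection of morphisms $h_i\colon X_n\to Y_{n+1}$ satisfying relations that involve only \emph{compositions} of morphisms of $\M$ (and not direct sums). Consequently, any functor $H\colon \M\to s(k\Md)$ applied degreewise preserves simplicial homotopies, and in particular simplicial homotopy equivalences. Now I would exploit global dimension zero: since $\M$ is semisimple, every chain complex in $\M$ is chain-homotopy equivalent to its homology complex (with zero differentials), so by Dold--Kan every simplicial object $X$ of $s\M$ is simplicially homotopy equivalent to $\prod_{i} K(\pi_i X,i)$, where $K(A,i)$ denotes the Dold--Kan transform of $A[i]$. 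An adaptation of Lemma~\ref{lm-decomp} to this semisimple abelian setting then fits $g$ into a homotopy commutative square whose vertical legs are simplicial homotopy equivalences and whose bottom horizontal is $\prod_i K(\pi_i g,i)$; applying $F'$ and invoking preservation of simplicial homotopies reduces the problem to showing that $F'(\prod_i K(\pi_i g,i))$ is $e$-connected.

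The main technical obstacle is this final step. Here I plan to exploit semisimplicity once more to arrange $\pi_i M\cong \pi_i M'$ for $i<e$ and $\pi_e M\cong \pi_e M'\oplus \ker(\pi_e g)$ with $\pi_e g$ identified to the canonical projection, so that $\prod_i K(\pi_i g,i)$ becomes a map of product objects in which the only contributions to low-degree homotopy come from the degree-$<e$ factors and a projection in degree $e$; the remaining factors with $i>e$ involve simplicially $e$-connected pieces. A cross-effect expansion of $F'$ on the (degreewise finite) product structure decomposes the induced map into summands indexed by nonempty subsets of factors, and a connectivity analysis on each summand---exploiting that cross-effects of $F'$ with a simplicially $e$-connected argument produce simplicial $k$-modules of the relevant connectivity---yields the desired $e$-connectedness on $\pi_i$ for $i\le e$.
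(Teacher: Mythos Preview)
Your factorisation $F(M)\to F'(M)\to F'(M')$ and your treatment of the first factor via the bisimplicial spectral sequence match the paper exactly. For the second factor $F'(g)$ you take a genuinely different route. The paper replaces $F'$ by a simplicial projective resolution $P\to F'$ in $s(\M'\Md)$ for a small $\M'\subset\M$; the already-proved first step shows $P(g)$ and $F'(g)$ have the same connectivity, and a second spectral sequence then reduces to the case of a standard projective $Q=k[\M(x,-)]$, where exactness of $\M(x,-)$ (global dimension zero) plus the relative Hurewicz theorem of Proposition~\ref{pr-Hur-classical} finish the argument. You instead decompose the \emph{inputs} $M,M'$ into Eilenberg--MacLane pieces and analyse $F'$ of the resulting product via cross-effects. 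This works, and avoids resolving $F'$, at the cost of more bookkeeping.

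Two points in your sketch deserve sharpening. First, ``$H$ applied degreewise preserves simplicial homotopies'' is not literally true on the diagonal: the data $H(h_i)$ give a simplicial homotopy in the $p$-direction of the bisimplicial object, and one must pass through the total complex (generalised Eilenberg--Zilber) to conclude equality on $\pi_*$ of the diagonal. Second, your final clause is slightly circular as phrased: the reason cross-effect summands indexed by $S$ with $\max S=m$ are $(m-1)$-connected is not the abstract $e$-connectedness of an argument, but the concrete fact that $K(A,i)_n=0$ for $n<i$, so any summand involving the factor $K_i$ vanishes identically in simplicial degrees below $i$. With this said, summands with $\max S>e$ vanish on $\pi_{\le e}$; those with $\max S<e$ carry an isomorphism (all $g_i$ are iso); and those with $\max S=e$ carry a degreewise split surjection whose kernel (built from cross-effects involving $K(\ker\pi_e g,e)$) vanishes below degree $e$, hence are $e$-connected by the long exact sequence. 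The paper's proof is shorter because it outsources the connectivity estimate to a single citation; yours is more self-contained but needs these details filled in.
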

\begin{proof}
We prove that $f(M):F(M)\to F'(M)$ and $F'(g):F'(M)\to F'(M')$ are  $e$-connected. 
The $e$-connectedness of $f(M)$ follows from the spectral sequence \cite[IV, section 2.2]{GoerssJardine}, which is natural with respect to $F$:
$$E_{m,n}^1(F)= \pi_m (F(M_n))\Rightarrow \pi_{m+n}(F(M))\;.$$

Let us prove the $e$-connectedness of $F'(g)$. We choose a small additive subcategory $\M'\subset \M$ which contains the objects $M_n$ and $M'_n$ for all $n\ge 0$. Let $\pi:P\to F'$ be a simplicial projective resolution in $s(\M'\Md)$. We have a commutative square of simplicial $k$-modules 
$$\begin{tikzcd}
P(M)\ar{r}{P(g)}\ar{d}{\pi(M)}& P(M')\ar{d}{\pi(M')}\\
F(M)\ar{r}{F(g)}& F(M')
\end{tikzcd}$$
whose vertical arrows are weak equivalences by the preceding paragraph, so it suffices to prove that $P(g)$ is $e$-connected. By using the spectral sequence natural with respect to $M$:  
$$F_{m,n}^1(M)= \pi_n(P_m(M))\Rightarrow \pi_{m+n}(P(M))\;,$$
the proof reduces further to showing that for all projective objects $Q$ in $\M'\Md$, the morphism of simplicial $k$-modules $Q(g)$ is $e$-connected. Since $Q$ is a direct summand of a direct sum of standard projectives, we can assume that $Q=k[\M(x,-)]$ with $x\in \M'$. Since $\M$ has global dimension zero, the functor $\M(x,-)$ is exact, in particular $\M(x,g)$ is $e$-connected. Hence, the result follows from the relative Hurewicz theorem of proposition \ref{pr-Hur-classical}.
\end{proof}

\section{Homology of bifunctors of AP-type}\label{sec-AP}

In this section we prove theorem \ref{thm-AP-type} from the introduction. The theorem is stated over an infinite perfect field in the introduction, but actually works over an arbitrary commutative ring, see theorem \ref{thm-AP-type-bis} below. So we fix a commutative ring $k$ and a small additive category $\A$. 

Before stating the theorem, we first come back to the definition of bifunctors of AP-type.
Recall from \cite[p.18]{Mi72} that an \emph{ideal} of the category $\A$ is a subfunctor of $\A(-,-):\A^\op\times \A\to \mathbf{Ab}$. Given such an ideal $\I$, we can form the additive quotient $\A/\I$ of $\A$, with the same objects as $\A$ and with morphisms $({\A/\I})(x,y)=\A(x,y)/\I(x,y)$. We let $\pi_\I:\A\to \A/\I$ denote the additive quotient functor. The following definitions are introduced in \cite{DTV}. 
\begin{defi}
An additive category $\B$ is \emph{$k$-trivial} if for all objects $x$ and $y$ the abelian group $\B(x,y)$ is finite and such that $k\otimes_\mathbb{Z}\B(x,y)=0$.  
An ideal $\I$ of $\A$ is \emph{$k$-cotrivial} if $\A/\I$ is $k$-trivial.
A functor $F:\A\to k\Md$ is \emph{antipolynomial}  if there is a $k$-cotrivial ideal $\I$ of $\A$ such that $F$ factors though $\pi_\I:\A\to \A/\I$. 
\end{defi}
We also need the polynomial functors introduced by Eilenberg and Mac Lane in \cite{EML}. 
An object $F$ of $k[\A]\Md$ is \emph{polynomial} if its $d$-th cross-effect $\cross_{d}F$ vanishes for some nonnegative integer $d$. This $d$-th cross-effect is an object of $k[\A^{\times d}]\Md$, which is a certain direct summand of the 
functor $F(x_1\oplus\cdots\oplus x_d)$. If $\cross_d F=0$ then $\cross_e F=0$ for all $e>d$. Hence there is a biggest integer $d$ such that $\cross_{d}F\ne 0$,s which is called the  \emph{Eilenberg-Mac Lane degree} of $F$ which we denote by $\deg_{\EML}F$.  
We refer the reader to \cite[Section 1]{DTV} for a detailed description of polynomial functors and further references. We will not use the explicit expression of cross effects, but we will rather rely on the following classical properties of polynomial functors. (These properties are explained in \cite[Section 1]{DTV}, except the $\Tor$ vanishing, which can be obtained from the $\Ext$-vanishing and the isomorphism of lemma \ref{lm-iso-dual}.)
\begin{pr} \label{prop-propfctpol}
The full subcategory $k[\A]\Md_{< d}$ of $k[\A]\Md$ on the polynomial functors of degree less than $d$ is stable under subobjects, extensions, arbitrary direct sums and products. 
Moreover, if $F$ and $F'$ are polynomial with $\deg_\EML F<d$ and $\deg_\EML F'<d$, then for all objects $F_i$ of $k[\A]\Md$ and $F'_i$ of $\Mdd k[\A]$ satisfying $F_i(0)=0=F_i'(0)$, we have
\begin{align}
\Ext^*_{k[\A]}(F_1\otimes\cdots\otimes F_d,F)=0=\Ext^*_{k[\A]}(F,F_1\otimes\cdots\otimes F_d)\;,\\
\Tor_*^{k[\A]}(F_1'\otimes\cdots\otimes F_d',F)=0=\Tor_*^{k[\A]}(F',F_1\otimes\cdots\otimes F_d)\;.
\end{align}
\end{pr}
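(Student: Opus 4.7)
The plan is to reduce every assertion to exactness and limit/colimit preservation properties of the $d$-th cross-effect $\cross_d:k[\A]\Md\to k[\A^{\times d}]\Md$, combined with the sum-diagonal adjunction of example \ref{ex-sum-diagonal} and the classical Taylor decomposition of $F(x_1\oplus\cdots\oplus x_d)$ into cross-effects.

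First, I would dispose of the stability assertions. Since $\cross_d F$ is the image of a natural idempotent endomorphism of the assignment $F\mapsto F(x_1\oplus\cdots\oplus x_d)$, and the latter is an exact functor preserving arbitrary direct sums and products (every functorial operation being pointwise), $\cross_d$ is itself exact and preserves direct sums and products. Its kernel $k[\A]\Md_{<d}$ is therefore stable under subobjects, extensions, arbitrary direct sums and direct products.

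For the $\Ext$-vanishing, let $\Delta:\A\to\A^{\times d}$ be the diagonal and $\Sigma:\A^{\times d}\to\A$ the sum. Since $F_1\otimes\cdots\otimes F_d=\Delta^*(F_1\boxtimes\cdots\boxtimes F_d)$, the two sum-diagonal adjunctions of example \ref{ex-sum-diagonal} translate both $\Ext^*_{k[\A]}(F_1\otimes\cdots\otimes F_d,F)$ and $\Ext^*_{k[\A]}(F,F_1\otimes\cdots\otimes F_d)$ into $\Ext$ groups on $k[\A^{\times d}]$ in which one of the arguments is $\Sigma^*F$. I would then use the classical Taylor-type decomposition
\[\Sigma^*F\;\simeq\;\bigoplus_{S\subseteq\{1,\ldots,d\}}p_S^*(\cross_{|S|}F),\]
where $p_S:\A^{\times d}\to\A^{\times|S|}$ is the projection, which is natural in $F$ and loses its top summand precisely when $\cross_dF=0$. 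For each remaining subset $S\subsetneq\{1,\ldots,d\}$, I would pick $j\notin S$ and consider the natural endomorphism $\eta$ of $\id_{\A^{\times d}}$ equal to the identity outside coordinate $j$ and to $0_{x_j}:x_j\to x_j$ in coordinate $j$. This $\eta$ acts as the identity on every $p_S^*H$ (since $j\notin S$) but as the zero morphism on $F_1\boxtimes\cdots\boxtimes F_d$ (because $F_j(0_{x_j})$ factors through $F_j(0)=0$). Bifunctoriality of $\Ext$ then forces the corresponding $\Ext$ groups to vanish.

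The $\Tor$-vanishings would then follow by duality: applying lemma \ref{lm-iso-dual} with $M$ an injective cogenerator of $k\Md$, the two $\Tor$ vanishings in the statement become special cases of the $\Ext$ vanishings just established, using that $\Hom_k(-,M)$ sends reduced functors to reduced ones and preserves polynomial degree when $M$ is injective (as $\Hom_k(-,M)$ is then exact, and commutes with the idempotent defining $\cross_d$). The main technical point to verify is the Taylor decomposition of $\Sigma^*F$; this is classical, going back to \cite{EML}, and follows by induction on $d$ from the two-variable splitting $F(x\oplus y)\simeq F(x)\oplus F(y)\oplus\cross_2F(x,y)$ coming from the additive structure.
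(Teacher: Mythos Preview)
The paper does not actually prove this proposition; it records it as a classical fact (pointing to \cite[Section~1]{DTV} for background on cross-effects) and only remarks, exactly as you do, that the $\Tor$ vanishing follows from the $\Ext$ vanishing via lemma~\ref{lm-iso-dual}. Your argument is the standard one and is correct: exactness and (co)limit preservation of $\cross_d$ give the stability assertions, and the sum-diagonal adjunction together with the cross-effect decomposition of $\Sigma^*F$ and the ``zero-in-one-coordinate'' natural endomorphism $\eta$ yield the $\Ext$ vanishing (this is the same centre-of-category trick used later in lemma~\ref{lm-homogeneity-vanish}).

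One small imprecision in your duality step: for the vanishing of $\Tor_*^{k[\A]}(F_1'\otimes\cdots\otimes F_d',F)$, the dual $\Hom_k(F_1'\otimes\cdots\otimes F_d',M)$ is \emph{not} a tensor product of $d$ reduced functors, so it is not literally an instance of the $\Ext$ statement you proved. This is harmless, and you have two easy fixes. Either observe that your $\eta$-argument applies more generally to any $\Delta^*K$ with $K\in k[\A^{\times d}]\Md$ reduced in each variable (and $K(x_1,\dots,x_d)=\Hom_k(\bigotimes_i F_i'(x_i),M)$ is of this form), or use the symmetry $\Tor_*^{k[\A]}(F_1'\otimes\cdots\otimes F_d',F)\simeq\Tor_*^{k[\A^{\op}]}(F,F_1'\otimes\cdots\otimes F_d')$ to reduce to the other $\Tor$ vanishing over $\A^{\op}$, where your duality argument goes through verbatim.
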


\begin{defi}
A bifunctor $B:\A\times\A\to k\Md$ is of \emph{antipolynomial-polynomial type} (AP-type) if  for all objects $x$ of $\A$ the functor $y\mapsto B(y,x)$ is antipolynomial and the functor $y\mapsto B(x,y)$ is polynomial.
\end{defi}

The main result of the section is the following theorem, which is theorem \ref{thm-AP-type} of the introduction.

\begin{thm}\label{thm-AP-type-bis}Let $k$ be a commutative ring and let $B$, $C$, $B'$ be three bifunctors of AP-type, with $B'$ contravariant in both variables. Restriction along the diagonal $\Delta:\A\to \A\times\A$ yields isomorphisms:
\begin{align*}
&\Ext^*_{k[\A\times\A]}(B,C)\simeq \Ext^*_{k[\A]}(\Delta^*B,\Delta^*C)\;,\\
&\Tor_*^{k[\A]}(\Delta^*B',\Delta^*C)\simeq \Tor_*^{k[\A\times \A]}(B',C)\;.
\end{align*}
\end{thm}

\begin{rk}
The Hom-isomorphism of theorem \ref{thm-AP-type-bis} was already known before. It is included in \cite[Prop 4.9]{DTV} and was proved there by another method.
\end{rk}

The remainder of the section is devoted to the proof of theorem \ref{thm-AP-type-bis}. 
If $B':\A^\op\times\A^\op\to k\Md$ is of AP-type, then for all injective $k$-modules $M$, the bifunctor $\Hom_k(B',M)=\Hom_k(-,M)\circ B'$ is also of AP-type. 
Therefore, by proposition \ref{prop-Tor-Ext}, it suffices to prove the $\Ext$-isomorphism of theorem \ref{thm-AP-type-bis}.
We shall prove this $\Ext$-isomorphism in two steps. We first reduce the proof to bifunctors $B$ `of special-AP-type', that is, bifunctors of the form $B(x,y)=A(x)\otimes P(y)$ for some particular antipolynomial functor $A$ (see definition below). In a second step, we establish the isomorphism for these bifunctors of special AP-type.

\subsubsection*{Step 1: Reduction to bifunctors of special-AP-type}

Given an ideal $\I$ of $\A$ and a positive integer $d$, we denote by $\C_{\I,d}$ the full subcategory of $k[\A\times\A]\Md$ whose objects are the bifunctors $B$ such that:
\begin{enumerate}[i)]
\item $B$ factors through $\pi_\I\times\mathrm{id}:\A\times\A\to (\A/\I)\times \A$, and
\item for all $x$, the functor $y\mapsto B(x,y)$ is polynomial of degree less than $d$. 
\end{enumerate}
The subcategory $\C_{\I,d}$ of $k[\A\times\A]\Md$ is stable under limits and colimits. Stability under colimits ensures that any object $B$ of $k[\A\times\A]\Md$ has a largest subobject $B_{\I,d}$ belonging to $\C_{\I,d}$. 

\begin{lm}\label{lm-exhaust}
If $B$ is a bifunctor of AP-type then 
$B=\bigcup_{\I,d} B_{\I,d}$, where $\I$ runs over the set of $k$-cotrivial ideals of $\A$ and $d$ runs over the set of positive integers. 
\end{lm}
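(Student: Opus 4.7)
The lemma asks us to show that every element $b\in B(x_0,y_0)$ lies in $B_{\I,d}(x_0,y_0)$ for some $k$-cotrivial ideal $\I$ and some positive integer $d$. Since $B_{\I,d}$ is characterized as the largest subobject of $B$ lying in $\C_{\I,d}$ (stability of $\C_{\I,d}$ under colimits), it suffices to exhibit any sub-bifunctor $L\subseteq B$ with $L\in\C_{\I,d}$ and $b\in L(x_0,y_0)$. My plan is to take $L=\langle b\rangle$, the sub-bifunctor of $B$ generated by $b$, i.e.\ the image of the morphism $h^{(x_0,y_0)}_{k[\A\times\A]}\to B$ adjoint to $b$, whose value at $(x,y)$ is the $k$-span of $\{B(\phi,\psi)(b):\phi\in\A(x_0,x),\,\psi\in\A(y_0,y)\}$, and then to identify suitable $\I_0$ and $d_0$ for which $\langle b\rangle$ lies in $\C_{\I_0,d_0}$.

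For condition (ii), let $d_0$ be chosen so that $B(x_0,-)$ is polynomial of degree $<d_0$, which exists by AP-type. Write $\widetilde L\subseteq B(x_0,-)$ for the single-variable subfunctor generated by $b$; it is polynomial of degree $<d_0$ by stability of this class under subobjects (Proposition~\ref{prop-propfctpol}). For general $x$ one has
\[
\langle b\rangle(x,-)=\sum_{\phi\in\A(x_0,x)}B(\phi,\id)(\widetilde L),
\]
which is a sum of quotients of $\widetilde L$. Using that polynomial functors of degree $<d_0$ form a Serre class (closed under quotients as well as sums, a consequence of the exactness of the cross-effect functors, supplementing Proposition~\ref{prop-propfctpol}), each summand is polynomial of degree $<d_0$, and so is their sum. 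Thus $\langle b\rangle$ satisfies (ii) with $d=d_0$.

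For condition (i), pick a $k$-cotrivial ideal $\I_0$ through which the antipolynomial functor $B(-,y_0)$ factors, so that $B(g,\id_{y_0})=0$ for every $g\in\I_0$. The crucial point is that this vanishing along the single slice $y=y_0$ propagates to the entire orbit of $b$ by bifunctoriality: for any $f\in\I_0(x,x')$ and any $\phi\colon x_0\to x$, $\psi\colon y_0\to y$, the ideal property gives $f\phi\in\I_0(x_0,x')$, hence $B(f\phi,\id_{y_0})(b)=0$, and therefore
\[
B(f,\id_y)\bigl(B(\phi,\psi)(b)\bigr)=B(\id_{x'},\psi)\bigl(B(f\phi,\id_{y_0})(b)\bigr)=0.
\]
Consequently $B(f,\id_y)$ vanishes on $\langle b\rangle(x,y)$, i.e.\ $\langle b\rangle$ factors through $\pi_{\I_0}\times\id$. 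Combined with (ii) this places $\langle b\rangle$ in $\C_{\I_0,d_0}$, whence $b\in\langle b\rangle(x_0,y_0)\subseteq B_{\I_0,d_0}(x_0,y_0)$, completing the proof. The only mildly technical step is the closure of polynomial functors of bounded degree under quotients; the heart of the argument, namely the propagation of antipolynomiality from the slice $B(-,y_0)$ to the orbit of $b$, is a purely formal consequence of the commutation of the two variable actions, and I do not expect any serious obstacle.
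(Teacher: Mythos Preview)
Your overall strategy—generating a cyclic sub-bifunctor $\langle b\rangle$ and checking it lies in $\C_{\I_0,d_0}$—is sound, and the argument for condition (ii) is fine. But the argument for condition (i) contains a genuine error: you write that $B(-,y_0)$ factoring through $\A/\I_0$ gives ``$B(g,\id_{y_0})=0$ for every $g\in\I_0$'', and later conclude ``$B(f,\id_y)$ vanishes on $\langle b\rangle(x,y)$, i.e.\ $\langle b\rangle$ factors through $\pi_{\I_0}\times\id$''. Neither implication holds for non-additive functors. Factoring through $\A/\I_0$ means only that $B(g_1,\id_{y_0})=B(g_2,\id_{y_0})$ whenever $g_1-g_2\in\I_0$; in particular $B(g,\id_{y_0})=B(0,\id_{y_0})$ for $g\in\I_0$, and the right-hand side is the projection through $B(0,y_0)$, not zero (think of a constant functor). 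Likewise, even if $\langle b\rangle(f,\id_y)$ \emph{were} zero for $f\in\I_0$, that would not force $\langle b\rangle$ to factor through the quotient, since $\langle b\rangle$ is not additive in its first variable.

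The repair is immediate and uses exactly your commutation idea. To show $\langle b\rangle$ factors through $\pi_{\I_0}\times\id$ you must check $B(\phi_1,\psi)=B(\phi_2,\psi)$ on generators of $\langle b\rangle(x,y)$ whenever $\phi_1-\phi_2\in\I_0$. For a generator $B(\alpha,\beta)(b)$ one has
\[
B(\phi_i,\psi)\bigl(B(\alpha,\beta)(b)\bigr)=B(\id_{x'},\psi\beta)\bigl(B(\phi_i\alpha,\id_{y_0})(b)\bigr),
\]
and since $\phi_1\alpha-\phi_2\alpha\in\I_0$ and $B(-,y_0)$ factors through $\A/\I_0$, the two right-hand sides agree. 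This is the statement you need.

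For comparison, the paper avoids the cyclic subfunctor altogether: it builds two global sub-bifunctors $B_d$ and $B_\I$ (the largest ones satisfying (ii), respectively (i)) by assembling the slice-wise maximal subfunctors, observes that each equals $B$ at the chosen point $(x,y)$, and then uses $B_\I\cap B_d\subseteq B_{\I,d}$. This sidesteps any explicit check that a given subfunctor factors through the quotient, at the cost of verifying that the slice-wise constructions are functorial in the remaining variable.
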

\begin{proof}
We fix two objects $x,y$ of $\A$. Let $d$ be the degree of $t\mapsto B(x,t)$ and let $\I$ be a $k$-cotrivial ideal such that $s\mapsto B(s,y)$ factors through $\A/\I$. To prove the lemma, it suffices to show that the inclusion $B_{\I,d}\hookrightarrow B$ induces an equality $B_{\I,d}(x,y)=B(x,y)$.

Let $B_{d}(a,-)$ be the largest subfunctor of $B(a,-)$ of degree less than $d$. Any map $f:a\to b$ induces a map $B_{d}(a,-)\to B_{d}(b,-)$, so that the functors $B_{d}(a,-)$ assemble into a bifunctor $B_{d}:\A\times\A\to k\Md$ which is a subfunctor of $B$, polynomial of degree less of equal to $d$ with respect to its first variable. By construction $B_{d}(x,y)=B(x,y)$. 
Similarly, let $B_\I(-,b)$ be the largest subfunctor of $B(-,b)$ factorizing through $\A/\I$. These functors assemble into a bifunctor $B_\I:\A\times\A\to k\Md$  factorizing through $\A/\I\times\A$. By construction $B_\I(x,y)=B(x,y)$. 
Since $B_{\I}\cap B_d\subset B_{\I,d}$ we finally obtain that $B_{\I,d}(x,y)=B(x,y)$.
\end{proof}

An object $B$ of $k[\A\times\A]\Md$ is \emph{of special-AP-type} if there is an object $z$ of $\A$, a $k$-cotrivial ideal $\I$ and a polynomial functor $F$ in $k[\A]\Md$ such that 
\[B(x,y)= k[\A/\I(z,x)]\otimes F(y)\;.\]

\begin{lm}\label{lm-resolution}
Let $B$ be an object of $k[\A\times\A]\Md$ such that $B=\bigcup_{\I,d} B_{\I,d}$, where $\I$ runs over the the set of $k$-cotrivial ideals of $\A$ and $d$ over the set of positive integers. Then $B$ has a resolution $Q$ whose terms are direct sums of bifunctors of special-AP-type.
\end{lm}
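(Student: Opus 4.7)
The plan is to produce the resolution $Q_\bullet\to B$ degree by degree, by showing that the bifunctors of special-AP-type form an adequate ``projective class'' for the bifunctors $B$ satisfying $B = \bigcup_{\I,d} B_{\I,d}$. Concretely, I first produce a surjection from a direct sum of special-AP-type bifunctors onto $B$, then check that the kernel again satisfies the same hypothesis, and iterate.

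To get the surjection I identify suitable ``representable'' objects of the subcategory $\C_{\I,d}$. The inclusion $\C_{\I,d}\hookrightarrow k[\A\times\A]\Md$ factors as restriction along $\pi_\I\times\id_\A:\A\times\A\to (\A/\I)\times\A$ followed by the inclusion of bifunctors polynomial of degree $<d$ in the second variable. Both steps admit left adjoints: Kan extension for the first, and polynomial truncation $p_d$ for the second (it exists because by Proposition \ref{prop-propfctpol} the subcategory $k[\A]\Md_{<d}$ is closed under subobjects and products in the Grothendieck category $k[\A]\Md$, and a direct cross-effect argument shows it is also closed under quotients). The composite left adjoint sends the standard projective $h^{(z,w)}_{k[\A\times\A]}$ to
\[P_{z,w,\I,d} := k[\A/\I(z,-)] \boxtimes p_d(k[\A(w,-)])\;,\]
which is manifestly of special-AP-type and, by Yoneda plus the adjunctions above, satisfies $\Hom_{k[\A\times\A]}(P_{z,w,\I,d}, T) = T(z,w)$ for every $T$ in $\C_{\I,d}$, in particular for $T = B_{\I,d}$.

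Forming
\[\Phi:\,Q_0 := \bigoplus_{\I,d}\bigoplus_{z,w\in\A}\bigoplus_{s\in B_{\I,d}(z,w)} P_{z,w,\I,d}\,\longrightarrow\, B\]
with the $s$-indexed component corresponding to $s$ under the adjunction above, one checks that the image of $\Phi$ at any $(z_0,w_0)$ contains every $s\in B_{\I,d}(z_0,w_0)$ for every $(\I,d)$; the hypothesis $B(z_0,w_0) = \bigcup B_{\I,d}(z_0,w_0)$ then gives surjectivity. For the induction step I first observe that $Q_0$ itself satisfies $Q_0 = \bigcup (Q_0)_{\I,d}$: any element of $Q_0(x,y)$ involves only finitely many summands $P_{z_i,w_i,\I_i,d_i}$, and taking $\I := \bigcap_i\I_i$ (which is $k$-cotrivial since $\A/\I$ embeds in the finite product $\prod_i \A/\I_i$ of $k$-trivial categories) and $d := \max_i d_i$ puts the whole finite partial sum into $\C_{\I,d}$. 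Next I use the following subobject lemma: if $D\subseteq C$ and $C = \bigcup C_{\I,d}$, then stability of $\C_{\I,d}$ under subobjects forces $D_{\I,d} = D\cap C_{\I,d}$, whence $\bigcup D_{\I,d} = D\cap\bigcup C_{\I,d} = D$. Applied to $K := \ker\Phi\subseteq Q_0$, this shows $K$ satisfies the hypothesis of the lemma, and iterating produces the desired resolution.

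Main obstacle: the delicate point is setting up $p_d$ as a left adjoint to $k[\A]\Md_{<d}\hookrightarrow k[\A]\Md$, and checking that the composite left adjoint to $\C_{\I,d}\hookrightarrow k[\A\times\A]\Md$ carries standard projectives to precisely the special-AP-type bifunctors $P_{z,w,\I,d}$. Once this identification is in place, the rest of the argument is the standard ``build-resolution-by-iterated-cover'' assembly, using only the stability of the ambient class under subobjects.
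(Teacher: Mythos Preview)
Your proof is correct, and the overall strategy (surject, verify the kernel inherits the hypothesis, iterate) matches the paper's. The difference lies in how the surjection onto each $B_{\I,d}$ is built. The paper avoids the truncation adjoint $p_d$ entirely: writing $B_{\I,d}=(\pi_\I\times\id_\A)^*B'$ for a bifunctor $B'$ on $(\A/\I)\times\A$, it uses the first step of the standard (bar) resolution of $B'$,
\[
\bigoplus_{z} h^z_{k[\A/\I]}\boxtimes B'_z\twoheadrightarrow B',\qquad B'_z(-):=B'(z,-),
\]
and restricts along $\pi_\I\times\id_\A$. The summands are of special-AP-type because each $B'_z$ is \emph{already} polynomial of degree $<d$; no reflection is needed. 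Your route instead produces a uniform family $P_{z,w,\I,d}=k[\A/\I(z,-)]\boxtimes p_d(k[\A(w,-)])$ of generators independent of $B$, which is conceptually pleasant but requires setting up $p_d$ as a left adjoint and checking the composite adjunction sends standard projectives where you claim. That check is correct (closure of $k[\A]\Md_{<d}$ under products and subobjects suffices for reflectivity; closure under quotients, while true, is not strictly needed). You also spell out the kernel step via the subobject lemma and the finite-intersection argument for $k$-cotrivial ideals, which the paper leaves implicit in the phrase ``it suffices to prove that every $B_{\I,d}$ is a quotient\dots''.
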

\begin{proof}
It suffices to prove that every $B_{\I,d}$, is a quotient of a direct sum of bifunctors of special-AP-type. By definition $B_{\I,d}=(\pi_\I\times\mathrm{id}_\A)^*B'$ for some bifunctor $B':\A/\I\times\A\to k\Md$ such that each $B'_z(-):=B'(z,-)$ is polynomial of degree less than $d$. The standard resolution of $B'$ \cite[section 17]{Mi72} yields an epimorphism $\bigoplus_{z} h_{k[\A/\I]}^z\boxtimes B'_z\to B'$, where the sum is indexed by a set of representatives $z$ of isomorphism classes of objects of $\A/\I$. The result follows by restricting this epimorphism along $\pi_\I\times\mathrm{id}_\A$.
\end{proof}

\begin{pr}\label{prop-red-step2}
Let $C$ be an arbitrary object of $k[\A\times\A]\Md$. If the map:
\[\Delta^*:\Ext^*_{k[\A\times\A]}(B,C)\to \Ext^*_{k[\A]}(\Delta^*B,\Delta^*C)\]
is an isomorphism for all bifunctors $B$ of special-AP-type then it is an isomorphism for all bifunctors $B$ of AP-type. 
\end{pr}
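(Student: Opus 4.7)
The plan is to combine Lemmas \ref{lm-exhaust} and \ref{lm-resolution} with a hypercohomology spectral sequence argument, reducing the isomorphism for arbitrary AP-type bifunctors to the assumed isomorphism on each term of a resolution.

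First, let $B$ be a bifunctor of AP-type. By Lemma \ref{lm-exhaust}, $B$ satisfies the hypothesis of Lemma \ref{lm-resolution}, so it admits a resolution $Q_\bullet \to B$ whose terms $Q_p = \bigoplus_{i\in I_p} B_i^{(p)}$ are direct sums of bifunctors of special-AP-type. Since the restriction functor $\Delta^*$ is exact (indeed, it has both adjoints, cf.\ Example \ref{ex-sum-diagonal}) and preserves direct sums, the complex $\Delta^*Q_\bullet \to \Delta^*B$ is a resolution of $\Delta^*B$ in $k[\A]\Md$, with each $\Delta^*Q_p \simeq \bigoplus_{i\in I_p} \Delta^*B_i^{(p)}$.

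Next, choose an injective resolution $C \to J^\bullet$ in $k[\A\times\A]\Md$. Restriction along $\Delta$ need not preserve injectives, but this is inessential: we use the double complex $\Hom_{k[\A\times\A]}(Q_\bullet, J^\bullet)$ (resp.\ its avatar for $\Delta^*B$ and $\Delta^*C$, obtained from an injective resolution of $\Delta^*C$) to produce first-quadrant hypercohomology spectral sequences
\[
E_1^{p,q} = \Ext^q_{k[\A\times\A]}(Q_p, C) \Rightarrow \Ext^{p+q}_{k[\A\times\A]}(B, C),
\]
\[
{}'E_1^{p,q} = \Ext^q_{k[\A]}(\Delta^*Q_p, \Delta^*C) \Rightarrow \Ext^{p+q}_{k[\A]}(\Delta^*B, \Delta^*C),
\]
and the natural transformation $\Delta^*$ induces a morphism of spectral sequences (using that $\Delta^*$ is compatible with the $\Hom$-pairings at the chain level).

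It remains to compare the $E_1$-pages. Since $\Ext$ in the first variable converts direct sums into direct products,
\[
E_1^{p,q} \simeq \prod_{i\in I_p} \Ext^q_{k[\A\times\A]}(B_i^{(p)}, C),\qquad {}'E_1^{p,q} \simeq \prod_{i\in I_p} \Ext^q_{k[\A]}(\Delta^*B_i^{(p)}, \Delta^*C),
\]
and the morphism between them is induced factorwise by $\Delta^*$. By hypothesis, each $B_i^{(p)}$ is of special-AP-type, so each factor of the product is an isomorphism; hence $\Delta^*$ is an isomorphism on $E_1$, and therefore on the abutments, which gives the desired $\Ext$-isomorphism. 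The only mildly delicate point is the naturality of the hypercohomology spectral sequences under restriction, but this follows from the standard fact that $\Delta^*$ commutes with $\Hom$ at the level of chain complexes; the convergence is automatic since both spectral sequences live in the first quadrant.
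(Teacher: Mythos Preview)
Your proof is correct and follows essentially the same route as the paper: resolve $B$ by direct sums of special-AP-type bifunctors via Lemmas \ref{lm-exhaust} and \ref{lm-resolution}, set up the two hypercohomology spectral sequences, and compare them via the morphism induced by $\Delta^*$. The only difference is that you spell out the reduction from direct sums of special-AP-type to single special-AP-type summands via the sum-to-product behaviour of $\Ext$, which the paper leaves implicit.
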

\begin{proof}
By lemmas \ref{lm-exhaust} and \ref{lm-resolution},  $B$ has a resolution $Q$ by direct sums of bifunctors of special AP-type. We have two spectral sequences:
\begin{align*}
&E_1^{p,q}=\Ext_{k[\A\times \A]}^q(Q_p,C)\Rightarrow \Ext^{p+q}_{k[\A\times\A]}(B,C)\;,\\
&'E_1^{p,q}=\Ext_{k[\A]}^q(\Delta^*Q_p,\Delta^*C))\Rightarrow \Ext^{p+q}_{k[\A]}(\Delta^*B,\Delta^* C)\;,
\end{align*}
and $\Delta^*$ induces a morphism of spectral sequences. So it suffices to prove that $\Delta^*$ is an isomorphism when $B=Q_p$, hence when $B$ is a functor of special-AP-type. 
\end{proof}

\subsubsection*{Step 2: Proof for bifunctors of special-AP-type} The proof relies on three vanishing lemmas. Lemmas \ref{lm-ssstd} and \ref{lm-vanish-pol} are quite general, and we will also use them later in the article, in the proof of proposition \ref{pr-vanish-Kuhn} and theorem \ref{thm-poly-excis}.

\begin{lm}\label{lm-ssstd}
Let $\C$ and $\D$ be two small categories and let $B$ and $C$ be two objects of $k[\C\times\D]\Md$. There is a first quadrant spectral sequence 
\[E^{p,q}_2=\Ext^p_{k[\D^\op\times \D]}(k[\D];E_\C^q)\Rightarrow \Ext^{p+q}_{k[\C\times\D]}(B,C)\,\]
where $k[\D]$ and $E_\C^q$ are the objects of $k[\D^\op\times \D]\Md$ respectively defined by
\begin{align*} &k[\D](x,y)=k[\D(x,y)]\;, && E_\C^q(x,y)=\Ext^q_{k[\C]}(B(-,x),C(-,y))\;.
\end{align*}
In particular, if $E_\C^*=0$ then $\Ext^{*}_{k[\C\times \D]}(B,C)=0$.
\end{lm}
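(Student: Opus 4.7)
The plan is to realise $\Hom_{k[\C\times\D]}(B,-)$ as a composition of two left exact functors and to apply Grothendieck's composite functor spectral sequence. Define
\[G_B : k[\C\times\D]\Md \to k[\D^\op\times\D]\Md, \qquad G_B(C)(x, y) = \Hom_{k[\C]}(B(-,x), C(-,y)),\]
and let $H = \Hom_{k[\D^\op\times\D]}(k[\D], -)$. The end formula identifying morphisms of bifunctors with their ``diagonal traces'' yields $H(G_B(C)) = \int_{x} \Hom_{k[\C]}(B(-,x), C(-,x)) \simeq \Hom_{k[\C\times\D]}(B, C)$, so that $\Hom_{k[\C\times\D]}(B, -) = H \circ G_B$. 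The spectral sequence will then follow from two verifications: that $R^qG_B(C) = E_\C^q$, and that $G_B$ sends injectives to $H$-acyclic objects.

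For the first verification, I would observe that for each $y \in \D$ the restriction functor $\iota_y^* : k[\C\times\D]\Md \to k[\C]\Md$ along $a \mapsto (a,y)$ admits the exact left adjoint $F \mapsto k[\D(y,-)] \otimes F(-)$. Hence $\iota_y^*$ preserves injectives, so any injective resolution $C \to I^\bullet$ restricts for each $y$ to an injective resolution of $C(-,y)$ in $k[\C]\Md$. Taking cohomology objectwise gives $R^q G_B(C)(x,y) = \Ext^q_{k[\C]}(B(-,x), C(-,y)) = E_\C^q(x,y)$.

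The main obstacle is the acyclicity condition. Since $\Ext$ in the second variable commutes with direct products and retracts, and every injective of $k[\C\times\D]\Md$ is a retract of a product of standard injectives, one reduces to a standard injective $I_{(a_0,x_0),M}$. A Yoneda computation then yields
\[G_B(I_{(a_0,x_0),M})(x,y) \;=\; \Hom_k\!\bigl(B(a_0,x) \otimes k[\D(y,x_0)],\, M\bigr),\]
and Lemma \ref{lm-iso-dual} translates $H$-acyclicity into the vanishing of $\Tor_p^{k[\D^\op\times\D]}(Y, k[\D])$ for $p > 0$, where $Y := B(a_0,-) \boxtimes k[\D(-,x_0)]$ is regarded as an object of $\Mdd k[\D^\op\times\D]$. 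To prove this vanishing, I would choose any projective resolution $P_\bullet \to B(a_0,-)$ in $\Mdd k[\D]$ by direct sums of representables; since $k[\D(y,x_0)]$ is a free $k$-module in every degree $y$, the external tensor product $-\boxtimes k[\D(-,x_0)]$ is exact, so $P_\bullet \boxtimes k[\D(-,x_0)] \to Y$ is a projective resolution whose terms are direct sums of standard projectives of the form $k[\D(x,-)]\boxtimes k[\D(-,x_0)] = h^{(x,x_0)}_{k[\D^\op\times\D]^\op}$. The Yoneda formula \eqref{eqn-Yoneda-tens} then identifies the complex $P_\bullet \boxtimes k[\D(-,x_0)] \otimes_{k[\D^\op\times\D]} k[\D]$ with $P_\bullet(x_0)$, whose positive-degree homology vanishes because evaluation at $x_0$ is exact.

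With both hypotheses verified, Grothendieck's composite functor spectral sequence (see \cite[Chap.\,5]{Weibel}) provides the desired first-quadrant spectral sequence with abutment $\Ext^{p+q}_{k[\C\times\D]}(B,C)$. The ``in particular'' statement is then immediate: if $E_\C^* = 0$ the entire $E_2$-page vanishes, forcing the abutment to vanish.
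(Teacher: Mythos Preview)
Your proof is correct. Both arguments are essentially Grothendieck spectral sequences for the composite $\Hom_{k[\D^\op\times\D]}(k[\D],-)\circ\Hom_{k[\C]}(B,-)$, but they handle the abutment identification differently. You work directly: you check that $G_B$ sends injectives to $H$-acyclics by reducing to standard injectives and computing the relevant $\Tor$ over $k[\D^\op\times\D]$. The paper instead establishes the adjunction-type isomorphism
\[\Hom_{k[\D^\op\times\D]}(D,\Hom_{k[\C]}(B,C))\simeq \Hom_{k[\C\times\D]}(B\otimes_{k[\D^\op]}D,C)\]
and then runs \emph{two} spectral sequences in the style of Cartan--Eilenberg \cite[XVI.4]{CE}, one for each side. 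Setting $D=k[\D]$, the second spectral sequence collapses immediately because each $k[\D](-,x)$ is projective in $\Mdd k[\D]$, which pins down the common abutment as $\Ext^*_{k[\C\times\D]}(B,C)$; the first spectral sequence is then the one in the statement. The paper's route is marginally slicker in that it sidesteps the explicit acyclicity verification (your $\Tor$ computation is morally the same projectivity observation, unpacked), while yours is a more direct application of the standard Grothendieck machinery. One small slip: $B(a_0,-)$ lives in $k[\D]\Md$, not $\Mdd k[\D]$, but your argument uses the correct projectives $k[\D(x,-)]$ regardless.
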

\begin{proof}
We use the notation $\Hom_{k[C]}(B,C):=E^0_\C$.
There is an isomorphism, natural with respect to $B$, $C$, and the object $D$ of $k[\D^\op\times\D]\Md$:
\[\Hom_{k[\D^\op\times \D]}(D,\Hom_{k[\C]}(B,C))\simeq \Hom_{k[\C\times\D]}(B\otimes_{k[\D^\op]}D,C)\;.\]
This isomorphism is the functor analogue of \cite[IX.2 Prop 2.2]{CE}, and we may construct it as follows. Firstly, there is a natural isomorphism when $D$ is a standard projective. Indeed $D(x,y)=k[\D(x,c)]\otimes k[\D(d,y)]$ and the the two sides are naturally isomorphic to $E_\C^0(c,d)$ by the Yoneda lemma. Now the isomorphism extends to every functor $D$ by taking a projective presentation of $D$.

Thus we have two spectral sequences converging to the same abutment (the construction of these spectral sequences is exactly the same as the one given for categories of modules in \cite[XVI.4]{CE}):
\begin{align*}
&\mathrm{I}^{p,q}=\Ext^p_{k[\D^\op\times\D]}(D,E^q_\C)\Rightarrow H^{p+q}\\
&\mathrm{II}^{p,q}= \Ext^p_{k[\C\times\D]}(\Tor^{k[\D^\op]}_q(B,D),C)\Rightarrow H^{p+q}\;.
\end{align*}
If $D=k[\D]$ then for all $x$, $k[\D](-,x)$ is a projective object of $k[\D^\op]\Md$, hence the functor $\Tor^{k[\D^\op]}_q(B,k[\D])$ is zero for positive $q$, and the Yoneda isomorphism  \eqref{eqn-Yoneda-tens} shows that for $q=0$ this functor is isomorphic to $B$. Thus the second spectral sequence collapses at the second page and $H^*=\Ext^*_{k[\C\times\D]}(B,C)$. Hence the first spectral sequence gives the result.
\end{proof}

The next two lemmas use the notion of a reduced functor. A functor of $k[\A]\Md$ is \emph{reduced} if it satisfies $F(0)=0$. In general, we denote by $F^{\red}$ the \emph{reduced part} of a functor $F$ in $k[\A]\Md$. Thus $F^\red$ is a reduced functor such that $F\simeq F(0)\oplus F^\red$ in $k[\A]\Md$. This decomposition is natural with respect to $F$, in particular we have a decomposition of $\Ext$ (since the full subcategory of constant functors is equivalent to $k\Md$, the $\Ext$ between $F(0)$ and $G(0)$ can be indifferently computed in $k[\A]\Md$ or $k\Md$):
$$\Ext^*_{k[\A]}(F,G)\simeq\Ext^*_k(F(0),G(0))\oplus \Ext^*_{k[\A]}(F^\red,G^\red)$$
and a similar decomposition for $\Tor$. 

\begin{lm}\label{lm-vanish-pol}
Let $G:\A^\op\to \mathbb{Z}\Md$ be such that $\Tor^\mathbb{Z}_1(k,G(x))=0$ and $k\otimes_{\mathbb{Z}}G(x)=0$ for all objects $x$ of $\A$, and let $k[G]$ denote its composition with the $k$-linearization functor $k[-]$. For all functors $H$ and for all reduced polynomial functors $F$ and $F'$ we have:
$$\Tor^{k[\A]}_*(k[G]^\red\otimes H,F)=0 = \Ext^*_{k[\A^\op]}(k[G]^\red\otimes H,F')\;. $$
\end{lm}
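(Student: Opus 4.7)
\emph{Plan.} The strategy is to exhibit $k[G]^\red\otimes H$ as the target of a resolution whose terms are tensor products of reduced functors, and then appeal to Proposition \ref{prop-propfctpol}. I focus on the $\Tor$ statement; the $\Ext$ statement follows by the same method, as Lemma \ref{lm-iso-dual} combined with Proposition \ref{prop-Tor-Ext} shows that the two vanishings are equivalent upon dualizing against an injective cogenerator of $k\Md$, noting that $\Hom_k(-,M)$ preserves reduced polynomial functors.

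Fix a reduced polynomial functor $F$ of degree less than $d$ and a functor $H$. The crucial input is the reduced bar complex of $G(x)$. Under the hypotheses $k\otimes_\mathbb{Z} G(x)=0$ and $\Tor_1^\mathbb{Z}(k,G(x))=0$, Lemma \ref{lm-EML-vanish} applied with $n=1$ forces $H^\simpl_i(K(G(x),1);k)=0$ for every $i>0$. Modeling $K(G(x),1)$ by the bar construction, this translates into the exactness in $k[\A^\op]\Md$ of
\[\cdots\to (k[G]^\red)^{\otimes n}\to \cdots\to (k[G]^\red)^{\otimes 2}\to k[G]^\red\to 0\;,\]
whose last arrow is (up to sign) induced by multiplication in the group algebra $k[G(x)]$. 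Since $(k[G(x)]^\red)^{\otimes n}$ is free as a $k$-module at each object $x$, the short exact sequences extracted from this resolution are $k$-split pointwise, so tensoring over $k$ with any functor preserves exactness.

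I then prove the general statement $S(m)$: for every integer $m\ge 1$ and every functor $H$, $\Tor^{k[\A]}_*((k[G]^\red)^{\otimes m}\otimes H, F)=0$. The lemma is the case $m=1$. The base case is $m\ge d$, where Proposition \ref{prop-propfctpol} applies directly since $(k[G]^\red)^{\otimes m}\otimes H$ contains at least $d$ reduced tensor factors. The induction step is downward on $m$: assuming $S(m')$ for every $m'>m$, tensor the resolution above with $(k[G]^\red)^{\otimes(m-1)}\otimes H$ to obtain a resolution $P_\bullet$ of $(k[G]^\red)^{\otimes m}\otimes H$ whose $n$-th term equals $(k[G]^\red)^{\otimes(n+m+1)}\otimes H$. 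Each such term has strictly more than $m$ reduced tensor factors, so the induction hypothesis kills its $\Tor$ with $F$. The hyper-$\Tor$ spectral sequence $E^1_{p,q}=\Tor^{k[\A]}_q(P_p,F)$ attached to this resolution therefore has a vanishing $E^1$-page, and it converges to $\Tor^{k[\A]}_*((k[G]^\red)^{\otimes m}\otimes H, F)$, which must then vanish as well.

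The main obstacle will be to verify rigorously that the reduced bar complex, read as a complex of functors in $k[\A^\op]\Md$, is indeed exact; this amounts to promoting the pointwise vanishing provided by Lemma \ref{lm-EML-vanish} to a functorial statement, and to keeping careful track of the $k$-split short exact sequences so that exactness remains intact after tensoring with $H$. Once these technical checks are in place, the downward induction together with the spectral sequence argument is essentially mechanical.
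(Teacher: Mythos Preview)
Your approach is the paper's: reduced bar complex (exact by Lemma~\ref{lm-EML-vanish}), tensor with $(k[G]^\red)^{\otimes m-1}\otimes H$, downward induction via the hyper-$\Tor$ spectral sequence, base case from Proposition~\ref{prop-propfctpol}. Two small points deserve attention.

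First, the identification of $k[G]^\red$ with the augmentation ideal of $k[G]$---which is what the bar complex actually produces---requires $G(0)=0$. In general $k[G]^\red$ is the complement of $k[G(0)]$, not of $k$. The paper fixes this in one line: $k[G]\simeq k[G(0)]\otimes k[G^\red]$, so $k[G]^\red$ is a (free) direct sum of copies of $k[G^\red]^\red$, and one may assume $G$ reduced from the start. You should insert this reduction before invoking the bar complex.

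Second, your duality sentence for the $\Ext$ half is not quite right: Lemma~\ref{lm-iso-dual} lets you deduce $\Ext^*_{k[\A^\op]}(k[G]^\red\otimes H,\Hom_k(F'',M))=0$ from the $\Tor$ vanishing, but an arbitrary reduced polynomial $F'$ is not of the form $\Hom_k(F'',M)$. The paper simply says ``the proof of the $\Ext$-vanishing is similar,'' meaning one reruns the same bar-complex and spectral-sequence argument with $\Ext$ in place of $\Tor$; Proposition~\ref{prop-propfctpol} supplies the base case on the $\Ext$ side as well.
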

\begin{proof}
We prove the $\Tor$-vanishing, the proof of the $\Ext$-vanishing is similar. The functor $k[G]$ is isomorphic to $k[G(0)]\otimes k[G^\red]$, hence to a direct sum of copies of $k[G^\red]$. Therefore, to prove the vanishing, we may assume that $G$ is reduced.  

Let $K$ be the kernel of the augmentation $\epsilon:k[G]\to k$, such that $\epsilon(\sum\lambda_f[f])=\sum \lambda_f$. Then $k[G]^\red=K$.
By lemma \ref{lm-EML-vanish}, the hypotheses on $G(x)$ imply that this abelian group has trivial homology with coefficients in $k$. Thus the reduced normalized bar construction of $k[G(x)]$ yields an exact sequence 
$$ \dots\to K^{\otimes i+1}\to K^{\otimes i}\to \dots \to K\to 0$$ 
in $\Mdd k[\A]$. Since $K$ has $k$-free values, this complex becomes a \emph{split} exact complex of $k$-modules after evaluation on every object $x$ or $\A$. Therefore, if we tensor this sequence by $K^{\otimes r-1}\otimes H$ for a positive integer $r$, we obtain an exact complex with associated hypercohomology spectral sequence:
$$E^1_{s,t}(r)=\Tor_t(K^{\otimes s+r+1}\otimes H,F)\Rightarrow \Tor_t^{k[\A]}(K^{\otimes r}\otimes H,F)\;.$$
Now we can prove that $\Tor_*^{k[\A]}(K^{\otimes r}\otimes H,F)=0$ for all positive integers $r$. Since $K^{\otimes r}\otimes H$ is the direct sum of $K^{\otimes r}\otimes H^\red$ and $K^{\otimes r}\otimes H(0)$, this is true if $r> d$ by lemma \ref{lm-EML-vanish}. Now if this is true for a given $r$, then $E^1_{*,*}(r-1)=0$, hence $\Tor_*^{k[\A]}(K^{\otimes r-1}\otimes H,F)=0$. The result follows.
\end{proof}

\begin{lm}\label{lm-annul}
Let $A$, $P$, and $F$ be three objects of $k[\A]\Md$. Assume that $A$ is antipolynomial and that $P$ is polynomial. Then $\Ext^*_{k[\A]}(A\otimes F,P)= 0$ if $A$ is reduced, and $\Ext^*_{k[\A]}(P\otimes F,A)= 0$ if $P$ is reduced.
\end{lm}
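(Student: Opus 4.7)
The plan is to handle both parts by resolving $A$ (the antipolynomial functor) and feeding the resolution into a hyper-Ext spectral sequence whose $E_1$-page will vanish by (a suitable variant of) lemma~\ref{lm-vanish-pol}. As a preliminary reduction, I split $P=P(0)\oplus P^\red$ (for part~1) or $A=A(0)\oplus A^\red$ (for part~2), and note that the adjunction between evaluation at $0$ (which is exact) and the constant-functor functor gives $\Ext^*_{k[\A]}(X,\underline{M})\cong \Ext^*_k(X(0),M)$, which vanishes for $X$ reduced. Since $A\otimes F$ and $P\otimes F$ are both reduced under the respective hypotheses, I may assume that $A$ and $P$ are both reduced. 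Writing $A=\pi_\I^*A'$ with $A'$ reduced in $k[\A/\I]\Md$ and $\I$ a $k$-cotrivial ideal, I will use throughout that for each object $z$ of $\A/\I$ the finite abelian group $\A/\I(z,y)$ satisfies $k\otimes_\mathbb{Z}\A/\I(z,y)=0$ and, consequently (by Smith normal form for finite abelian groups), also $\Tor_1^\mathbb{Z}(k,\A/\I(z,y))=0$, which is precisely the input required to apply lemma~\ref{lm-vanish-pol}.

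For part~1, I take a projective resolution of $A'$ in $k[\A/\I]\Md$ by direct sums of the reduced standard projectives $k[\A/\I(z,-)]^\red$. These have $k$-free objectwise values, so pulling back along $\pi_\I$ and then tensoring objectwise with $F$ preserves exactness, yielding a (non-projective) resolution $P_\bullet\otimes F\to A\otimes F$ in $k[\A]\Md$. The associated hyper-Ext spectral sequence $E_1^{p,q}=\Ext^q_{k[\A]}(P_p\otimes F,P)\Rightarrow \Ext^{p+q}_{k[\A]}(A\otimes F,P)$ has its $E_1$-page built from terms $\Ext^q_{k[\A]}(k[G_z]^\red\otimes F,P)$ with $G_z(y)=\A/\I(z,y)$, all of which vanish by the covariant version of lemma~\ref{lm-vanish-pol} (obtained by applying that lemma with $\A$ replaced by $\A^\op$).

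For part~2, I dualize: I take a reduced injective coresolution of $A'$ in $k[\A/\I]\Md$ whose terms are products of $\Hom_k(N_x^\red,M)$, where $N_x^\red(y)=k[\A/\I(y,x)]^\red$ is a contravariant antipolynomial functor and $M$ is an injective cogenerator of $k\Md$. Pulling back along $\pi_\I$ and applying a hyper-Ext spectral sequence combined with lemma~\ref{lm-iso-dual} reduces the problem to showing $\Tor^{k[\A]}_*(N_x^\red,P\otimes F)=0$. Here I invoke the sum-diagonal adjunction of example~\ref{ex-sum-diagonal}: writing $P\otimes F=\Delta^*(P\boxtimes F)$, this $\Tor$ becomes $\Tor^{k[\A^{\times 2}]}_*(\Sigma^*N_x^\red,P\boxtimes F)$. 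The crucial structural identity is $\Sigma^*N_x\cong N_x\boxtimes N_x$ as bifunctors, which follows from the biproduct decomposition $\A/\I(y\oplus z,x)\cong \A/\I(y,x)\oplus \A/\I(z,x)$ combined with the $k$-module isomorphism $k[A\oplus B]\cong k[A]\otimes_k k[B]$ (from the set-theoretic identification of the underlying bases). Subtracting off the constant summand yields
\[\Sigma^*N_x^\red \;\cong\; \underline{k}\boxtimes N_x^\red\,\oplus\, N_x^\red\boxtimes\underline{k}\,\oplus\, N_x^\red\boxtimes N_x^\red,\]
a direct sum of three external tensor products. A K\"unneth-type argument using $k$-free projective resolutions of $P$ and $F$ in $k[\A]\Md$ then reduces $\Tor^{k[\A^{\times 2}]}(Y\boxtimes Y',P\boxtimes F)$ for each summand to a product involving the factor-$\Tor$ groups; the ``$P$-side'' factor vanishes in all three cases, since $\Tor^{k[\A]}_*(\underline{k},P)=0$ because $\underline{k}=h^0_{k[\A]^\op}$ is already a standard projective (the set $\A(y,0)$ being a singleton) and $P(0)=0$, whereas $\Tor^{k[\A]}_*(N_x^\red,P)=0$ follows directly from lemma~\ref{lm-vanish-pol} with trivial auxiliary factor.

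The main obstacle I anticipate is the K\"unneth-type argument in part~2 over a commutative ring $k$ not assumed to be a field (proposition~\ref{prop-Kunneth-tens} is stated only over fields). My strategy is to exploit that standard projective resolutions in $k[\A]\Md$ have $k$-free values: the resulting complexes $Y\otimes_{k[\A]}Q_\bullet$ have $k$-free terms (computed via the Yoneda isomorphism as direct sums of the values of $Y$ on representing objects, and $N_x^\red$ evaluated on any object is $k$-free), so once these complexes are acyclic (which occurs precisely when $\Tor^{k[\A]}_*(Y,P)=0$), tensoring them over $k$ with any complex produces an exact total complex by the standard dimension-shifting argument for flat modules, delivering the desired vanishing.
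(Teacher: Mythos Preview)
Your argument for part~1 is correct and in fact slightly more direct than the paper's: the paper first uses sum-diagonal adjunction and lemma~\ref{lm-ssstd} to strip off the factor $F$ (reducing to $\Ext^*_{k[\A]}(A,P)=0$), and only then resolves $A$ and invokes lemma~\ref{lm-vanish-pol} with trivial auxiliary factor $H=k$. You skip that reduction by resolving $A$ immediately and applying lemma~\ref{lm-vanish-pol} with $H=F$. Both routes are fine.

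Your argument for part~2, however, has a genuine gap precisely where you anticipated trouble. You correctly show that the complex $(Y\otimes_{k[\A]}Q)\otimes_k(Y'\otimes_{k[\A]}Q')=(Y\boxtimes Y')\otimes_{k[\A^{\times 2}]}(Q\boxtimes Q')$ is acyclic, because $Y\otimes_{k[\A]}Q$ is acyclic with $k$-free terms. But this total complex does \emph{not} compute $\Tor^{k[\A^{\times 2}]}_*(Y\boxtimes Y',P\boxtimes F)$: for that you would need $Q\boxtimes Q'$ to be a projective resolution of $P\boxtimes F$, and evaluating at $(a,b)$ shows its homology is $\Tor^k_*(P(a),F(b))$, which need not be concentrated in degree~$0$ over a general ring $k$. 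If instead you resolve the other argument by $S\boxtimes S'$ with $S\to Y$, $S'\to Y'$ (this \emph{is} a resolution since $Y,Y'$ have $k$-free values), you get $H_*\big((S\otimes_{k[\A]}P)\otimes_k(S'\otimes_{k[\A]}F)\big)$, but now neither factor has $k$-flat terms and the acyclicity of $S\otimes_{k[\A]}P$ no longer propagates through the tensor product.

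The paper's route sidesteps this entirely by staying on the $\Ext$ side: sum-diagonal adjunction gives $\Ext^*_{k[\A]}(P\otimes F,A)\simeq\Ext^*_{k[\A\times\A]}(P\boxtimes F,\Sigma^*A)$, and then lemma~\ref{lm-ssstd} reduces to showing $\Ext^*_{k[\A]}(P\otimes F(x),A(-\oplus y))=0$. Since $P\otimes F(x)$ is again a reduced polynomial functor (tensoring with the constant $k$-module $F(x)$ preserves both properties), this is the same statement with $F$ replaced by a constant, and now the injective coresolution of $A$ together with lemma~\ref{lm-iso-dual} reduces to $\Tor^{k[\A]}_*(N_z^\red,P')=0$ for $P'$ reduced polynomial, which is lemma~\ref{lm-vanish-pol} with $H=k$. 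No K\"unneth formula is needed.
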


\begin{proof}
We prove the first $\Ext$-vanishing, the proof of second one is similar. 
By sum-diagonal adjunction $\Ext^*_{k[\A]}(A\otimes F,P)$ is isomorphic to 
$\Ext^*_{k[\A\times\A]}(A\boxtimes F,P)$, where $B(x,y)=P(x\oplus y)$ (see example \ref{ex-sum-diagonal}). For all $x$, the functor $x\mapsto P(x\oplus y)$ is polynomial, hence by lemma \ref{lm-ssstd}, it suffices to prove that $\Ext^*_{k[\A]}(A,P)=0$ for all reduced antipolynomial functors $A$ and for all polynomial functors $P$. Since $A$ is reduced, we may as well assume that $P$ is reduced. 
Since $A=\pi^*_\I A'$ for some $k$-cotrivial ideal $\I$ and some functor $A'$ in $k[\A/\I]\Md$, the first base change spectral sequence of proposition \ref{pr-base-change-ss} shows that is suffices to prove that $\Ext^*_{k[\A]}( k[\A/\I(-,x)],P)=0$ for all $x$ in $\A$. The latter follows from  lemma \ref{lm-vanish-pol} (with $G=\A/\I(-,x)$, $H=k$ and $F=P$).  
\end{proof}

The next proposition finishes the proof of theorem \ref{thm-AP-type-bis}.
\begin{pr}
Let $B$ be a bifunctor of special-AP-type, and let $C$ be a bifunctor of AP-type. Restriction along the diagonal yields an isomorphism:
\[\Delta^*:\Ext^*_{k[\A\times\A]}(B,C)\xrightarrow[]{\simeq} \Ext^*_{k[\A]}(\Delta^*B,\Delta^*C)\;.\]
\end{pr}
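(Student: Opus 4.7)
The plan is to use the sum-diagonal adjunction to convert the desired isomorphism into a vanishing of $\Ext$-groups, and then to verify this vanishing by means of the three lemmas. Specifically, applying the sum-diagonal adjunction of Example \ref{ex-sum-diagonal} yields a natural isomorphism
\[
\Ext^*_{k[\A]}(\Delta^*B,\Delta^*C) \simeq \Ext^*_{k[\A\times\A]}\bigl(B,\,\Sigma^*\Delta^*C\bigr),
\]
under which the restriction map $\Delta^*$ is identified with the map induced by the unit $\eta_C\colon C\to\Sigma^*\Delta^*C$, given at $(x,y)$ by $C(\iota_x,\iota_y)\colon C(x,y)\to C(x\oplus y,x\oplus y)$.

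Next I would exploit the canonical biproduct decomposition $(x\oplus y,x\oplus y) = (x,y)\oplus (y,x)$ in $\A\times\A$ and the Eilenberg--Mac Lane cross-effect decomposition applied to $C$: the unit $\eta_C$ becomes a split monomorphism with cokernel
\[
K(x,y) = C^{\red}(y,x)\,\oplus\,\cross_2 C^{\red}\bigl((x,y);(y,x)\bigr),
\]
where $C^{\red}$ denotes the reduced part of $C$. Thus the proposition reduces to showing that $\Ext^*_{k[\A\times\A]}(A\boxtimes F, K) = 0$.

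I would prove this vanishing summand by summand, applying Lemma \ref{lm-ssstd} with $\mathcal C = \mathcal D = \A$ to reduce to the vanishing of
\[
E^q(x,y) = \Ext^q_{k[\A]}\bigl(A\otimes F(x),\,K'(-,y)\bigr)
\]
for each summand $K'$ of $K$. After splitting $A = k\oplus A^{\red}$ (with $A^{\red}$ reduced antipolynomial, since $A(0)=k$) and $F = F(0)\oplus F^{\red}$, the pieces involving $A^{\red}$ vanish by Lemma \ref{lm-annul}, which annihilates $\Ext$ between a reduced antipolynomial and a polynomial functor. The remaining pieces, involving the constant summand $k\subset A$, reduce via the adjunction between the constant embedding and evaluation at zero to $\Ext^*_k$ with the value of $K'(-,y)$ at zero; a secondary cross-effect decomposition of $C$ separating its bireduced piece from the ``reduced only in the first variable'' piece shows that these residual contributions also vanish.

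The main obstacle is the careful bookkeeping of the constant pieces. Lemma \ref{lm-annul} applies only to a reduced antipolynomial (or reduced polynomial) argument, and the various constant summands of $A$, $F$, and $C$ produce cross terms which must be shown either to vanish by opposite-variance arguments or to match up on the two sides of $\Delta^*$. The cleanest organization is to decompose $C$ in advance into its four cross-effect quadrants, via the Eilenberg--Mac Lane decomposition in each variable separately, and to verify the isomorphism one quadrant at a time; in each case only the ``diagonal'' quadrant contributes, and the map $\Delta^*$ is manifestly an isomorphism there.
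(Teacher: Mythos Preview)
Your overall plan---use the sum-diagonal adjunction to rewrite $\Delta^*$ as pushforward along $\eta_C\colon C\to \Sigma^*\Delta^*C$, then kill the cokernel---is sound, and it is \emph{dual} to what the paper does: the paper applies the adjunction the other way, identifying $\Delta^*$ with pullback along the projection $q\colon \Sigma^*\Delta^*B\twoheadrightarrow B$. The crucial difference is that the paper exploits the exponential property $A(x\oplus y)\simeq A(x)\otimes A(y)$ (specific to $A=k[\A/\I(z,-)]$). This gives a decomposition $\Sigma^*\Delta^*B\simeq B\oplus X\oplus Y$ in which each of $X,Y$ is, as a functor of one of the variables, of the form (reduced antipolynomial or reduced polynomial)$\,\otimes\,$(arbitrary); that is exactly the shape to which Lemma~\ref{lm-annul} applies, and the vanishing follows in one stroke via Lemma~\ref{lm-ssstd}.

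Your route, decomposing $C$ instead, runs into a genuine difficulty at the cross-effect summand $K_2(x,y)=\cross_2 C\bigl((x,y),(y,x)\bigr)$. Fixing $y$ and viewing $K_2(-,y)$ as a functor of the first variable, you get a direct summand of $x\mapsto C(x\oplus y,\,x\oplus y)$: the first slot of $C$ makes this antipolynomial in $x$, but the second slot makes it polynomial in $x$, and the cross-effect mixes the two. So $K_2(-,y)$ is neither polynomial nor antipolynomial, and Lemma~\ref{lm-annul} does not apply to $\Ext^*_{k[\A]}(A^{\red}\otimes F(y),\,K_2(-,y))$ as you claim. The same obstruction arises if you fix the other variable. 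Your suggested remedy---decomposing $C$ into its four ``quadrants'' beforehand---does eliminate the constant-piece bookkeeping, but it does \emph{not} dissolve this obstruction: for the bireduced quadrant $C_{11}$ the cross-effect piece $K_2^{11}$ is still neither polynomial nor antipolynomial in either variable. One can push through by further decomposing $K_2^{11}$ using separate cross-effects of $C_{11}$ in each of its two variables, but this is substantially more work than your sketch indicates, and at that point you have essentially reinvented the paper's decomposition of $\Sigma^*\Delta^*B$ on the other side.
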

\begin{proof}
We have $B=A\boxtimes F$ and $\Delta^*B=A\otimes F$, with $F$ polynomial of degree less than $d$, and $A=k[\A/\I(z,-)]$ for a $k$-cotrivial ideal $\I$ and an object $z$ of $\A$.

We have $A\simeq k\oplus A^\red$. We also have an isomorphism $F(x\oplus y)\simeq F(x)\oplus G(x,y)$ for some bifunctor $G$. Since $A(x\oplus y)\simeq A(x)\otimes A(y)$, we have a decomposition 
\[(A\otimes F)(x\oplus y)\simeq \big(A(x)\otimes F(y)\big) \;\oplus \; X(x,y)\;\oplus Y(x,y)\]
where the bifunctors $X$ and $Y$ are defined by
\begin{align*}
&X(x,y):= A(y)\otimes G(x,y)\;,&&Y(x,y):= A(x)\otimes A^\red(y)\otimes F(x\oplus y)\;.
\end{align*}
Moreover, let $q:A\boxtimes F \oplus X\oplus Y \twoheadrightarrow A\boxtimes F$ denote the canonical projection. Then we readily check from the expression of the adjunction isomorphism $\alpha$ given in example \ref{ex-sum-diagonal} that the following diagram commutes:
\[
\begin{tikzcd}[column sep=large]
\Ext^*_{k[\A\times\A]}(A\boxtimes F,C)\ar{d}[swap]{\Ext^*_{k[\A\times\A]}(q,C)}\ar{r}{\Delta^*} & \Ext^*_{k[\A]}(A\otimes F,\Delta^*C)\ar{dl}{\alpha}[swap]{\simeq}\\
\Ext^*_{k[\A\times\A]}(A\boxtimes F \oplus X\oplus Y,C)
\end{tikzcd}\;.
\] 
Therefore, it suffices to prove that there is no nonzero $\Ext$ between $X\oplus Y$ and $C$.  

But for all objects $y$ and $y'$, there is no nonzero $\Ext$ between $X(x,-)$ and $C(x,-)$ by lemma \ref{lm-annul} since $C(-,y')$ is antipolynomial and $X(-,y)$ is polynomial and such that $X(0,y)=0$. Hence there is no nonzero $\Ext$ between $X$ and $C$ by lemma \ref{lm-ssstd}. Similarly, there is no nonzero $\Ext$ between $Y$ and $C$, whence the result.
\end{proof}

\section{Excision in functor homology}\label{sec-excis}

The main purpose of this section is to prove theorem \ref{thm-intro-2} in the introduction, in a version which is valid over a commutative ring $k$, see corollary \ref{cor-proof-thm-ex-intro}. In fact, we will derive theorem \ref{thm-intro-2} from a general `excision theorem', which is a functor homology analogue of the excision theorem of Suslin and Wodzicki in $K$-theory \cite{Suslin-Wodz}, see remark \ref{rk-excis} for further explanations relative to this analogy. We finish the section by a quick computational application to antipolynomial homology.

\subsection{The excision theorem}

\begin{thm}[excision]\label{thm-magique-general}Let $k$ be a commutative ring and let $\phi:\A\to \B$ be an additive functor between two small additive categories, such that $\phi^*:k[\B]\Md\to k[\A]\Md$ is fully faithful.  
For all positive integers $e$, the following assertions are equivalent.
\begin{enumerate}[(1)]
\item\label{item-1-ex} The restriction functor $\phi^*$ is $e$-excisive. 
\item\label{item-2-ex} For all objects $x$ and $y$ of $\B$ we have:
\begin{align*}
\bigoplus_{0\le i< e}k \otimes_{\mathbb{Z}}\pi_iA(x,y) =0 \text{ and }
\bigoplus_{0\le i< e-1}\mathrm{Tor}^{\mathbb{Z}}(k,\pi_iA(x,y))=0\;,
\end{align*}
where $A(x,y)$ is the simplicial abelian group $A(x,y):=Q^x\otimes_\A \phi^*h_\B^y$, where $Q^x$ is a projective simplicial resolution of the functor $\phi^*h_{\B^\op}^x$ in $\Mdd\A$.
\end{enumerate}
\end{thm}
\begin{rk}
A key point in theorem \ref{thm-magique-general} is that it connects two different contexts. Namely, condition \eqref{item-1-ex} deals with the category $k[\A]\Md$ of \emph{all} functors, while the simplicial abelian group $A(x,y)$ appearing in the technical condition \eqref{item-2-ex} are constructed from the category $\A\Md$ of \emph{additive} functors from $\A$ to the category of abelian groups $\mathbb{Z}\Md$.
\end{rk}

In order to prove theorem \ref{thm-magique-general} we need the following general lemma. It is well-known to experts, but we do not know any written reference for it.

\begin{lm}\label{lm-kcrochet-tens}
Let $A:\A^\mathrm{op}\to \mathbb{Z}\Md$ and $B:\A\to \mathbb{Z}\Md$ be two additive functors, and let $k[A]$ and $k[B]$ denote the composition of these functors with the $k$-linearization functor $k[-]$. There is an isomorphism of $k$-modules, natural with respect to $A$ and $B$:
$$k[A]\otimes_{k[\A]} k[B]\simeq k[A\otimes_\A B]\;.$$
\end{lm}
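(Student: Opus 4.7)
The plan is to construct a natural $k$-linear map $\phi\colon k[A] \otimes_{k[\A]} k[B] \to k[A \otimes_\A B]$, defined on generators by $[a] \otimes [b] \mapsto [a \otimes b]$ for $a \in A(x), b \in B(x)$, and to prove it is an isomorphism by giving explicit set-level descriptions of both sides. Since the $k$-linearization functor $k[-]\colon \mathbf{Set} \to k\Md$ is a left adjoint, it preserves all colimits; writing the coend that defines $k[A] \otimes_{k[\A]} k[B]$ as a coequalizer in $k\Md$ and using the identities $k[S] \otimes_k k[T] = k[S \times T]$ and $k[\coprod_i S_i] = \bigoplus_i k[S_i]$, one identifies $k[A] \otimes_{k[\A]} k[B]$ with the free $k$-module $k[E]$, where $E$ is the set $\coprod_x A(x) \times B(x)$ modulo the equivalence relation $\sim$ generated by $(A(f)(a), b) \sim (a, B(f)(b))$ for every morphism $f\colon x \to y$ in $\A$. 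On the other hand, $k[A \otimes_\A B]$ is the free $k$-module on the underlying set of the abelian group $A \otimes_\A B$. The map $\phi$ is induced by the natural set map $E \to A \otimes_\A B$, $[(a,b)] \mapsto a \otimes b$, which is well-defined on $\sim$-classes, and it suffices to show that this set map is bijective.

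For surjectivity I would exploit the additivity of $A$ and $B$ together with the biproducts in $\A$. An arbitrary element of $A \otimes_\A B$ is a finite sum $\sum_{i=1}^n a_i \otimes b_i$ with $a_i \in A(x_i)$, $b_i \in B(x_i)$; at the biproduct $x = x_1 \oplus \cdots \oplus x_n$, additivity yields $A(x) \simeq \bigoplus_i A(x_i)$ and $B(x) \simeq \bigoplus_i B(x_i)$, so one forms a single pair $\tilde{a} = (a_1, \ldots, a_n)$, $\tilde{b} = (b_1, \ldots, b_n)$. Summing the $R$-balance relations for the projections $\pi_i\colon x \to x_i$ (which give $A(\pi_i)(a_i) \otimes \tilde{b} = a_i \otimes B(\pi_i)(\tilde{b}) = a_i \otimes b_i$ in $A \otimes_\A B$) and using that $\sum_i A(\pi_i)(a_i) = \tilde{a}$ in $A(x)$ shows that $\tilde{a} \otimes \tilde{b} = \sum_i a_i \otimes b_i$, so the given element lies in the image of $E \to A \otimes_\A B$.

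For injectivity, I expect this to be the main obstacle. The key observation is that the biadditivity of $\otimes_\mathbb{Z}$ which is folded into the coend $A \otimes_\A B$ can already be recovered from $R$-balance in $E$, thanks to the biproduct morphisms of $\A$. For instance, the diagonal $\Delta\colon x \to x \oplus x$ yields, via the balance relation, the pair equivalence $(a_1 + a_2, b) \sim ((a_1, a_2), (b, b))$ in $E$, which is exactly the pair-level incarnation of the identity $(a_1+a_2)\otimes b = a_1\otimes b + a_2\otimes b$; the fold map $\mathrm{add}\colon x \oplus x \to x$ handles the second variable symmetrically, giving $((a,a),(b_1,b_2)) \sim (a, b_1 + b_2)$. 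To organize the verification cleanly, I would argue by the universal property: for any $k$-module $N$, morphisms $k[A] \otimes_{k[\A]} k[B] \to N$ correspond to families $(g_x\colon A(x) \times B(x) \to N)_x$ of set functions satisfying the $R$-balance relations, while morphisms $k[A \otimes_\A B] \to N$ are arbitrary set functions $A \otimes_\A B \to N$. Given such a family $g$, one defines the candidate inverse $f\colon A \otimes_\A B \to N$ by $f(v) := g_x(a, b)$ for any presentation $v = a \otimes b$ at level $x$ (which exists by the surjectivity step). Any two such presentations are connected by a chain of $\sim$-equivalences in $E$ obtained by iterating the $R$-balance and the biproduct-morphism identities above, and $g$ is constant along any such chain; this yields well-definedness of $f$ and hence the inverse to $\phi$, finishing the proof.
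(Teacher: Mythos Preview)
Your approach is genuinely different from the paper's.  The paper first verifies that $\theta_{A,B}$ is an isomorphism when $B$ is representable (via the Yoneda isomorphisms), extends to arbitrary projectives by filtered colimits, and then handles a general $B$ by choosing a simplicial projective resolution $P\to B$ in $\A\Md$ and invoking right exactness of tensor products together with the relative Hurewicz theorem to compare $\pi_0$.  Your route is purely combinatorial: you identify $k[A]\otimes_{k[\A]}k[B]$ with $k[E]$ for an explicit quotient set $E$ and try to check directly that the set map $q:E\to A\otimes_\A B$ is a bijection.  This is more elementary and would make the lemma independent of the simplicial machinery of Section~\ref{sec-simplicial}; the paper's method, on the other hand, reuses tools already in place for the rest of the argument.

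Your identification $k[A]\otimes_{k[\A]}k[B]\simeq k[E]$ and your surjectivity argument are correct.  The injectivity step, however, has a genuine gap.  You write that ``any two such presentations are connected by a chain of $\sim$-equivalences in $E$'', but this sentence \emph{is} the injectivity of $q$; asserting it inside the Yoneda-style argument is circular.  Knowing that $(a_1+a_2,b)\sim((a_1,a_2),(b,b))$ and $((a,a),(b_1,b_2))\sim(a,b_1+b_2)$ does not by itself produce a chain from $(a,b)$ to $(a',b')$ given only $a\otimes b=a'\otimes b'$ in $A\otimes_\A B$: the latter equation hides an arbitrary $\mathbb{Z}$-linear combination of balance relations, and you have not explained how to convert that into a chain of pair-level equivalences.

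The fix uses exactly your $\Delta/\nabla$ observations, but organised differently.  Endow $E$ with an abelian group structure by
\[
[(a,b)]+[(a',b')]\;:=\;[((a,a'),(b,b'))]\ \text{at }x\oplus x',
\]
checking well-definedness via balance for $f\oplus\id_{x'}$, associativity and commutativity via the associator and swap isomorphisms, and inverses and zero via balance for the diagonal and the zero morphisms.  Your $\Delta$ and $\nabla$ identities then say precisely that each set map $A(x)\times B(x)\to E$, $(a,b)\mapsto[(a,b)]$, is $\mathbb{Z}$-bilinear; since it is balanced by definition of $\sim$, these maps assemble into a group homomorphism $r:A\otimes_\A B\to E$ with $r(a\otimes b)=[(a,b)]$, visibly inverse to $q$.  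With this correction your argument is complete and gives a clean elementary alternative to the paper's homotopical proof.
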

\begin{proof}
For all objects $x$ of $\A$, we let $\theta_{A,B,x}:k[A(x)]\otimes k[B(x)]\to k[A\otimes_{\A}B]$ be the $k$-linear map
such that $\theta_{A,B,x}(s\otimes t)=\llbracket s\otimes t\rrbracket$ for all $s$ in $A(x)$ and all $t\in B(x)$. (The brackets refer to the class of an element of $A(x)\otimes B(x)$ in the quotient $A\otimes_{\A}B$.) These maps $\theta_{A,B,x}$ induce a $k$-linear map, natural in $A$ and $B$:
\[\theta_{A,B}:k[A]\otimes_{k[\A]}k[B]\to k[A\otimes_{\A}B]\;.\]

Assume that $B=\A(x,-)$, hence $k[B]$ is a standard projective in $k[\A]\Md$. One checks on the explicit formulas that the composition of $\theta_{A,B}$ with the $k$-linearization of the Yoneda isomorphism $A\otimes_{\kA}\A(x,-)\simeq A(x)$ is equal to the Yoneda isomorphism $k[A]\otimes_{k[\A]}k[\A(x,-)]\simeq k[A(x)]$. Thus $\theta_{A,\A(x,-)}$ is an isomorphism. Since every finitely generated projective object of $\A\Md$ is a direct summand of a standard projective, it follows that $\theta_{A,B}$ is an isomorphism for all finitely projective functor $B$ in $\A\Md$.

Every projective functor $\A\Md$ is the filtered colimit of its finitely generated projective subfunctors. Since both the source and the target of $\theta_{A,B}$, viewed as functors of $B$, preserve filtered colimits of monomorphisms, we obtain that $\theta_{A,B}$ is an isomorphism for all projectives $B$. 

Now let $B$ be arbitrary and let $P\to B$ be a projective simplicial resolution of $B$ in $\A\Md$. Then we have a commutative square of simplicial $k$-modules in which the top row is an isomorphism, the bottom row features constant simplicial $k$-modules and the vertical arrows are induced by the simplicial maps $P\to B$:
\[
\begin{tikzcd}
k[A]\otimes_{k[\A]}k[P]\ar{d}\ar{r}{\Theta_{A,P}}[swap]{\simeq}& k[A\otimes_{\A}P]\ar{d}\\
k[A]\otimes_{k[\A]}k[B]\ar{r}{\Theta_{A,B}}& k[A\otimes_{\A}B]
\end{tikzcd}
\;.\]
By right exactness of tensor products and by the relative Hurewicz theorem of proposition \ref{pr-Hur-classical}, the vertical morphisms are isomorphisms in $\pi_0$. Hence $\Theta_{A,B}$ is an isomorphism.
\end{proof}

\begin{proof}[Proof of theorem \ref{thm-magique-general}]
Lemma \ref{lm-kcrochet-tens} yields an isomorphism of simplicial $k$-modules:
\[k[A(x,y)]\simeq k[Q^x]\otimes_{k[\A]}k[\phi^*h_\B^y]\;. \]
We can give a homological interpretation of $k[A(x,y)]$ by inspecting the right hand-side of this isomorphism. Indeed we have:
\begin{align*} &k[\phi^*h_\B^y]=k[\B(y,\phi(-))]=\phi^*h^y_{k[\B]}\;, &&&& k[\phi^*h_{\B^\op}^x]=k[\B(\phi(-),x)]=\phi^*h^x_{k[\B^\op]}\;,
\end{align*}
and $k[Q^x]$ is a simplicial projective resolution of $\phi^*h^x_{k[\B^\op]}$ by the relative Hurewicz theorem of proposition \ref{pr-Hur-classical} (with $e=\infty$). Whence an isomorphism:
\begin{align}\pi_*k[A(x,y)]\simeq \Tor_*^{k[\A]}(\phi^*h^x_{k[\B^\op]},\phi^*h^y_{k[\B]})\;.\label{eqn-iso-excis}
\end{align}
Therefore, it follows from the $k$-local Hurewicz theorem of corollary \ref{cor-Hur} that the vanishing of $\pi_ik[\A(x,y)]$ for $0<i< e$ is equivalent to condition \eqref{item-2-ex} of theorem \ref{thm-magique-general}. Thus, the result follows from the isomorphism \eqref{eqn-iso-excis} and proposition \ref{cor-memechose2}.
\end{proof}

\subsection{Some special cases of the excision theorem}
Condition \eqref{item-2-ex} of theorem \ref{thm-magique-general} looks quite technical. We now investigate concrete situations in which condition \eqref{item-2-ex}, hence theorem \ref{thm-magique-general}, can be reformulated in a nicer way. 

As a first example, condition \eqref{item-2-ex} is always satisfied if the category $\B$ is such that $\B(x,x)\otimes_{\mathbb{Z}} k=0$ for all $x$, hence in this case theorem \ref{thm-magique-general} takes the following form.

\begin{thm}\label{thm-magique-csq1}
Let $\A$ and $\B$ be small additive categories, and let $\phi:\A\to \B$ be a full and essentially surjective additive functor. Assume that for all $x$, $\B(x,x)\otimes_\mathbb{Z}k=0$. Then the restriction functor $\phi^*:k[\B]\Md\to k[\A]\Md$
is $\infty$-excisive.
\end{thm}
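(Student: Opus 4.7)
The plan is to apply the excision theorem \ref{thm-magique-general} with $e=\infty$. Since $\phi$ is full and essentially surjective, the example following Definition-Proposition \ref{cor-memechose2} ensures that $\phi^*:k[\B]\Md\to k[\A]\Md$ is fully faithful, so it only remains to verify condition (2) of theorem \ref{thm-magique-general} for every positive integer $e$. Letting $e\to\infty$, this reduces to showing that for every pair $(x,y)$ of objects of $\B$, every $i\ge 0$ and every $j\in\{0,1\}$, the $k$-module
\[M_j\;:=\;\Tor_j^\mathbb{Z}\bigl(k,\pi_iA(x,y)\bigr)\]
vanishes.

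The key observation is that $\pi_iA(x,y)\simeq \Tor_i^\A(\phi^*h_{\B^\op}^x,\phi^*h_\B^y)$, and that these Tor-groups assemble, as $x$ and $y$ vary in $\B$, into a bi-additive functor $T_i:\B\times\B^\op\to\mathbb{Z}\Md$: indeed, $h_{\B^\op}^{-}:\B\to\Mdd\B$ and $h_\B^{-}:\B^\op\to\B\Md$ are themselves additive in their indexing variables, and $\Tor_i^\A$ preserves this functoriality. Consequently, $T_i(x,y)$ carries a canonical left $\B(x,x)$-module structure given by a $\mathbb{Z}$-bilinear pairing $\B(x,x)\otimes_\mathbb{Z}T_i(x,y)\to T_i(x,y)$, under which $\id_x\in\B(x,x)$ acts as the identity of $T_i(x,y)$.

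Applying $\Tor_j^\mathbb{Z}(k,-)$, the $\B(x,x)$-action transports by naturality to a $\mathbb{Z}$-bilinear map $\B(x,x)\otimes_\mathbb{Z}M_j\to M_j$, which is moreover $k$-linear in $M_j$ since each endomorphism of $T_i(x,y)$ induced by $f\in\B(x,x)$ descends to a $k$-linear endomorphism of $M_j$ (the functor $\Tor_j^\mathbb{Z}(k,-)$ taking values in $k\Md$). Extension of scalars then identifies
\[\B(x,x)\otimes_\mathbb{Z}M_j\;\simeq\;\bigl(\B(x,x)\otimes_\mathbb{Z}k\bigr)\otimes_k M_j\;,\]
and the right-hand side, hence the action, vanishes by hypothesis. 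But $\id_x$ still acts as the identity on $M_j$, so every $m\in M_j$ satisfies $m=0$, whence $M_j=0$.

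Once the bi-additive structure of the Tor-groups $T_i$ is identified, the argument is very short: the only conceptual ingredient is the automatic $k$-linearity acquired by the $\B(x,x)$-action after deriving with $k\otimes_\mathbb{Z}^\Lbf-$, and the idempotent retraction witnessed by $\id_x$ then forces the derived tensor products to vanish uniformly in $i$ and $j$. There is no real technical obstacle here; the theorem is essentially a corollary of \ref{thm-magique-general} packaged with this elementary module-theoretic observation.
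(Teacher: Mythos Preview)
Your proof is correct and takes a genuinely different route from the paper's.

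The paper proceeds by introducing an auxiliary full subcategory $\C\subset\mathbf{Ab}$ (with a case split on $\operatorname{char} k$: if $\operatorname{char} k\ne 0$, the groups on which $\operatorname{char} k$ is invertible; if $\operatorname{char} k=0$, the torsion groups whose element orders are invertible in $k$). This $\C$ is closed under kernels, cokernels and direct sums, and every $A\in\C$ satisfies $k\otimes_\mathbb{Z}A=0=\Tor_1^\mathbb{Z}(k,A)$. The paper then uses a small ring-theoretic lemma (lemma~\ref{exoann}) to show that all $\B(x,y)$ lie in $\C$, hence so does the entire simplicial group $A(x,y)$ and all its homotopy groups.

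Your approach bypasses both the case distinction and lemma~\ref{exoann}: you observe directly that each $\pi_iA(x,y)\simeq\Tor_i^\A(\phi^*h_{\B^\op}^x,\phi^*h_\B^y)$ is naturally a $\B(x,x)$-module, and that this module structure survives on $M_j=\Tor_j^\mathbb{Z}(k,\pi_iA(x,y))$, where it becomes $k$-linear and therefore factors through $(\B(x,x)\otimes_\mathbb{Z}k)\otimes_k M_j=0$; unitality of the action then forces $M_j=0$. This is essentially the same module-theoretic trick that the paper uses only once in lemma~\ref{exoann} (for $j=0$, $i=0$), but you apply it uniformly to all the homotopy groups at once. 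Your argument is shorter and avoids the auxiliary category $\C$; the paper's approach, on the other hand, isolates a convenient Serre subcategory that could be reused elsewhere.
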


\begin{proof}
Let $\C$ denote the following full subcategory of abelian groups:
\begin{itemize}
\item if $\operatorname{char} k\ne 0$, the objects of $\C$ are the groups on which multiplication by $\operatorname{char} k$ is invertible;
\item if $\operatorname{char} k=0$, the objects of $\C$ are the torsion groups whose elements have orders invertible in $k$.
\end{itemize}
This subcategory is stable under kernels, cokernels and direct sums. Moreover, for all $A\in \C$ we have $\Tor^\mathbb{Z}_1(A,k)=0=A\otimes_\mathbb{Z} k$.

Now, the hypothesis on $\B$ implies that $\B(x,y)\in\C$ for all $(x,y)$, thanks to lemma~\ref{exoann} below (applied to the rings $k$ and $\B(y,y)$, using that $\B(x,y)$ is a $\B(y,y)$-module). Thus, for all standard projectives $h^a_{\A^\op}$ in $\Mdd\A$, the abelian group $h^a_{\A^\op}\otimes_\A \phi^*h^y_\B=\B(y,\phi(a))$ belongs to $\C$. Therefore, $A(x,y)$ is a simplicial group in $\C$. In particular, its homotopy groups belong to $\C$. Thus the second assertion of theorem \ref{thm-magique-general} is satisfied for all $e$, whence the result.
\end{proof}

\begin{lm}\label{exoann} Let $R$ and $S$ be rings such that $R\otimes_\mathbb{Z}S=0$. Let us denote $r:=\operatorname{char} R$ and $s:=\operatorname{char} S$. Then $(r,s)\ne (0,0)$. Moreover, if $r\ne 0$, then $r$ belongs to $S^\times$.
\end{lm}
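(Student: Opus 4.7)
The plan is to prove the two conclusions separately, starting with the second (which turns out to be the more delicate).

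Suppose $r\ne 0$. Then $r\cdot 1_R=0$, so $R$ is annihilated by $r$ and is therefore a $\mathbb{Z}/r$-module; base change then gives $R\otimes_\mathbb{Z}S\cong R\otimes_{\mathbb{Z}/r}(S/rS)$. Since $1_R$ has additive order exactly $r$, the subgroup $\mathbb{Z}\cdot 1_R\subset R$ is a $\mathbb{Z}/r$-submodule isomorphic to $\mathbb{Z}/r$. The key ingredient is that the ring $\mathbb{Z}/r$ is self-injective: by the Chinese Remainder Theorem it decomposes as $\prod_p\mathbb{Z}/p^{v_p(r)}$, and each factor, being a finite principal ideal local ring, is self-injective (directly from Baer's criterion applied to its chain of ideals). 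The embedding $\mathbb{Z}/r\hookrightarrow R$ therefore splits as $\mathbb{Z}/r$-modules, and tensoring the splitting over $\mathbb{Z}/r$ with $S/rS$ exhibits $S/rS$ as a direct summand of $R\otimes_\mathbb{Z}S=0$. Hence $S/rS=0$, i.e. $rS=S$; since $r=r\cdot 1_S$ is central in $S$, this equality is equivalent to $r\in S^\times$.

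For the first conclusion I argue by contradiction. Assume $r=s=0$, so that $n\cdot 1_R\ne 0$ for every $n\ne 0$, and similarly in $S$. Using the colimit presentation $\mathbb{Q}=\colim(\mathbb{Z}\xrightarrow{\cdot 2}\mathbb{Z}\xrightarrow{\cdot 3}\cdots)$ one gets $R\otimes_\mathbb{Z}\mathbb{Q}=\colim(R\xrightarrow{\cdot 2}R\xrightarrow{\cdot 3}\cdots)$, in which the image of $1_R$ is nonzero because no $n!\cdot 1_R$ vanishes; thus $R\otimes_\mathbb{Z}\mathbb{Q}$ is a nonzero $\mathbb{Q}$-algebra, and likewise $S\otimes_\mathbb{Z}\mathbb{Q}$. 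Rationalizing the hypothesis $R\otimes_\mathbb{Z}S=0$ now yields
\[0\;=\;R\otimes_\mathbb{Z}S\otimes_\mathbb{Z}\mathbb{Q}\;\cong\;(R\otimes_\mathbb{Z}\mathbb{Q})\otimes_\mathbb{Q}(S\otimes_\mathbb{Z}\mathbb{Q})\;,\]
which is impossible, since the tensor product over the field $\mathbb{Q}$ of two nonzero $\mathbb{Q}$-vector spaces is nonzero.

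The main subtlety lies in the second conclusion: one cannot in general split $\mathbb{Z}\cdot 1_R$ off $R$ as abelian groups (it is not a pure subgroup in general), so purely additive considerations do not suffice, and the self-injectivity of the cyclic ring $\mathbb{Z}/r$ is the technical ingredient that rescues the argument in the torsion case.
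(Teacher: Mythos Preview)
Your proof is correct and follows essentially the same route as the paper's. The paper's argument is terser---``if a tensor product of abelian groups is zero, at least one of them is torsion'' for the first claim, and ``$\mathbb{Z}/r$ is a direct summand of the additive group of $R$'' for the second---but these are precisely your rationalization argument and your self-injectivity argument, respectively, with the details suppressed.

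One small remark on your closing paragraph: once you know $R$ is $r$-torsion, a $\mathbb{Z}/r$-module splitting \emph{is} an abelian group splitting, so your argument does in fact split $\mathbb{Z}\cdot 1_R$ off $R$ additively. The point you are presumably making is that one cannot see this splitting by naive divisibility/purity considerations alone (and certainly not in the characteristic-zero case), and that self-injectivity of $\mathbb{Z}/r$ is the clean way to obtain it; that is exactly what the paper is invoking as well.
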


\begin{proof} If a tensor product of abelian groups is zero, at least one of them is torsion, whence $(r,s)\ne (0,0)$. If $r\ne 0$, then $\mathbb{Z}/r$ is a direct summand of the additive group of $R$, whence $\mathbb{Z}/r\otimes_\mathbb{Z}S=0$, which implies $r\in S^\times$.
\end{proof}

The next two corollaries are direct consequences of theorem \ref{thm-magique-csq1}. Corollary \ref{cor-proof-thm-ex-intro} provides a proof of theorem \ref{thm-intro-2} from the introduction. 

\begin{cor}
Let $\A$ be a small additive category and let $n$ be an integer invertible in $k$. The restriction functor $\phi^*:k[\A/n]\Md\to k[\A]\Md$ induced by the quotient functor $\phi:\A\to \A/n$ is $\infty$-excisive.
\end{cor}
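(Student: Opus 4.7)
The plan is to invoke theorem \ref{thm-magique-csq1} directly with $\B=\A/n$, so the bulk of the work is verifying its two hypotheses for the canonical quotient functor $\phi:\A\to\A/n$. There is essentially no obstacle; this corollary is more of a sanity check than a substantive new result.

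First I would verify that $\phi$ is full and essentially surjective. By definition $\A/n=(\mathbb{Z}/n\mathbb{Z})\otimes_\mathbb{Z}\A$ has the same objects as $\A$, so $\phi$ is the identity on objects, hence bijective on objects. On morphisms, $\phi$ is the canonical surjection $\A(x,y)\twoheadrightarrow \A(x,y)/n\A(x,y)$, so it is full.

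Next I would check the cotriviality condition on endomorphism rings, namely that $(\A/n)(x,x)\otimes_\mathbb{Z}k=0$ for every object $x$. Since $(\A/n)(x,x)=\A(x,x)/n\A(x,x)$ is annihilated by $n$, the tensor product factors as $\A(x,x)\otimes_\mathbb{Z}(k/nk)$, and $k/nk=0$ because $n$ is invertible in $k$.

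With these two verifications in place, theorem \ref{thm-magique-csq1} applies and yields that $\phi^*:k[\A/n]\Md\to k[\A]\Md$ is $\infty$-excisive, which is exactly the statement. No further ingredients (simplicial or Hurewicz) are needed beyond what is already packaged inside theorem \ref{thm-magique-csq1}.
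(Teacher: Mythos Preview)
Your proposal is correct and matches the paper's approach exactly: the paper states this corollary as a direct consequence of theorem \ref{thm-magique-csq1} without further proof, and your verification of the two hypotheses (fullness/essential surjectivity and the vanishing $(\A/n)(x,x)\otimes_\mathbb{Z}k=0$) is precisely what is needed.
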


\begin{cor}\label{cor-proof-thm-ex-intro}
Let $\I$ be a $k$-cotrivial ideal of a small additive category $\A$. The restriction functor $\pi^*:k[\A/\I]\Md\to k[\A]\Md$ induced by the quotient functor $\pi:\A\to \A/\I$ is $\infty$-excisive.
\end{cor}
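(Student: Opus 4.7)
The plan is to observe that this corollary is an immediate instance of Theorem \ref{thm-magique-csq1}, so essentially no new work is required beyond checking the hypotheses.

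First, I would verify that the quotient functor $\pi : \A \to \A/\I$ is full and essentially surjective. By the very construction of the additive quotient, $\pi$ is the identity on objects (so in particular essentially surjective), and the map on morphism groups $\A(x,y) \to (\A/\I)(x,y) = \A(x,y)/\I(x,y)$ is the canonical projection, which is surjective; hence $\pi$ is full.

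Next, I would unpack the hypothesis that $\I$ is $k$-cotrivial. By definition, this means that the additive category $\A/\I$ is $k$-trivial, i.e.\ for every pair of objects $x, y$ the abelian group $(\A/\I)(x,y)$ is finite and satisfies $(\A/\I)(x,y) \otimes_{\mathbb{Z}} k = 0$. In particular, $(\A/\I)(x,x) \otimes_{\mathbb{Z}} k = 0$ for every object $x$, which is precisely the hypothesis required on the codomain in Theorem \ref{thm-magique-csq1}.

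Having checked these two points, I would conclude by applying Theorem \ref{thm-magique-csq1} directly to the functor $\phi = \pi$ and the category $\B = \A/\I$, which yields that the restriction functor $\pi^* : k[\A/\I]\Md \to k[\A]\Md$ is $\infty$-excisive. There is no real obstacle here: all the homotopical content has already been absorbed into Theorem \ref{thm-magique-csq1} (and ultimately into the excision theorem \ref{thm-magique-general} together with the $k$-local Hurewicz theorem of Corollary \ref{cor-Hur}), and the only thing to do is match up the hypotheses.
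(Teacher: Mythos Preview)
Your proposal is correct and follows exactly the paper's approach: the paper states that this corollary (together with the preceding one) is a direct consequence of Theorem~\ref{thm-magique-csq1}, and you have correctly verified the two hypotheses of that theorem (fullness and essential surjectivity of the quotient functor, and the vanishing $(\A/\I)(x,x)\otimes_{\mathbb{Z}}k=0$ coming from $k$-triviality).
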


Under some favorable assumptions on $\A$ and $\B$, theorem \ref{thm-magique-general} can also be reformulated in terms of categories of additive functors. 

\begin{defi}
Let $k$ be a commutative ring. We say that an additive category $\C$ is \emph{$k$-torsion-free} if $\mathrm{Tor}^{\mathbb{Z}}(k,\C(x,y))=0$ for all objects $x$, $y$ of $\C$. 
\end{defi}

\begin{thm}\label{thm-magique2} Let $\phi:\A\to \B$ be an additive functor between two small additive categories, such that $\phi^*:k[\B]\Md\to k[\A]\Md$ is fully faithful.  Assume that that $\A$ and $\B$ are both $k$-torsion free. Then the following assertions are equivalent.
\begin{enumerate}[(1)]
\item The functor $\phi^*:k[\B]\Md\to k[\A]\Md$ is $e$-excisive.
\item The functor $\phi^*:\kB\Md\to \kA\Md$ is $e$-excisive.
\end{enumerate}
\end{thm}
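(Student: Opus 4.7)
My plan is to characterize both excisiveness conditions via the homotopy groups of the same simplicial abelian groups $A(x,y)=Q^x\otimes_\A \phi^*h^y_\B$ introduced in Theorem~\ref{thm-magique-general}, and then to compare the resulting criteria using the Hurewicz-type theorems of Section~\ref{sec-simplicial}. By Theorem~\ref{thm-magique-general}, condition~(1) is equivalent to the vanishing of $\pi_i k[A(x,y)]=H^\simpl_i(A(x,y);k)$ for $0<i<e$ and all $x,y\in\B$.

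For condition~(2), I would establish a parallel criterion by showing that, under the $k$-torsion-free hypotheses, $k\otimes_\mathbb{Z} Q^x$ is a simplicial projective resolution of $\phi^*h^x_{\kB^\op}$ in $\Mdd\kA$. Each $Q^x_n$ is a direct sum of standard projectives $h^a_{\A^\op}=\A(-,a)$, whose values are pointwise $k$-torsion-free by hypothesis on $\A$; tensoring with $k$ yields a direct sum of the standard projectives $\kA(-,a)$, and the pointwise universal coefficient theorem, combined with the vanishing of $\Tor^\mathbb{Z}(k,\B(\phi(z),x))$ (by hypothesis on $\B$), shows that $\pi_0(k\otimes Q^x)=\phi^*h^x_{\kB^\op}$ and $\pi_i(k\otimes Q^x)=0$ for $i>0$. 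A direct coend manipulation then gives the identification $(k\otimes_\mathbb{Z} Q^x)\otimes_\kA \phi^*h^y_\kB \simeq k\otimes_\mathbb{Z} A(x,y)$, and full faithfulness of $\phi^*\colon\kB\Md\to\kA\Md$ is inherited from its $k[\B]$-level counterpart via the fully faithful inclusions $\kB\Md\hookrightarrow k[\B]\Md$ and $\kA\Md\hookrightarrow k[\A]\Md$. Proposition~\ref{cor-memechose2} thus makes (2) equivalent to the vanishing of $\pi_i(k\otimes_\mathbb{Z} A(x,y))$ for $0<i<e$.

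It remains to compare $H^\simpl_*(A(x,y);k)$ and $\pi_*(k\otimes_\mathbb{Z} A(x,y))$. Corollary~\ref{cor-Hur} translates the first vanishing into $k\otimes\pi_iA(x,y)=0$ for $0<i<e$ together with $\Tor^\mathbb{Z}_1(k,\pi_jA(x,y))=0$ for $0<j<e-1$, while the universal coefficient theorem (applicable because $A(x,y)$ is degreewise $k$-torsion-free) translates the second into the same vanishing together with the extra condition $\Tor^\mathbb{Z}_1(k,\pi_0A(x,y))=0$. This extra condition is automatic: full faithfulness of $\phi^*$ at the $k[\A]$-level, applied in degree zero, forces the canonical composition map $\phi^*h^x_{\B^\op}\otimes_\A\phi^*h^y_\B\to\B(y,x)$ to become an iso after applying $k[-]$ (using lemma~\ref{lm-kcrochet-tens} to rewrite its left-hand side and Yoneda to rewrite the right-hand side); but $k[-]\colon\mathbf{Set}\to k\Md$ reflects isomorphisms, so $\pi_0A(x,y)\cong\B(y,x)$ as abelian groups, and the hypothesis that $\B$ is $k$-torsion-free then kills $\Tor^\mathbb{Z}_1(k,\pi_0A(x,y))$.

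The main obstacle lies in the second step, where both $k$-torsion-free hypotheses are genuinely used: the one on $\A$ to apply the pointwise universal coefficient theorem to the complex $Q^x$, and the one on $\B$ to kill the resulting $\Tor^\mathbb{Z}$-obstruction to $k\otimes_\mathbb{Z} Q^x$ being a resolution. Once the correct homotopical model for $\Tor^\kA$ is in place, the remaining comparisons reduce to formal manipulations with the Hurewicz and universal coefficient theorems.
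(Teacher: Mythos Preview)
Your proposal is correct and follows essentially the same route as the paper: both reduce the comparison to the simplicial abelian group $A(x,y)$, use the universal coefficient theorem (valid because $A(x,y)$ is degreewise $k$-torsion-free, by the hypothesis on $\B$) to identify condition~(2) with the vanishing of $\pi_i(k\otimes_\mathbb{Z}A(x,y))$, and use the hypothesis on $\A$ (together with that on $\B$ for $\pi_0$) to show that $k\otimes_\mathbb{Z}Q^x$ is a projective resolution of $\phi^*h^x_{\kB^\op}$ in $\Mdd\kA$. You are in fact more explicit than the paper on two points it leaves implicit: the inherited full faithfulness of $\phi^*$ at the $\kA$-level, and the identification $\pi_0 A(x,y)\cong\B(y,x)$ (via $k[-]$ reflecting bijections) needed to dispose of the extra condition $\Tor^\mathbb{Z}_1(k,\pi_0A(x,y))=0$.
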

\begin{proof}
We first claim that for all objects $x,y$ and all integers $i$ we have a short exact sequence, where $A(x,y)$ is the simplicial group defined in theorem \ref{thm-magique-general}:
$$0\to k\otimes_\mathbb{Z}\pi_iA(x,y)\to \pi_i(k\otimes_kA(x,y))\to \mathrm{Tor}^{\mathbb{Z}}(k,\pi_{i-1}A(x,y))\to 0\;.$$
Indeed, if $\B$ is $k$-torsion-free, then for all objects $a$ of $\A$,  $\Tor^\mathbb{Z}_1(k,-)$ vanishes on the abelian group $h^a_{\A^{\mathrm{op}}}\otimes_\A \phi^*h^y_{\B^{\mathrm{op}}}\simeq \B(y,\phi(a))$, hence on the abelian groups $A(x,y)_q$
for all $q\ge 0$, and the short exact sequence is given by the universal coefficient theorem \cite[XII Thm 12.1]{MLHom}. Thus, the second assertion of theorem \ref{thm-magique-general} is equivalent to the vanishing of the homotopy groups of $k\otimes_\mathbb{Z} A(x,y)\simeq (k\otimes_\mathbb{Z}Q^x)\otimes_{\kA}(\phi^*h^y_{\kB})$ in degrees $0<i<e$.

Next, we claim that $k\otimes_\mathbb{Z}Q^x$ is a simplicial resolution of $k\otimes_\mathbb{Z}\phi^*h^x_{\B^\op}=k\otimes_{\mathbb{Z}}\B(\phi(-),x)$ in $\Mdd\kA$. If $\A$ is $k$-torsion-free, then $\Tor^\mathbb{Z}_1(k,-)$ vanishes on the objects of $Q^x$, and also on $\pi_0Q^x$ because $\B$ is $k$-torsion-free. Thus the claim follows from the universal coefficient theorem \cite[XII Thm 12.1]{MLHom}. 
As a consequence, proposition \ref{cor-memechose2} tells us that the vanishing of the homotopy groups of $k\otimes_{\mathbb{Z}}A(x,y)$ is equivalent to $\phi^*:\kB\Md\to \kA\Md$ being $e$-excisive. 
\end{proof}

\begin{rk}\label{rk-excis}
The above theorem is a functor homology analogue of Suslin-Wodzicki's excision theorem in rational algebraic $K$-theory \cite{Suslin-Wodz} (see also \cite{Suslin-Kexcision} for the non-rational case). Indeed, the second assertion in theorem \ref{thm-magique2} is a natural generalization of the `$H$-unital' condition which governs excision in $K$-theory.

To be more specific, if $I$ is a two-sided ideal of a ring $R$, and if we consider $\A=\Proj_R$, $\B=\Proj_{R/I}$ and $\phi=-\otimes_R R/I$, then the second assertion of theorem \ref{thm-magique2} is easily seen to be equivalent to 
$$\text{(3)}\quad 
\mathrm{Tor}^{R\otimes_{\mathbb{Z}} k}_i((R/I)\otimes_{\mathbb{Z}} k,(R/I)\otimes_\mathbb{Z} k)=0 \text{ for $0<i<e$.}
$$ 
(To prove the equivalence, use proposition \ref{cor-memechose2} and the fact that $\kA\Md$ is equivalent to $R\otimes_\mathbb{Z}k$ by the Eilenberg-Watts theorem.)
In the situation considered in \cite{Suslin-Wodz}, that is, if  
$R=\mathbb{Z}\oplus I$ where $I$ is a ring without unit (seen as an ideal in the unital ring $R$ constructed by adding formally a unit to $I$) and $k=\mathbb{Q}$, the $\Tor$ appearing in assertion (3) can be computed with a bar complex, hence assertion (3) is equivalent to $R$ being $H$-unital. 
\end{rk}

\subsection{An application to the computation of antipolynomial homology}

The next proposition is a consequence of Kuhn's structure results \cite{Ku-adv}, and corollary \ref{cor-vanish-Kuhn} is the consequence for antipolynomial homology that one immediately deduces from corollary \ref{cor-proof-thm-ex-intro}. 
\begin{pr}\label{pr-vanish-Kuhn}
If $R$ is a finite semi-simple ring and if $k$ is a field of characteristic zero, the $k$-vector spaces $\Ext^i_{k[\Proj_R]}(F,G)$ and $\Tor_i^{k[\Proj_R]}(F,G)$ vanish in positive degrees $i$ for all functors $F$, $G$. 
\end{pr}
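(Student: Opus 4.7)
\textbf{Proof proposal for Proposition \ref{pr-vanish-Kuhn}.}

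The plan is to reduce the statement to the assertion that, under the hypotheses, the abelian category $k[\Proj_R]\Md$ has global dimension zero (i.e.\ is semi-simple). Once this is established, all higher $\Ext$ groups vanish automatically, and the analogous vanishing of $\Tor_i^{k[\Proj_R]}(F,G)$ for $i>0$ is then a consequence of the isomorphism of Lemma \ref{lm-iso-dual}: applied to any injective cogenerator $M$ of $k\Md$, it yields $\Hom_k(\Tor_*^{k[\Proj_R]}(F,G),M)\simeq \Ext^*_{k[\Proj_R]}(G,\Hom_k(F,M))$, which is concentrated in degree $0$.

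To establish semi-simplicity, I would appeal directly to Kuhn's structure theorem in \cite{Ku-adv}. Since $R$ is finite and semi-simple, the category $\Proj_R$ has finite $\Hom$-sets, every $\End_R(x)$ is a finite semi-simple ring, and the group of units of $\End_R(x)$ (the automorphism group of $x$) is a finite group. Kuhn's results decompose $k[\Proj_R]\Md$ (or at least its simple objects together with their generating subcategories) in terms of representations of these finite automorphism groups over $k$. Under the hypothesis that $k$ has characteristic zero, each such group algebra $k[\mathrm{Aut}_R(x)]$ is semi-simple by Maschke, so the building blocks of Kuhn's decomposition are themselves semi-simple categories. I would then deduce that every simple functor in $k[\Proj_R]\Md$ is both projective and injective, and that every object is a (possibly infinite) direct sum of simples --- which is the desired semi-simplicity.

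The main obstacle will be extracting this semi-simplicity in a form tight enough to cover \emph{all} functors $F,G$ (not merely those satisfying finiteness conditions). For this one needs that Kuhn's classification of simples yields a set of projective generators of $k[\Proj_R]\Md$, so that every functor admits a resolution (in fact just a direct sum decomposition) by simples; this follows from the fact that $k[\Proj_R]\Md$ is a Grothendieck category with a projective generator, combined with Kuhn's identification of its semi-simple building blocks. Once that is in place, the vanishing of $\Ext^i$ in positive degrees is formal, and the $\Tor$ vanishing follows as indicated above.
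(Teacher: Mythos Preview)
Your strategy is the right one and aligns with the paper's: both use Kuhn's structure theorem \cite{Ku-adv} to reduce to Maschke's theorem for finite groups, and your deduction of the $\Tor$ vanishing from the $\Ext$ vanishing via Lemma~\ref{lm-iso-dual} is fine.

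The one place where your argument is looser than the paper's is the scope of Kuhn's result. The theorem cited from \cite{Ku-adv} is specifically the equivalence $k[\Proj_{\Fq}]\Md\simeq\prod_{n\ge 0}k[\GL_n(\Fq)]\Md$ for a finite \emph{field} $\Fq$; it does not directly treat $\Proj_R$ for a general finite semi-simple ring $R$. You gesture at a decomposition ``in terms of representations of the finite automorphism groups $\mathrm{Aut}_R(x)$'', but you would need to actually produce that decomposition. The paper fills this gap with an explicit two-step reduction: first, a finite \emph{simple} ring is a matrix ring $M_n(\Fq)$, so $\Proj_R\simeq\Proj_{\Fq}$ by Morita theory and Kuhn applies; second, a finite semi-simple ring is a product $R_1\times\cdots\times R_n$ of simple rings, so $\Proj_R\simeq\Proj_{R_1}\times\cdots\times\Proj_{R_n}$, and the vanishing is propagated to the product via the spectral sequence of Lemma~\ref{lm-ssstd}. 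This last step is the one genuinely missing ingredient in your sketch --- once you have semi-simplicity for each factor, you still need an argument (Künneth-type, or the spectral sequence, or a direct check that a product of semi-simple Grothendieck categories is semi-simple) to pass to the product.
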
 
\begin{proof}
The main result of \cite{Ku-adv} says that $k[\Proj_\Fq]\Md$ is equivalent to the infinite product $\prod_{n\ge 0}k[\GL_n(\Fq)]\Md$, which implies the vanishing result when $R$ is a finite field. Assume now that $R$ is a finite simple ring. Then $R$ is isomorphic to a matrix ring $M_n(\Fq)$ and $\Proj_R$ is therefore equivalent to $\Proj_\Fq$ by Morita theory, which implies that the vanishing result holds for finite simple rings. Finally, assume that $R$ is a finite semi-simple ring. Then $R$ is isomorphic to a product $R_1\times\cdots\times R_n$ of simple rings, hence $\Proj_R$ is equivalent to $\Proj_{R_1}\times\cdots\times \Proj_{R_n}$. The vanishing result can then be retrieved from the vanishing result for simple rings by iterated uses of the spectral sequence of lemma \ref{lm-ssstd}.
\end{proof}
\begin{cor}\label{cor-vanish-Kuhn}
Let $k$ be a field of characteristic zero, and let $F,F',G$ be three functors from $\A$ to $k\Md$, with $F$ contravariant. If there is a finite semi-simple ring $R$ such that these three functors factor through $\Proj_R$, then for all $i>0$ we have:
\begin{align*}
&\Ext^i_{k[\A]}(F',G)=0\;, &&  \Tor_i^{k[\A]}(F,G)=0\;.
\end{align*}
\end{cor}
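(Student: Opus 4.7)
The plan is to combine the excision result of corollary \ref{cor-proof-thm-ex-intro} with the vanishing theorem of proposition \ref{pr-vanish-Kuhn}, via an appropriate $k$-cotrivial quotient of $\A$ through which all three functors factor.

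First I would extract from the hypothesis a single additive functor $\phi : \A \to \Proj_R$ through which $F$, $F'$ and $G$ all factor. If the three factorizations use distinct additive functors $\phi_F$, $\phi_{F'}$, $\phi_G$, I combine them as the single functor $\phi = (\phi_F, \phi_{F'}, \phi_G) : \A \to (\Proj_R)^3 \simeq \Proj_{R^3}$; since $R^3$ is still finite semi-simple, no generality is lost. Next, let $\I$ be the kernel ideal of $\phi$: the sub-bifunctor of $\A(-,-)$ given by $\I(x,y) = \{f : \phi(f)=0\}$. Additivity of $\phi$ ensures this is an ideal of $\A$, and the universal property of the additive quotient produces a faithful additive functor $\bar\phi : \A/\I \to \Proj_R$ factoring $\phi$.

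The key verification is that $\I$ is $k$-cotrivial. Since $R$ is finite, the Hom-groups of $\Proj_R$ are finite abelian groups; the groups $(\A/\I)(x,y)$ embed into them via $\bar\phi$ as subgroups, hence are themselves finite and torsion. In characteristic zero, tensoring with $k$ over $\mathbb{Z}$ annihilates any torsion abelian group, so $(\A/\I)(x,y) \otimes_\mathbb{Z} k = 0$ for all $x,y$, which establishes the $k$-triviality of $\A/\I$. Since $F$, $F'$ and $G$ factor through $\pi_\I : \A \to \A/\I$, corollary \ref{cor-proof-thm-ex-intro} now yields, for each $i \ge 0$, isomorphisms
\[\Ext^i_{k[\A]}(F', G) \simeq \Ext^i_{k[\A/\I]}(\tilde F', \tilde G),\quad \Tor_i^{k[\A]}(F, G) \simeq \Tor_i^{k[\A/\I]}(\tilde F, \tilde G),\]
where the tildes denote the induced functors on the quotient. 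Reading the hypothesis as saying that $\bar\phi : \A/\I \to \Proj_R$ is an equivalence of additive categories (i.e., that $\phi$ was full and essentially surjective to begin with), the right-hand sides identify with $\Ext^i$ and $\Tor_i$ over $k[\Proj_R]$, which vanish in positive degrees by proposition \ref{pr-vanish-Kuhn}.

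The step I expect to be the main obstacle is precisely this last identification. Under the weakest reading of ``factor through $\Proj_R$'' (a merely additive, not full or essentially surjective, $\phi$), the functor $\bar\phi$ is only a faithful embedding, so $\A/\I$ is a sub-$k$-trivial-category of $\Proj_R$ to which proposition \ref{pr-vanish-Kuhn} does not directly apply: Kuhn's product-of-group-algebras decomposition is specific to $\Proj_R$ and does not extend to arbitrary $k$-trivial categories. Handling this weaker reading would require performing a further excision along $\bar\phi$ via theorem \ref{thm-magique-general}, whose fully-faithful hypothesis on $\bar\phi^*$ and companion $\Tor$-vanishing are non-trivial to verify without fullness and essential surjectivity of $\bar\phi$.
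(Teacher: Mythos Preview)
Your proof is correct and follows the same route as the paper, which presents the corollary as an immediate consequence of corollary~\ref{cor-proof-thm-ex-intro} (excision through a $k$-cotrivial quotient) combined with proposition~\ref{pr-vanish-Kuhn} (vanishing over $\Proj_R$). Your explicit verification that the kernel ideal $\I$ is $k$-cotrivial, using that $\A/\I$ has finite Hom-groups because they embed into those of $\Proj_R$, is exactly what the paper leaves implicit.

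Two remarks on the points you flag. First, your $R^3$ trick to merge three separate factorizations is neat, but under the paper's intended reading it is unnecessary: the phrase ``factor through $\Proj_R$'' is meant in the sense of theorem~\ref{thm-intro-2}, i.e.\ through a \emph{single} full, essentially surjective additive functor $\pi:\A\to\Proj_R$ (equivalently, through a quotient $\A/\I$ equivalent to $\Proj_R$). This is the setup for antipolynomial functors used throughout the paper, and it is the only reading under which the corollary is indeed ``immediate''. Second, your worry about the weak reading (an arbitrary additive $\phi$, neither full nor essentially surjective) is well-founded: in that generality $\A/\I$ need not be of the form $\Proj_{R'}$, and neither Kuhn's decomposition nor theorem~\ref{thm-magique-general} applies directly. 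But this is a concern about the \emph{statement}, not about your argument --- the paper simply does not claim the result in that generality.
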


\section{Preliminaries on polynomial homology}\label{sec-prelim-polyn-hom}

We use the term `polynomial homology' as a shorthand for the computation of $\Tor$ and $\Ext$ over $k[\A]$ between polynomial functors. Sections \ref{sec-prelim-polyn-hom} to \ref{sec-generalized} deal with the computation of polynomial homology.  
We will assume that $k$ is a field and we will focus on the polynomial functors of the form
\begin{align}T_F:=\pi_1^*F_1\otimes \cdots\otimes \pi_n^*F_n\label{eq-tp}\end{align}
where the $F_i$ are strict polynomial functors over $k$ and the $\pi_i$ are additive functors from $\A$ to $k$-vector spaces.
The purpose of this short section is to make preliminary remarks on polynomial homology, which justify and explain the assumptions of our theorems in sections \ref{sec-add} and \ref{sec-generalized}. All the material presented in this section is more or less standard, the only new result being the polynomial analogue of excision given in theorem \ref{thm-poly-excis}.

\subsection{The size of the field $k$}\label{subsec-size}

We can most often assume that the field $k$ is as big as we want, in particular infinite and perfect. Indeed, for all field extensions $k\to K$ proposition \ref{prop-Kunneth-tens} yields a base change isomorphism:
\begin{align}
&\Tor^{k[\A]}_*(F,G)\otimes K\simeq \Tor^{K[\A]}_*(F\otimes K,G\otimes K)\;.
\end{align}
We have similar situation for $\Ext$, however one needs suitable finiteness assumptions (namely $F$ is ${fp}_\infty$, or $k\to K$ is a finite extension of fields, see proposition \ref{prop-Kunneth-ext}) to ensure that the following map is an isomorphism
\begin{align}
&\Ext_{k[\A]}^*(F,G)\otimes K\to \Ext_{K[\A]}^*(F\otimes K,G\otimes K)\;.\label{base-change-Ext}
\end{align}

\subsection{Additive functors $\pi_i$ with infinite dimensional values}

Recall that over an infinite field, the category of $d$-homogeneous polynomial functors is a full subcategory of the category $k[\Proj_k]\Md$. In other words, strict polynomial functors  are functors from \emph{finite-dimensional} vector spaces to all vector spaces, hence the meaning of $\pi^*_i F_i = F_i\circ \pi_i$ is clear only when $\pi_i(x)$ is finite-dimensional for all $x$. In general we use the following definition.
\begin{defi}\label{defi-comp}
Let $\overline{F}:k\Md\to k\Md$ denote the left Kan extension to all vector spaces of a functor $F:\Proj_k\to k\Md$. That is, $\overline{F}(v)$ is the colimit of the vector spaces $F(u)$ taken over the poset of finite-dimensional subspaces $u\subset v$ ordered by inclusion. For all additive functors $\pi:\A\to k\Md$ we define $\pi^*F$ as the composition 
\[\pi^*F:=\overline{F}\circ \pi\;.\]
\end{defi}

The following lemma follows from the fact that $\overline{F}$ is defined by taking filtered colimits and that limits and colimits in functor categories are computed objectwise.
\begin{lm}\label{lm-evalinfty}
Sending a strict polynomial functor $F$ to the composition $\pi^*F$ yields an exact and colimit preserving functor 
\[\pi^*:\Gamma^d\Proj_k\Md\to k[\A]\Md\;.\]
As a consequence, we have induced maps on the level of functor homology:
\begin{align*}
&\Ext^*_{\Gamma^d\Proj_k}(F,G)\to \Ext^*_{k[\A]}(\pi^*F,\pi^*G)\;,\\
&\Tor_*^{k[\A]}(\pi^*F,\pi^*G)\to \Tor_*^{\Gamma^d\Proj_k}(F,G)\;.
\end{align*}
\end{lm}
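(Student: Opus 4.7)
The plan is to check well-definedness and the two stated formal properties of $\pi^*$, and then derive the induced maps on $\Ext$ and $\Tor$ from them. The main subtlety will lie in the $\Tor$ map, because $\pi^*$ is not literally a restriction of scalars along a functor between the underlying small $k$-categories, so one cannot simply appeal to the formalism of section \ref{subsec-restriction}.

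First I would observe that the left Kan extension $\overline{F}$ of definition \ref{defi-comp} is given by the filtered colimit formula $\overline{F}(v)=\colim_{u\subset v}F(u)$ over finite-dimensional subspaces, and is functorial in $F$ by construction, so $\pi^*$ is a well-defined functor. A short exact sequence or colimit diagram in $\Gamma^d\Proj_k\Md$ is exact, respectively a colimit, upon evaluation at each object of $\Proj_k$; since filtered colimits of $k$-vector spaces are exact and commute with arbitrary colimits, the same holds after evaluating $\overline{F}$ at any $v\in k\Md$, and in particular at $v=\pi(x)$ for $x$ in $\A$. As exactness and colimits in $k[\A]\Md$ are tested objectwise, this yields the exactness and colimit preservation of $\pi^*$. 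The $\Ext$-map is then immediate from Yoneda's interpretation of $\Ext^n$: an $n$-fold extension of $F$ by $G$ in $\Gamma^d\Proj_k\Md$ is sent by the exact additive functor $\pi^*$ to an $n$-fold extension of $\pi^*F$ by $\pi^*G$ in $k[\A]\Md$, in a manner compatible with the Baer sum and with equivalence of extensions.

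The harder step will be to produce the $\Tor$-map, for which I would first construct a natural $k$-linear transformation at the level of coends
\[\pi^*F\otimes_{k[\A]}\pi^*G\longrightarrow F\otimes_{\Gamma^d\Proj_k}G.\]
Using the commutation of tensor products with filtered colimits, an element of $\pi^*F(x)\otimes\pi^*G(x)=\overline{F}(\pi(x))\otimes\overline{G}(\pi(x))$ is represented by a pair $(a,b)\in F(u)\otimes G(w)$ for some finite-dimensional $u,w\subset\pi(x)$; pushing $a$ and $b$ along the inclusions $u,w\hookrightarrow u+w$ and taking the class inside the coend $F\otimes_{\Gamma^d\Proj_k}G$ yields the desired map. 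Coherence with morphisms $f:x\to y$ of $\A$ follows because $\pi(f)$ restricts to $k$-linear maps between finite-dimensional subspaces, which define morphisms in $\Gamma^d\Proj_k$ via the functor $\gamma^d$ of example \ref{ex-ordvsstrict}, so the coend relation on the right absorbs the transition. To pass to higher $\Tor$, I would pick a projective resolution $P_\bullet\to G$ in $\Gamma^d\Proj_k\Md$; by exactness of $\pi^*$, the complex $\pi^*P_\bullet\to\pi^*G$ is an acyclic resolution in $k[\A]\Md$ (not in general projective), so any projective resolution $Q_\bullet\to\pi^*G$ admits a chain map to $\pi^*P_\bullet$ lifting the identity, unique up to homotopy. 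Composing with the coend transformation gives a chain map
\[\pi^*F\otimes_{k[\A]}Q_\bullet\longrightarrow \pi^*F\otimes_{k[\A]}\pi^*P_\bullet\longrightarrow F\otimes_{\Gamma^d\Proj_k}P_\bullet,\]
whose induced map on homology is the sought $\Tor_*^{k[\A]}(\pi^*F,\pi^*G)\to \Tor_*^{\Gamma^d\Proj_k}(F,G)$; independence of the choices follows from the standard chain-homotopy uniqueness in the comparison theorem for resolutions.
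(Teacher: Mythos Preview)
Your argument is correct and fills in details that the paper leaves to the reader. The paper's own justification is the single sentence preceding the lemma: exactness and colimit preservation of $\pi^*$ hold because $\overline{F}$ is a filtered colimit and because (co)limits in functor categories are computed objectwise. Your account of this part, and of the $\Ext$-map via Yoneda extensions, is the natural unpacking of that sentence.

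You are right that the $\Tor$-map requires an extra word, since $\pi^*$ is not restriction along a $k$-functor between the underlying small categories and the formalism of section~\ref{subsec-restriction} does not apply directly; the paper leaves this entirely implicit. Your explicit coend construction is a valid way to produce the degree-zero transformation and then derive it via a comparison of resolutions. An alternative route, closer to the machinery the paper later relies on (see for instance diagram~\eqref{eqn-def-Theta} in section~\ref{subsec-defTheta}), is to factor through the $\aleph$-additivization: choose $\aleph$ large enough that $\pi$ lands in $\Proj_k^\aleph$, and compose $\res^\pi$ with the inverse of the isomorphism of proposition~\ref{pr-Kexact} and then with $\res^{\gamma^d}$. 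Both constructions give the same map; yours is more self-contained, while the factorization through $\Proj_k^\aleph$ plugs directly into the restriction formalism used elsewhere in the paper.
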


\begin{rk}\label{rk-ambiguous}
The notation $\pi^*F$ used in definition \ref{defi-comp} is compact and it extends the classical notation for composition. However, we shall be careful about the following phenomenon. If $\phi:k\Md\to k\Md$ is an additive functor, we often denote by the same letter $\phi:\Proj_k\to k\Md$ its restriction to finite-dimensional vector spaces. Now the functor $\overline{F}\circ \phi\circ \pi$ need not be be isomorphic to $\overline{F\circ \phi}\circ \pi$ -- though these two functors do coincide if $\phi$ preserves filtered colimits of monomorphisms of vector spaces or if $\pi$ has finite-dimensional values. As a consequence, $\pi^*(\phi^* F)$ might have two different meanings, depending on the fact that we consider $\phi:k\Md\to k\Md$ or its restriction to $\Proj_k$. For this reason, we shall cautiously avoid iterating the notation of definition \ref{defi-comp} and we turn back to notations with compositions whenever there is a risk of ambiguity.
\end{rk}

\subsection{Reducing the number of factors in tensor products}

Computation of $\Ext$ and $\Tor$ between tensor products
of the form \eqref{eq-tp} can always be reduced to the case where there is only one factor in the tensor products. The reduction procedure is well-known (at least to the experts) and we briefly explain it 
here.
We consider two cases, according to the characteristic of the field $k$.
\begin{defi}\label{defi-char-large}
We say that the characteristic of the field $k$ is \emph{large with respect to the tensor product \eqref{eq-tp}} if each $F_i$ is a $d_i$-homogeneous strict polynomial functor such that $d_i!$ is invertible in $k$.
\end{defi}

Notice that according to definition \ref{defi-char-large}, characteristic zero is large.
The following well-known fact is a consequence of classical Schur-Weyl duality for Schur algebras, together with the fact \cite[Thm 3.2]{FS} that the category of $d$-homogeneous strict polynomial functors is equivalent to the category of modules over the Schur algebra $S(n,d)$, if $n\ge d$.

\begin{lm}\label{lm-SWD}
Assume that $d!$ is invertible in the field $k$. Then for all $d$-homogeneous strict polynomial functor $F$ there is a $k[\Si_d]$-module $M$ and an isomorphism, natural with respect to the vector space $v$ 
\[F(v)\simeq v^{\otimes d}\otimes_{k[\Si_d]}M\;.\]
\end{lm}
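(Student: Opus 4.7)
My plan is to prove the lemma by invoking classical Schur--Weyl duality, transported into the world of strict polynomial functors via the Friedlander--Suslin equivalence already cited above. Choose any integer $n \ge d$, so that evaluation on $k^n$ yields an equivalence $\Gamma^d\Proj_k\Md \simeq S(n,d)\Md$.

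First, I would observe that the tensor power functor $T^d:v\mapsto v^{\otimes d}$ is a $d$-homogeneous strict polynomial functor, and that permutation of tensor factors makes it into an object of $\Gamma^d\Proj_k\Md$ equipped with a natural right action of $k[\Si_d]$. Under the Friedlander--Suslin equivalence, $T^d$ corresponds to the $(S(n,d),k[\Si_d])$-bimodule $(k^n)^{\otimes d}$, with its endomorphism algebra in $\Gamma^d\Proj_k\Md$ being $\End_{S(n,d)}((k^n)^{\otimes d})=k[\Si_d]$ by the very definition $S(n,d)=\End_{k[\Si_d]}((k^n)^{\otimes d})$ and the double centralizer theorem.

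Next, I would use the hypothesis that $d!$ is invertible in $k$, which makes $k[\Si_d]$ a semisimple algebra. By the classical Schur--Weyl duality in this situation, $(k^n)^{\otimes d}$ is a progenerator of $S(n,d)\Md$ (every simple $S(n,d)$-module of the form indexed by a partition of $d$ with at most $n$ parts appears as a direct summand, and for $n\ge d$ all such partitions occur). By Morita theory, the functor $\Hom_{S(n,d)}((k^n)^{\otimes d},-)$ is then an equivalence $S(n,d)\Md\simeq k[\Si_d]\Md$, whose inverse is $M\mapsto (k^n)^{\otimes d}\otimes_{k[\Si_d]}M$.

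Transporting this back through the Friedlander--Suslin equivalence, $T^d$ becomes a projective generator of $\Gamma^d\Proj_k\Md$ with endomorphism algebra $k[\Si_d]$, and the functor $F\mapsto M:=\Hom_{\Gamma^d\Proj_k}(T^d,F)$ is an equivalence with inverse $M\mapsto T^d\otimes_{k[\Si_d]} M$. The counit of this adjunction, evaluated at $F$, provides an isomorphism $v^{\otimes d}\otimes_{k[\Si_d]}M\xrightarrow{\simeq}F(v)$, natural in $v$, which is exactly what is asserted.

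The only nontrivial input is the statement that $T^d$ is a progenerator when $d!$ is invertible, which is precisely the content of classical Schur--Weyl duality and rests on the semisimplicity of $k[\Si_d]$; everything else is formal Morita theory and bookkeeping through the Friedlander--Suslin equivalence. The identification of the endomorphism algebra with $k[\Si_d]$ is essentially a Yoneda-type computation and holds without any assumption on $\mathrm{char}(k)$.
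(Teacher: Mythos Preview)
Your proposal is correct and follows exactly the approach the paper indicates: the paper does not give a detailed proof but simply states that the lemma is a consequence of classical Schur--Weyl duality for Schur algebras combined with the Friedlander--Suslin equivalence $\Gamma^d\Proj_k\Md\simeq S(n,d)\Md$ for $n\ge d$, which is precisely what you unpack. Your write-up supplies the details (Morita theory via the progenerator $(k^n)^{\otimes d}$, semisimplicity of $k[\Si_d]$ when $d!$ is invertible) that the paper leaves implicit.
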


Assume that the characteristic of $k$ is large with respect to the tensor product \eqref{eq-tp} and let $\Si_{\mathbf{d}}=\Si_{d_1}\times\cdots\times \Si_{d_n}$. Lemma \ref{lm-SWD} yields a $k[\Si_{\mathbf{d}}]$-module $N$ such that 
\[T_F\simeq (\pi_1^{\otimes d_1}\otimes\cdots\otimes\pi_n^{\otimes d_n})\otimes_{k[\Si_{\mathbf{d}}]}N\]
where the action of $\Si_{\mathbf{d}}$ on $\pi_1^{\otimes d_1}\otimes\cdots\otimes\pi_n^{\otimes d_n}$ is given by permuting the factors of the tensor product. If the characteristic of $k$ is also large with respect to another tensor product $T_G:=\rho_1^*G_1\otimes\cdots \otimes\rho^*_mG_m$ where each $G_i$ is a $e_i$-homogeneous strict polynomial functor then we have a similar isomorphism:
\[T_G\simeq (\rho_1^{\otimes e_1}\otimes\cdots\otimes\rho_m^{\otimes e_m})\otimes_{k[\Si_{\mathbf{e}}]}M\;.\]
The assumption on the characteristic also ensures that $k[\Si_{\mathbf{d}}]$ and $k[\Si_{\mathbf{e}}]$ are semisimple, hence we have an isomorphism: 
\begin{align}
\Tor_*^{k[\A]}(T_F, T_G)\simeq \mathbb{T}_* \otimes_{k[\Si_{\mathbf{e}}]\otimes k[\Si_{\mathbf{e}}]} (N\otimes M)\label{eq-calcul-gde-char}
\end{align}
where $\mathbb{T}_*$ denotes the right $k[\Si_{\mathbf{d}}]\otimes k[\Si_{\mathbf{e}}]$-module (with action of $\Si_{\mathbf{d}}$ and $\Si_{\mathbf{e}}$ induced by permuting the factors of the tensor product in the first argument of $\Tor$ and in the second argument of $\Tor$ respectively):
\[\mathbb{T}_*:=\Tor_*^{k[\A]}(\pi_1^{\otimes d_1}\otimes\cdots\otimes \pi_n^{\otimes d_n},\rho_1^{\otimes e_1}\otimes\cdots\otimes\rho_n^{\otimes e_m})\;.\]
Similarly $\Ext^*_{k[\A]}(T_F, T_G)$ can be computed from 
\[\mathbb{E}^*:=\Ext^*_{k[\A]}(\pi_1^{\otimes d_1}\otimes\cdots\otimes \pi_n^{\otimes d_n},\rho_1^{\otimes e_1}\otimes\cdots\otimes\rho_n^{\otimes e_m})\;.\]
Thus it remains to compute $\mathbb{T}_*$ and $\mathbb{E}^*$. This can be achieved by the standard technique using sum-diagonal adjunction isomorphisms (see example \ref{ex-sum-diagonal}) and K\"unneth formulas (see section \ref{subsec-Kunneth}). Some special instances of this computation can be found in the literature, see e.g. \cite[Thm 1.8]{FF} or \cite[Prop 5.4]{TouzeENS}. 
The general formula is not harder to prove but it is combinatorially slightly more involved. We give the result for $\mathbb{T}_*$ (and leave its proof as an exercise to the reader). 
\begin{pr}\label{prop-calcul-bimod}
Let $d=d_1+\cdots + d_n$ and $e=e_1+\dots+e_m$. If $d\ne e$ then $\mathbb{T}_*$ is zero in all degrees. If $d=e$, let
\begin{align*}\alpha:\{1,\dots,d\}\twoheadrightarrow \{1,\dots,n\}\text{ and }
\beta:\{1,\dots,d\}\twoheadrightarrow \{1,\dots,m\}
\end{align*}
be the nondecreasing surjective maps
such that $\alpha^{-1}(i)$ has cardinal $d_i$ and $\beta^{-1}(i)$ has cardinal $e_i$ for all $i$. There is an isomorphism of graded vector spaces:
\begin{align*}
\mathbb{T}_*\simeq\bigoplus_{\sigma\in\Si_d}\mathbf{T}^\sigma_*\;,\text{ with }\;\mathbf{T}^\sigma_* =
\bigotimes_{1\le i\le d}\Tor_*^{k[\A]}(\pi_{\alpha\sigma(i)},\rho_{\beta(i)})\;.
\end{align*}
The action of $(\tau,\mu)\in\Si_{\mathbf{d}}\times\Si_{\mathbf{e}}$ on the right hand side of this isomorphism can be described as follows. If $t=t_1\otimes\cdots\otimes t_n\in \mathbf{T}^\sigma_*$ then $(\tau,\mu)\cdot t$ equals
\[ \epsilon t_{\mu(1)}\otimes\cdots\otimes t_{\mu(n)} \in \mathbb{T}_*^{\tau^{-1}\sigma\mu}\]
where $\epsilon\in\{\pm1\}$ is the Koszul sign such that $t_1\cdots t_n = \epsilon t_{\mu(1)}\cdots t_{\mu(n)}$ in the free graded commutative algebra generated by $t_1,\dots,t_n$.
\end{pr}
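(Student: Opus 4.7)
The plan is to compute $\mathbb{T}_*$ by reducing to external tensor products on $\A^{\times d}$ via sum-diagonal adjunction, expanding via additivity of the $\rho_j$'s, and applying the Künneth isomorphism, then using a vanishing argument to keep only the terms that match the stated formula. Writing $\Delta \colon \A \to \A^{\times d}$ for the iterated diagonal and $\tilde{\pi}_t := \pi_{\alpha(t)}$, we have $\pi_1^{\otimes d_1} \otimes \cdots \otimes \pi_n^{\otimes d_n} = \Delta^*(\tilde{\pi}_1 \boxtimes \cdots \boxtimes \tilde{\pi}_d)$, so sum-diagonal adjunction (Example~\ref{ex-sum-diagonal}) gives
\[
\mathbb{T}_* \simeq \Tor_*^{k[\A^{\times d}]}\bigl(\tilde{\pi}_1 \boxtimes \cdots \boxtimes \tilde{\pi}_d,\ \Sigma^*(\rho_1^{\otimes e_1} \otimes \cdots \otimes \rho_m^{\otimes e_m})\bigr).
\]
Using additivity of each $\rho_j$ together with distributivity of tensor products over direct sums, the right-hand argument splits, as a direct sum over maps $\phi \colon \{1,\ldots,e\} \to \{1,\ldots,d\}$, into external tensor products whose $t$-th factor is $\bigotimes_{s\in\phi^{-1}(t)}\rho_{\beta(s)}$. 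The Künneth isomorphism (Proposition~\ref{prop-Kunneth-tens}) then produces
\[
\mathbb{T}_* \simeq \bigoplus_{\phi} \bigotimes_{t=1}^d \Tor_*^{k[\A]}\Bigl(\pi_{\alpha(t)},\ \bigotimes_{s \in \phi^{-1}(t)} \rho_{\beta(s)}\Bigr).
\]

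The key step is the vanishing argument showing that only bijections $\phi$ contribute, forcing $d = e$. If $\phi^{-1}(t) = \varnothing$ for some $t$, the corresponding second argument is the constant functor $k$, which coincides with the standard projective $h^0_{k[\A]}$ at the zero object of $\A$, so the Tor concentrates in degree $0$ and equals $\pi_{\alpha(t)}(0) = 0$ since $\pi_{\alpha(t)}$ is additive. If $|\phi^{-1}(t)| \geq 2$, I would apply sum-diagonal once more to the corresponding Tor factor: writing the second argument as $\Delta^*$ of an external tensor product of the $\rho_{\beta(s)}$'s on $\A^{\times r}$ with $r = |\phi^{-1}(t)|$, we obtain $\Tor_*^{k[\A^{\times r}]}(\Sigma^* \pi_{\alpha(t)}, \rho_{\beta(s_1)} \boxtimes \cdots \boxtimes \rho_{\beta(s_r)})$, and additivity of $\pi_{\alpha(t)}$ decomposes $\Sigma^* \pi_{\alpha(t)}$ as a direct sum of functors of the form $\pi_{\alpha(t)} \circ p_j$. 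Each Künneth summand then contains at least one factor $\Tor_*^{k[\A]}(k, \rho_{\beta(s)}) = \rho_{\beta(s)}(0) = 0$, so the whole contribution vanishes. Only bijections $\phi \colon [d] \to [d]$ survive, which forces $d = e$; setting $\sigma := \phi^{-1}$ and reindexing the tensor product produces the stated decomposition $\mathbb{T}_* \simeq \bigoplus_{\sigma \in \Si_d} \mathbf{T}^\sigma_*$.

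For the description of the $\Si_{\mathbf{d}} \times \Si_{\mathbf{e}}$-action, I would trace naturality through the chain of isomorphisms above. The action of $(\tau, \mu)$ on $\mathbb{T}_*$ is induced by permuting the tensor factors of $\pi_1^{\otimes d_1} \otimes \cdots \otimes \pi_n^{\otimes d_n}$ and $\rho_1^{\otimes e_1} \otimes \cdots \otimes \rho_m^{\otimes e_m}$; under sum-diagonal these become the corresponding permutations of external factors on $\A^{\times d}$ and $\A^{\times e}$, which via the $\Sigma^*$-expansion and Künneth relabel the indexing bijection $\phi$ and thus send the summand $\mathbf{T}^\sigma_*$ to $\mathbf{T}^{\tau^{-1}\sigma\mu}_*$. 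The Koszul sign appears from the symmetric monoidal structure on graded vector spaces when the factors of the Künneth product are reordered. I expect the main obstacle to be the combinatorial bookkeeping that matches the reindexed bijections $\phi$ with permutations $\sigma$ modulo the fixed nondecreasing surjections $\alpha$ and $\beta$, together with a precise verification of the Koszul sign under the composite naturality diagrams.
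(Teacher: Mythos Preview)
Your proposal is correct and follows precisely the approach the paper indicates: the paper does not give a proof of this proposition but explicitly leaves it as an exercise, pointing to sum-diagonal adjunction (example~\ref{ex-sum-diagonal}) and the K\"unneth formula (section~\ref{subsec-Kunneth}) as the standard technique, which is exactly what you do. Your vanishing argument for non-bijective $\phi$ via the additivity of the $\pi_i$ and $\rho_j$ is the right mechanism; the only point to tidy up is the bookkeeping you already flag, namely whether one sets $\sigma=\phi$ or $\sigma=\phi^{-1}$ before reordering the tensor factors to match the paper's indexing $\mathbf{T}^\sigma_*=\bigotimes_i\Tor(\pi_{\alpha\sigma(i)},\rho_{\beta(i)})$, but since one sums over all of $\Si_d$ this does not affect the decomposition.
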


There is a similar result for $\Ext$, provided the $\pi_i$ are of type ${fp}_{\infty}$ -- this assumption is needed for the K\"unneth formula, cf. proposition \ref{prop-Kunneth-ext} and remark \ref{rk-pfinfty}. Isomorphism \eqref{eq-calcul-gde-char} and proposition \ref{prop-calcul-bimod} lead us to the following conclusion.

\begin{conclusion}
If the characteristic of the field $k$ is large with respect to the tensor products $T_F$ and $T_G$, then the computation of $\Tor^{k[\A]}_*(T_F,T_G)$ reduces to the computation of $\Tor^{k[\A]}_*(\pi,\rho)$ where $\pi$ and $\rho$ are some of the additive functors appearing in the definition of $T_F$ and $T_G$. A similar reduction to additive functors holds for the computation of $\Ext$-groups, provided the additive functors $\pi_i$ appearing in the definition of $T_F$ are of type ${fp}_\infty$. (See remark \ref{rk-pfinfty} and the article \cite{DTSchwartz} for more details on the  $\mathrm{fp}_\infty$ condition).
\end{conclusion}

If the characteristic of the field $k$ is not large with respect to $T_F$ and $T_G$, we may not be able to reduce ourselves to the computation of $\Ext$ an $\Tor$ between additive functors. But in principle, we can still reduce the computations to something simpler. Namely sum-diagonal adjunction yields an isomorphism 
\[ \Tor_*^{k[\A]}(T_F,T_G)\simeq \Tor_*^{k[\A^{\times m}]}(T_F^{\boxplus^m},\rho_1^*G_1\boxtimes\cdots \boxtimes \rho_m^*G_m)\;,\]
where $T_F^{\boxplus^m}$ is the functor such that 
\[T_F^{\boxplus^m}(x_1,\dots,x_m)=T_F(x_1\oplus\cdots\oplus x_m)=\bigotimes_{1\le i\le n}F_i(\pi_i(x_1)\oplus\cdots\oplus \pi_i(x_m))\]
Each $F_i(v_1\oplus\cdots\oplus v_m)$ has a finite filtration (e.g. its Loewy filtration) whose layers are direct sums of tensor products of the form $L_1(v_1)\otimes\cdots\otimes L_m(v_m)$, in which the $L_i$ are strict polynomial functors. Thus $T_F^{\boxplus^m}$ has a finite filtration whose layers are direct sums of functors of the form $T_{H_1}\boxtimes\cdots\boxtimes T_{H_m}$, where each $T_{H_j}$ is a tensor product of the form \eqref{eq-tp}:
\[T_{H_j}=\pi_1^*H_{1,j}\otimes\cdots\otimes \pi_n^*H_{n,j}\;.\]
Therefore, the K\"unneth formula of proposition \ref{prop-Kunneth-tens} reduces the computation of $\Tor_*^{k[\A]}(T_F,T_G)$ to the computation of the graded $k$-modules
\[\Tor_*^{k[\A]}(T_{H_j},\rho_j^*G_j)\]
In other words, we have reduced the computation of $\Tor_*^{k[\A]}(T_F,T_G)$ to a similar computation, where the tensor product in the right argument of $\Tor$ is now a tensor product with only one factor. (Admittedly, this reduction may be hard to work out in practice since it involves a computation of the filtration of $T_F^{\boxplus^m}$ and the study of the associated long exact sequences in $\Tor$.) 

A similar reasoning then allows to reduce the number of factors of the tensor product in the left hand side argument of $\Tor$, and we obtain the following conclusion.
\begin{conclusion}
In principle, the computation of $\Tor_*^{k[\A]}(T_F,T_G)$ can be reduced to the computation of $\Tor$-groups of the form $\Tor^{k[\A]}_*(\pi^*H,\rho^*K)$, where $H$, $K$ are strict polynomial functors and $\pi$ and $\rho$ are some of the additive functors used in the definition of $T_F$ and $T_G$. A similar reduction holds for the computation of $\Ext$-groups under some suitable ${fp}_\infty$ assumptions.
\end{conclusion}

\subsection{Simplifying the source category $\A$}
If $k$ is a field of positive characteristic $p$, let $\I$ be a ideal of $\A$ contained in $p\A$. Then every additive functor $\A\to k\Md$ factors through $\A/\I$, hence every tensor product of the form \eqref{eq-tp} factors through $\A/\I$. 
The next theorem is a polynomial analogue of the excision theorem.
Under good hypotheses on $\I$, it reduces the computation of polynomial homology over $\A$ to the computation of polynomial homology over $\A/\I$.  
A typical use of this theorem is when $\I(x,y)$ is the abelian subgroup of all the elements of $\A(x,y)$ whose orders are finite and invertible in $k$.
\begin{thm}[Polynomial excision]\label{thm-poly-excis}
Let $k$ be an arbitrary commutative ring, and let $\I$ be an ideal of $\A$ such that $k\otimes_\mathbb{Z}\I(x,y)=0=\Tor_1^\mathbb{Z}(k,\I(x,y))$ for all $x$ and $y$ in $\A$. Then for all polynomial functors $F$ in $k[\A/\I]\Md$ and for all functors $G$ in $k[\A/\I]\Md$ and $G'$ in $\Mdd k[\A/\I]$, restriction along  $\pi:\A\twoheadrightarrow \A/\I$ yields isomorphisms
\begin{align*}
& \Ext^*_{k[\A/\I]}(G,F)\simeq \Ext^*_{k[\A]}(\pi^*G,\pi^*F)\;,&&
& \Tor_*^{k[\A/\I]}(G',F)\simeq \Tor_*^{k[\A]}(\pi^*G',\pi^*F)\;.\\
\end{align*}
\end{thm}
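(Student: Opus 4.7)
My plan is to reduce the two isomorphisms, via the base-change spectral sequences of proposition \ref{pr-base-change-ss}, to symmetric $\Tor$- and $\Ext$-vanishing statements involving polynomial coefficients, and then to establish these vanishings by a common bar-type resolution argument combined with lemma \ref{lm-vanish-pol}.

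Since $\pi:\A\to\A/\I$ is the identity on objects and full on morphisms, the restriction functor $\pi^*$ is fully faithful, whence $L_0^\pi(\pi^*F)\simeq F\simeq R^0_\pi(\pi^*F)$. The two base-change spectral sequences then reduce the theorem to the key vanishings
\[\Tor^{k[\A]}_t\bigl(k[(\A/\I)(-,y)],\,\pi^*F\bigr) \;=\; 0 \;=\; \Ext^t_{k[\A]}\bigl(k[(\A/\I)(y,-)],\,\pi^*F\bigr)\]
for all $t>0$, $y\in\A$, and $F$ polynomial in $k[\A/\I]\Md$.

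I will focus on the $\Tor$ half; the $\Ext$ half follows by an identical argument applied to the bar resolution of $k[(\A/\I)(y,-)]$ in $k[\A]\Md$. To prove the $\Tor$ vanishing, I would resolve $k[(\A/\I)(-,y)]$ in $\Mdd k[\A]$ by the simplicial object $C_\bullet$ with
\[C_p(x) \;=\; k[\I(x,y)]^{\otimes p}\otimes_k k[\A(x,y)],\]
coming from the bar construction for the free translation action of $\I(x,y)\subseteq\A(x,y)$; its normalized complex is pointwise a free $k$-resolution of $k[(\A/\I)(x,y)]$ (since $k[\A(x,y)]$ is $k[\I(x,y)]$-free and the bar complex resolves the trivial module $k$). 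Splitting $k[\I(-,y)]=k\oplus k[\I(-,y)]^\red$ decomposes each $C_p$ into a direct sum of the projective $k[\A(-,y)]$ and summands containing $k[\I(-,y)]^\red$ as a tensor factor. Lemma \ref{lm-vanish-pol}, applied with $G=\I(-,y)$ (whose $k$-locality hypotheses are precisely those assumed on $\I$), kills the higher $\Tor$ of these summands against any reduced polynomial functor. Separating off the constant part of $F$ via the Yoneda-type identity $k[(\A/\I)(-,y)]\otimes_{k[\A]}M\simeq M$ for constant $M$, the hyper-$\Tor$ spectral sequence of $C_\bullet$ collapses to the $q=0$ row and identifies $\Tor^{k[\A]}_*(k[(\A/\I)(-,y)],\pi^*F)$ with $H_*(C_\bullet\otimes_{k[\A]}\pi^*F)$.

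The main obstacle is then to show that $C_\bullet\otimes_{k[\A]}\pi^*F$ is acyclic in positive degrees. Via the sum-diagonal adjunction of example \ref{ex-sum-diagonal} and Yoneda, this complex identifies with a bar-type complex for $\I(-,y)$ with coefficients in the polynomial functor $\pi^*F(-\oplus y)$; and since elements of $\I$ map to zero in $\A/\I$ and hence act as the identity on values of $F$, this $\I$-action is trivial, so the acyclicity reduces to the vanishing of a group-homology complex with polynomial coefficients. This vanishing is proved by iterating the argument of the proof of lemma \ref{lm-vanish-pol}, with proposition \ref{prop-propfctpol} forcing the iteration to terminate at the polynomial degree of $F$.
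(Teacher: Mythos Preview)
Your approach is essentially the same as the paper's: reduce via base change (or, what amounts to the same thing, full faithfulness of $\pi^*$) to showing that $\pi^*h^y_{k[\A/\I]}$ is $\Tor$-acyclic against $\pi^*F$, and then resolve $\pi^*h^y_{k[\A/\I]}$ by the bar construction for the free $\I(-,y)$-action on $\A(-,y)$ and invoke lemma~\ref{lm-vanish-pol}.

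The one point where you make your life unnecessarily hard is the ``main obstacle''. Lemma~\ref{lm-vanish-pol} gives vanishing of $\Tor^{k[\A]}_*(k[G]^\red\otimes H,F)$ in \emph{all} degrees, including degree~$0$, not just in positive degrees. Once you use this, and work with the \emph{normalized} bar complex $K^{\otimes i}\otimes k[\A(-,y)]$ (where $K=k[\I(-,y)]^\red$), every term with $i\ge 1$ already satisfies $(K^{\otimes i}\otimes k[\A(-,y)])\otimes_{k[\A]}\pi^*F=0$ for reduced polynomial $F$. Thus the complex $C_\bullet\otimes_{k[\A]}\pi^*F$ is concentrated in degree~$0$ on the nose, and there is nothing left to prove. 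This is exactly the ``dimension shifting argument'' the paper alludes to; your detour through sum-diagonal adjunction and group homology with trivial action is not wrong, but it is not needed.
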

\begin{proof}
We prove the $\Ext$-isomorphism, the proof of the $\Tor$-isomorphism is similar. Since $\pi^*:k[\A/\I]\Md\to k[\A]\Md$ is full and faithful, we only have to check that for all $x$ in $\A$ the functor $\pi^*h^x_{\A/\I}=k[\A/\I(x,-)]$ is $\Hom_{k[\A]}(-,F)$-acyclic. 

If $A$ is an abelian group and $B$ is a subgroup of $A$, then $k[A]$ is a free $k[B]$-module, and $k\otimes_{k[B]}k[A]\simeq k[A/B]$. Therefore the normalized bar construction yields an exact complex, where $K\subset k[B]$ is the augmentation ideal of $k[B]$:
$$\dots\to \underbrace{K^{\otimes i}\otimes k[A]}_{\deg i}\to K^{\otimes i-1}\otimes k[A] \to \dots\to \underbrace{k[A]}_{\deg 0}\to \underbrace{k[A/B]}_{\deg -1}\to 0\;.$$
For all objects $x$ of $\A$, we consider this complex with $B=\I(x,-)$ and $A=\A(x,-)$. The resulting complex is projective in degree $0$, and it is $\Hom_{k[\A]}(-,F)$-acyclic  in positive degrees by lemma \ref{lm-vanish-pol}. Therefore, its degree $-1$ term, which is nothing but $\pi^*h^x_{\A/\I}$, is $\Hom_{k[\A]}(-,F)$-acyclic by a standard dimension shifting argument.
\end{proof}

In some cases, $\A/\I$ is $\Fp$-linear. If not, one can at least hope to obtain information on the polynomial homology over $k[\A/\I]$ from the polynomial homology over $k[\A/p]$ via the base change spectral sequences of proposition \ref{pr-base-change-ss}. In the sequel of the article, we bound ourselves to the study of polynomial homology over an $\Fp$-linear source category.

\section{The homology of additive functors}\label{sec-add}

The purpose of this section is to prove theorem \ref{thm-intro-3} of the introduction, which compares functor homology over $\A$ with functor homology over $k[\A]$, when $k$ is an infinite imperfect field of positive characteristic $p$. 

Although they do not appear in the statement of theorem \ref{thm-intro-3}, Frobenius twist functors do play an important role in its proof, and generic homology of strict polynomial functors is hidden behind the graded vector spaces $E^*_\infty$ and $T^\infty_*$. So we begin by recollections on Frobenius twists and generic homology in subsection \ref{subsec-frobgen}. In section \ref{subsec-homadd} we give a more precise statement of theorem \ref{thm-intro-3}, revealing the role of Frobenius twists and generic homology, and we prove this statement in section \ref{subsec-homadd-proof}.  

\subsection{Frobenius twists and generic cohomology}\label{subsec-frobgen}

Let $k$ be a perfect field (non necessarily infinite in this subsection) of positive characteristic $p$.
For all integers $r$ and all $k$-vector spaces $v$ we denote by $^{(r)}v$ the $k$-vector space which equals $v$ as an abelian group, with action of $k$ given by 
\[\lambda\cdot x:= \lambda^{p^{-r}}x\;.\]
We note that $^{(r)}-$ is an additive endofunctor of $k$-vector spaces which preserves dimension. 
Moreover $^{(0)}v= v$ and $^{(s)}({}^{(r)}v)={}^{(s+r)}v$, hence  $^{(r)}-$ is a self-equivalence of the category of $k$-vector spaces, with inverse $^{(-r)}-$.
\begin{nota}\label{nota-twist-ord}
If $L$ is a perfect field and $F$ is an object of $k[\Proj_L]\Md$ we denote by $F^{(r)}$ the composition of $F$ and $^{(r)}-:\Proj_L\to \Proj_L$.
\end{nota}

When $r> 0$, the functor $^{(r)}-: \Proj_k\to k\Md$ is the underlying ordinary functor of a certain strict polynomial functor.
Indeed, let $\mathrm{sym}:S^{p^r} \to \otimes^{p^r}$ be the symmetrization morphism. This is a morphism of $p^r$-homogeneous strict polynomial functors such that for all finite-dimensional $k$-vector spaces $v$:
\[
\begin{array}{cccc}
\mathrm{sym}:& S^{p^r}(v)&\to & v^{\otimes p^r}\\
& x_1\cdots x_{p^r} & \mapsto & \sum_{\sigma\in\Si_{p^r}}x_{\sigma(1)}\otimes\cdots\otimes x_{\sigma(p^r)}
\end{array}.
\]
The natural morphism ${}^{(r)}v\to S^{p^r}(v)$ which maps $x$ to $x^{p^r}$ identifies $^{(r)}v$ with the kernel of $\mathrm{sym}$. Hence $^{(r)}v$ is actually the underlying functor of a $p^r$-homogeneous strict polynomial functor, namely the kernel of $\mathrm{sym}$, which is called the \emph{$r$-th Frobenius twist functor} and which is denoted by $I^{(r)}$.
The following notation was introduced in \cite{FS}, it is the strict polynomial functor analogue of notation \ref{nota-twist-ord}.
\begin{nota}\label{nota-twist}
For all $d$-homogeneous strict polynomial functors $F$, we denote by $F^{(r)}$ the $dp^r$-homogeneous strict polynomial functor $F^{(r)}:= F\circ I^{(r)}$.
\end{nota}
\begin{rk}
The definition of composition of strict polynomial functors is the obvious one if we think of strict polynomial functors in the way they are defined in \cite{FS}. If we use the description of strict polynomial functors as $k$-linear functors with domain the Schur category as in example \ref{ex-str-fct}, then $F^{(r)}$ is the restriction of $F$ along the $k$-linear functor $\Gamma^{dp^r}\Proj_k\to \Gamma^d\Proj_k$ which sends a vector space $v$ to its Frobenius twist $v^{(r)}$, and whose action on morphisms is induced by the verschiebung map 
$$\mathbf{v}:\Gamma^{dp^r}\Hom_k(u,v)\to \Gamma^{d}({}^{(r)}\Hom_k(u,v))\simeq \Gamma^{d}(\Hom_k({}^{(r)}u,{}^{(r)}v))\;.$$
\end{rk}

One checks that $(I^{(r)})^{(s)}= I^{(r+s)}$ for all positive integers $r,s$. This formula extends to all non-negative integers $r$ and $s$ if we define $I^{(0)}$ as the $1$-homogeneous functor such that $I^{(0)}(v)=v$. 

\begin{rk}
The strict polynomial functor $I^{(0)}$ is known under many different names, namely we have isomorphisms of $1$-homogeneous strict polynomial functors $I^{(0)}\simeq S^1\simeq \Lambda^1\simeq \Gamma^1\simeq \otimes^1$. The functor is also commonly denoted by the letter $I$, this simpler notation being consistent with notation \ref{nota-twist}, i.e. $I^{(r)}=I\circ I^{(r)}$.
\end{rk}


The next result was first established in \cite[Cor 1.3 and Cor 4.6]{FFSS}. 

\begin{defi-prop}\label{pdef-gen-Ext}
Let $F$ and $G$ be two $d$-homogeneous strict polynomial functors. The maps given by precomposition by $I^{(1)}$:
\[\Ext^i_{\Gamma^{dp^r}\Proj_k}(F^{(r)},G^{(r)})\to \Ext^i_{\Gamma^{dp^{r+1}}\Proj_k}(F^{(r+1)},G^{(r+1)})\]
are always injective, and they are isomorphisms if $i<2p^r$. The stable value is called the vector space of \emph{generic extensions of degree $i$} and denoted by $\Ext^i_\gen(F,G)$:
\begin{align*}
\Ext^i_\gen(F,G)&:=\colim_{r}\Ext^i_{\Gamma^{dp^r}\Proj_k}(F^{(r)},G^{(r)})\simeq \Ext^i_{\Gamma^{dp^r}\Proj_k}(F^{(r)},G^{(r)})\text{ if $r\gg 0$.}
\end{align*}
\end{defi-prop}

\begin{ex}\label{ex-FS-key} The generic cohomology of the simplest case $F=G=I$ is a key computation in \cite{FS}. We have \cite[Thm 4.5]{FS}:
\[\Ext^i_{\Gamma^{p^r}\Proj_k}(I^{(r)},I^{(r)})=
\begin{cases}
k & \text{if $i$ is even and $i<2p^r$,}\\
0 & \text{otherwise.} 
\end{cases}
\]
Therefore, $\Ext^i_{\gen}(I,I)$ equals $k$ in even degrees and $0$ in odd degrees.
\end{ex}

We refer the reader to \cite{Touze-Survey} for a survey of these generic extensions and some formulas to compute them (which simplify and generalize \cite{FFSS}). See also section \ref{subsec-sample}. 
We can define generic $\Tor$ in the same fashion as generic $\Ext$. In order to compute $\Tor$, we need objects of $\Mdd\Gamma^d\Proj_k$ that we call \emph{contravariant $d$-homogeneous strict polynomial functors}. By applying proposition \ref{prop-Tor-Ext}, we can dualize the $\Ext$ situation, and we obtain the next statement.

\begin{defi-prop}\label{pdef-gen-Tor}
Let $F$ and $G$ be two $d$-homogeneous strict polynomial functors, with $F$ contravariant. The maps given by precomposition by $I^{(1)}$
\[\Tor^i_{\Gamma^{dp^{r+1}}\Proj_k}(F^{(r+1)},G^{(r+1)})\to \Tor^i_{\Gamma^{dp^{r}}\Proj_k}(F^{(r)},G^{(r)})\]
are always surjective, and they are isomorphisms if $i<2p^r$. The stable value is called the vector space of \emph{generic torsion of degree $i$}
and denoted by $\Tor^\gen_i(F,G)$:
\begin{align*}\Tor_i^\gen(F,G)&:=\lim_{r}\Tor_i^{\Gamma^{dp^r}\Proj_k}(F^{(r)},G^{(r)})\simeq \Tor_i^{\Gamma^{dp^r}\Proj_k}(F^{(r)},G^{(r)})\text{ for $r\gg 0$.}
\end{align*}
\end{defi-prop}

\subsection{The main result on the homology of additive functors}\label{subsec-homadd} We now state a more precise form of theorem \ref{thm-intro-3} of the introduction, which makes explicit the role of Frobenius twists and generic homology. Let $E_r^*=\Ext^*_{\Gamma^{p^r}\Proj_k}(I^{(r)},I^{(r)})$, and let
\[E_\infty^*=\Ext^*_\gen(I,I)=\colim_r E_r^*\;.\]
The graded vector space $E_\infty^*$ is described in example \ref{ex-FS-key}: it is one-dimensional in even degrees, and zero in odd degrees. For all even nonnegative integers $i$, we fix a basis vector $e_\infty(i)$ of $E_\infty^i$.

We explain now how the cohomology class $e_\infty(i)$ determines a cohomology class $e_\rho(i)\in \Ext^i_{k[\A]}(\rho,\rho)$ for all additive functors $\rho:\A\to k\Md$. 
Firstly, since the category $\A$ is small, there is a cardinal $\aleph$ such that $\rho$ has values in the category $\Proj_k^{\aleph}$ of vector spaces of dimension less or equal to $\aleph$. 
For all nonnegative integers $r$, the functor $\rho$ can then be written as the composition:
\[ \A\xrightarrow[]{{}^{(-r)}\rho} \Proj_k^{\aleph} \xrightarrow[]{{}^{(r)}-} k\Md\;,\]
where ${}^{(-r)}\rho= ({}^{(-r)}-)\circ\rho$ stands for the composition of $\rho$ and the $(-r)$-th Frobenius twist. Secondly, we consider the map obtained  by composing the forgetful functor from strict polynomial to ordinary functors as in example \ref{ex-ordvsstrict}, the $\Ext$-isomorphism of proposition \ref{pr-Kexact} provided by restriction along the inclusion $k[\Proj_k]\hookrightarrow k[\Proj_k^\aleph]$, and restriction along ${}^{(-r)}\rho:\A\to \Proj_k^\aleph$: 
\begin{equation}
E_r^i\to \Ext^i_{k[\Proj_k]}({}^{(r)}-, {}^{(r)}-)\simeq \Ext^i_{k[\Proj_k^\aleph]}({}^{(r)}-, {}^{(r)}-)\xrightarrow[]{({}^{(-r)}\rho)^*}
\Ext^i_{k[\A]}(\rho,\rho)\;.
\label{eq-compo-add}
\end{equation}
Now we denote by $e_r(i)$ the vector basis of $E_r^i$ representing $e_\infty(i)$ in the colimit, and we define $e_\rho(i)$ to be the image of $e_r(i)$ by the previous map \eqref{eq-compo-add}.
\begin{lm}
The class $e_\rho(i)$ does not depend on the choice of $r$ and $\aleph$.
\end{lm}
\begin{proof}
If $\beth$ is a cardinal greater or equal to $\aleph$ and if $s\ge r$, we have a commutative diagram in which the two vertical arrows on the left are induced by pullback along $(s-r)$-th Frobenius twists, the third vertical arrow is induced by restriction along the $(r-s)$-th Frobenius twist and the inclusion $k[\Proj_k^\aleph]\hookrightarrow k[\Proj_k^\beth]$:
\[\begin{tikzcd}
E_r^i\ar{r}\ar{d}{\simeq}&\Ext^i_{k[\Proj_k]}({}^{(r)}-, {}^{(r)}-)\ar{d}{\simeq}&\ar{l}[swap]{\simeq}\Ext^i_{k[\Proj_k^\aleph]}({}^{(r)}-, {}^{(r)}-)\ar{r}{({}^{(-r)}\rho)^*}
&\Ext^i_{k[\A]}(\rho,\rho)\\
E_s^i\ar{r}&\Ext^i_{k[\Proj_k]}({}^{(s)}-, {}^{(s)}-)&\ar{l}[swap]{\simeq}\Ext^i_{k[\Proj_k^\beth]}({}^{(s)}-, {}^{(s)}-)\ar{r}{({}^{(-s)}\rho)^*}\ar{u}[swap]{\simeq}
&\Ext^i_{k[\A]}(\rho,\rho)\ar[equal]{u}
\end{tikzcd}\;.\]
\end{proof}

For all even integers $i$ and all integers $j$, we denote by $\Upsilon_{ij}$ the composition
\[\Upsilon_{ij}:\Ext^j_{\kATor}(\pi,\rho)\otimes E_\infty^i\to \Ext^{j}_{k[\A]}(\pi,\rho)\otimes E^i_\infty\to \Ext^{i+j}_{k[\A]}(\pi,\rho)\]
where the first map is induced by the forgetful functor $\kA\Md\to k[\A]\Md$ and the second one sends $e\otimes e_\infty(i)$ to the Yoneda splice $e_\rho(i)\circ e$.
The maps $\Upsilon_{ij}$ assemble into a graded $k$-linear map
\begin{align}
\Upsilon: \Ext^*_{\kATor}(\pi,\rho)\otimes E^*_\infty\to \Ext^*_{k[\A]}(\pi,\rho)\;.\label{eq-Upsilon}
\end{align}
The following result is the main result of this section, and our improved form of the $\Ext$-isomorphism of theorem \ref{thm-intro-3}.
\begin{thm}\label{thm-main-additive}
Let $k$ be an infinite perfect field of positive characteristic $p$, and let $\A$ be a small additive category, which we assume to be $\Fp$-linear.  For all additive functors  $\rho,\pi:\A\to k\Md$, the map $\Upsilon$ defined in equation \eqref{eq-Upsilon} is an isomorphism of graded $k$-vector spaces.
\end{thm}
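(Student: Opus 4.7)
The plan is to reduce the claim to the strong comparison theorem \ref{thm-strong-comparison} applied to $F = G = I$, via a base-change spectral sequence and a degeneration argument driven by the $E^*_\infty$-module structure of Yoneda Ext.

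I would first apply the base-change spectral sequence of proposition \ref{pr-base-change-ss} to the canonical $k$-linear functor $\phi : k[\A] \to \kA$ (identity on objects), obtaining
\[E_2^{s,t} = \Ext^s_{\kA}(\pi,\, R^t_\phi \rho) \;\Longrightarrow\; \Ext^{s+t}_{k[\A]}(\pi,\rho),\]
where $R^t_\phi \rho(y) = \Ext^t_{k[\A]}(\kA(y,-),\rho)$. The heart of the proof is to establish the natural isomorphism $R^t_\phi \rho \cong \rho \otimes_k E^t_\infty$ as an object of $\kA\Md$, which yields $E_2^{s,t} \cong \Ext^s_{\kA}(\pi,\rho)\otimes E^t_\infty$. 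The spectral sequence is a right module over the Yoneda algebra $\Ext^*_{k[\A]}(\rho,\rho)$ via splice, hence over the subalgebra generated by the classes $e_i'$, which is exactly $E^*_\infty$. Since $E^*_\infty$ is concentrated in even non-negative degrees, each higher differential $d_r$ ($r\ge 2$) applied to a generator in $E_r^{s,0}$ lands in a group of negative $t$-degree (hence zero), so by $E^*_\infty$-linearity all higher differentials vanish everywhere; the sequence degenerates at $E_2$, and the $E^*_\infty$-action identifies the abutment with the target of $\Upsilon$.

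It remains to prove $R^t_\phi \rho(y) \cong \rho(y)\otimes E^t_\infty$. For the base case $\A = \Proj_\Fp$ and $y = \Fp$, Eilenberg--Watts gives $\kA\Md \simeq k\Md$, $\kA(\Fp,-) = t$, and every additive $\rho$ has the form $t_M = (-)\otimes_\Fp M$ with $M = \rho(\Fp)$. Since $k$ is a field, $M$ is free as a $k$-module, and since $t$ is of type $\mathrm{fp}_\infty$ by Schwartz's lemma (remark \ref{rk-pfinfty}), $\Ext^*_{k[\Proj_\Fp]}(t,-)$ commutes with arbitrary direct sums, reducing the computation to $\Ext^*_{k[\Proj_\Fp]}(t,t)$. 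The strong comparison theorem \ref{thm-strong-comparison} applied to the $1$-homogeneous strict polynomial functor $I$ (with $|\Fp|=p\ge 1$) identifies this with $\Ext^*_{\gen}(I,I) = E^*_\infty$, and the very construction of the classes $e_i'$ is arranged so that $\Upsilon$ realises this identification. For general $\Fp$-linear $\A$ and arbitrary $y \in \A$, I would reduce the representable case to this base case using the $\Fp$-linear additive functor $y^* :\Proj_\Fp\to\A$ sending $\Fp$ to $y$, together with the $\aleph$-additivization procedure (proposition \ref{pr-Kexact} guarantees it preserves Ext) and the adjunctions of propositions \ref{pr-Eadjoint} and \ref{pr-iso-Ext-explicit}.

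The main obstacle is this last reduction. The natural adjunctions provided by $\aleph$-additivization relate categories of additive ($\kA$-linear) functors, whereas $\Ext^*_{k[\A]}(\kA(y,-),\rho)$ lives in the much larger $k[\A]\Md$ whose representable generators are $k[\A(y,-)]$, \emph{not} $\kA(y,-)$. Bridging these two worlds requires carefully exploiting the $\Fp$-linearity of $\A$ to build an adjunction at the level of the $k$-linear categories $k[-]$ rather than at the level of $\kA$, together with a careful tracking of naturality to ensure that the identifications obtained really do correspond to the map $\Upsilon$ defined via the classes $e_i'$.
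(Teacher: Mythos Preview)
Your proposal is correct and converges on the same reduction as the paper: both arguments reduce to showing $\Upsilon$ is an isomorphism for $\pi$ a standard projective $\kA(y,-)$, then reduce this to $\A = \Proj_{\Fp}$ via $\aleph$-additivization and the adjunction of proposition \ref{pr-Eadjoint}, and finally invoke the strong comparison theorem for $I$. What you call ``computing $R^t_\phi\rho$ with its module structure'' is exactly the statement that $\Upsilon$ is an isomorphism in the representable case, and your ``main obstacle'' is precisely the content of the paper's lemma \ref{lm-red-Fp}. Your instinct about its resolution is right: the key point is that proposition \ref{pr-Eadjoint} furnishes an adjunction $\A^\aleph(a,-)\dashv (a\otimes -)$ at the level of the \emph{categories} $\A^\aleph$ and $\Proj_{\Fp}^\aleph$, and a single adjunction between categories induces compatible adjunctions on both $k[\,\cdot\,]\Md$ and ${}_k(\,\cdot\,)\Md$ simultaneously, giving the commuting square you need.

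The genuine difference lies in how you pass from representable $\pi$ to general $\pi$. You use the base-change spectral sequence of proposition \ref{pr-base-change-ss} and argue degeneration via the $E^*_\infty$-module structure. The paper instead takes a projective resolution $P_\bullet\to\pi$ in $\kA\Md$, an injective resolution $Q^\bullet$ of $\rho$ in $k[\A]\Md$, builds the bicomplexes $C^{pq}=\Hom_{\kA}(P_p,\rho)\otimes E^q_\infty$ and $D^{pq}=\Hom_{k[\A]}(P_p,Q^q)$, and writes down an explicit chain map $\Phi:C\to D$ (using cocycles $z'_q$ representing $e'_q$) whose induced map on $E_1$-columns and on abutments is $\Upsilon$. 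Your route is valid but carries extra burdens: you must check that the $e'_i$-action commutes with the differentials of the Grothendieck spectral sequence, that your abstract identification $R^*_\phi\rho\cong\rho\otimes E^*_\infty$ is compatible with this action (otherwise ``$E^*_\infty$-linearity'' does not force the differentials to vanish---and note that this compatibility is already the representable case of the theorem), and finally you must resolve the filtration/extension problem on the abutment. The paper's direct bicomplex comparison sidesteps all three at once, so it is the cleaner packaging of the same idea. A minor variant: you reduce in $\rho$ using that $t$ is $fp_\infty$ and direct sums, whereas the paper reduces via products (every additive $\rho$ is a retract of a product of copies of $t$).
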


Before proving theorem \ref{thm-main-additive}, we observe that theorem \ref{thm-main-additive} implies not only the $\Ext$-isomorphism in theorem \ref{thm-intro-3} but also the $\Tor$-isomorphism therein. Hence the whole of theorem  \ref{thm-intro-3} is actually a direct consequence of theorem \ref{thm-main-additive}.
\begin{cor}\label{cor-mainadditive-tor}
Let $T^\infty_*$ denote the graded vector space equal to $k$ in even non-negative degrees and to $0$ in the other degrees. There is a graded isomorphism, natural with respect to $\pi$ and $\rho$: 
\[\Tor_*^{\kATor}(\pi,\rho)\otimes T^{\infty}_*\simeq  \Tor_*^{k[\A]}(\pi,\rho)\;.\]
\end{cor}
\begin{proof}[Proof of corollary \ref{cor-mainadditive-tor}]
The graded vector spaces $\Hom_k(\Tor_*^{\K}(\pi,\rho),M)$ are naturally isomorphic to $\Ext^*_\K(\rho,\Hom_k(\pi,M))$ for $\K=\kA$ or $\K=k[\A]$ and for all vector spaces $M$. So theorem \ref{thm-main-additive} yields an isomorphism, natural in $\pi$, $\rho$ and $M$:
\begin{align}\Hom_k(\Tor_*^{\kATor}(\pi,\rho),M)\otimes E_\infty^*\simeq \Hom_k(\Tor_*^{k[\A]}(\pi,\rho),M)\label{eq-dual-iso}\end{align}
Note that $E_\infty^*$ is the $k$-linear graded dual of $T^\infty_*$. Since $E^\infty_*$ is degreewise finite-dimensional, there is a canonical isomorphism: 
\begin{align}\Hom_k(\Tor_*^{\kATor}(\pi,\rho),M)\otimes E_\infty^*\simeq \Hom_k(\Tor_*^{\kATor}(\pi,\rho)\otimes T_*^\infty,M)\;.\label{eq-dual-iso2}
\end{align}
Thus the right hand sides of \eqref{eq-dual-iso2} and  \eqref{eq-dual-iso} are naturally isomorphic with respect to $M$, whence the result.
\end{proof}

\subsection{Proof of theorem \ref{thm-main-additive}}\label{subsec-homadd-proof} 
We fix a small category $\A$ and an infinite perfect field $k$ of positive characteristic.
In order to prove that the map $\Upsilon$ is an isomorphism, we proceed in three steps. 
\begin{enumerate}[1.]
\item In lemma \ref{lm-red-proj}, we reduce the proof to the case of $\pi=k\otimes_{\mathbb{Z}}\A(a,-)$. This step relies on a standard spectral sequence argument.
\item In lemma \ref{lm-red-Fp}, we reduce the proof further to $\A=\Proj_\Fp$. This step uses an adjoint of $\A(a,-):\A\to \Fp\Md$. Since the adjoint does not necessarily exist, we must first replace $\A$ by a larger additive category $\A^\aleph$.
\item Finally, we prove the case $\A=\Proj_\Fp$ and $\pi=k\otimes_{\mathbb{Z}}\Hom_\Fp(a,-)$ in lemma \ref{lm-finish-proof}, as an application of the computations of \cite{FLS} and \cite{FS}.
\end{enumerate}
\begin{lm}\label{lm-red-proj}
Fix $\rho$ in $\kA\Md$. If the morphism \eqref{eq-Upsilon} is an isomorphism for $\pi=k\otimes_{\mathbb{Z}}\A(a,-)$ for all objects $a$ of $\A$, then it is an isomorphism for all $\pi$.
\end{lm}
\begin{proof}
Assume that the morphism \eqref{eq-Upsilon} is an isomorphism for $\pi=k\otimes_{\mathbb{Z}}\A(a,-)$ for all objects $a$ of $\A$. 
The source and the target of $\Upsilon$, regarded as functors of the variable $\pi$, turn direct sums into products. Since every projective object of $\kA\Md$ is a direct summand of a direct sum of functors of the form $k\otimes_{\mathbb{Z}}\A(a,-)$, this implies that the map \eqref{eq-Upsilon} is an isomorphism whenever $\pi$ is projective in $\kA\Md$. 

Now let $\pi$ be an arbitrary object of $\kA\Md$, let $P$ be a projective resolution of $\pi$ in $\kA\Md$, and let $Q$ be an injective resolution of $\rho$ in $k[\A]\Md$. We consider the bicomplexes:
\begin{align*}
C^{pq}=\Hom_{\kATor}(P_p,\rho)\otimes E_\infty^q\;,&&& D^{pq}=\Hom_{k[\A]}(P_p,Q^q)\;.
\end{align*}
Here we consider $E_\infty^*$ as a complex with zero differential, hence the second differential of $C$ is zero. We have two associated spectral sequences:
\begin{align*}
&E^{pq}_1(C)=\Hom_{\kATor}(P_p,\rho)\otimes E^q_\infty\Rightarrow  (\Ext_{\kATor}(\pi,\rho)\otimes E_\infty)^{p+q}\;,\\
&E^{pq}_1(D)=\Ext^q_{k[\A]}(P_p,\rho)\Rightarrow  \Ext_{k[\A]}^{p+q}(\pi,\rho)\;.
\end{align*}
For all even integers $q$, we choose a cycle $z(q)$ representing $e_\rho(q)$ in the complex $\Hom_{k[\A]}(\pi,Q)$. Then the morphism of bicomplexes $\Phi^{pq}:C^{pq}\to D^{pq}$ such that $\Phi^{pq}(f\otimes e_\infty(q))=z(q)\circ f$ induces a morphism of spectral sequences $E(\Phi)$.
By construction, the morphisms
\begin{align*}
&E^{p,*}_1(\Phi):\Hom_{\kATor}(P_p,\rho)\otimes E^*_\infty\to \Ext^*_{k[\A]}(P_p,\rho)\\
&\Tot(\Phi): \Ext^*_{\kATor}(\pi,\rho)\otimes E_\infty^*\to \Ext_{k[\A]}^{*}(\pi,\rho)
\end{align*}
are both equal to $\Upsilon$. Thus $E_1(\Phi)$ is an isomorphism (since the morphism \eqref{eq-Upsilon} is an isomorphism on projective objects of $\kA\Md$), which implies that $\Tot(\Phi)$ is an isomorphism.
\end{proof}

\begin{lm}\label{lm-red-Fp}
If theorem \ref{thm-main-additive} holds for $\A=\Proj_\Fp$ then it holds for all small additive $\Fp$-linear categories $\A$. 
\end{lm}
\begin{proof}
Let us fix a small additive $\Fp$-linear category $\A$. By lemma \ref{lm-red-proj}, it suffices to prove that the map \eqref{eq-Upsilon} is an isomorphism when $\pi=k\otimes_{\mathbb{Z}}\A(a,-)$. 

Let $\aleph$ be a cardinal larger than the cardinal of $\A(x,y)$ for all $x$ and $y$, and let $\A^\aleph$ be the $\aleph$-additivization of the $\Fp$-category $\A$, as in definition \ref{def-kappa-completion}. Let $\pi'=k\otimes_{\mathbb{Z}}\A^\aleph(a,-)$ and let $\rho':\A^\aleph\to k\Md$ be an arbitrary extension of $\rho$. We have a commutative diagram in which the vertical arrows are induced by restriction along the inclusions $\A\hookrightarrow \A^\aleph$:
\begin{equation}
\begin{tikzcd}
\Ext^*_{\kATor^\aleph}(\pi',\rho')\otimes E^*_\infty \ar{r}{\Upsilon}\ar{d}{\simeq} & \Ext^*_{k[\A^\aleph]}(\pi',\rho')\ar{d}{\simeq}\\
\Ext^*_{\kATor}(\pi,\rho)\otimes E^*_\infty \ar{r}{\Upsilon} & \Ext^*_{k[\A]}(\pi,\rho)
\end{tikzcd}\;.\label{dgm-enlarge}
\end{equation}
The explicit formula for Kan extensions given in proposition \ref{pr-Kexact} shows that $\pi'$ is the left Kan extension of $\pi$ (regarded as an object of $\kA\Md$ or as an object of $k[\A]\Md$), hence the vertical arrows are isomorphisms. Thus it suffices to check that the upper $\Upsilon$ is an isomorphism.

Proposition \ref{pr-Eadjoint} gives an adjoint pair $\A^\aleph(a,-):\A^\aleph\leftrightarrows \Proj_{\Fp}^\aleph:a\otimes -$.
One can write $\pi'$ as the composition of the functor $\A^\aleph(a,-)$ with the functor $I':\Proj_{\Fp}^\aleph\to \Proj_{k}^\aleph$ such that $I'(v)=k\otimes_{\Fp}v$. Hence we have adjunction isomorphisms
\begin{align*}
&\Ext^*_{\kATor^\aleph}(\pi',\rho')\simeq \Ext^*_{\Proj_{\Fp}^\aleph}(I',\rho'(a\otimes -))\;,\\
&\Ext^*_{k[\A^\aleph]}(\pi',\rho')\simeq \Ext^*_{k[\Proj_{\Fp}^\aleph]}(I',\rho'(a\otimes -))\;.
\end{align*}
These adjunction isomorphisms are given by evaluation on $a\otimes -$ and restriction along the unit of adjunction $v\to \A^\aleph(a,a\otimes v)$ (see the beginning of section \ref{subsec-Kan-homology}). Thus they fit into a commutative square:
\begin{equation}
\begin{tikzcd}
\Ext^*_{\kATor^\aleph}(\pi',\rho')\otimes E^*_\infty \ar{r}{\Upsilon}\ar{d}{\simeq} & \Ext^*_{k[\A^\aleph]}(\pi',\rho')\ar{d}{\simeq}\\
\Ext^*_{{}_k\Proj_{\Fp}^\aleph}(I',\rho'(a\otimes -))\otimes E^*_\infty \ar{r}{\Upsilon} & \Ext^*_{k[\Proj_{\Fp}^\aleph]}(I',\rho'(a\otimes -))
\end{tikzcd}\;.\label{dgm-adj}
\end{equation}
Thus in order to prove lemma \ref{lm-red-Fp}, it suffices to prove that the lower $\Upsilon$ in diagram \eqref{dgm-adj} is an isomorphism.

But the lower $\Upsilon$ is an isomorphism because we assume that theorem \ref{thm-main-additive} holds for $\A=\Proj_\Fp$. Indeed, $I'=k\otimes_{\mathbb{Z}}\Hom_\Fp(\Fp,-)={}_k(\Proj_{\Fp}^\aleph)(\Fp,-)$, hence diagram \eqref{dgm-enlarge} with $\A$, $\pi'$, $\rho'$ respectively taken as $\Proj_\Fp$, $I'$, $\rho'(a\otimes -)$, shows that the lower $\Upsilon$ in diagram \eqref{dgm-adj} is an isomorphism.
\end{proof}
The next lemma finishes the proof of theorem \ref{thm-main-additive}.
\begin{lm}\label{lm-finish-proof}
Theorem \ref{thm-main-additive} holds when $\A=\Proj_\Fp$.
\end{lm}
\begin{proof}
If $\A=\Proj_\Fp$, there is an equivalence of categories
\[k\Md\simeq {}_k\A\Md\]
which sends a $k$-vector space $u$ to the functor $v\mapsto v\otimes_{\Fp}u$. Thus, the additive functors $\pi$ and $\rho$ are direct sums of copies of the functor $t:v\mapsto v\otimes_{\Fp}k$, hence it suffices to check that \eqref{eq-Upsilon} is an isomorphism when $\pi=\rho=t$. Moreover, we have \[\Ext^j_{\kA}(t,t)\simeq \Ext^j_k(k,k) = 
\begin{cases}
0 &\text{if $j>0$,}\\
k\id_t &\text{ if $j=0$.}
\end{cases}\] 
Hence, going back to the definition of the map \eqref{eq-Upsilon} it suffices to check that for $r$ big enough, the composition of the forgetful map and of the restriction along the functor $^{(-r)}t=t$: 
\[\Ext^i_{\Gamma^{p^r}\Proj_k}(I^{(r)},I^{(r)})\to \Ext^i_{k[\Proj_k]}(I^{(r)},I^{(r)})\to \Ext^i_{k[\Proj_\Fp]}(t,t)\]
is an isomorphism. 
The latter fact follows from the computations in \cite{FLS} and \cite{FS}. To be more specific, we have a commutative square 
\[\begin{tikzcd}
\Ext^i_{\Gamma^{p^r}\Proj_k}(I^{(r)},I^{(r)})\ar{r}& \Ext^i_{k[\Proj_k]}(I^{(r)},I^{(r)})\ar{r}& \Ext^i_{k[\Proj_\Fp]}(t,t)\\
k\otimes_\Fp\Ext^i_{\Gamma^{p^r}\Proj_\Fp}(I^{(r)},I^{(r)})\ar{u}{\simeq}\ar{rr}&& k\otimes_\Fp\Ext^i_{\Fp[\Proj_\Fp]}(I,I)\ar{u}{\simeq} 
\end{tikzcd}\]
where the bottom horizontal arrow is induced by the forgetful functor, the vertical isomorphism on the left is the base change isomorphism for strict polynomial functors \cite[2.7]{SFB} and the vertical isomorphism on the right is the base change morphism \eqref{base-change-Ext} from section \ref{subsec-size} (the latter is an iso since $I$ is $fp_\infty$ by \cite[Prop 10.1]{FLS}). Now the top horizontal line of the square is an isomorphism because the bottom arrow is an isomorphism, as we can see it by comparing the computations of \cite{FLS} and \cite{FS} or by using \cite[Thm 3.10]{FFSS}.
\end{proof}

\section{Polynomial homology over $\Proj_{\Fq}$}\label{sec-Pol-Fq}
The purpose of this section is to compute polynomial homology over the additive category $\Proj_\Fq$ in terms of the generic homology of strict polynomial functors recalled in section \ref{subsec-frobgen}. The main result is theorem \ref{thm-verystrong}, which can be seen as the special case $\A=\Proj_\Fq$ of the generalized comparison theorem established in section \ref{sec-generalized}. This special case is a key ingredient for the proof of the generalized comparison theorem.

The section is organized as follows. In the first three subsections we review the strong comparison theorem 
\cite[Thm 3.10]{FFSS} and we formulate it in a form which is better adapted to our purposes.  Then, in section \ref{subsec-strong-small}, we elaborate on the techniques of \cite{FFSS}, and we succeed in removing the assumption on the size of the field $\Fq$ in the strong comparison theorem. This leads us to theorem \ref{thm-verystrong}. 

Throughout the section, $k$ denotes a (non-necessarily infinite) perfect field of positive characteristic $p$ containing a finite subfield $\Fq$ with $q$ elements and we let 
\[t:\Proj_\Fq\to \Proj_k\] 
denote the additive functor given by extensions of scalars: $t(v)=k\otimes_\Fq v$.

\subsection{The strong comparison theorem}\label{subsec-strong-comparison}
Recall from lemma \ref{lm-evalinfty} the exact functor:
\[t^*:\Gamma^d\Proj_k\Md\to k[\Proj_\Fq]\Md\;.\]
induced by forgetful functor from strict polynomial functors to ordinary functors and by restriction along the base change functor $t:\Proj_\Fq\to \Proj_k$. If $q=p^r$, there are canonical isomorphisms $t^*I^{(nr)}\simeq t$ in $k[\Proj_\Fq]\Md$, which sends an element $\lambda\otimes x$ in  ${}^{(nr)}(k\otimes_\Fq v)$ to the element $\lambda^{p^{nr}}\otimes x$ in $k\otimes_\Fq v$. So if $F$ and $G$ are $d$-homogeneous strict polynomial functors, we have canonical isomorphisms: 
\begin{align*}
&t^*F\simeq t^*(F^{(nr)})\;, &&&t^*(G^{(nr)})\simeq t^*G\;.
\end{align*}
If $n$ is big enough, by combining these isomorphisms with the morphism of $\Ext$ induced by $t^*$ we obtain a graded $k$-linear map:
\begin{align}
\Ext^i_{\gen}(F,G)\simeq 
\Ext^i_{\Gamma^{dp^{nr}}\Proj_k}(F^{(nr)},G^{(nr)})
\to 
\Ext^i_{k[\Proj_\Fq]}(t^*F,t^*G)
\;.\label{eq-forget-comparison}
\end{align} 

The next result follows from the strong comparison theorem \cite[Thm 3.10]{FFSS}.

\begin{thm}\label{thm-strong-comparison}
Let $k$ be an infinite perfect field containing a finite subfield with $q$ elements, and let $F$ and $G$ be two $d$-homogeneous strict polynomial functors. If $q\ge d$, the map \eqref{eq-forget-comparison} is an isomorphism in all degrees $i$.
\end{thm}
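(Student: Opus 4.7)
The plan is to deduce this extension to infinite perfect fields from the original strong comparison theorem \cite[Thm 3.10]{FFSS} over $\Fq$ via a base change argument. Both the source $\Ext^*_\gen(F,G)$ and the target $\Ext^*_{k[\Proj_\Fq]}(t^*F,t^*G)$ are cohomological $\delta$-functors in each variable, and the comparison map \eqref{eq-forget-comparison} is natural, so a standard spectral sequence comparison applied to a projective resolution of $F$ and a coresolution of $G$ by products of standard injectives reduces the statement to the case where $F$ is a standard projective $\Gamma^{d,v}=\Gamma^d(\Hom_k(v,-))$ and $G$ a standard injective $\Hom_k(\Gamma^d\Hom_k(-,w),k)$ in $\Gamma^d\Proj_k\Md$, for some finite-dimensional $k$-vector spaces $v$ and $w$.

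Next, I would observe that such standard projectives and injectives are base changes of their $\Fq$-counterparts: choosing $\Fq$-forms $v_0,w_0$ of $v,w$, we have canonical isomorphisms $F\simeq k\otimes_\Fq F_0$ and $G\simeq k\otimes_\Fq G_0$, where $F_0,G_0$ are the analogous standard functors in $\Gamma^d\Proj_\Fq\Md$. I would then check that both sides of \eqref{eq-forget-comparison} commute with the base change $-\otimes_\Fq k$ in this reduced situation. On the source side, this is the standard base change formula for Ext over the Schur algebra $S(n,dp^{nr})$, available because the relevant Hom spaces are finite-dimensional over $\Fq$; passing to the colimit over $n$ gives the identification $\Ext^*_\gen(F,G)\simeq k\otimes_\Fq \Ext^*_{\gen,\Fq}(F_0,G_0)$. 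On the target side, the base change formula for ordinary functor Ext \eqref{base-change-Ext} applies, provided $t_0^*F_0$ is of type $\mathrm{fp}_\infty$ in $\Fq[\Proj_\Fq]\Md$; this follows from the local noetherianity of $\Fq[\Proj_\Fq]\Md$ invoked in remark \ref{rk-pfinfty}. Combining these two base change identifications with \cite[Thm 3.10]{FFSS} applied over $\Fq$ (which is available since $q\geq d$) yields the result.

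The main obstacle is verifying that the target Ext commutes with base change to $k$, since the $\mathrm{fp}_\infty$ condition is delicate and relies on the nontrivial noetherianity of $\Fq[\Proj_\Fq]\Md$. A related subtlety is to keep track of Frobenius twists throughout the reduction: one must choose $n$ large enough uniformly in any fixed cohomological degree range so that both $\Ext^*_\gen$ has stabilized and the isomorphism $t^*F\simeq t^*F^{(nr)}$ can be used; the fact that standard projectives and injectives remain standard projectives/injectives after applying $(-)^{(nr)}$ (with the underlying $\Fq$-form simply replaced by its own Frobenius twist) ensures that the reduction is compatible with this choice.
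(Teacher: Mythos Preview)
Your proposal is correct and follows essentially the same approach as the paper: reduce to standard projectives and injectives, use base change on both sides (invoking \cite[2.7]{SFB} for the strict polynomial side and an $\mathrm{fp}_\infty$ argument for the ordinary functor side), and then apply \cite[Thm 3.10]{FFSS} over $\Fq$. The only minor differences are that the paper cites Schwartz's $\mathrm{fp}_\infty$-lemma \cite[Prop 10.1]{FLS} rather than local noetherianity (both work, and both are mentioned in remark \ref{rk-pfinfty}), and the paper inserts an explicit intermediate step extending from \emph{standard} projectives/injectives to \emph{arbitrary} projectives/injectives by checking that source and target of \eqref{eq-forget-comparison} turn direct sums into products in $F$ and preserve products in $G$---a point your reduction to ``products of standard injectives'' implicitly needs but does not spell out.
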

\begin{proof}
Theorem \ref{thm-strong-comparison} slightly generalizes the strong comparison theorem of \cite{FFSS} in two ways. Firstly, contrarily to \cite{FFSS}, we do not assume that $k=\Fq$. Secondly, we work with $\Gamma^d\Proj_k\Md$ rather than with the category $\Gamma^d\Proj_k\md$, i.e. we allow our strict polynomial functors to have infinite-dimensional values. 

We overcome these two technical points as follows. The standard projective objects of $\Gamma^d\Proj_k\Md$ are the divided power functors $\Gamma^{d,s}=\Gamma^d(\Hom_k(k^s,-))$ and the standard injectives are the symmetric power functors $S^{d,s}=S^d(k^s\otimes -)$. These two kinds of functors commute with base change: there are canonical isomorphisms
\[t^*\Gamma^{d,s}(v)\simeq\Gamma^{d,s}_\Fq(v)\otimes_\Fq k \text{ and } t^*S^{d,s}(v)\simeq S^{d,s}_\Fq(v)\otimes_\Fq k\]
where the indices $\Fq$ indicate their counterparts in the category $\Gamma^d\Proj_\Fq\Md$ of strict polynomial functors over $\Fq$. Thus the morphism \eqref{eq-forget-comparison} fits into a commutative diagram
\[
\begin{tikzcd}
\Ext^i_{\Gamma^d\Proj_\Fq}(\Gamma^{d,s\,(nr)}_\Fq, S^{d,s\,(nr)}_\Fq )\otimes_\Fq k\ar{r}{\simeq}\ar{d}& \Ext^i_{\Gamma^d\Proj_k}(\Gamma^{d,s\,(nr)}, S^{d,s\,(nr)})\ar{d}{\eqref{eq-forget-comparison}}\\
\Ext^i_{\Fq[\Proj_\Fq]}(\Gamma^{d,s}_\Fq, S^{d,s}_\Fq )\otimes_\Fq k\ar{r}{\simeq}& \Ext^i_{k[\Proj_\Fq]}(t^*\Gamma^{d,s}, t^*S^{d,s})
\end{tikzcd}
\]
where the upper horizontal isomorphism is the base change functor for strict polynomial functors \cite[2.7]{SFB}, and the lower horizontal isomorphism is induced by tensoring by $k$ over $\Fq$ (that tensoring by $k$ yields an isomorphism follows from the K\"unneth formula of proposition \ref{prop-Kunneth-ext} and the fact that $\Gamma^{d,s}_\Fq$ is $\mathrm{fp}_\infty$ by Schwartz's $\mathrm{fp}_\infty$ lemma \cite[Prop 10.1]{FLS}), and the vertical morphism on the left hand side is induced by the forgetful functor from strict polynomial functors to ordinary functors. This latter morphism is an isomorphism if $n\gg 0$ by the strong comparison theorem \cite[Thm 3.10]{FFSS}, so that our morphism \eqref{eq-forget-comparison} is an isomorphism when $F$ is a standard projective and $G$ is a standard injective. 
Next, we observe that the source and the target of morphism \eqref{eq-forget-comparison} both turn sums into products when viewed as functors of $F$, and they both turn products into products when viewed as functors of $G$, which implies that morphism \eqref{eq-forget-comparison} is an isomorphism when $F$ is an arbitrary projective strict polynomial functor and $G$ is an arbitrary injective strict polynomial functors (possibly with infinite-dimensional values). This implies that \eqref{eq-forget-comparison} is an isomorphism when $F$ and $G$ are arbitrary objects of $\Gamma^d\Proj_k\Md$ by a standard spectral sequence argument.
\end{proof}

We have a similar situation with generic $\Tor$. Namely, if $F$ is a $d$-homogeneous contravariant strict polynomial functors, and if $G$ is a $d$-homogeneous strict polynomial functor, restriction along $t$ (as in lemma \ref{lm-evalinfty}) together with the isomorphisms $t^*F\simeq t^*F^{(nr)}$ and $t^*G\simeq t^*G^{(nr)}$ induce a morphism:
\begin{align}\Tor_i^{k[\Proj_\Fq]}(t^*F,t^*G)\to \Tor_i^{\Gamma^{dp^{nr}}\Proj_k}(F^{(nr)},G^{(nr)})\simeq \Tor_i^{\gen}(F,G)\;.\label{eq-forget-comparison-Tor}
\end{align}
The next corollary follows from theorem \ref{thm-strong-comparison} and proposition \ref{prop-Tor-Ext}.
\begin{cor}\label{cor-strong-thm}
If $q\ge d$, the map \eqref{eq-forget-comparison-Tor} is an isomorphism in all degrees $i$.
\end{cor}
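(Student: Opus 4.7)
The plan is to deduce corollary \ref{cor-strong-thm} from theorem \ref{thm-strong-comparison} by a duality argument using proposition \ref{prop-Tor-Ext}. For $n$ large enough that the $\Tor$ groups over $\Gamma^{dp^{nr}}\Proj_k$ have stabilized to the generic $\Tor$, the map \eqref{eq-forget-comparison-Tor} becomes (after the canonical isomorphisms $t^*F \simeq t^*F^{(nr)}$ and $t^*G \simeq t^*G^{(nr)}$) the restriction map $\res^{\phi}$ associated to the $k$-functor $\phi := \gamma^{dp^{nr}} \circ k[t]: k[\Proj_\Fq] \to \Gamma^{dp^{nr}}\Proj_k$. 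So it suffices to prove that $\res^\phi$ induces an isomorphism on $\Tor_i$.

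By corollary \ref{cor-memechose}, this reduces in turn to showing that for every injective $k$-module $M$, the $\Ext$ restriction map
\[\phi^* : \Ext^i_{\Gamma^{dp^{nr}}\Proj_k}\bigl(G^{(nr)}, \Hom_k(F^{(nr)}, M)\bigr) \to \Ext^i_{k[\Proj_\Fq]}\bigl(\phi^*G^{(nr)}, \phi^*\Hom_k(F^{(nr)}, M)\bigr)\]
is an isomorphism. Setting $F^\vee := \Hom_k(F(-), M)$, which is a covariant $d$-homogeneous strict polynomial functor, one has a natural isomorphism $\Hom_k(F^{(nr)}, M) \simeq (F^\vee)^{(nr)}$ in $\Gamma^{dp^{nr}}\Proj_k\Md$ deduced from the identity $F^{(nr)} = F \circ I^{(nr)}$ (the dualization $\Hom_k(-, M)$ acts only on the outermost variable, not on $I^{(nr)}$). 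Combining this with $\phi^*G^{(nr)} \simeq t^*G$ and $\phi^*(F^\vee)^{(nr)} \simeq t^*F^\vee$, the displayed $\Ext$ map gets identified with the instance of the map \eqref{eq-forget-comparison} of theorem \ref{thm-strong-comparison} applied to the pair $(G, F^\vee)$ of covariant $d$-homogeneous strict polynomial functors. Since $q \ge d$, that map is an isomorphism, which concludes.

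The main technical point is verifying that these identifications actually fit together: the commutative square of proposition \ref{prop-Tor-Ext}, post-composed with the various canonical isomorphisms (twist isomorphisms $t^*F \simeq t^*F^{(nr)}$, the dualization $\Hom_k(F^{(nr)}, M) \simeq (F^\vee)^{(nr)}$, and the adjunction $\alpha$ of lemma \ref{lm-iso-dual}), must genuinely match the map of theorem \ref{thm-strong-comparison} applied to $(G, F^\vee)$. This is purely a naturality check and poses no real difficulty, since each of the comparison isomorphisms is induced by a natural transformation that can be chosen compatibly on the chain level, but it deserves some care because of the number of conventions being composed.
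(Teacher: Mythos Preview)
Your proposal is correct and follows essentially the same approach as the paper, which simply states that the corollary follows from theorem \ref{thm-strong-comparison} and proposition \ref{prop-Tor-Ext}. You have spelled out the duality argument in detail via corollary \ref{cor-memechose}, including the compatibility checks between the Frobenius-twist identifications and the dualization $\Hom_k(F^{(nr)},M)\simeq (F^\vee)^{(nr)}$, which is exactly the content the paper leaves implicit.
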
 

\subsection{Recollections of non-homogeneous strict polynomial functors}\label{subsec-nonhomogeneous}

Non-homogeneous strict polynomial functors are used in the generalizations of the strong comparison theorem \ref{thm-strong-comparison} that we give in sections \ref{subsec-strong-inhomogeneous} and \ref{subsec-strong-small}.
We abuse notations (see remark \ref{rk-abuse} below) and we denote by $\Gamma\Proj_k\Md$ the category of strict polynomial functors of bounded degree over a field $k$. 
This category is defined by 
\[\Gamma\Proj_k\Md=\bigoplus_{d\ge 0} \Gamma^d\Proj_k\Md\;.\]
Thus a strict polynomial functor of bounded degree $F$ is simply defined as a family of $d$-homogeneous strict polynomial functors $F_d$, which are called the $d$-homogeneous components of $F$, and all the $F_d$ are zero but a finite number of them. 
The highest $d$ such that $F_d\ne 0$ is called the \emph{degree of $F$}, and denoted by $\stdeg F$. 
We have $F=\bigoplus_{d\ge 0}F_d$. Morphisms of strict polynomial functors preserve these decompositions into homogeneous components. More generally we have (only  finitely many terms of the sum are nonzero):
\[\Ext^*_{\Gamma\Proj_k}(F,G)=\bigoplus_{d\ge 0}\Ext^*_{\Gamma^d\Proj_k}(F_d,G_d)\;.\]
We define generic extensions by:
\[\Ext^*_\gen(F,G)=\bigoplus_{d\ge 0}\Ext^*_\gen(F_d,G_d)\;.\]
\begin{rk}
Let $\Gamma\Proj_k\md$ denote the full subcategory of $\Gamma\Proj_k\Md$ on the functors $F$ such that $F(v)=\bigoplus_{d\ge 0}F(v)$ has finite dimension for all $v$. Then  $\Gamma\Proj_k\md$ identifies with the category $\Pp_k$ introduced in \cite{FS}. The inclusion $\Gamma\Proj_k\md\hookrightarrow \Gamma\Proj_k\Md$ induces an isomorphism on $\Ext$, so that working with the former category or the latter is largely a matter of taste.
\end{rk}

The forgetful functor from homogeneous strict polynomial functors to ordinary functors described in section \ref{subsec-restriction} extends to the non-homogeneous case. Namely, we have a forgetful functor:
\[\gamma^*:\Gamma\Proj_k\Md = \bigoplus_{d\ge 0}\Gamma^d\Proj_k\Md\xrightarrow[]{\sum\gamma^{d\,*}}k[\Proj_k]\Md\;.\]
If $k$ is an infinite field, this forgetful functor is fully faithful, and 
an ordinary functor $F$ with finite-dimensional values is the image of a strict polynomial functor of degree $d$ if and only if the coordinate functions of the maps
$\Hom_k(v,w)\to \Hom_k(F(v),F(w))$, $f\mapsto F(f)$, are polynomials of degree $d$.

Most often we will omit $\gamma^*$ from the notations, and simply denote by $F$ the underlying ordinary functor of a strict polynomial functor $F$. 

\begin{rk}\label{rk-ordvsstrict}
The underlying ordinary functor of a strict polynomial functor of bounded degree $F$ is always polynomial in the sense of Eilenberg and Mac Lane, used in section \ref{sec-AP}. Thus $F$ has a degree $\stdeg F$ and an Eilenberg-Mac Lane degree $\deg_\EML F$.  We have $\stdeg F\ge \deg_\EML F$, but the inequality may be strict. For example $\stdeg I^{(r)}=p^r$ and $\deg_\EML I^{(r)}=1$. More detailed relations between these two notions of degree can be found in \cite{TouzeFund}. 
\end{rk}

Similarly there is a category $\Mdd\Gamma\Proj_k$ of contravariant strict polynomial functors of bounded degree and we have a similar decompositions (with finitely many nonzero terms in the direct sum)
 \[\Tor_*^{\Gamma\Proj_k}(F,G)=\bigoplus_{d\ge 0}\Tor_*^{\Gamma^d\Proj_k}(F_d,G_d)\;,\quad \Tor_*^{\gen}(F,G)=\bigoplus_{d\ge 0}\Tor_*^{\gen}(F_d,G_d)\;.\]
\begin{rk}\label{rk-abuse}
Despite its notation, the category $\Gamma\Proj_k\Md$ is not a category of $k$-linear functors from some category $\Gamma\Proj_k$ to $k$-modules. However this abuse of notation emphasizes the fact the properties of the category $\Gamma\Proj_k\Md$ are very close to those of the categories $\Gamma^d\Proj_k\Md$. It also allows compact notations for $\Ext$ and $\Tor$, with the fictious category $\Gamma\Proj_k$ as a decoration.
\end{rk}

\subsection{Strong comparison without homogeneity}\label{subsec-strong-inhomogeneous}
If $F$ and $G$ are two strict polynomial functors of bounded degrees, we define a comparison map  
\begin{align}
\begin{array}[b]{c}
\Ext^*_{\gen}(F,G)=\\
\displaystyle\bigoplus_{d\ge 0}\Ext^*_{\gen}(F_d,G_d)
\end{array}
\to 
\bigoplus_{d\ge 0}\Ext^*_{k[\Proj_\Fq]}(t^*F_d,t^*G_d)\to 
\Ext^*_{k[\Proj_\Fq]}(t^*F,t^*G)
\label{eqn-strong-inhom}
\end{align}
where the map on the left hand side is the  direct sum of the comparison maps \eqref{eq-forget-comparison} used in theorem \ref{thm-strong-comparison}
while the map on the right hand side is the canonical inclusion into  
\[\Ext^*_{k[\Proj_k]}(t^*F,t^*G)=\bigoplus_{d,e\ge 0}\Ext^*_{k[\Proj_k]}(t^*F_d,t^*G_e)\;.\]
We will often refer to morphism \eqref{eqn-strong-inhom} as the \emph{strong comparison map}. The next result extends the strong comparison theorem \ref{thm-strong-comparison} to the non-homogeneous case.
\begin{thm}\label{thm-strong-inhom}
Let $k$ be an infinite perfect field containing a finite subfield with $q$ elements, and let $F$ and $G$ be two strict polynomial functors, with degrees less or equal to $q$. Then the map \eqref{eqn-strong-inhom} is an isomorphism. 
\end{thm}
\begin{proof}
The first map of the composition \eqref{eqn-strong-inhom} is an isomorphism by the strong comparison theorem \ref{thm-strong-comparison}. Thus it suffices to prove that $\Ext^*_{k[\Proj_k]}(t^*F_d,t^*G_e)=0$ as soon as $d\ne e$, which follows from the vanishing lemma \ref{lm-homogeneity-vanish} below.
\end{proof}

We shall explain the elementary vanishing result used in theorem \ref{thm-strong-inhom} in a general context, in order to use it again later.
Let $\A$ be a small additive category. We assume that $\A$ is $\FF$-linear, over a subfield $\FF\subset k$. In the next lemma, we say that a functor $F$ of $k[\A]\Md$ is \emph{$d$-homogeneous} (with respect to the field $\FF$) if $F(\lambda f)=\lambda^dF(f)$ for all morphisms $f$ in $\A$ and all $\lambda\in\FF$. 

\begin{lm}\label{lm-homogeneity-vanish}
Let $\FF$ be a subfield of $k$, and let $d\ne e$ be two non-negative integers such that $\mathrm{card}\,\FF\ge d,e$. If $F$ and $G$ are two objects of $k[\A]\Md$ which are respectively $d$-homogeneous and $e$-homogeneous, then $\Ext^*_{k[\A]}(F,G)=0$.
\end{lm}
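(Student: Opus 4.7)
The plan is to exploit the $\FF$-linearity of $\A$ to produce a natural scalar action of $\FF^\times$ on the identity endofunctor of $\A$, then track its induced action on both $F$ and $G$. The homogeneity conditions force two a priori different scalar actions on $\Ext^*_{k[\A]}(F,G)$ to coincide, which yields the vanishing.

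First, I would observe that for each $\lambda\in\FF^\times$ the family $(\mu_\lambda)_x:=\lambda\cdot\id_x$ defines a natural automorphism $\mu_\lambda:\id_\A\to\id_\A$: the naturality squares commute by $\FF$-linearity of $\A$. For any functor $H:\A\to k\Md$, whiskering produces a natural endomorphism $H\mu_\lambda$ of $H$ with components $H(\lambda\cdot\id_x)$. By $d$-homogeneity we have $F\mu_\lambda=\lambda^d\cdot\id_F$, and similarly $G\mu_\lambda=\lambda^e\cdot\id_G$.

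Next, for any natural transformation $f:F\to G$, naturality at the morphism $\lambda\cdot\id_x$ gives $f_x\circ F(\lambda\cdot\id_x)=G(\lambda\cdot\id_x)\circ f_x$. In other words, pre-composition by $F\mu_\lambda$ and post-composition by $G\mu_\lambda$ induce the same endomorphism of $\Hom_{k[\A]}(F,G)$. Deriving (for instance by replacing $F$ by a projective resolution, on which the same argument works term by term since the resolution inherits the $d$-homogeneity), the same identity holds on $\Ext^*_{k[\A]}(F,G)$. These two endomorphisms are respectively scalar multiplication by $\lambda^d$ and by $\lambda^e$, so the scalar $\lambda^d-\lambda^e\in\FF\subset k$ annihilates $\Ext^*_{k[\A]}(F,G)$ for every $\lambda\in\FF^\times$.

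To conclude, it suffices to produce a single $\lambda\in\FF^\times$ for which $\lambda^d\neq\lambda^e$ in $\FF$, since the difference is then a unit of $k$. Assuming without loss of generality $d>e$, this is equivalent to $\lambda^{d-e}\neq 1$, and the polynomial $X^{d-e}-1$ has at most $d-e$ roots in $\FF^\times$; the cardinality hypothesis $\mathrm{card}\,\FF\ge d,e$ ensures that $\FF^\times$ contains an element outside this zero-locus, which is the required $\lambda$. The argument is elementary once the scalar action is set up; there is no real obstacle, the only point requiring care being the verification that the $\FF$-linearity of $\A$ is exactly what makes the scalars $\mu_\lambda$ into natural transformations of $\id_\A$.
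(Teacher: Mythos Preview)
Your argument follows the same strategy as the paper's: the natural endomorphism $\mu_\lambda=\lambda\cdot\id$ of $\id_\A$ induces two coinciding scalar actions on $\Ext^*_{k[\A]}(F,G)$, which by homogeneity are multiplication by $\lambda^d$ and $\lambda^e$. Two points need correction.

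First, your justification for passing from $\Hom$ to $\Ext$ is not quite right: a projective resolution of $F$ (say by direct sums of standard projectives $k[\A(a,-)]$) does \emph{not} consist of $d$-homogeneous functors, so the resolution does not ``inherit the $d$-homogeneity''. What makes the derived statement work is rather that $H\mapsto H\mu_\lambda$ is a natural endomorphism of the identity functor on $k[\A]\Md$; for any such endomorphism, pullback along its component at $F$ and pushforward along its component at $G$ automatically coincide on $\Ext^*(F,G)$. This is exactly how the paper argues (``for all morphisms $f:H\to K$ we have $f\circ \lambda_H=\lambda_K\circ f$''). Your conclusion is correct, but the reason you give is not.

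Second, your final counting step has a genuine gap. You seek $\lambda\in\FF^\times$ with $\lambda^{d-e}\ne 1$ and argue that the zero locus has at most $d-e$ elements while $|\FF|\ge d$. But $|\FF^\times|=|\FF|-1\ge d-1$, and $d-1>d-e$ only when $e\ge 2$; for $e\in\{0,1\}$ your bound does not suffice. Concretely, with $\FF=\FF_q$, $d=q-1$, $e=0$, every unit satisfies $\lambda^{q-1}=1$ and no $\lambda\in\FF^\times$ works. The paper avoids this by comparing the maps $\lambda\mapsto\lambda^d$ and $\lambda\mapsto\lambda^e$ on all of $\FF$: since $0^d=0\ne 1=0^0$ whenever $d\ge 1$, the value $\lambda=0$ separates the two maps in the case $\min(d,e)=0$. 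You should drop the restriction to $\FF^\times$; the equality of the two actions on $\Ext$ nowhere uses invertibility of $\lambda$.
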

\begin{proof}
Let $F$ and $G$ be two arbitrary objects of $k[\A]\Md$. 
Since $\A$ is $\FF$-linear, every element of $\FF$ yields a natural transformation $\lambda_F\in \End_{k[\A]}(F)$ whose component at $x$ equals $F(\lambda \id_x)$. Thus $\Ext^*_{k[\A]}(F,G)$ has an $\FF$-$\FF$-bimodule structure given by $\lambda\cdot [\xi]\cdot \mu = [\mu_G\circ \xi\circ \lambda_F]$, where $-\circ\lambda_F$ is the pullback of an extension along $\lambda_F$ and $\lambda_G\circ-$ is the pushout of an extension along $\mu_G$. Moreover, for all morphisms $f:H\to K$ in $k[\A]\Md$ we have $f\circ \lambda_H=\mu_G\circ f$, which implies that the two $\FF$-module structures coincide: $\lambda_F\cdot [\xi]=[\xi]\cdot \lambda_G$.
Assume now that $F$ is $d$ homogeneous and $G$ is $e$-homogeneous. Then $\lambda_F=\lambda^d\id_F$ and $\lambda_G=\lambda^e\id_G$. Thus for all extensions $[\xi]$ we have $\lambda^d[\xi]=\lambda_F\cdot [\xi]=[\xi]\cdot\lambda_G=\lambda^e[\xi]$.
Since the cardinal of $\FF$ is greater or equal to $d$ and $e$, the maps $\lambda\mapsto \lambda^d$ and $\lambda\mapsto \lambda^e$, seen as maps from $\FF$ to $k$, are not equal. Hence the equality $\lambda^d[\xi]=\lambda^e[\xi]$ implies that $[\xi]=0$.
\end{proof}

As before, this result can be dualized. Namely, if
$F$ and $G$ are two strict polynomial functors of bounded degrees, respectively contravariant and covariant, we have a comparison map (where the first map is the canonical projection and the second map is given by the direct sum of the comparison maps \eqref{eq-forget-comparison-Tor})
\begin{align}
\Tor_*^{k[\Proj_k]}(t^*F,t^*G)\to 
\bigoplus_{d\ge 0}\Tor_*^{k[\Proj_k]}(t^*F_d,t^*G_d)\to 
\begin{array}{c}
\Tor^\gen_*(F,G)=\\
\displaystyle\bigoplus_{d\ge 0}\Tor^\gen_*(F_d,G_d)
\end{array}\;.
\label{eqn-strong-inhom-Tor}
\end{align}
We will often refer to morphism \eqref{eqn-strong-inhom-Tor} as the \emph{strong comparison map (for $\Tor$)}. 
By using proposition \ref{prop-Tor-Ext} we deduce the following result from theorem \ref{thm-strong-inhom}. 
\begin{cor}\label{cor-strong-inhom-Tor}
Let $k$ be an infinite perfect field containing a finite subfield with $q$ elements, and let $F$ and $G$ be two strict polynomial functors, with degrees less or equal to $q$. Then the map \eqref{eqn-strong-inhom-Tor} is an isomorphism. 
\end{cor}

\subsection{Strong comparison over small fields}\label{subsec-strong-small}
We are now going to generalize the strong comparison theorem \ref{thm-strong-inhom} to the case when $q=p^r$ is not big enough with respect to the degrees of $F$ and $G$. Our approach, in particular lemma \ref{lm-chgt-base-finitefield} and proposition \ref{pr-interm-strong}, is inspired by the proof of \cite[Thm 6.1]{FFSS}.
\begin{nota}\label{nota-twist1}
Let $L$ be a perfect field. For all positive integers $a$ and $s$ and for all $L$-vector spaces $v$, we let $^{(a|s)}v={}^{(0)}v\oplus {}^{(a)}v\oplus\cdots \oplus {}^{(\,(s-1)a\,)}v$.
For all functors $F$ in $k[\Proj_L]\Md$, we denote by $F^{(a|b)}$ the composition of $F$ with the functor $^{(a|s)}-:\Proj_L\to \Proj_L$.
\end{nota}

Assume that $\Fq$ is a finite field with $q=p^r$ elements and that $\Fq\subset L$ is an extension of perfect fields. Let $\tau:\Proj_\Fq\to \Proj_L$ be the associated extension of scalars. 
For all positive integers $s$ we define two morphisms in $k[\Proj_\Fq]\Md$ :
\begin{align}
&\tau^*F\to \tau^*(F^{(r|s)})\label{eq-map1}\\
&\tau^*(G^{(rs|s)})\to \tau^*G\label{eq-map2}
\end{align}
as follows.
Firstly, if $a$ is divisible by $r$ then for all integers $i$ and for all $\Fq$-vector spaces $v$ there is a canonical isomorphism of $L$-vector spaces $^{(ia)}\tau(v)\simeq \tau(v)$ which sends an element $\lambda\otimes x\in {}^{(ia)}\tau(v)$ to the element $\lambda^{p^{ai}}\otimes x\in \tau(v)$. By taking the direct sum of these isomorphisms, we obtain a canonical isomorphism 
$^{(a|s)}\tau(v)\simeq \tau(v)^{\oplus s}$.
Secondly, we let $\mathrm{diag}:\tau\to \tau^{\oplus s}$ and $\mathrm{sum}: \tau^{\oplus s}\to \tau$ denote the morphisms whose restrictions to the components of $\tau^{\oplus s}$ are all equal to the identity of $\tau$. Then we define the morphisms \eqref{eq-map1} and \eqref{eq-map2} as the compositions:
\begin{align*}
&\tau^*F\xrightarrow[]{F(\diag)} (\tau^{\oplus s})^*F\simeq \tau^*(F^{(r|s)})\;,&&&
&\tau^*(G^{(rs|s)})\simeq (\tau^{\oplus s})^*G \xrightarrow[]{G(\summ)} \tau^*G\;.
\end{align*}

Now, restriction along $\tau:\Proj_\Fq\to \Proj_L$ and naturality with respect to the morphisms \eqref{eq-map1} and \eqref{eq-map2} yield a graded $k$-linear map:
\begin{equation}
\Ext^*_{k[\Proj_L]}(F^{(r|s)},G^{(rs|s)})\to \Ext^*_{k[\Proj_\Fq]}(\tau^*F,\tau^*G)\;.\label{eqn-interm-iso}
\end{equation} 
\begin{lm}\label{lm-chgt-base-finitefield}
If $\Fq\subset L$ is an extension of fields of degree $s^2$ and $q=p^r$, then for all $F$ and $G$ in $k[\Proj_L]\Md$, the map \eqref{eqn-interm-iso} is an isomorphism.
\end{lm}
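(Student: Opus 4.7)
\emph{Strategy.} The plan is to realize the isomorphism as the composite of two applications of Proposition \ref{pr-iso-Ext-explicit}: one for the extension--restriction of scalars adjunction $\tau\dashv\rho$ between $\Proj_\Fq$ and $\Proj_L$, the other for a self-adjunction of the endofunctor $\Psi:={}^{(rs,s)}(-)$ of $\Proj_L$.

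\emph{First adjunction.} Since $[L:\Fq]=s^2$ is finite, the restriction of scalars functor $\rho:\Proj_L\to \Proj_\Fq$ is well-defined and $\tau\dashv\rho$, with unit $u_v:v\to L\otimes_\Fq v$, $x\mapsto 1\otimes x$. Proposition \ref{pr-iso-Ext-explicit} yields a natural isomorphism
\[\Ext^*_{k[\Proj_L]}(\rho^*\tau^*F,G)\;\simeq\; \Ext^*_{k[\Proj_\Fq]}(\tau^*F,\tau^*G)\;.\]
The Galois group $\mathrm{Gal}(L/\Fq)$ is cyclic of order $s^2$, generated by $\mathrm{Fr}^r$, and the standard algebra decomposition $L\otimes_\Fq L\cong\prod_{\sigma\in\mathrm{Gal}(L/\Fq)}L$ yields a canonical $L$-linear isomorphism $L\otimes_\Fq w\cong\bigoplus_{i=0}^{s^2-1}{}^{(ri)}w={}^{(r,s^2)}w$ for all $w\in\Proj_L$. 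Hence $\rho^*\tau^*F\cong F^{(r,s^2)}$.

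\emph{Second adjunction.} The bijection $(i,j)\mapsto i+sj$ from $\{0,\dots,s-1\}^2$ to $\{0,\dots,s^2-1\}$ identifies ${}^{(r,s^2)}v$ with ${}^{(rs,s)}({}^{(r,s)}v)$, whence $F^{(r,s^2)}=\Psi^*(F^{(r,s)})$. Since $L$ is perfect, $\Hom_L({}^{(a)}u,v)\cong\Hom_L(u,{}^{(-a)}v)$ canonically for every $a$ (both being the space of $\mathrm{Fr}^a$-semilinear maps $u\to v$), and $\mathrm{Fr}^{rs^2}=\mathrm{id}_L$ gives ${}^{(-a)}v\cong {}^{(rs^2-a)}v$ canonically. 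Summing over $a=rsj$ with $j\in\{0,\dots,s-1\}$, and using that $j\mapsto s-j\pmod s$ is a bijection of $\{0,\dots,s-1\}$, one obtains a natural isomorphism $\Hom_L(\Psi u,v)\cong\Hom_L(u,\Psi v)$, so $\Psi$ is self-adjoint. Proposition \ref{pr-iso-Ext-explicit} applied to this self-adjunction then gives
\[\Ext^*_{k[\Proj_L]}(F^{(r,s^2)},G)\;\simeq\; \Ext^*_{k[\Proj_L]}(F^{(r,s)},G^{(rs,s)})\;.\]

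\emph{Identification with \eqref{eqn-interm-iso}.} Composing the two displayed isomorphisms produces a natural isomorphism $\Ext^*_{k[\Proj_L]}(F^{(r,s)},G^{(rs,s)})\simeq \Ext^*_{k[\Proj_\Fq]}(\tau^*F,\tau^*G)$. It remains to check that this composite coincides with the explicit map \eqref{eqn-interm-iso}. For this, one traces the units and counits of the two adjunctions through the canonical isomorphisms ${}^{(a,s)}\tau(v)\cong\tau(v)^{\oplus s}$ for $a\in\{r,rs\}$ (both divisible by $r$) recalled before the lemma. Under these identifications, $F$ applied to the unit of $\tau\dashv\rho$ matches $F(\diag):\tau^*F\to\tau^*F^{(r,s)}$, while $G$ applied to the counit of the self-adjunction of $\Psi$ matches $G(\summ):\tau^*G^{(rs,s)}\to\tau^*G$. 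The combinatorial bookkeeping of Galois indices here is the main obstacle, but the matching is forced by naturality once these unit and counit maps are spelled out in coordinates.
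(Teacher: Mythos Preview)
Your strategy is valid and genuinely different from the paper's, but the final identification step contains an error that hides real work.

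\textbf{Comparison with the paper.} The paper introduces an intermediate field $K$ with $[K:\Fq]=[L:K]=s$ and uses two restriction--extension adjunctions, one for each degree-$s$ step. The point of this factorization is that each adjunction cleanly produces one of the two maps in \eqref{eqn-interm-iso}: the $K\subset L$ step (using $\rho'\dashv\tau'$) yields exactly $G(\summ)$, and the $\Fq\subset K$ step (using $\tau''\dashv\rho''$) yields exactly $F(\diag)$. Your route---the full $\Fq\subset L$ adjunction together with the self-adjunction of $\Psi={}^{(rs,s)}(-)$---is more economical in that no auxiliary field is needed, but it entangles the $F$-side and $G$-side contributions.

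\textbf{The gap.} Your claim that ``$F$ applied to the unit of $\tau\dashv\rho$ matches $F(\diag):\tau^*F\to\tau^*F^{(r,s)}$'' is wrong as stated: since $[L:\Fq]=s^2$, the unit of $\tau\dashv\rho$ gives (under the Galois identification) the diagonal into $s^2$ copies, i.e.\ a map $\tau^*F\to\tau^*F^{(r,s^2)}$, not into $\tau^*F^{(r,s)}$. Likewise the counit of $\Psi\dashv\Psi$ is a map $G\circ\Psi^2\to G$, not $G\circ\Psi\to G$. So neither adjunction by itself produces the maps appearing in \eqref{eqn-interm-iso}. What one must actually show is that, after restricting to $\Proj_\Fq$ and using naturality of $\xi$ with respect to the diagonal $\bar\iota:\tau\to\Psi\tau$, the composite factors as
\[
G((e_\Psi)_\tau)\circ\xi_{\Psi\tau}\circ F(\text{diag}_{s^2})
= G\bigl((e_\Psi)_\tau\circ\Psi(\bar\iota)\bigr)\circ\xi_\tau\circ F(\diag_s),
\]
and then verify the key identity $(e_\Psi)_\tau\circ\Psi(\bar\iota)=\summ_s$ as maps $\Psi\tau\to\tau$. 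This last identity requires computing the counit $e_\Psi$ explicitly: on the $(j,j')$-summand ${}^{(rs(j+j'))}w$ of $\Psi^2 w$ it is the identity when $j+j'\equiv 0\pmod s$ and zero otherwise, and one checks that precomposing with $\Psi(\bar\iota)$ picks out exactly the sum map. This computation is not ``forced by naturality''---it is the substantive content of the identification, and it is precisely what the paper's intermediate-field decomposition is designed to avoid.
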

\begin{proof}
The strategy of the proof is as follows. Since $\Fq\subset L$ is an extension of degree $s^2$, we can find an intermediate field $K$ such that $\Fq\subset K\subset L$ is a sequence of extensions of fields of degree $s$. We are going to convert $\Ext$ over $k[\Proj_L]$ into $\Ext$ over $k[\Proj_\Fq]$ in two steps, by using the effect on $\Ext$ of the restrictions of scalars $\Proj_L\to \Proj_K$ and $\Proj_K\to \Proj_\Fq$, and their adjoints (with the help of proposition \ref{pr-iso-Ext-explicit}), and we are going to check that the two-steps isomorphism obtained coincides with the explicit map \eqref{eqn-interm-iso}. 

In the first step, we express extensions over $k[\Proj_L]$ in terms of extensions over $k[\Proj_K]$. For this purpose, we consider the adjoint pair \cite[Prop 3.1]{FFSS}
$$\rho':\Proj_L\leftrightarrows \Proj_K:\tau'$$ 
where $\tau'$ is the extension of scalars and $\rho'$ the restriction of scalars associated to the extension $K\subset L$. Of course, $\tau'=L\otimes_K-$ is left  adjoint of $\rho'$, but we consider it here as the \emph{right} adjoint (this is possible because the extension $K\subset L$ has finite degree). By proposition \ref{pr-iso-Ext-explicit} the adjoint pair $(\rho',\tau')$ induces an isomorphism for all $H$ in $k[\Proj_L]\Md$ 
\begin{align}
\Ext^*_{k[\Proj_L]}(H,{\rho'}^*{\tau'}^*G)\simeq \Ext^*_{k[\Proj_K]}({\tau'}^*H,{\tau'}^*G)\;.\label{eqn-isoproof2}
\end{align}
Moreover, there is an isomorphism of $L$-vector spaces $\phi_v:L\otimes_K v\simeq \bigoplus_{0\le i<s} {}^{(irs)}v$ natural with respect to the $L$-vector space $v$. This isomorphism is given by sending $\lambda\otimes x$ to $\sum_{0\le i<s}\lambda^{p^{-rsi}}x$. Therefore we have an isomorphism in $k[\Proj_L]\Md$:
\begin{align}
G(\phi^{-1}): G^{(rs|s)}\simeq {\rho'}^*{\tau'}^*G\;.
\label{eqn-isoproof1}
\end{align}
By combining the isomorphisms \eqref{eqn-isoproof2} and \eqref{eqn-isoproof1} we obtain an isomorphism
\begin{align}
\Ext^*_{k[\Proj_L]}(H,G^{(rs|s)})\xrightarrow[]{\simeq}
\Ext^*_{k[\Proj_K]}({\tau'}^*H,{\tau'}^*G)\;.
\label{eqn-isoproof12}
\end{align}
To finish this first step, we are going to give a more explicit expression of the isomorphism \eqref{eqn-isoproof12}. Recall from proposition \ref{pr-iso-Ext-explicit} that the isomorphism \eqref{eqn-isoproof2} is induced by restriction along $\tau'$ and by the map $({\tau'}^*G)(\epsilon)$ where $\epsilon$ is the counit of the adjunction $\rho'\dashv\tau'$. By \cite[Prop 3.1]{FFSS}, this counit of adjunction $\epsilon_u: L\otimes_K u\to u$ is given by $\epsilon_u(\lambda\otimes x)= T(\lambda)x$, where $T(\lambda)=\sum_{0\le i<s}\lambda^{p^{rsi}}$ is the trace of $\lambda$. Thus for all $K$-vector spaces $u$ we have a commutative square of $L$-vector spaces, in which the upper horizontal arrow is the canonical isomorphism:
\[
\begin{tikzcd}
\bigoplus_{0\le i<s}{}^{(-rsi)}\tau'(u)\ar{r}{\simeq} & 
\bigoplus_{0\le i<s}\tau'(u)\ar{d}{\summ}\\
\tau'(\rho'(\tau'(u)))\ar{u}{\phi_{\tau'(u)}}\ar{r}{\tau'(\epsilon_u)}&
\tau'(u)
\end{tikzcd}\;.
\]
It follows that the isomorphism \eqref{eqn-isoproof12} equals the following composition: 
\[\Ext^*_{k[\Proj_L]}(H,G^{(rs|s)})\xrightarrow[]{\tau'{}^*} \Ext^*_{k[\Proj_K]}(\tau'{}^*H,\tau'{}^*(G^{(rs|s)}))\to 
\Ext^*_{k[\Proj_K]}(\tau'{}^*H,\tau'{}^*G)\]
where the last map is induced by the morphism ${\tau'}^*(G^{(rs|s)})\simeq ({\tau'}^{\oplus s})^*G\xrightarrow[]{G(\summ)}G$.

In the second step, we express extensions over $k[\Proj_K]$ in terms of extensions over $k[\Proj_\Fq]$. For this purpose, we consider a pair of adjoints, in which $\tau''$ and $\rho''$ are the extension of scalars and the restriction of scalars associated to the extension $\Fq\subset K$, and $\tau''$ is this time seen as a left adjoint: 
$$\tau'':\Proj_\Fq\leftrightarrows \Proj_K:\rho''\;.$$ 
Since $\rho''$ is right adjoint to $\tau''$, proposition \ref{pr-iso-Ext-explicit} yields an isomorphism for all $K$ in $k[\Proj_K]\Md$:
\begin{align}
\Ext^*_{k[\Proj_K]}({\rho''}^* {\tau}^*F,K)\simeq \Ext^*_{k[\Proj_\Fq]}(\tau^*F,{\tau''}^*K)\;.
\label{eqn-isoproof4}
\end{align}
Moreover, the isomorphism of $L$-vector spaces $\psi_v:L\otimes_\Fq v\simeq \bigoplus_{0\le i<s} {}^{(ri)}(L\otimes_K v)$ given by $\psi_v(\lambda \otimes x)=\sum_{0\le i<s}\lambda^{p^{-ri}}x$ is natural with respect to the $K$-vector space $v$, hence it induces an isomorphism in $k[\Proj_K]\Md$:
\begin{align}
F(\psi): {\rho''}^* {\tau}^*F \simeq {\tau'}^*(F^{(r|s)}) \;.
\label{eqn-isoproof3}
\end{align}
Combining the isomorphisms \eqref{eqn-isoproof4} and \eqref{eqn-isoproof3}, we obtain an isomorphism:
\begin{align}
\Ext^*_{k[\Proj_K]}({\tau'}^*(F^{(r|s)}),K)\xrightarrow[]{\simeq}\Ext^*_{k[\Proj_\Fq]}(\tau^*F,{\tau''}^*K)\;.
\label{eqn-isoproof34}
\end{align}
To finish the second step or the proof, we give an explicit expression of the isomorphism \eqref{eqn-isoproof34}. Recall from proposition \ref{pr-iso-Ext-explicit} that the isomorphism \eqref{eqn-isoproof2} is induced by restriction along $\tau''$ and by $(\tau^*F)(\eta)$, where $\eta$ is the unit of the adjunction $\tau''\dashv\rho''$. This unit of adjunction $\eta_v: v\to K\otimes_\Fq v$ is given by $\eta_v(x)=1\otimes x$, whence a commutative diagram of $L$-vector spaces, in which the lower horizontal arrow is the canonical isomorphism:
\[
\begin{tikzcd}
\tau(\rho''(\tau''(u)))\ar{d}{\psi_{\tau''(u)}}& \tau(u)\ar{l}[swap]{\tau(\eta_u)}\ar{d}{\diag}\\
\bigoplus_{0\le i<s}{}^{(ri)}\tau'(\tau''(u))& \bigoplus_{0\le i<s}\tau(u)\ar{l}[swap]{\simeq}
\end{tikzcd}\;.
\] 
This shows that the isomorphism \eqref{eqn-isoproof34} is equal to the composition 
\begin{align*}
\Ext^*_{k[\Proj_K]}({\tau'}^*(F^{(r|s)}),K)\xrightarrow[]{\tau''{}^*} \Ext^*_{k[\Proj_\Fq]}({\tau}^*(F^{(r|s)}),\tau''{}^*K)\to \Ext^*_{k[\Proj_\Fq]}(\tau^*F,{\tau''}^*K)
\end{align*}
with last map induced by the morphism $\tau^*F\xrightarrow[]{F(\diag)} (\tau^{\oplus s})^*F\simeq {\tau''}^*{\tau'}^*(F^{(r|s)})$.

Thus, the graded morphism \eqref{eqn-interm-iso} is the composition of the maps \eqref{eqn-isoproof12} (with $H=F^{(r,s)}$) and  \eqref{eqn-isoproof34} (with $K={\tau'}^*G$), hence it is an isomorphism.
\end{proof}

\begin{nota}\label{nota-twist2} We extend notation \ref{nota-twist1} to strict polynomial functors. 
For all positive integers $r$ and $s$, we denote by $I^{(r|s)}$ the (non-homogeneous) strict polynomial functor of degree $p^{(s-1)r}$ defined by:
\begin{align*}
I^{(r|s)}:=I^{(0)}\oplus I^{(r)}\oplus \cdots\oplus I^{(\,(s-1)r\,)}.
\end{align*}
For all strict polynomial functors $F$ we let $F^{(r|s)}=F\circ I^{(r|s)}$. 
If $\stdeg F=d$ then $\stdeg(F^{(r|s)})=p^{(s-1)r}d$.
\end{nota}
\begin{rk}
The definition of composition of strict polynomial functors is the obvious one if we think of strict polynomial functors in the way they are defined in \cite{FS}. If we use the description of strict polynomial functors as families of $k$-linear functors as we pretend to do it, then composition can be defined as follows. First we can consider $F(v_0\oplus \cdots \oplus v_{s-1})$ as a strict polynomial functor of $s$ variables as in \cite[Section 3.2]{TouzeFund}. Then we precompose each variable $v_i$ by the Frobenius twist $I^{(ir)}$. The strict polynomial functor $F^{(r|s)}$ is then defined as the evaluation of the resulting strict polynomial functor with $s$ variables on the $s$-tuple $(v,\dots,v)$.
\end{rk}

For all strict polynomial functors $F$ and $G$ we define a 
morphism $\Xi_k$ of graded $k$-vector spaces as the composition of the strong comparison map \eqref{eqn-strong-inhom} together with map induced by the morphisms $t^*F\to t^*(F^{(r|s)})$ and $t^*(G^{(rs|s)})\to t^*G$ constructed in \eqref{eq-map1} and \eqref{eq-map2} with $L$ and $\tau$ replaced by $k$ and $t$:
\begin{equation}
\begin{tikzcd}
\Ext^*_\gen(F^{(r|s)},G^{(rs|s)})\ar{r}{\eqref{eqn-strong-inhom}}\ar[dashed]{rd}[swap]{\Xi_k}&\Ext^*_{k[\Proj_\Fq]}(t^*(F^{(r|s)}),t^*(G^{(rs|s)}))\ar{d}\\
& \Ext^*_{k[\Proj_\Fq]}(t^*F,t^*G)
\end{tikzcd}\;.
\label{eq-comparison-q-petit}
\end{equation} 
\begin{pr}\label{pr-interm-strong}
Assume that $k$ contains a finite field $\Fq$ of cardinal $q=p^r$, and let $s$ be a positive integer. Then for all strict polynomial functors $F$ and $G$ of degrees less or equal to $q^s$, the map \eqref{eq-comparison-q-petit} is an isomorphism in all degrees.
\end{pr}
\begin{proof}
We first claim that it suffices to prove the result when $k$ contains a subfield $L$ with $q^{s^2}$ elements. Indeed, let $k\to K$ be a finite extension of fields and let $\tau:k\Md\to K\Md$ be the extension of scalars. By \cite[Section 2]{SFB} there is an exact $k$-linear base change functor 
\[-_K:\Gamma\Proj_k\Md \to \Gamma\Proj_K\Md\] 
such that for all strict polynomial functors $F'$ over $k$ there are canonical isomorphisms of functors $\tau^*F'_K\simeq \tau\circ F'$. Moreover this base change functor induces an isomorphism on the level of $\Ext$ (See \cite[cor 2.7]{SFB} for the case of functors with finite-dimensional values. The proof extends to arbitrary functors when $k\to K$ is a finite extension of fields). 
There is a commutative square 
\[
\begin{tikzcd}[column sep = large]
\Ext^*_\gen(F^{(r|s)}_K,G^{(rs|s)}_K)\ar{r}{\Xi_K}&\Ext^*_K[\Proj_\Fq](t^*\tau^*F_K,t^*\tau^*G_K)\\
K\otimes\Ext^*_\gen(F^{(r|s)},G^{(rs|s)})\ar{r}{K\otimes\Xi_k}\ar{u}{\simeq}&K\otimes \Ext^*_K[\Proj_\Fq](t^*F,t^*G)\ar{u}{\simeq}
\end{tikzcd}
\]
in which the vertical isomorphism on the left hand side is induced by the base change functor $-_K$ and the vertical isomorphism on the right hand side is induced by $\tau$ (see the map \eqref{base-change-Ext} in section \ref{sec-prelim-polyn-hom}) and by the isomorphisms $H_K\circ \tau\circ t\simeq \tau\circ H\circ t$, for $H=F$ or $G$. Therefore $\Xi_k$ is an isomorphism if an only if $\Xi_K$ is an isomorphism. Thus, up to replacing $k$ by a finite extension $K$, we may assume that our field $k$ contains a subfield $L$ with $q^{s^2}$ elements.

We denote by $t':\Proj_L\to \Proj_k$ the extension of scalars associated to the extension of fields $L\subset k$. Then $\Xi_k$ is an isomorphism because we can rewrite it as the composition of three isomorphisms:
\[
\begin{tikzcd}[column sep = large]
\Ext^*_\gen(F^{(r|s)},G^{(rs|s)})\ar{r}{\Xi_k}\ar{d}{\simeq}& \Ext^*_{k[\Proj_\Fq]}(t^*F,t^*G)\\
\Ext^*_{k[\Proj_L]}({t'}^*(F^{(r|s)}),{t'}^*(G^{(rs|s)}))\ar{r}{\simeq}&\Ext^*_{k[\Proj_L]}(({t'}^*F)^{(r|s)},({t'}^*G)^{(rs|s)})\ar{u}{\simeq}
\end{tikzcd}\;.
\]
To be more specific, the vertical map on the left hand side is the strong comparison map \eqref{eqn-strong-inhom} relative to the finite field $L$. By our assumptions on $s$, the degrees of $F^{(r|s)}$ and $G^{(rs|s)}$ are less or equal to the cardinal of $L$, hence this map is an isomorphism by theorem \ref{thm-strong-inhom}. To define the lower horizontal map, we first observe that for all integers $i$ there is a canonical isomorphism $^{(ir)}t'(v)\simeq t'({}^{(ir)}v)$ which sends an element $\lambda\otimes x \in {}^{(ir)}(k\otimes_L v)$ to the element $\lambda^{p^{ir}}\otimes x\in k\otimes_L {}^{(ir)}v$. These canonical isomorphisms induce isomorphisms of functors ${t'}^*(F^{(r|s)})\simeq ({t'}^*F)^{(r|s)}$ and ${t'}^*(G^{(rs|s)})\simeq ({t'}^*G)^{(rs|s)}$, and the lower horizontal map is induced by these isomorphisms. Finally, the vertical map on the right hand side is the isomorphism provided by lemma \ref{lm-chgt-base-finitefield}.
\end{proof}

Now we introduce a variant of the map $\Xi_k$ which will be better adapted to our later purposes. With this new map $\Xi_k'$, direct sums of Frobenius twists only appear inside $F$ in the generic extensions. To be more specific, we let $\Xi_k'$ be the composition
of the strong comparison map \eqref{eqn-strong-inhom} and of the map induced by the morphism $t^*F\to F^{(r|s^2)}$ as in \eqref{eq-map1} (with $L$, $\tau$ and $s$ replaced by $k$, $t$ and $s^2$) and by the isomorphism of functors $t^*(G^{(rs^2-rs)})\simeq t^*G$ induced by the canonical isomorphism $^{(rs^2-rs)}t(v)\simeq t(v)$:
\begin{equation}
\begin{tikzcd}
\Ext^*_\gen(F^{(r|s^2)},G^{(rs^2-rs)})\ar{r}{\eqref{eqn-strong-inhom}}\ar[dashed]{rd}[swap]{\Xi_k'}&\Ext^*_{k[\Proj_\Fq]}(t^*(F^{(r|s^2)}),t^*(G^{(rs^2-rs)}))\ar{d}\\
& \Ext^*_{k[\Proj_\Fq]}(t^*F,t^*G)
\end{tikzcd}\;.
\label{eqn-verystrong}
\end{equation} 
Next theorem subsumes the strong comparison theorem \ref{thm-strong-inhom}. To be more specific, on recovers theorem \ref{thm-strong-inhom} by taking $s=1$ in the statement. 
\begin{thm}\label{thm-verystrong}
Let $k$ be a perfect field containing a finite subfield with $q=p^r$ elements, and let $s$ be a positive integer. Assume that $F$ and $G$ are two strict polynomial functors with degrees less or equal to $q^s$. Then the map \eqref{eqn-verystrong} is a graded isomorphism.
\end{thm}
\begin{proof}
We shall use strict polynomial multifunctors, as in \cite[Section 3]{SFB}, \cite[Section 2]{TouzeClassical} or \cite[Section 3.2]{TouzeFund}. To be more specific, we consider the category of strict polynomial multifunctors of $n$ variables:
\[\Gamma(\Proj_k^{\times n})\Md = \bigoplus_{d\ge 0}\Gamma^d (\Proj_k^{\times n})\Md\;.\]

The operation of precomposition by Frobenius twist extends to the multivariable setting, namely given a strict polynomial multifunctors $F$ and an $n$-tuple of non-negative integers $\underline{r}=(r_1,\dots,r_n)$ we let $F^{(\underline{r})}$ denote the strict polynomial multifunctor such that 
$$F^{(\underline{r})}(v_1,\dots,v_n)=F({}^{(r_1)}v_1,\dots,{}^{(r_n)}v_n)\;.$$
Precomposition by Frobenius twists yield a morphism on $\Ext$:
$$-\circ I^{(\underline{m})}:\Ext^i_{\Gamma(\Proj_k^{\times n})}(F^{(\underline{r})}, G^{(\underline{r})})\to 
\Ext^i_{\Gamma(\Proj_k^{\times n})}(F^{(\underline{r}+\underline{m})}, G^{(\underline{r}+\underline{m})})$$
which is an isomorphism provided that all the integers $r_i$ are big enough (with respect to $i$, $F$ and $G$). Indeed, by a spectral sequence argument, it suffices to check the result when $F$ is a standard projective and $G$ is a standard injective. In this case, 
$F(v_1,\dots,v_n)=F_1(v_1)\otimes\cdots\otimes F_n(v_n)$ for some standard projective strict polynomial functors $F_i$, and $G(v_1,\dots,v_n)=G_1(v_1)\otimes\cdots\otimes G_n(v_n)$ for some standard injective strict polynomial functors $G_i$, hence the isomorphism follows from the $\Ext$-isomorphism for functors with one variable and the K\"unneth formula. 

There is a forgetful functor $\gamma^*:\Gamma(\Proj_k^{\times n})\Md\to k[\Proj_k^{\times n}]\Md$ and the sum-diagonal adjunction lifts to the setting of strict polynomial functors.

Now in order to prove theorem \ref{thm-verystrong}, we observe that we may choose strict polynomial multifunctors $F'$, $G'$, $F''$ and $G''$ such that there is a commutative diagram, with $n\gg 0$:
\[
\begin{tikzcd}
\Ext^{i}_{\Gamma \Proj_k^{s^2}}(F',G')\ar{r}{\simeq}[swap]{(*)}& \Ext^i_{\Gamma\Proj_k}\Big((F^{(r|s^2)})^{(nr)},(G^{(rs-r)})^{(nr)}\Big)\ar{dd}{\Xi'_k}\\
\Ext^{i}_{\Gamma \Proj_k^{s^2}} (F'',G'')\ar{d}{\simeq}[swap]{(**)}\ar{u}[swap]{\simeq}{-\circ I^{(\underline{m})}}&\\
\Ext^i_{\Gamma \Proj_k}\Big((F^{(r|s)})^{(nr)},(G^{(rs|s)})^{(nr)}\Big)\ar{r}{\Xi_k} & \Ext^i_{k[\Proj_\Fq]}(t^*F,t^*G)
\end{tikzcd}
\]
To be more specific, the strict polynomial multifunctors $F'$, $G'$, $F''$ and $G''$ of the $s^2$ variables $v_{ij}$, $0\le i,j< s$, are respectively given by 
\begin{align*}
&F'(\dots,v_{ij},\dots)=F\Big(\bigoplus_{0\le i,j< s} {}^{(nr+ri+rsj)}v_{ij}\Big),\\
&G'(\dots,v_{ij},\dots)=G\Big(\bigoplus_{0\le i,j< s} {}^{(nr+rs^2-rs)}v_{ij}\Big),\\
&F''(\dots,v_{ij},\dots)=F\Big(\bigoplus_{0\le i,j< s} {}^{(nr+ri)}v_{ij}\Big),\\
&G''(\dots,v_{ij},\dots)=G\Big(\bigoplus_{0\le i,j< s} {}^{(nr+rsj)}v_{ij}\Big).
\end{align*}
The $s^2$-tuple $\underline{m}$ is given by $m_{ij}=rs^2-rs-rsj$ and $-\circ I^{(\underline{m})}$ is an isomorphism because $n$ is big enough. The maps $(*)$ and $(**)$ are given by sum-diagonal adjunction. To be more explicit, the map $(*)$ is given by setting $v_{ij}=v$ for all $i$ and $j$, and by composing the resulting extensions of strict polynomial functors of the variable $v$ by the morphism $G(\summ')$, where 
$$\summ': {}^{(rs^2-rs+nr)}v^{\oplus s^2}\to {}^{(rs^2-rs+nr)}v$$ 
is the morphism which restricts to the identity of ${}^{(rs^2-rs+nr)}v$ on each summand of ${}^{(rs^2-rs+nr)}v^{\oplus s^2}$. Similarly, the map $(**)$ is given by setting $v_{ij}=v$ for all $i$ and $j$, and by composing the resulting extensions of strict polynomial functors of the variable $v$ by the morphisms $F(\diag'')$ and $G(\summ'')$ where each of the morphisms
\begin{align*}
&\diag'':\bigoplus_{0\le i<s}{}^{(ri+nr)}v\to\bigoplus_{0\le i<s}{}^{(ri+nr)}v^{\oplus s}   \\
&\summ'':\bigoplus_{0\le j<s}{}^{(rsj+nr)}v^{\oplus s}\to  \bigoplus_{0\le j<s}{}^{(rsj+nr)}v
\end{align*}
restricts to identity morphisms between any two summands with the same number of Frobenius twists.

Under the hypotheses of theorem \ref{thm-verystrong} the map $\Xi_k$ is an isomorphism by proposition \ref{pr-interm-strong}, hence $\Xi'_k$ is an isomorphism by commutativity of the above diagram.
\end{proof}

\begin{rk}
Theorem \ref{thm-verystrong} shows in particular that $\Ext^*_\gen(F^{(r|s^2)},G^{(rs^2-rs)})$ does not depend on $s$ when $s$ is big enough (i.e. greater of equal to the degrees of $F$ and $G$). This stabilization phenomenon can be seen directly within the framework of strict polynomial functors. 
To be more specific, set $c=2rs$. Then $(G^{(rs^2-rs)})^{(c)} = G^{(r(s+1)^2-r(s+1))}$, and $(F^{(r|s^2)})^{(c)}$ is a direct summand of $F^{(r|(s+1)^2)}$. Thus precomposition by $I^{(c)}$ and projection onto the direct summand $(F^{(r|s^2)})^{(c)}$ yields a graded morphism fitting into a commutative triangle:
\[
\begin{tikzcd}
\Ext^*_\gen(F^{(r|s^2)},G^{(rs^2-rs)})
\ar{r}{\Xi_k'}\ar{r}\ar[d] & 
\Ext^*_{k[\Proj_\Fq]}(t^*F,t^*G)\\
\Ext^*_\gen(F^{(r|(s+1)^2)},G^{(r(s+1)^2-r(s+1))})\ar{ru}[swap]{\Xi_k'}&
\end{tikzcd}\;.
\]
The cokernel of the vertical map can be computed by using sum-diagonal adjunction, and for homogeneity reasons (i.e. $\Ext^*$ between two homogeneous strict polynomial multifunctors of different multidegrees vanishes) the cokernel is trivial if $p^{rs}$ is greater or equal to the degrees of $F$ and $G$.
\end{rk}

Let us give the analogue of theorem \ref{thm-verystrong} for $\Tor$. Let $F$ and $G$ be two strict polynomial functors, with $F$ contravariant. The strong comparison map \eqref{eqn-strong-inhom-Tor} for $\Tor$ and the morphisms
\begin{align*}
& t^*F\xrightarrow[]{F(\summ)}(t^{\oplus s^2})^*F\simeq F^{(r|s^2)}\;, &&& t^*G\simeq t^*(G^{(rs^2-rs)})\;,
\end{align*}
(where $\summ:t^{\oplus s^2}\to t$ is the morphism whose restriction to each direct summand $t$ of $t^{\oplus s^2}$ equals the identity of $t$)
induce a graded $k$-linear map:
\begin{align}
\Tor_*^{k[\Proj_\Fq]}(t^*F,t^*G)\to \Tor^\gen_*(F^{(r|s^2)},G^{(rs^2-rs)})\;.\label{eqn-verystrong-Tor}
\end{align}
Proposition \ref{prop-Tor-Ext} allows to dualize theorem \ref{thm-verystrong}, and we obtain the following result.
\begin{cor}\label{cor-verystrong}Let $k$ be a perfect field containing a finite subfield with $q=p^r$ elements, and let $s$ be a positive integer. Assume that $F$ and $G$ are two strict polynomial functors (respectively contravariant and covariant) with degrees less or equal to $q^s$. Then the map \eqref{eqn-verystrong-Tor} is a graded isomorphism.
\end{cor}

\section{An auxiliary comparison map}\label{sec-gen-prelim-comparison}
Throughout this section $k$ is a commutative ring, $\FF$ is a field, $\A$ is a small additive category, and we consider functors 
\[F,G:\Proj_\FF\to k\Md \;,\quad  \pi:\A^{\op}\to \FF\Md \;,\quad \rho:\A\to \FF\Md\;,\]
with $\rho$ and $\pi$ additive. In particular $\pi$ and $\rho$ may be considered as objects of the $\FF$-categories $\Mdd\FA$ and $\FA\Md$ respectively. We define a dual vector space $D_{\pi,\rho}(v)$ of an $\FF$-vector space $v$ by the formula
\begin{align*}
D_{\pi,\rho}(v):= \Hom_\FF(v, \pi\otimes_{\FA} \rho)\;.
\end{align*}

Throughout the section, we allow our additive functors $\pi$ and $\rho$ to have infinite-dimensional values. 
We recall from definition \ref{defi-comp} that a notation such as $\pi^*F$ refers to the composition $\overline{F}\circ\pi$, where $\overline{F}$ is the left Kan extension of $F$ to all $\FF$-vector spaces.  The purpose of this section is to introduce a comparison map:
\[\Theta_\FF: \Tor_*^{k[\A]}(\pi^*F,\rho^*G) \to \Tor_*^{k[\Proj_\FF]}(D_{\pi,\rho}^*F,G)\;.\]
and to establish its main properties. 
In the special case where $\FF=\Fq$ is a finite field and $k$ is an overfield of $\Fq$, the map $\Theta_\FF$ will be a key ingredient  in the proof of the generalized comparison theorem in section \ref{sec-generalized}. The last result of the section, namely theorem \ref{thm-Theta}, is interesting in its own right. 

\begin{warning}
In contrast with the other sections of the article, many constructions of this section are performed over the ground field $\FF$ (that is, we use $\FF$-linear categories, tensor products over $\FF$\dots) rather than over $k$. Since the field $\FF$ is completely independent from the commutative ring $k$, it will be clear from the context over which ring the tensor products are taken.
\end{warning}

\subsection{Construction of $\Theta_\FF$.}\label{subsec-defTheta}
Taking $\K=\FA^\op$ in the isomorphism \eqref{eqn-Formule-Cartan} of section \ref{subsec-tp} yields a pair of adjoint $\FF$-functors 
$-\otimes_{\FA}\rho:\Mdd\FA\leftrightarrows \FF\Md: \Hom_\FF(\rho,-)$.
We denote by $\theta_\FF$ the unit of adjunction: 
\begin{equation}
\theta_\FF:\pi\to \Hom_\FF(\rho,\pi\otimes_{\FA}\rho)=D_{\pi,\rho}\circ\rho\;.
\label{eqn-def-theta}
\end{equation}
Thus the component at $a$ of the natural transformation $\theta_\FF$ is the $\FF$-linear map 
\[(\theta_\FF)_a: \pi(a)\to \Hom_\FF(\rho(a), \pi\otimes_{\FA}\rho)\]
which sends $x\in\pi(a)$ to the $\FF$-linear map $y\mapsto \llbracket x\otimes y\rrbracket$ where the brackets denote the image of the tensor in $\rho\otimes_{\FA}\pi$.
Next, we choose a cardinal $\aleph$ such that the images of $\rho$ and $\pi$ are contained in the category $\Proj_\FF^{\aleph}$ of vector spaces of dimension less or equal to $\aleph$, and we let $\iota^\aleph:\Proj_\FF^{\aleph}\hookrightarrow \FF\Md$ be the inclusion of categories. We define $\Theta_\FF$ as the unique graded $k$-linear map fitting into the commutative square (note that $\res^{\iota^\aleph}$ is an isomorphism by proposition \ref{pr-Kexact}): 
\begin{equation}
\begin{tikzcd}[column sep=large]
\Tor_*^{k[\A]}(\overline{F}\circ\pi,\overline{G}\circ\rho)\ar[dashed]{r}{\Theta_\FF}
\ar{d}[swap]{\Tor_*^{k[\A]}(\overline{F}(\theta_\FF),\overline{G}\circ\rho)}
 & \Tor_*^{k[\Proj_\FF]}(\overline{F}\circ D_{\pi,\rho},G)\ar{d}{\res^{\iota^\aleph}}[swap]{\simeq}\\
\Tor_*^{k[\A]}(\overline{F}\circ D_{\pi,\rho}\circ\rho,\overline{G}\circ\rho)\ar{r}{\res^\rho} & \Tor_*^{k[\Proj_\FF^\aleph]}(\overline{F}\circ D_{\pi,\rho},\overline{G})
\end{tikzcd}.\label{eqn-def-Theta}
\end{equation}

\begin{lm}\label{lm-independant-card}
The map $\Theta_\FF$ does not depend on the choice of $\aleph$. 
\end{lm}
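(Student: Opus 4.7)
The plan is to reduce to comparing the constructions for two cardinals $\aleph \le \aleph'$ both satisfying the hypothesis (any two admissible cardinals are dominated by a common admissible cardinal, e.g. their sum, since enlarging $\aleph$ preserves the containment of the images of $\pi$ and $\rho$). So I will show that the map $\Theta_\FF^{\aleph'}$ defined via $\aleph'$ also fits into the defining square for $\aleph$, and then conclude by the uniqueness built into that definition: the right-hand vertical arrow $\res^{\iota^\aleph}$ is an isomorphism by proposition \ref{pr-Kexact}, so at most one graded $k$-linear map makes the $\aleph$-square commute.

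First I would record the data needed to relate the two squares. Write $j:\Proj_\FF^\aleph \hookrightarrow \Proj_\FF^{\aleph'}$ for the inclusion, so that $\iota^\aleph = \iota^{\aleph'}\circ j$. Denote by $\rho_\aleph:\A\to\Proj_\FF^\aleph$ and $\rho_{\aleph'}:\A\to\Proj_\FF^{\aleph'}$ the two factorizations of $\rho$, which are related by $\rho_{\aleph'} = j\circ\rho_\aleph$. The functoriality of the restriction map on $\Tor$ recalled in section \ref{subsec-restriction} then gives
\[
\res^{\iota^\aleph} = \res^{\iota^{\aleph'}}\circ \res^j\qquad\text{and}\qquad \res^{\rho_{\aleph'}} = \res^j\circ\res^{\rho_\aleph}.
\]

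Second, I would stack the defining square \eqref{eqn-def-Theta} for $\Theta_\FF^{\aleph'}$ on top of the trivially commutative square expressing $\res^{\rho_{\aleph'}}=\res^j\circ\res^{\rho_\aleph}$, inserting the intermediate $\Tor_*^{k[\Proj_\FF^\aleph]}(\overline F\circ D_{\pi,\rho},\overline G)$ as a new bottom row. The left column of this $3$-row diagram is $\Tor_*^{k[\A]}(F(\theta_\FF),\overline G\circ\rho)$ followed by the identity, while the right column composes to $\res^{\iota^{\aleph'}}\circ\res^j = \res^{\iota^\aleph}$; consequently the outer rectangle is exactly the square \eqref{eqn-def-Theta} for $\aleph$ with $\Theta_\FF^{\aleph'}$ placed along the top. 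By the uniqueness property recalled above, this forces $\Theta_\FF^{\aleph}=\Theta_\FF^{\aleph'}$.

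The only subtlety — and the one place where care is required — is the direction of the arrow $\res^{\iota^\aleph}$: in diagram \eqref{eqn-def-Theta} it is drawn pointing downward, whereas the generic restriction morphism of section \ref{subsec-restriction} points from the smaller to the larger category. Since proposition \ref{pr-Kexact} guarantees that this particular restriction map is an isomorphism, the two orientations differ only by inversion, and the stacking argument above may be carried out equally well with either convention after taking inverses where needed. Once this bookkeeping is settled, no nontrivial computation remains.
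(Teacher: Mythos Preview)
Your proof is correct and follows essentially the same route as the paper: compare two admissible cardinals $\aleph\le\aleph'$ (the paper writes $\aleph<\beth$), use the functoriality $\res^{\iota^\aleph}=\res^{j}\circ\res^{\iota^{\aleph'}}$ together with the factorization of $\res^\rho$ through $\Proj_\FF^\aleph\hookrightarrow\Proj_\FF^{\aleph'}$, and conclude by the uniqueness built into the defining square. Your last paragraph is unnecessary: with $\iota^\aleph:\Proj_\FF\hookrightarrow\Proj_\FF^\aleph$, the restriction map $\res^{\iota^\aleph}$ on $\Tor$ already goes from the smaller category to the larger one by definition (equation~\eqref{eqn-res-2}), so the downward arrow in diagram~\eqref{eqn-def-Theta} is drawn in the standard direction and no inversion is needed.
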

\begin{proof}
This is a consequence of the fact that for a cardinal $\beth$ greater than $\aleph$ we have a commutative diagram (where $\iota^{\aleph,\beth}$ is the inclusion of $\Proj_\FF^\aleph$ into $\Proj_\FF^\beth$):
\begin{equation*}
\begin{tikzcd}[column sep=large]
&  
 \Tor_*^{k[\Proj_\FF]}(\overline{F}\circ D_{\pi,\rho}F,G)\ar{d}[swap]{\simeq}{\res^{\iota^\aleph}}\ar[bend left, shift left=10ex]{dd}[swap, near start]{\simeq}[near start]{\res^{\iota^\beth}}\\
\Tor_*^{k[\A]}(\overline{F}\circ D_{\pi,\rho}\circ \pi,\overline{G}\circ \rho)\ar{r}{\res^\rho}\ar{rd}{\res^\rho} & \Tor_*^{k[\Proj_\FF^\aleph]}(\overline{F}\circ D_{\pi,\rho},\overline{G})\ar{d}{\res^{\iota^{\aleph,\beth}}}\\
& \Tor_*^{k[\Proj_\FF^\beth]}(\overline{F}\circ D_{\pi,\rho},\overline{G})
\end{tikzcd}.
\end{equation*}
\end{proof}

\begin{lm}\label{lm-nat-Theta}
The map $\Theta_\FF$ is natural with respect to $F$, $G$, $\pi$ and $\rho$.
\end{lm}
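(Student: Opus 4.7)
The plan is to check naturality in each of the four variables $F$, $G$, $\pi$, $\rho$ separately by arguing that every solid arrow in the defining square \eqref{eqn-def-Theta} is natural in the variable under consideration; then the dashed map $\Theta_\FF$ inherits naturality from the commutativity of \eqref{eqn-def-Theta} and the invertibility of $\res^{\iota^\aleph}$.

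For $F$ and $G$ this is essentially automatic. A morphism $F\to F'$ induces $\overline{F}\to\overline{F'}$ by universality of the left Kan extension, hence natural transformations between all of $\overline{F}\circ\pi$, $\overline{F}\circ D_{\pi,\rho}\circ\rho$ and $\overline{F}\circ D_{\pi,\rho}$; the unit $\theta_\FF$ does not depend on $F$, so $F(\theta_\FF)$ is compatible with $F\to F'$; and the restrictions $\res^\rho$ and $\res^{\iota^\aleph}$ are natural in their coefficient arguments. The case of $G$ is analogous since $G$ enters only on the right of each $\Tor$.

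For $\pi$, a morphism $\pi\to\pi'$ yields by functoriality of $-\otimes_{\FA}\rho$ a natural transformation $D_{\pi,\rho}\to D_{\pi',\rho}$, and naturality of the unit of the adjunction $(-\otimes_{\FA}\rho)\dashv\Hom_\FF(\rho,-)$ gives a commutative square comparing $\theta_\FF$ at $\pi$ and at $\pi'$. Combined with naturality of $\res^\rho$ and $\res^{\iota^\aleph}$ in their coefficients, and with lemma \ref{lm-independant-card} to allow enlarging $\aleph$ so that both $\pi$ and $\pi'$ land in $\Proj_\FF^\aleph$, this yields naturality in $\pi$.

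The subtlest case, which I expect to be the main technical point, is $\rho$: here $\rho$ plays the dual role of a coefficient (in $\overline{G}\circ\rho$ and $D_{\pi,\rho}\circ\rho$) and of the functor along which one restricts (in $\res^\rho$). Given a morphism $\rho\to\rho'$, one must verify in particular that (i) the units $\theta_\FF$ associated with the adjunctions for $\rho$ and $\rho'$ are compatible via the induced maps $D_{\pi,\rho}\circ\rho\to D_{\pi,\rho'}\circ\rho'$, and (ii) the restriction $\res^\rho$ in \eqref{eqn-def-Theta} is natural with respect to the natural transformation of $k$-functors $\A\to\Proj_\FF^\aleph$ underlying $\rho\to\rho'$. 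Point (i) follows from the general functoriality of an adjunction in its right adjoint, applied to the parametric family $\Hom_\FF(\rho,-)$; point (ii) reduces to the fact that a natural transformation between parallel $k$-functors induces a morphism between the associated coends on the $\Tor$ level, directly from the coend description of $-\otimes_\K-$ recalled in section~\ref{subsec-tp}. Assembling these observations, and using lemma \ref{lm-independant-card} once more to align the choice of $\aleph$, one concludes.
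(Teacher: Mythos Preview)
Your treatment of $F$, $G$, and $\pi$ is fine and matches the paper's. The gap is in the $\rho$ case, at your point~(i). You claim the two units are ``compatible via the induced maps $D_{\pi,\rho}\circ\rho\to D_{\pi,\rho'}\circ\rho'$'', but no such natural map exists: $D_{\pi,\rho}\circ\rho=\Hom_\FF(\rho,\pi\otimes_{\FA}\rho)$ is contravariant in the first occurrence of $\rho$ and covariant in the second, so a morphism $f:\rho\to\rho'$ yields no map in either direction between $D_{\pi,\rho}\circ\rho$ and $D_{\pi,\rho'}\circ\rho'$. What the ``functoriality of an adjunction in its right adjoint'' actually gives is a \emph{dinaturality} square: both $\Hom_\FF(f,\pi\otimes_{\FA}\rho')\circ\theta_\FF^{\rho'}$ and $\Hom_\FF(\rho,\pi\otimes_{\FA}f)\circ\theta_\FF^{\rho}$ are maps $\pi\to D_{\pi,\rho'}\circ\rho$, and they agree there. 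This is exactly what the paper isolates and uses.

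Your point~(ii) has the same issue. The maps $\res^\rho$ and $\res^{\rho'}$ differ both in their source and in the functor along which one restricts, so ordinary naturality of $\res$ in the coefficients is not enough; one needs the dinaturality of the coend defining $\otimes_{k[\A]}$ with respect to the natural transformation $f$. The paper's proof makes all this explicit by drawing a single diagram that passes through the intermediate objects $\Tor_*^{k[\A]}(\overline{F}D'\rho',\overline{G}\rho)$ and $\Tor_*^{k[\A]}(\overline{F}D'\rho,\overline{G}\rho)$ (with $D'=D_{\pi,\rho'}$), and checking cell by cell: two obvious triangles, one parallelogram by dinaturality of $\theta_\FF$, and one by dinaturality of restriction. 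Your outline gestures at the right ingredients, but as written it asserts plain naturality in places where only dinaturality holds; without introducing those intermediate terms the argument does not close.
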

\begin{proof}
It is equivalent to prove the naturality of $\res^{\iota^\aleph}\circ\Theta_k$ with respect to $F$, $G$, $\pi$ and $\rho$.
Naturality with respect to $F$, $G$ and $\pi$ is a straightforward verification. We check naturality with respect to $\rho$, which is less straightforward since $\theta_\FF$ is not natural with respect to $\rho$. Let $f:\rho\to \rho'$ be a natural transformation, and let $D_f:D:=D_{\pi,\rho}\to D':=D_{\pi,\rho'}$ be the natural transformation induced by $f$. We consider the following diagram of graded $k$-modules, in which the composition operator for functors is omitted, e.g. `$\overline{F}\pi$' means $\overline{F}\circ \pi$, and the arrows are labelled by the natural transformations which induce them.
\[
\begin{tikzcd}
\Tor_*^{k[\A]}(\overline{F}\pi,\overline{G}\rho)\ar{rr}{\overline{G}f}\ar{d}{F\theta_\FF}\ar{dr}{\overline{F}\theta_\FF}&&\Tor_*^{k[\A]}(\overline{F}\pi,G\rho')\ar{dd}{\overline{F}\theta_\FF}\\
\Tor_*^{k[\A]}(\overline{F}D\rho,\overline{G}\rho)\ar{dd}{\res^\rho}\ar{dr}[swap]{\overline{F}D_f\rho}&\Tor_*^{k[\A]}(\overline{F}D'\rho',\overline{G}\rho)\ar{dr}{\overline{G}f}\ar{d}{\overline{F}D'f}&\\
& \Tor_*^{k[\A]}(\overline{F}D'\rho,\overline{G}\rho) \ar{dr}[swap]{\res^\rho}& \Tor_*^{k[\A]}(\overline{F}D'\rho',\overline{G}\rho')\ar{d}{\res^{\rho'}}\\
\Tor_*^{k[\Proj_\FF^\aleph]}(\overline{F}  D,\overline{G})\ar{rr}{\overline{F}D_f}&& \Tor_*^{k[\Proj_\FF^\aleph]}(\overline{F} D',\overline{G})
\end{tikzcd}
\]
The upper right triangle and the lower left triangle of the diagram are obviously commutative. The upper left parallelogram is commutative because of the dinaturality of $\theta_\FF$, i.e. because the following square commutes:
\[
\begin{tikzcd}[column sep=large]
\pi \ar{rr}{\theta_\FF}\ar{d}{\theta_\FF} &&D'\rho'=\Hom_\FF(\rho', \pi\otimes_{\FA}\rho')\ar{d}{\Hom_\FF(f, \pi\otimes_{\FA}\rho')}\\
D\rho=\Hom_\FF(\rho, \pi\otimes_{\FA}\rho)\ar{rr}{\Hom_\FF(\rho, \pi\otimes_{\FA}f)} && D'\rho=\Hom_\FF(\rho, \pi\otimes_{\FA}\rho')
\end{tikzcd}\;.
\] 
Finally, the lower right parallelogram commutes by dinaturality of restriction maps between $\Tor$-modules (which comes from the fact that tensor products are defined by a coend formula). Thus the outer square is commutative, which shows that $\res^{\iota^\aleph}\circ\Theta_\FF$, hence $\Theta_\FF$, is natural with respect to $\pi$.
\end{proof}

\subsection{Base change}\label{subsec-base-change-theta}
We fix a field morphism $\FF\to \KK$, and we let $t:\FF\Md\to \KK\Md$ denote the extension of scalars $t(v)=\KK\otimes_\FF v$. 
The canonical isomorphisms $t(\pi(a))\otimes_\KK t(\rho(a))\simeq t(\pi(a)\otimes_{\FF}\rho(a))$ induce a canonical isomorphism:
\begin{align} 
t(\pi\otimes_{\FA}\rho)\simeq (t\circ \pi)\otimes_{\KA} (t\circ \rho)\;.\label{eq-iso-tens-base-change}
\end{align}
Thus, extension of scalars induces a $\KK$-linear morphism:
\begin{align}\KK\otimes_\FF\Hom_\FF(v,\pi\otimes_{\FA}\rho) \xrightarrow[]{\,(f\otimes\lambda\mapsto f\otimes\lambda)\,} &\Hom_\KK(t(v),t(\pi\otimes_{\FA}\rho))\notag\\
&\simeq \Hom_\KK(t(v), (t\circ \pi)\otimes_{\KA} (t\circ \rho))\label{eqn-can}
\end{align}  
which is an isomorphism when $v$ has finite dimension. If we let $D_{\pi,\rho}$ and $D_{t\circ\pi,t\circ\rho}$ be the duality functors respectively defined by: 
\begin{align*}
&D_{\pi,\rho}(v)=\Hom_{\FF}(v,\pi\otimes_{\FA}\rho)\\
& D_{t\circ\pi,t\circ\rho}(w)=\Hom_\KK(w,(t\circ\pi)\otimes_{\KA}(t\circ\rho))
\end{align*}
then the morphism \eqref{eqn-can} can be written as a canonical morphism of functors
\begin{equation}t\circ D_{\pi,\rho}\xrightarrow[]{\mathrm{can}} D_{t\circ\pi,t\circ\rho}\circ t
\label{eqn-can-fct}
\end{equation}
whose component at every finite-dimensional $\FF$-vector space $v$ is an isomorphism. 

\begin{pr}\label{prop-chgt-base}
Let $\FF\to \KK$ be a field morphism. 
For all additive functors $\pi:\A^\op\to \FF\Md$ and $\rho:\A\to \FF\Md$, and for all objects $F$ and $G$ in $k[\Proj_\KK]\Md$, we have a commutative diagram in which the lower horizontal isomorphism is induced by the isomorphism $F(\mathrm{can})$:
\[
\begin{tikzcd}[column sep=large]
\Tor_*^{k[\A]}(\overline{F}\circ t\circ \pi,\overline{G}\circ t\circ \rho)\ar{r}{\Theta_\KK}\ar{d}{\Theta_\FF}& 
\Tor_*^{k[\Proj_\KK]}(\overline{F}\circ D_{t\circ \pi,t\circ \rho},G)\\
\Tor_*^{k[\Proj_\FF]}(\overline{F}\circ t\circ D_{\pi,\rho},G\circ t)\ar{r}[swap]{\simeq} &
\Tor_*^{k[\Proj_\FF]}(\overline{F}\circ D_{t\circ \pi,t\circ \rho}\circ t,G\circ t) \ar{u}{\res^t}
\end{tikzcd}.
\]
\end{pr}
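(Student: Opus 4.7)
The plan is to expand both $\Theta_\FF$ and $\Theta_\KK$ via their defining squares \eqref{eqn-def-Theta} and paste together a few commutative cells. Set $\pi':=t\pi$, $\rho':=t\rho$, $D:=D_{\pi,\rho}$, $D':=D_{\pi',\rho'}$, and pick a cardinal $\aleph$ containing the values of $\pi$ and $\rho$ (hence also of $\pi'$ and $\rho'$). Write $t_\aleph:\Proj_\FF^\aleph\to\Proj_\KK^\aleph$ for the restriction of $t$, and note the factorizations $\rho'=t_\aleph\circ\rho$ and $\iota_\KK^\aleph\circ t_\aleph=t\circ\iota_\FF^\aleph$ of additive functors.

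The crucial ingredient is the base-change compatibility of the unit \eqref{eqn-def-theta}: the triangle of natural transformations $\A\to\KK\Md$
\[
\begin{tikzcd}
\pi' \ar{r}{\theta_\KK} \ar{dr}[swap]{t\theta_\FF} & D'\rho' \\
 & tD\rho \ar{u}[swap]{\mathrm{can}\circ\rho}
\end{tikzcd}
\]
commutes, as one checks directly on elements: both composites send $\lambda\otimes x\in t\pi(a)$ to the $\KK$-linear map $\mu\otimes y\mapsto\llbracket(\lambda\otimes x)\otimes(\mu\otimes y)\rrbracket\in\pi'\otimes_\KA\rho'$, the intermediate element $\lambda\mu\otimes\llbracket x\otimes y\rrbracket\in t(\pi\otimes_\FA\rho)$ being identified with the target via the canonical isomorphism \eqref{eq-iso-tens-base-change}.

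Inserting this triangle into the defining square for $\Theta_\KK$ factors $\Theta_\KK$ as $\Phi_1$, then $\Phi_2$, then $\res^{\rho'}$, then the iso of Proposition~\ref{pr-Kexact} for $\iota_\KK^\aleph$, where $\Phi_1$ and $\Phi_2$ are the morphisms on $\Tor_*^{k[\A]}(-,\overline{G}\rho')$ induced respectively by $\overline{F}(t\theta_\FF)$ and $\overline{F}(\mathrm{can}\circ\rho)$. The defining square for $\Theta_\FF$ (applied to the inputs $\pi,\rho,F\circ t,G\circ t$, with the help of Remark~\ref{rk-ambiguous}) similarly factors $\Theta_\FF$ as $\Phi_1$ followed by $\res^\rho$ and the iso of Proposition~\ref{pr-Kexact} for $\iota_\FF^\aleph$. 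Cancelling the common factor $\Phi_1$, commutativity of the proposition's diagram reduces to the equality of the two maps $\Tor_*^{k[\A]}(\overline{F}tD\rho,\overline{G}\rho')\to\Tor_*^{k[\Proj_\KK]}(\overline{F}D',G)$ given by
\[
\Phi_2\;\to\;\res^{\rho'}\;\to\;(\text{iso for }\iota_\KK^\aleph)
\qquad\text{and}\qquad
\res^\rho\;\to\;(\text{iso for }\iota_\FF^\aleph)\;\to\;F(\mathrm{can})_*\;\to\;\res^t\,.
\]
This is established by pasting three standard cells: (i) the factorization $\res^{\rho'}=\res^{t_\aleph}\circ\res^\rho$ from $\rho'=t_\aleph\circ\rho$; (ii) the dinaturality of $\res^\rho$ in the first variable, $\res^\rho\circ\Phi_2=\tilde\Phi_2\circ\res^\rho$, where $\tilde\Phi_2$ is the morphism on $\Tor_*^{k[\Proj_\FF^\aleph]}$ induced by $\overline{F}\mathrm{can}$; and (iii) the functoriality of restriction along the commuting square $\iota_\KK^\aleph\circ t_\aleph=t\circ\iota_\FF^\aleph$, which intertwines the iso of Proposition~\ref{pr-Kexact} for $\iota_\KK^\aleph$ composed with $\res^{t_\aleph}$ and $\res^t$ composed with the iso for $\iota_\FF^\aleph$. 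A final elementary check identifies $\tilde\Phi_2$ with $F(\mathrm{can})_*$, since $\mathrm{can}$ is an isomorphism on finite-dimensional vector spaces.

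The main obstacle is the combinatorial bookkeeping of these nested restrictions and isomorphisms, and keeping track of the various incarnations of Proposition~\ref{pr-Kexact}'s iso for the inclusions $\iota_\FF^\aleph$ and $\iota_\KK^\aleph$; Lemma~\ref{lm-independant-card} ensures that the result is independent of the auxiliary cardinal $\aleph$. Since all the maps in sight are derived from natural transformations in degree zero and are assembled functorially via projective resolutions, once commutativity is established at the level of coends — where the verification reduces to the elementwise computation of the key triangle above — the statement in higher homological degrees follows formally from the universal property of left derived functors.
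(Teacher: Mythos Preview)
Your proof is correct and follows essentially the same route as the paper's: both hinge on the identity $\theta_\KK=(\mathrm{can}\circ\rho)\circ(t\theta_\FF)$ and then decompose the square into the same naturality and restriction cells; the paper simply draws one large diagram exhibiting all cells at once, while you factor through a common $\Phi_1$ and reduce to the remaining triangle. One terminological nit: what you call ``dinaturality of $\res^\rho$ in the first variable'' in cell (ii) is ordinary naturality of $\res^\rho$ with respect to morphisms in the first argument; and your closing remark about ``commutativity at the level of coends'' is unnecessary, since all maps in play are already defined on $\Tor_*$ and the cell-by-cell checks hold directly in every degree.
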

\begin{proof}
Let us denote $D=D_{t\circ\pi,t\circ\rho}$ and $D'=D_{\pi,\rho}$ for short and let $\aleph$ be a big enough cardinal. 
We have a diagram of graded $k$-modules, in which the composition symbol for functors has been omitted and the arrows are labelled by the name of the morphisms which induce them. 
\[
\begin{tikzcd}
\Tor_*^{k[\A]}(\overline{F}t\pi,\overline{G}t\rho) \ar{r}{\overline{F}\theta_\KK}\ar{d}{\overline{F}t\theta_\FF}
& \Tor_*^{k[\A]}(\overline{F}Dt\rho,\overline{G}t\rho)\ar{d}{\res^\rho}\ar{r}{\res^{t\rho}}
& \Tor_*^{k[\Proj_\KK^\aleph]}(\overline{F}D,\overline{G}) \\
\Tor_*^{k[\A]}(\overline{F}tD'\rho,\overline{G}t\rho) \ar{ru}{\overline{F}\mathrm{can}\rho}\ar{d}{\res^\rho}
& \Tor_*^{k[\Proj_\FF^\aleph]}(\overline{F}Dt,\overline{G}t) \ar{ru}{\res^t}
& \Tor_*^{k[\Proj_\KK]}(\overline{F}D,\overline{G}) \ar{u}[swap]{\res}{\simeq}\\
\Tor_*^{k[\Proj_\FF^\aleph]}(\overline{F}tD',\overline{G}t) \ar{ru}{\overline{F}\mathrm{can}}
& \Tor_*^{k[\Proj_\FF]}(\overline{F}tD',\overline{G}t) \ar{l}{\res}[swap]{\simeq}\ar{r}{\overline{F}\mathrm{can}}
 & \Tor_*^{k[\Proj_\FF^\aleph]}(\overline{F}Dt,\overline{G}t) \ar{u}{\res^t}\ar{ul}[swap]{\res}{\simeq}\\
\end{tikzcd}
\]
One readily checks from the explicit expressions of $\theta_\KK$, $\theta_\FF$ and of the canonical morphism $\mathrm{can}:tD'\to Dt$ that $\theta_\KK=(\mathrm{can}\,\rho)\circ (t\theta_\FF)$, hence the upper left triangle of the diagram commutes. The other cells of the diagram obviously commute. The commutativity of the outer square proves proposition \ref{prop-chgt-base}.
\end{proof}

\subsection{Isomorphism conditions}

We now investigate some conditions which ensure that our comparison map $\Theta_\FF$ is an isomorphism. The next proposition provides the base case.

\begin{pr}\label{pr-iso-rep-Theta}
If $\A$ is $\FF$-linear and if $\pi=\A(-,a)$ and $\rho=\A(b,-)$, then $\Theta_\FF$ is an isomorphism.
\end{pr}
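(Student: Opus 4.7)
The plan is to exhibit $\Theta_\FF$ as the concatenation of three natural isomorphisms obtained by passing to the $\aleph$-additivization, applying an adjunction, and returning to $\Proj_\FF$. Fix an infinite cardinal $\aleph$ such that $\A(x,y)\in\mathbf{F}_\FF^\aleph$ for all $x,y$, and write $\iota\colon\A\hookrightarrow\A^\aleph$ and $\iota^\aleph\colon\Proj_\FF\hookrightarrow\mathbf{F}_\FF^\aleph$. Extend $\pi$ and $\rho$ to $\pi^\aleph:=\A^\aleph(-,a)$ and $\rho^\aleph:=\A^\aleph(b,-)$; the definition of morphisms in $\A^\aleph$ gives $\rho^\aleph(\bigoplus x_i)=\bigoplus\A(b,x_i)$, so $\rho^\aleph$ lands in $\mathbf{F}_\FF^\aleph$. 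Since $\A$ is $\FF$-linear, Proposition \ref{pr-Eadjoint} furnishes an adjoint pair $\phi:=-\otimes b\colon\mathbf{F}_\FF^\aleph\leftrightarrows\A^\aleph\colon\psi:=\rho^\aleph$.

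Two computations drive the argument. Using that $\overline G$ preserves filtered colimits, the explicit formula in Proposition \ref{pr-Kexact} yields $\Lan_\iota(\rho^*G)=\rho^{\aleph,*}G$ and $\Lan_{\iota^\aleph}G=\overline G|_{\mathbf{F}_\FF^\aleph}$; together with the obvious identities $\iota^*\pi^{\aleph,*}F=\pi^*F$ and $\iota^{\aleph,*}D^*_{\pi,\rho}F=D^*_{\pi,\rho}F|_{\Proj_\FF}$, this produces restriction isomorphisms $\res^\iota\colon\Tor^{k[\A]}_*(\pi^*F,\rho^*G)\xrightarrow{\sim}\Tor^{k[\A^\aleph]}_*(\pi^{\aleph,*}F,\rho^{\aleph,*}G)$ and $\res^{\iota^\aleph}\colon\Tor^{k[\Proj_\FF]}_*(D^*_{\pi,\rho}F,G)\xrightarrow{\sim}\Tor^{k[\mathbf{F}_\FF^\aleph]}_*(D^*_{\pi,\rho}F,\overline G)$. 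Next, a direct unravelling of the adjunction shows
\[\phi^*\pi^{\aleph,*}F(v)=\overline F(\A^\aleph(v\otimes b,a))=\overline F(\Hom_\FF(v,\A(b,a)))=D^*_{\pi,\rho}F(v),\]
so $\phi^*\pi^{\aleph,*}F=D^*_{\pi,\rho}F$, and Proposition \ref{pr-iso-tens-explicit2} provides a third isomorphism
\[\Tor^{k[\A^\aleph]}_*(\pi^{\aleph,*}F,\psi^*\overline G)\xrightarrow[\simeq]{\res^{\rho^\aleph}\circ\pi^{\aleph,*}F(e)_*}\Tor^{k[\mathbf{F}_\FF^\aleph]}_*(D^*_{\pi,\rho}F,\overline G),\]
where $e\colon\phi\psi\to\id_{\A^\aleph}$ is the counit. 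Concatenating the three isomorphisms gives a graded isomorphism $\Phi\colon\Tor^{k[\A]}_*(\pi^*F,\rho^*G)\xrightarrow{\sim}\Tor^{k[\Proj_\FF]}_*(D^*_{\pi,\rho}F,G)$.

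It remains to match $\Phi$ with $\Theta_\FF$. Using that $\res^\rho=\res^{\rho^\aleph}\circ\res^\iota$, the equality $\Phi=\Theta_\FF$ reduces, by naturality of restriction maps, to a single identity of morphisms in $\Mdd k[\A]$:
\[\iota^*\bigl(\pi^{\aleph,*}F(e)\bigr)=\overline F(\theta_\FF).\]
For $x\in\A$, the counit $e_x\colon\A(b,x)\otimes b\to x$ is characterised by its correspondence with $\id_{\A(b,x)}$ under the adjunction bijection $\A^\aleph(\A(b,x)\otimes b,a)\simeq\Hom_\FF(\A(b,x),\A(b,a))$, which then sends any morphism $h\circ e_x$ (with $h\in\A(x,a)$) to the $\FF$-linear map $g\mapsto h\circ g$. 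This is exactly the formula defining $\theta_\FF|_x$ in \eqref{eqn-def-theta}, and applying $\overline F$ yields the claim. The only delicate step in the whole argument is this last identification --- tracking the counit of $-\otimes b\dashv\A^\aleph(b,-)$ carefully enough to recognise its image under $\A^\aleph(-,a)$ as the dinatural transformation $\theta_\FF$. Modulo this bookkeeping, the result is a formal composition of Propositions \ref{pr-Kexact} and \ref{pr-iso-tens-explicit2}.
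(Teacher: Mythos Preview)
Your proof is correct and follows essentially the same approach as the paper's own argument. Both proofs pass to the $\aleph$-additivization $\A^\aleph$, invoke the adjunction $-\otimes b\dashv\A^\aleph(b,-)$ from Proposition~\ref{pr-Eadjoint}, apply the $\Tor$-isomorphism of Proposition~\ref{pr-iso-tens-explicit2}, and then identify the counit of this adjunction (after applying $\A^\aleph(-,a)$) with $\theta_\FF$ via the Yoneda isomorphism $\pi\otimes_{\FA}\rho\simeq\A(b,a)$; the paper packages the same three ingredients into two commutative diagrams (its diagram~\eqref{eqn-diagr-com} and the large diagram that follows), whereas you give a more linear narrative, but the content is identical.
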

\begin{proof}
By lemma \ref{lm-independant-card}, we may assume $\aleph$  as big as we want in the definition of $\Theta_\FF$, so that the functor $\rho^\aleph:=\A^\aleph(b,-):\A^\aleph\to \Proj_\FF^\aleph$ has a left adjoint $\tau:=b\otimes_{\FF}-$ by proposition \ref{pr-Eadjoint}. We also let $\pi^\aleph:=\A^\aleph(-,a):\A^\aleph\to \FF\Md$.

Let us first reinterpret $\theta_\FF$ in the situation of proposition \ref{pr-iso-rep-Theta}.
We have an isomorphism
\[\phi:\pi\otimes_{\FA}\rho \xrightarrow[]{\simeq}\A(b,a)\]
which sends the class of $f\otimes g\in \A(x,a)\otimes_\FF \A(b,x)$ to  $f\circ g\in \A(b,a)$. (The inverse of $\phi$ sends an element $f\in\A(a,b)$ to the class of $\id_a\otimes f\in\A(a,a)\otimes_\FF \A(b,a)$). From the explicit expressions of $\theta_\FF$ and $\phi$, one sees that the lower left triangle of the following diagram commutes.
\begin{equation}
\begin{tikzcd}[column sep=large]
\A(x,a)\ar{rr}{\A(\epsilon_x,a)}\ar{d}[swap]{(\theta_{\FF})_x}\ar{rrd}[swap]{\A(b,-)} && \A^\aleph(b\otimes_\FF \A(b,x),a)\ar{d}{\alpha}[swap]{\simeq}\\
\Hom_\FF(\A(b,x), \pi\otimes_{\FA}\rho)\ar{rr}[swap]{\Hom_\FF(\A(b,x),\phi)}{\simeq}&& \Hom_\FF(\A(b,x),\A(b,a))
\end{tikzcd}
\label{eqn-diagr-com}
\end{equation}
The upper right triangle of diagram \eqref{eqn-diagr-com} also commutes: here $\alpha$ is an adjunction isomorphism for the adjunction between $\tau$ and $\rho^\aleph$, and $\epsilon_x$ is the associated counit of adjunction. Diagram \eqref{eqn-diagr-com} is our new interpretation of $\theta_\FF$.

Next we prove that $\res^{\iota^\aleph}\circ\Theta_\FF$ is an isomorphism. We let $D:=D_{\pi,\rho}$ for short, and we let $\chi:D\simeq \pi^\aleph\circ \tau$ be the isomorphism whose component at $v$ is given by the composition:
\[\Hom_\FF(v,\pi\otimes_{\FA}\rho)\xrightarrow[\simeq]{\Hom_\FF(v,\phi)} \Hom_\FF(v, \A(a,b))\xrightarrow[\simeq]{\alpha^{-1}} \A^\aleph(b\otimes_\FF v,a)\;.\]
We consider the following diagram of graded $k$-modules, in which the composition operator for functors is omitted and the arrows are labelled by the natural transformations which induce them. The vertical maps $\res^j$ are induced by restriction along the canonical inclusion $j:\A\hookrightarrow \A^\aleph$.
\[
\begin{tikzcd}
\Tor_*^{k[\A^\aleph]}(\overline{F}\pi^\aleph,\overline{G}\rho^\aleph)\ar{r}{\overline{F}\pi^\aleph\epsilon}&
\Tor_*^{k[\A^\aleph]}(\overline{F}\pi^\aleph \tau \rho^\aleph,\overline{G}\rho^\aleph)\ar{r}{\res^{\rho}} &
\Tor_*^{k[\Proj_\FF^\aleph]}(\overline{F}\pi^\aleph \tau,\overline{G})\ar[equal]{d}
\\
\Tor_*^{k[\A]}(\overline{F}\pi^\aleph,\overline{G}\rho^\aleph)
\ar{u}[swap]{\res^j}{\simeq}
\ar{r}{\overline{F}\pi^\aleph\epsilon}\ar[equal]{d}&
\Tor_*^{k[\A]}(\overline{F}\pi^\aleph \tau \rho,\overline{G}\rho)
\ar{u}[swap]{\res^j}{\simeq}
\ar{r}{\res^{\rho}} &
\Tor_*^{k[\Proj_\FF^\aleph]}(\overline{F}\pi^\aleph \tau,\overline{G})\\
\Tor_*^{k[\A]}(\overline{F}\pi,\overline{G}\rho)
\ar{r}{\overline{F}\theta_\FF}
&\Tor_*^{k[\A]}(\overline{F}D\rho,\overline{G}\rho)
\ar{r}{\res^{\rho}}
\ar{u}[swap]{\overline{F}\chi \rho}{\simeq}&
\Tor_*^{k[\Proj_\FF^\aleph]}(\overline{F}D,\overline{G})\ar{u}[swap]{\overline{F}\chi}{\simeq}
\end{tikzcd}
\]
All the squares of the diagram are obviously commutative, but the lower left square which commutes by commutativity of diagram \eqref{eqn-diagr-com}. The maps $\res^j$ are isomorphisms by proposition \ref{pr-Kexact} because $\overline{G}\rho^\aleph$ is the left Kan extension of $\overline{G}\rho$ along $j$. (To see this, use that $\rho^\aleph=\A^\aleph(b,-):\A^\aleph\to \FF\Md$ is $\aleph$-additive by proposition \ref{pr-elt-prop}, hence it is the left Kan extension of $\rho$ and $\overline{G}$ is already a left Kan extension.) The composite in the top row is the $\Tor$-map induced by the adjunction between $\tau$ and $\rho^\aleph$, hence it is an isomorphism by proposition \ref{pr-iso-tens-explicit2}. We deduce that the composite in the bottom row, which is nothing but $\res^{\iota^\aleph}\circ\Theta_\FF$, is an isomorphism. Hence $\Theta_\FF$ is an isomorphism.
\end{proof}

\begin{cor}\label{cor-direct-sum-rep-Theta}
If $\A$ is $\FF$-linear, and if 
$$\pi=\bigoplus_{i\in I}\A(-,a_i)\;,\qquad \rho=\bigoplus_{j\in J}\A(b_j,-)$$
for some possibly infinite indexing sets $I$ and $J$, then $\Theta_\FF$ is an isomorphism.
\end{cor}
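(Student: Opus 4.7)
The plan is to reduce the statement to the representable case already handled by proposition~\ref{pr-iso-rep-Theta} via a filtered colimit argument. For each finite subset $S \subset I$, set $\pi_S = \bigoplus_{i \in S} \A(-, a_i)$, and for each finite subset $T \subset J$, set $\rho_T = \bigoplus_{j \in T} \A(b_j, -)$. Since $\A$ has finite biproducts, these are representable --- by $\bigoplus_{i \in S} a_i$ and $\bigoplus_{j \in T} b_j$ respectively --- so proposition~\ref{pr-iso-rep-Theta} guarantees that $\Theta_\FF$ is an isomorphism for every pair $(\pi_S, \rho_T)$. Writing $\pi = \colim_S \pi_S$ and $\rho = \colim_T \rho_T$ as filtered colimits along the canonical inclusions (indexed by finite subsets ordered by inclusion), it suffices by the naturality of $\Theta_\FF$ established in lemma~\ref{lm-nat-Theta} to verify that both the source and the target of $\Theta_\FF$ commute with these filtered colimits.

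For the source $\Tor_*^{k[\A]}(\overline{F} \circ \pi, \overline{G} \circ \rho)$, this is straightforward: the system $\pi_S(x) \hookrightarrow \pi(x)$ consists of inclusions of vector spaces, and the left Kan extension $\overline{F}$ preserves filtered colimits of inclusions (being itself defined by such a colimit), whence $\overline{F} \circ \pi = \colim_S \overline{F} \circ \pi_S$; the same holds for $\overline{G} \circ \rho = \colim_T \overline{G} \circ \rho_T$; and $\Tor$ commutes with filtered colimits in each of its variables. For the target $\Tor_*^{k[\Proj_\FF]}(\overline{F} \circ D_{\pi, \rho}, G)$, the key point is that each $\rho_T$ is a direct sum of representables in $\FA\Md$, hence projective, in particular flat over $\FA$; so tensoring with $\rho_T$ preserves monomorphisms, and the transition maps $\pi_S \otimes_{\FA} \rho_T \to \pi_{S'} \otimes_{\FA} \rho_{T'}$ for $S \subset S'$, $T \subset T'$ are inclusions. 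Since $\Hom_\FF(v, -)$ preserves filtered colimits for finite-dimensional $v$, one obtains $D_{\pi, \rho}(v) = \colim_{S, T} D_{\pi_S, \rho_T}(v)$ as a filtered colimit of inclusions, whence $\overline{F} \circ D_{\pi, \rho} = \colim_{S, T} \overline{F} \circ D_{\pi_S, \rho_T}$ by the same preservation property of $\overline{F}$, and a last application of the commutation of $\Tor$ with filtered colimits yields the desired identification on the target side.

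Combining these identifications, $\Theta_\FF$ for $(\pi, \rho)$ identifies via naturality with the filtered colimit of the isomorphisms $\Theta_\FF$ for $(\pi_S, \rho_T)$, and is therefore itself an isomorphism. The only mild subtlety is ensuring that all the filtered systems in play consist of monomorphisms, so that the Kan extension $\overline{F}$ passes through the colimit inside the formula for $D_{\pi, \rho}$; this relies in an essential way on the projectivity (and hence flatness over $\FA$) of the representable summands of $\pi$ and $\rho$.
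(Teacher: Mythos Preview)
Your proof is correct and follows essentially the same approach as the paper: reduce to finite $I$ and $J$ via proposition~\ref{pr-iso-rep-Theta}, then pass to the general case by a filtered colimit of monomorphisms argument using the naturality of $\Theta_\FF$. The only cosmetic difference is in how you justify that $\overline{F}\circ D_{\pi,\rho}$ commutes with the colimit: the paper invokes the isomorphism $D_{\pi,\rho}(v)\simeq \Hom_\FF(v,\FF)\otimes_\FF(\pi\otimes_{\FA}\rho)$ for $v\in\Proj_\FF$, whereas you argue directly that $\Hom_\FF(v,-)$ preserves filtered colimits for finite-dimensional $v$ and use flatness of the $\rho_T$ (and implicitly of the $\pi_S$) to ensure the transition maps are monomorphisms; both arguments are equivalent.
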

\begin{proof}
If $I$ and $J$ are finite, then $\pi\simeq \A(-,a)$ and $\rho\simeq \A(b,-)$ for $a=\bigoplus a_i$ and $b=\bigoplus b_j$, hence $\Theta_\FF$ is an isomorphism by proposition \ref{pr-iso-rep-Theta}. For arbitrary $I$ and $J$, the functors $\pi$ and $\rho$ are filtered colimits of monomorphisms of functors of the form $\A(-,a)$ and $\A(b,-)$. So the result follows from the fact that the target and the source of $\Theta_\FF$ both preserve filtered colimits of monomorphisms of functors, when viewed as functors of the variables $\pi$ and $\rho$. (Indeed $\Tor_*$, $\overline{F}\circ\pi$, $\overline{G}\circ\rho$ and $\overline{F}\circ D_{\pi,\rho}$ preserve filtered colimits of monomorphisms -- for $\overline{F}\circ\pi$, $\overline{G}\circ\rho$, this follows from the fact that $\overline{F}$ and $\overline{G}$ are left Kan extensions of $F$ and $G$, and for $\overline{F}\circ D_{\pi,\rho}$, one uses in addition the isomorphism $D_{\pi,\rho}(v)\simeq \Hom_\FF(v,\FF)\otimes_\FF (\pi\otimes_{\FA}\rho)$, which holds because we view $D_{\pi,\rho}$ as a functor from $\Proj_\FF$ to $\FF\Md$.)
\end{proof}

We are going to extend corollary \ref{cor-direct-sum-rep-Theta} to more general $\FF$-linear functors $\pi$ and $\rho$ by taking simplicial resolutions. We refer the reader to section \ref{sec-simplicial} for recollections of simplicial techniques. 
If $X$ is a simplicial object in $k[\Proj_\FF]\Md$, and $\mu$ is a simplicial object in the category of additive functors $\A\to \FF\Md$, we let $\mu^*X$ be the diagonal simplicial object $\overline{X}_n\circ \mu_n$. Thus $\mu^*X$ is a simplicial object in $k[\A]\Md$ natural with respect to $\mu$ and $X$. The next two lemmas are our main tools to contruct convenient simplicial resolutions.
\begin{lm}\label{lm-Whitehead-thm}
If $X_1\to X_2$ and  $\mu_1\to \mu_2$ are $e$-connected morphisms, the induced morphism $\mu_1^*X_1\to \mu_2^*X_2$ is $e$-connected. 
\end{lm}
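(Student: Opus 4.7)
My plan is to deduce Lemma~\ref{lm-Whitehead-thm} directly from Proposition~\ref{prop-simpl-connected}. First, since $e$-connectedness in $s(k[\A]\Md)$ is detected objectwise (as limits and colimits in $k[\A]\Md$ are computed objectwise, so homotopy groups are), it suffices to fix an object $a$ of $\A$ and show that the morphism of simplicial $k$-modules $(\mu_1^*X_1)(a)\to(\mu_2^*X_2)(a)$ is $e$-connected. By the very definition of $\mu^*X$, this is the diagonal of the morphism of bisimplicial $k$-modules obtained by evaluating the natural transformation $\overline{X}_1\to\overline{X}_2\colon \FF\Md\to s(k\Md)$ on the simplicial object $\mu_1(a)\to\mu_2(a)$ in $\FF\Md$. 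In the notation of Section~\ref{subsec-simpl-ab}--\ref{sec-simplicial}, this is exactly the morphism $\overline{X}_1(\mu_1(a))\to\overline{X}_2(\mu_2(a))$ considered in Proposition~\ref{prop-simpl-connected}.

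Next I would verify the two hypotheses of Proposition~\ref{prop-simpl-connected} applied to the ambient abelian category $\M=\FF\Md$, which is of global dimension $0$ since $\FF$ is a field (the proof of Proposition~\ref{prop-simpl-connected} invokes only a small additive subcategory containing the $\mu_{i,m}(a)$, so the non-smallness of $\FF\Md$ causes no issue). The $e$-connectedness of the simplicial morphism $\mu_1(a)\to\mu_2(a)$ in $\FF\Md$ is immediate from the assumption: it is the value at $a$ of an $e$-connected morphism in the Grothendieck abelian category $\FA\Md\simeq\add(\A;\FF)$, where homotopy is computed objectwise.

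For the $e$-connectedness of the natural transformation $\overline{X}_1\to\overline{X}_2\colon \FF\Md\to s(k\Md)$, I need to check it at every $w\in\FF\Md$, not just on finite-dimensional vector spaces. Using the level-wise formula for the left Kan extension, $\overline{X}_{i,n}(w)$ is the filtered colimit of $X_{i,n}(v)$ along the poset of finite-dimensional subspaces $v\subset w$; since filtered colimits commute with homotopy groups of simplicial $k$-modules, the simplicial morphism $\overline{X}_1(w)\to\overline{X}_2(w)$ is the filtered colimit of the $e$-connected maps $X_1(v)\to X_2(v)$, hence itself $e$-connected. Proposition~\ref{prop-simpl-connected} then yields the desired $e$-connectedness of $\overline{X}_1(\mu_1(a))\to\overline{X}_2(\mu_2(a))$, and the proof is complete.

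The main (and only real) obstacle is the bookkeeping: ensuring that the four meanings of "$e$-connected" in play (for $X_1\to X_2$ in $s(k[\Proj_\FF]\Md)$, for $\mu_1\to\mu_2$ in $s(\add(\A;\FF))$, for the natural transformation $\overline{X}_1\to\overline{X}_2$ in the sense of Section~\ref{sec-simplicial}, and for the conclusion in $s(k[\A]\Md)$) are compatible in the way needed, and that the diagonal appearing in Proposition~\ref{prop-simpl-connected} matches the diagonal implicit in the definition of $\mu^*X$. Once the identifications are made, the proof is a direct invocation.
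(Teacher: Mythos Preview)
Your proposal is correct and follows essentially the same approach as the paper: reduce to an objectwise check at $a\in\A$, observe that $\overline{X}_1\to\overline{X}_2$ remains $e$-connected on all of $\FF\Md$ because it is a filtered colimit of $e$-connected maps and homotopy groups commute with filtered colimits, and then invoke Proposition~\ref{prop-simpl-connected} with $\M=\FF\Md$. The paper's proof is more terse but identical in substance; your additional remarks on the bookkeeping (objectwise detection of $e$-connectedness, smallness, matching the diagonals) are accurate and do no harm.
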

\begin{proof}
We have to show that for all $a$ in $\A$ the morphism of simplicial $k$-modules $\overline{X_1}(\mu_1(a))\to \overline{X_2}(\mu_2(a))$ is $e$-connected.
The morphism $\overline{X_1}\to \overline{X_2}$ is $e$-connected because it is defined as a filtered colimit and homotopy groups commute with filtered colimits. Hence the result follows from proposition \ref{prop-simpl-connected}.
\end{proof}

\begin{lm}[linearization of projective additive functors]\label{lm-flat}
Let $\rho:\A\to \FF\Md$ be an additive functor, and let $P$ be a projective object in $k[\Proj_\FF]\Md$. If $\rho$ is projective as an {\em additive} functor from $\A$ to {\em abelian groups}, 
then $\rho^*P$ is a projective over $k[\A]$.
\end{lm}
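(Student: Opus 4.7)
The plan is to reduce the statement to showing that for a projective additive functor $\sigma:\A\to\mathbf{Ab}$, the linearization $a\mapsto k[\sigma(a)]$ is projective in $k[\A]\Md$. First I would observe that $\rho^*$ factors as $\rho^{*,\FF\Md}\circ\iota_!$, the left Kan extension $\iota_!$ along $\iota:\Proj_\FF\hookrightarrow\FF\Md$ followed by restriction along $\rho$. Being a composition of left adjoints, $\rho^*$ commutes with arbitrary direct sums and retracts, so since every projective of $k[\Proj_\FF]\Md$ is a retract of a direct sum of standard projectives $h^v_{k[\Proj_\FF]}$, it is enough to prove the lemma when $P=h^v$. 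The explicit formula $\overline{h^v}(w)=k[\Hom_\FF(v,w)]$ and an isomorphism $v\simeq\FF^n$ yield
\[\rho^*h^v(a)=k[\rho(a)^n]=k[\rho(a)]^{\otimes n}=k[\rho^{\oplus n}](a),\]
and since $\rho^{\oplus n}$ inherits projectivity from $\rho$, the problem reduces to the announced claim on $k[\sigma]$.

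Next I would handle $\sigma$. The representable case $\sigma=\A(b,-)$ is immediate: $k[\sigma]=h^b_{k[\A]}$ is a standard projective. For a general projective $\sigma$, write $\sigma\oplus\sigma'\simeq\tau:=\bigoplus_{i\in I}\A(b_i,-)$. Exploiting that each $\sigma'(a)$ is pointed by $0$, the basepoint inclusion and the augmentation $k[\sigma']\to\underline{k}$ (sending each basis vector to $1$) provide a retraction $k[\sigma]\hookrightarrow k[\sigma]\otimes_k k[\sigma']=k[\tau]\twoheadrightarrow k[\sigma]$, so it suffices to treat the free case $\sigma=\tau$. When $I$ is finite, additivity of $\A$ gives $\tau=\A(\bigoplus_{i\in I}b_i,-)$, bringing us back to the representable case.

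The hard part will be handling infinite $I$. For this I would invoke the $\aleph$-additivization of definition \ref{def-kappa-completion}: picking $\aleph\ge|I|$, the formal direct sum $b=\bigoplus_{i\in I}b_i$ is an object of $\A^\aleph$ satisfying $\A^\aleph(b,a)=\bigoplus_i\A(b_i,a)=\tau(a)$ for all $a\in\A$, so $k[\tau]=j^*h^b_{k[\A^\aleph]}$ where $j:\A\hookrightarrow\A^\aleph$ is the canonical inclusion. The crux is then to show that restriction along $j$ sends the standard projective $h^b_{k[\A^\aleph]}$ to a projective in $k[\A]\Md$. A direct coend calculation gives
\[k[\tau]\otimes_{k[\A]}F\simeq\colim_{F'\subset I\text{ finite}}F(b_{F'})\quad\text{with}\quad b_{F'}=\bigoplus_{i\in F'}b_i,\]
which is exact because filtered colimits in $k\Md$ are exact; this establishes at least flatness of $k[\tau]$. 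Upgrading flatness to projectivity is the main obstacle, for which I would combine proposition \ref{pr-Kexact} on the exactness of the left Kan extension $\Lan_j$ with the adjunction $j_!\dashv j^*$ and a cofinality argument to identify $j^*h^b_{k[\A^\aleph]}$ as a retract of a genuine direct sum of standard projectives in $k[\A]\Md$.
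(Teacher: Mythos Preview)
Your reductions are sound and parallel the paper's: reducing to $P=k[\Hom_\FF(\FF^n,-)]$, then to showing $k[\tau]$ is projective for $\tau=\bigoplus_{i\in I}\A(b_i,-)$, is exactly where the paper lands. The retraction argument you give to pass from a summand $\sigma$ to the full direct sum $\tau$ is fine.

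The gap is in your treatment of infinite $I$. The $\aleph$-additivization detour does not help: you correctly identify $k[\tau]=j^*h^b_{k[\A^\aleph]}$, but the tools you invoke point the wrong way. Proposition~\ref{pr-Kexact} says $\Lan_j$ is exact, which tells you $j^*$ preserves \emph{injectives}, not projectives; for $j^*$ to preserve projectives you would need exactness of the \emph{right} Kan extension, which you have no reason to expect. Your filtered-colimit computation does give flatness, but flatness does not imply projectivity in functor categories, and you acknowledge this yourself. The final clause---``a cofinality argument to identify $j^*h^b$ as a retract of a direct sum of standard projectives''---is precisely the statement to be proved, not a method for proving it.

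The paper supplies the missing idea: the cross-effect decomposition
\[
k\Big[\bigoplus_{i\in E}\A(a_i,-)\Big]\;\simeq\;\bigoplus_{I\in\mathcal P_f(E)}\bigotimes_{i\in I}k[\A(a_i,-)]^{\red},
\]
which follows from $k[M\oplus N]\simeq k[M]\otimes k[N]$ and $k[M]\simeq k\oplus k[M]^{\red}$, together with the fact that elements of the infinite direct sum have finite support. Each finite tensor product $\bigotimes_{i\in I}k[\A(a_i,-)]^{\red}$ is a direct summand of the standard projective $k[\A(\bigoplus_{i\in I}a_i,-)]$, so the whole thing is projective. This is exactly the ``retract of a direct sum of standard projectives'' you were aiming for, obtained by a direct combinatorial decomposition rather than via $\A^\aleph$.
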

\begin{proof}
It suffices to prove the result when $P=k[\Hom_\FF(\FF^n,-)]$. As $\rho$, seen as an object of $\A\Md$, is a direct summand of a direct sum of representable functors, it is enough to see that $k[\bigoplus_{i\in E}\A(a_i,-)]$ is projective over $k[\A]$ for every family $(a_i)_{i\in E}$ of objects of $\A$.
The result follows from the cross-effect type decomposition
$$k[\bigoplus_{i\in E}\A(a_i,-)]\simeq\bigoplus_{I\in\mathcal{P}_f(E)}\bigotimes_{i\in I}k[\A(a_i,-)]^\red$$
where $^\red$ refers to the reduced part of a functor (see lemma~\ref{lm-vanish-pol} in section~\ref{sec-AP}) and $\mathcal{P}_f(E)$ denotes the set of finite subsets of $E$. Indeed, each functor $\bigotimes_{i\in I}k[\A(a_i,-)]^\red$ is a direct summand of the projective functor $k[\A(\bigoplus_{i\in I}a_i,-)]$, hence the left-hand side of the isomorphism is a projective functor as claimed.
\end{proof}

\begin{thm}\label{thm-Theta}
Let $k$ be a commutative ring and let $\A$ be a small additive $\FF$-category for a field $\FF$. Assume that $\pi$ and $\rho$ are $\FF$-linear, consider them as objects of the $\FF$-categories $\Mdd\A$ and $\A\Md$, and let $e$ be a positive integer such that $\Tor_i^{\A}(\pi,\rho)=0$ for $0<i<e$.
Then for all objects $F$, $G$ of $k[\Proj_\FF]\Md$, the map
$$\Theta_\FF:\Tor^{k[\A]}_*(\pi^*F,\rho^*G)\to \Tor_*^{k[\Proj_\FF]}(D^*_{\pi,\rho}F,G)$$
defined by diagram \eqref{eqn-def-Theta} is $e$-connected.
\end{thm}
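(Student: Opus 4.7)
The plan is to reduce the general case to corollary~\ref{cor-direct-sum-rep-Theta} via a trisimplicial resolution argument, with the $\Tor^{\A}$-vanishing hypothesis controlling the resulting connectivity loss. First, I would choose simplicial resolutions $\nu_\bullet \to \pi$ and $\mu_\bullet \to \rho$ in $\Mdd\A$ and $\A\Md$ respectively by direct sums of representable functors, together with a simplicial projective resolution $P^G_\bullet \to G$ in $k[\Proj_\FF]\Md$. By lemma~\ref{lm-flat}, each $\mu_q^* P^G_m$ is then projective in $k[\A]\Md$, and by lemma~\ref{lm-Whitehead-thm} the diagonal of $\mu_\bullet^* P^G_\bullet$ is a simplicial projective resolution of $\rho^*G$. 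Applying corollary~\ref{cor-direct-sum-rep-Theta} at each triple $(p,q,m)$ and using naturality of $\Theta_\FF$ (lemma~\ref{lm-nat-Theta}) produces an isomorphism of trisimplicial $k$-modules
\[ X_{p,q,m} := \nu_p^* F \otimes_{k[\A]} \mu_q^* P^G_m \;\xrightarrow{\sim}\; D^*_{\nu_p,\mu_q} F \otimes_{k[\Proj_\FF]} P^G_m =: Y_{p,q,m}. \]

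The next step is to identify the homotopy of the diagonal of $X$ with the source of $\Theta_\FF$: for fixed $(q,m)$, the $\infty$-connected augmentation $\nu_\bullet^* F \to \pi^* F$ (lemma~\ref{lm-Whitehead-thm}) remains $\infty$-connected after tensoring with the flat $\mu_q^* P^G_m$, and varying $(q,m)$ then yields $\pi^* F \otimes_{k[\A]} \mathrm{diag}(\mu_\bullet^* P^G_\bullet)$, whose homotopy is $\Tor^{k[\A]}_*(\pi^* F, \rho^* G)$ by the projective resolution property.

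For the $Y$-side, the hypothesis $\Tor^{\A}_i(\pi,\rho)=0$ for $0<i<e$ forces the bisimplicial $\FF$-module $\nu_\bullet \otimes_\A \mu_\bullet$ to have its diagonal $e$-connected to $\pi \otimes_\A \rho$, since its spectral sequence collapses because each $\nu_p$ and $\mu_q$ is projective over $\A$. Applying the exact functor $\Hom_\FF(v,-)$ for finite-dimensional $v$ transports this $e$-connectedness to $D_{\nu_\bullet,\mu_\bullet}(v) \to D_{\pi,\rho}(v)$. The hard part will be pushing this $e$-connectedness through the non-exact Kan extension $\overline{F}:\FF\Md \to k\Md$; this is precisely what proposition~\ref{prop-simpl-connected} accomplishes, applied with the identity natural transformation of $\overline{F}$, because $\FF\Md$ has global dimension zero. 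Since $P^G_m$ is a direct summand of a direct sum of standard projectives in $k[\Proj_\FF]\Md$, tensoring by it reduces to direct sums of evaluations and therefore preserves $e$-connectedness; the bisimplicial-diagonal lemma then gives that $\mathrm{diag}\,Y \to D^*_{\pi,\rho} F \otimes_{k[\Proj_\FF]} P^G_\bullet$ is $e$-connected, where the target has homotopy $\Tor^{k[\Proj_\FF]}_*(D^*_{\pi,\rho} F, G)$.

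Combining the isomorphism $\mathrm{diag}\,X \cong \mathrm{diag}\,Y$ with the two identifications above and invoking naturality of $\Theta_\FF$ to verify that the composed map coincides with $\Theta_\FF$ itself, one concludes that $\Theta_\FF$ is $e$-connected. The main obstacle, as indicated, is the propagation of $e$-connectedness through the non-exact functor $\overline{F}$, which crucially relies on the semisimplicity of $\FF\Md$ via proposition~\ref{prop-simpl-connected}; without this input the argument would break down already at the level of constant Frobenius-twist coefficients.
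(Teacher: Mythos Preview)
Your proposal is correct and follows essentially the same route as the paper's proof: resolve $\pi$, $\rho$, and $G$ by direct sums of standard projectives, apply corollary~\ref{cor-direct-sum-rep-Theta} degreewise to obtain an isomorphism at the resolved level, and then use the $\Tor^{\A}$-vanishing together with proposition~\ref{prop-simpl-connected} (equivalently lemma~\ref{lm-Whitehead-thm}) to control the passage from $D_{\varpi,\varrho}$ to $D_{\pi,\rho}$. The only cosmetic difference is that you keep the three simplicial directions separate in a trisimplicial object before diagonalizing, whereas the paper diagonalizes earlier and works with a single simplicial diagram; the substance of the argument is identical.
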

\begin{proof}
Let $\mathcal{G}\to G$, $\varpi\to \pi$ and $\varrho\to \rho$ be simplicial resolutions by direct sums of standard projectives in the categories $k[\Proj_\FF]\Md$, $\Mdd\A$ and $\A\Md$ respectively. 
We have a commutative diagram of simplicial $k$-modules, in which the maps $(\dag)$ are induced by the morphisms $\varpi\to \pi$ and $\varrho\to \rho$, and the maps $\widetilde{\Theta_\FF}$ are degreewise equal to the degree zero component of $\Theta_\FF$, hence they are isomorphisms by corollary \ref{cor-direct-sum-rep-Theta}:
\[
\begin{tikzcd}[column sep=large]
\varpi^*F\otimes_{k[\A]}\varrho^*\G\ar{dd}[swap]{\simeq}{\widetilde{\Theta_\FF}}\ar{rd}[swap]{(\dag)}\ar{r}{F(\theta_\FF)\otimes\id}
&\rho^*D_{\varpi,\rho}^*F\otimes_{k[\A]}\varrho^*\G\ar{r}{(\dag)} & 
\rho^*D_{\pi,\rho}^*F\otimes_{k[\A]}\rho^*\G
\ar{dd}{\res^\rho} \\
& \pi^*F\otimes_{k[\A]}\rho^*G\ar{rd}[swap]{\simeq}{\widetilde{\Theta_\FF}}\ar{ru}[swap]{F(\theta_\FF)\otimes\id} &\\
D_{\varpi,\varrho}^*F\otimes_{k[\Proj_k]}\G\ar{rr}{(\dag)} &&  D_{\pi,\rho}^*F\otimes_{k[\Proj_k]}\G
\end{tikzcd}\;.
\]
The homotopy groups of $D_{\pi,\rho}^*F\otimes_{k[\Proj_\FF]}\mathcal{G}$ compute $\Tor_*^{k[\Proj_\FF]}(D_{\pi,\rho}^*F,G)$ because $\mathcal{G}\to G$ is a projective simplicial resolution. The homotopy groups of $\varpi^*F\otimes_{k[\A]}\varrho^*\mathcal{G}$ compute $\Tor_*^{k[\A]}(\pi^*F,\rho^*G)$ because $\varrho^*\mathcal{G}\to \rho^*G$ is a simplicial projective resolution by lemmas \ref{lm-Whitehead-thm} and \ref{lm-flat}. And the map induced on the level of homotopy groups by the top right corner of the diagram is $\Theta_\FF$. Thus, to prove the theorem, it remains to prove that the bottom horizontal map $(\dag)$ is $e$-connected.

In order to do this, we first observe that the $\FF$-category $\A\Md$ of $\FF$-functors from $\A$ to $\FF\Md$ is a full subcategory of the $\FF$-category $\FA\Md$ of additive functors from $\A$ to $\FF\Md$. Hence for all $\FF$-functors $\pi$ and $\rho$, restriction along the functor $\FA\to \A$, $f\otimes\lambda\mapsto \lambda f$ induces an isomorphism 
$\pi\otimes_{\FA}\rho\simeq  \pi\otimes_{\A}\rho$ by corollary \ref{cor-memechose2}.
Thus the $\Tor$-condition in the theorem ensures that $\varpi\otimes_\A\varrho\to \pi\otimes_\A\rho$ is $e$-connected. Thus $D_{\varpi,\varrho}\to D_{\pi,\rho}$ is $e$-connected, hence $D_{\varpi,\varrho}^*F\to D_{\pi,\rho}^*F$ is $e$-connected by lemma \ref{lm-Whitehead-thm}. This implies that the bottom horizontal map is $e$-connected by a standard spectral sequence argument (use the spectral sequence of a bisimplicial $k$-module as in \cite[IV section 2.2]{GoerssJardine}).
\end{proof}

\section{The generalized comparison theorem}\label{sec-generalized}
Throughout this section, $k$ is an infinite perfect field of positive characteristic $p$, $\pi:\A^\op\to k\Md$ and $\rho:\A\to k\Md$ are two additive functors and $F$ and $G$ are two strict polynomial functors over $k$ (possibly non-homogeneous, cf. section \ref{subsec-nonhomogeneous}). In this section we state and prove the generalized comparison theorem, which computes $\Tor^{k[\A]}_*(\pi^*F,\rho^*G)$ in terms of the generic homology $\Tor^\gen_*(F^\dag,G)$ for some functor $F^\dag$ constructed from $F$, $\pi$ and $\rho$. Then we spell out explicitly some special cases of the generalized comparison theorem in subsection \ref{sec-special-cases}, in particular we establish theorem \ref{thm-intro-F-lin-case} of the introduction. Finally, we give the $\Ext$-versions of the generalized comparison theorem in subsection \ref{subsec-Ext-comp}. 

\subsection{Statement of the generalized comparison theorem}
Let us fix two positive integers $r$ and $s$, and for $0\le i<s^2$ we let $T_i$ denote the $k$-vector space:
\begin{align}
T_i=({}^{(-ri)}\pi)\otimes_{\kATor}({}^{(rs-rs^2)}\rho)\label{eqn-def-Ti}
\end{align}
where a notation such as $^{(-ri)}\pi$ refers to the additive functor obtained as the composition of the functor $\pi:\A\to k\Md$ and the Frobenius twist $^{(-ri)}- :k\Md\to k\Md$. 
Each vector space $T_i$ determines a duality functor $\hom_k(-,T_i):\Proj_k^\op\to k\Md$. By composing these duality functors with Frobenius twists and by taking their direct sum, one obtains a contravariant functor $A:\Proj_k^\op\to k\Md$ with
\begin{align}
A(v)= \bigoplus_{0\le i<s^2} {}^{(ri)}\Hom_k(v,T_i)
\end{align}
We define a functor $F^\dag$ in $\Mdd k[\Proj_k]$ by
\begin{align}
F^\dag= \overline{F}\circ A\label{eqn-def-Fprime}
\end{align}
where as in definition \ref{defi-comp}, the notation $\overline{F}$ refers to the left Kan extension of $F$ to all vector spaces. Since $k$ is an infinite field, the category $\Mdd\Gamma\Proj_k$ of contravariant strict polynomial functors identifies with a full subcategory of $\Mdd k[\Proj_k]$, stable under colimits (see section \ref{subsec-nonhomogeneous}). 
\begin{lm}
If $F$ is a strict polynomial functor of degree $d$, then $F^\dag$ is a strict polynomial functor of degree $dp^{r(s^2-1)}$.
\end{lm}
\begin{proof}
The vector spaces $T_i$ can be written as a filtered colimit of finite-dimensional subspaces $T_{i,\alpha}$. For all $\alpha$, $F^\dag_\alpha(v)=F(\bigoplus_{0\le i<s^2} {}^{(ri)}\Hom_k(v,T_{i,\alpha}))$ is a strict polynomial functor of the variable $v$ with degree $dp^{r(s^2-1)}$ since it is the composition of a strict polynomial functor of degree $d$ by a strict polynomial functor of degree $p^{r(s^2-1)}$. Now $F^\dag=\colim_\alpha F^\dag_\alpha$ whence the result.
\end{proof}

In subsection \ref{subsec-gen-comp-map}, we will first construct an explicit morphism of graded vector spaces, natural with respect to $F$, $G$, $\pi$ and $\rho$:
\begin{align}
\Tor_*^{k[\A]}(\pi^*F,\rho^*G)\to \Tor_*^{\gen}(F^\dag,G^{(rs^2-rs)})\;.\label{eqn-gencompmap}
\end{align}
We will refer to morphism \eqref{eqn-gencompmap} as the \emph{generalized comparison map} in the remainder of the article. The following generalized comparison theorem will be proved in subsection \ref{subsec-proof-gencomp}.
Notice that the vector spaces $T_i$ appearing in the construction of $F^\dag$ are the degree zero components of the $\Tor$ appearing in the vanishing condition.

\begin{thm}[generalized comparison]\label{thm-gen-comp}
Let $k$ be an infinite perfect field of characteristic $p$, containing a finite field $\Fq$ of cardinal $q=p^r$. 
Let $\A$ be a small additive category, let $\pi:\A^\op\to k\Md$ and $\rho:\A\to k\Md$ be two additive functors. Assume that there are positive integers $s$ and $e$ such that 
\[\Tor_j^{\kATor}\left({}^{(-ri)}\pi,{}^{(rs-rs^2)}\rho\right)=0\]
for $0< j<e$ and $0\le i<s^2$. Assume further that $\A$ is $\Fq$-linear and that $\rho$ and $\pi$ are $\Fq$-linear. Then for all strict polynomial functors $F$ and $G$ of degrees less or equal to $q^s$, the generalized comparison map \eqref{eqn-gencompmap} is $e$-connected.
\end{thm}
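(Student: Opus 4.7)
The plan is to construct the generalized comparison map as the composition of three morphisms, then establish $e$-connectedness factor by factor. First, I would set up structural morphisms using the $\Fq$-linear structure of $\pi$ and $\rho$: since $\Fq$-linearity makes $\pi$ canonically $\Fq$-isomorphic to each of its Frobenius twists ${}^{(ri)}\pi$ (and similarly for $\rho$), one obtains sum–diagonal type natural transformations that incorporate the various twists appearing in the definition of the $T_i$ and of $F'$. These mirror the construction of the map $\Xi'_k$ defining \eqref{eqn-verystrong-Tor}, but now performed internally to the additive functors $\pi$ and $\rho$ rather than at the level of $F$ and $G$. By construction this step yields an isomorphism on $\Tor$, transporting $\Tor_*^{k[\A]}(\pi^*F,\rho^*G)$ to a modified $\Tor_*^{k[\A]}(\tilde\pi^*\tilde F,\tilde\rho^*\tilde G)$, where $\tilde F$, $\tilde G$ are derived from $F^{(r,s^2)}$, $G^{(rs^2-rs)}$ and $\tilde\pi$, $\tilde\rho$ are $\Fq$-linear additive functors assembled from the Frobenius twists of $\pi$ and $\rho$.

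Second, I would apply the comparison map $\Theta_\Fq$ of Theorem~\ref{thm-Theta} with base field $\FF=\Fq$, landing in $\Tor_*^{k[\Proj_\Fq]}(D^*\tilde F,\tilde G)$ where $D=D_{\tilde\pi,\tilde\rho}$. A direct computation using the direct-sum decomposition of $\tilde\pi$ together with the identification $\pi\otimes_{\Fq\A}\rho\simeq\pi\otimes_\A\rho$ (recalled in the proof of Theorem~\ref{thm-Theta}) and its compatibility with the $k$-linear structure on the values reveals that $D^*\tilde F$ is naturally isomorphic to $t^*F'$, with $F'$ as in the theorem: the Hom-duality in $D$ combined with the Frobenius indexing on $\tilde\pi$ exactly reconstitutes the sum $\bigoplus_i {}^{(ri)}\Hom_k(v,T_i)$. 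Finally, I would apply the strong comparison map of Corollary~\ref{cor-verystrong}; its weight hypothesis is satisfied because the input contravariant functor to this last map, extracted from $F'$, has weight bounded by $\stdeg(F)\le q^s$. The resulting composition is the desired map \eqref{eqn-gencompmap} into $\Tor_*^\gen(F',G^{(rs^2-rs)})$.

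For $e$-connectedness, the first step is an isomorphism by construction and the third is an isomorphism by Corollary~\ref{cor-verystrong}, so the crux is the middle step. Theorem~\ref{thm-Theta} demands $\Tor_j^{\Fq\A}(\tilde\pi,\tilde\rho)=0$ for $0<j<e$; the direct-sum decomposition of $\tilde\pi$ and $\tilde\rho$ splits this into the vanishing of $\Tor_j^{\Fq\A}({}^{(-ri)}\pi,{}^{(rs-rs^2)}\rho)$ for each $i\in\{0,\dots,s^2-1\}$, and a flatness argument comparing $\otimes_{\Fq\A}$ with $\otimes_{\kA}$ (noting that $k$ is a flat $\Fq$-module) translates this into the vanishing of $\Tor_j^{\kA}({}^{(-ri)}\pi,{}^{(rs-rs^2)}\rho)$ in the same range, which is precisely the hypothesis of the theorem. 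The main obstacle will be the careful Frobenius bookkeeping of step~1 and step~2, in particular verifying that the duality functor produced by Theorem~\ref{thm-Theta} reconstructs precisely the specific twisted structure of $F'$, together with a rigorous treatment of the passage between $\Fq$-linear and $k$-linear tensor products for $\Fq$-linear functors with $k$-valued outputs.
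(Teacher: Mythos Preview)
Your proposal contains a genuine gap at the very first step. You assert that $\Fq$-linearity of $\pi$ gives a canonical isomorphism $\pi\simeq{}^{(ri)}\pi$; this is false for general $\Fq$-linear functors $\pi:\A^\op\to k\Md$. The Frobenius twist ${}^{(ri)}\pi$ has the same underlying additive functor to abelian groups, but the $k$-module structure on each value is twisted, and there is no natural $k$-linear isomorphism ${}^{(ri)}\pi(a)\simeq\pi(a)$ unless $\pi$ factors as $t\circ\mu$ for some $\Fq$-valued $\mu$ (so that one can use the canonical isomorphism ${}^{(ri)}t\simeq t$). A related problem occurs at your step~2: Theorem~\ref{thm-Theta} with $\FF=\Fq$ applies to functors with values in $\Fq\Md$, whereas your $\tilde\pi,\tilde\rho$ are $k$-valued, so $\Theta_\Fq$ is not directly available.

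What you have outlined is, in effect, the paper's treatment of the \emph{base case} (proposition~\ref{prop-base-case}), where $\pi=\bigoplus k\otimes_\Fq\A(-,a_i)$ and $\rho=\bigoplus k\otimes_\Fq\A(b_j,-)$: there $\pi=t\circ\mu$ holds, so the Frobenius isomorphisms exist and one can pass via $\Theta_\Fq$ and corollary~\ref{cor-verystrong} to an isomorphism. For general $\pi,\rho$ the paper must define the comparison map differently (via the morphism $\theta'_k$ of~\eqref{eqn-def-thetaprime}, which makes sense without any Frobenius-invariance of $\pi$), and then bridge the general case to the base case by taking simplicial projective resolutions $\varpi\to\pi$, $\varrho\to\rho$ in $(k\otimes_\Fq\A)\Md$. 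The $e$-connectedness then comes from a bisimplicial spectral sequence argument together with the $\Tor$ hypothesis (via lemma~\ref{lm-FpFq}, which is the precise comparison between $\Tor^{\kA}$ and $\Tor^{k\otimes_\Fq\A}$ --- your ``flatness argument'' is not quite the right formulation here). The simplicial step is not optional bookkeeping: it is what allows one to reduce to functors of the special form $t\circ\mu$, where your argument would go through.
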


\begin{rk}
If $\pi$ and $\rho$ are $\Fq$-linear, so are ${}^{(ri)}\pi$ and ${}^{(rs-rs^2)}\rho$, and we can consider them as objects of $k\otimes_\Fq\A\Md$ and $\Mdd k\otimes_\Fq\A$. Hence it is natural to ask about the relation between the $\Tor$ hypothesis in theorem \ref{thm-gen-comp} and the vanishing of $\Tor_j^{k\otimes_\Fq\A}({}^{(ri)}\pi,{}^{(rs-rs^2)}\rho)$. These two conditions are actually equivalent as we shall see it in lemma \ref{lm-FpFq}. 
\end{rk}


\subsection{Construction of the generalized comparison map}\label{subsec-gen-comp-map}
We define the generalized comparison map \eqref{eqn-gencompmap} as the composition of three maps $\Lambda_k$, $\Theta'_k$ and $\Phi_k$: 
\[
\begin{tikzcd}
\Tor^{k[\A]}_*(\pi^*F,\rho^*G)\ar[dashed]{d}{\text{\eqref{eqn-gencompmap}}}\ar{rr}{\Lambda_k} && \Tor_*^{k[\A]}((\pi^{\oplus s^2})^*F,\rho^*G)\ar{d}{\Theta^\dag_k}\\
\Tor_*^{\gen}(F',G^{(rs^2-rs)})&&\ar{ll}{\Phi_k}\Tor_*^{k[\Proj_k]}(F',G^{(rs^2-rs)})
\end{tikzcd}\;.
\]

The map $\Lambda_k$ is induced by the morphism of additive functors $\mathrm{diag}:\pi\to \pi^{\oplus s^2}$ whose components all equal the identity morphism of $\pi$.

The map $\Phi_k$ is an immediate generalization of the strong comparison map \eqref{eqn-strong-inhom-Tor} over an infinite perfect field $k$. Namely, the additive functor 
$^{(n)}-:\Proj_k\to \Proj_k$
has a quasi-inverse $^{(-n)}-$ for all positive integers $n$. Restriction along this quasi-inverse yields an isomorphism of graded vector spaces, natural in the functors $H$ and $K$:
\[\Tor_*^{k[\Proj_k]}(H,K)\xrightarrow[]{\simeq}  \Tor_*^{k[\Proj_k]}(H^{(n)},K^{(n)})\;,\]
where a notation such as $H^{(n)}$ denotes the precomposition of $H$ by $^{(n)}-$. 
Since $k$ is infinite, the category $\Gamma\Proj_k\Md$ of strict polynomial functors  identifies with a full subcategory of $k[\Proj_k]\Md$ (see section \ref{subsec-nonhomogeneous}). If $H$ and $K$ are strict polynomial functors, we let $\Phi_k$ be the unique morphism of graded $k$-vector spaces fitting in the commutative diagrams for all $i\ge 0$ and all $r\gg 0$:
\begin{equation}
\begin{tikzcd}
\Tor_i^{k[\Proj_k]}(H,K)\ar{d}{\res^{^{(-n)}-}}[swap]{\simeq}\ar[dashed]{rr}{\Phi_k}&& \Tor_i^{\gen}(H,K)\ar{d}{\simeq}\\
\Tor_i^{k[\Proj_k]}(H^{(n)},K^{(n)})\ar{rr}{\res}&& \Tor_i^{\Gamma\Proj_k}(H^{(n)},K^{(n)})
\end{tikzcd}\;.
\end{equation}

It remains to define $\Theta^\dag_k$. This map $\Theta^\dag_k$ is a generalization of the comparison map $\Theta_k$ constructed in section \ref{sec-gen-prelim-comparison}, and just as $\Theta_k$, its definition involves three graded maps between $\Tor$. In order to avoid heavy notations, we set:
\begin{align*}\pi_i:={}^{(-ri)}\pi\;,&&&\sigma:={}^{(rs-rs^2)}\rho\;.
\end{align*}
To define the first graded map, recall that every $k$-vector space $v$ has a dual $D_{\pi_i,\sigma}(v)= \Hom_k(v,\pi_i\otimes_{\kA}\sigma)$. With this notation, one can write $F^\dag$ as a composition:
\[F^\dag=\overline{F}\circ \big(\bigoplus_{0\le i<s^2} {}^{(ri)}D_{\pi_i,\sigma}\big)\;.\] 
Since the functors $\overline{F}$ and $D_{\pi_i,\sigma}$ can be fed with arbitrary dimensional vector spaces, this formula actually defines $F^\dag$ as an endofunctor of $k\Md$. In particular the composition $F^\dag\circ \sigma$ is well defined.
Let $\aleph$ be a cardinal greater than the dimension of $\sigma(a)$ for all objects $a$ of $\A$. Hence $\sigma$ can be considered as a functor $\A\to \Proj_k^\aleph$, and restriction along $\sigma$ defines a graded map:
\[\Tor_*^{k[\A]}(F^\dag\circ \sigma,\overline{G}^{(rs^2-rs)}\circ\sigma)\to \Tor_*^{k[\Proj_k^\aleph]}(F^\dag,\overline{G}^{(rs^2-rs)})\;.\]
To define the second graded map, observe that the functor $\overline{G}{}^{(rs^2-rs)}$ is the left Kan extension of $G^{(rs^2-rs)}$  to all vector spaces (because $-^{(rs^2-rs)}$ is an autoequivalence of the category of $k$-vector spaces, hence it preserves filtered colimits). Therefore, by proposition \ref{pr-Kexact} restriction along the inclusion $\iota^\aleph:\Proj_k\hookrightarrow \Proj_k^\aleph$ yields an isomorphism on the level of $\Tor$:
\[\Tor_*^{k[\Proj_k]}(F^\dag,G^{(rs^2-rs)})\xrightarrow[]{\simeq}\Tor_*^{k[\Proj_k^\aleph]}(F^\dag,\overline{G}^{(rs^2-rs)})\;.\]
Finally the third graded map will be induced by a certain morphism $\theta^\dag_k$ constructed from the morphisms $\theta_k$ defined by equation \eqref{eqn-def-theta} in section \ref{sec-gen-prelim-comparison}. To be more specific we define $\theta_k^\dag$ as the direct sum
\begin{align}
\theta_k^\dag:\pi^{\oplus s^2}=\bigoplus_{0\le i<s^2} {}^{(ri)}\pi_i \xrightarrow[]{\bigoplus {}^{(ri)}\theta_k} \bigoplus_{0\le i<s^2} {}^{(ri)}D_{\pi_i,\sigma}\circ\sigma\;.\label{eqn-def-thetaprime}
\end{align}
We can now define our map $\Theta^\dag_k$ as the unique map making the following square commute:
\begin{equation}
\begin{tikzcd}[column sep=large]
\Tor_*^{k[\A]}(\overline{F}\circ (\pi^{\oplus s^2}),\overline{G}\circ\rho)\ar[dashed]{r}{\Theta^\dag_k}
\ar{d}[swap]{\Tor_*^{k[\A]}(\overline{F}(\theta_k^\dag),\overline{G}\circ\rho )}
 & \Tor_*^{k[\Proj_k]}(F^\dag,G^{(rs^2-rs)})\ar{d}{\res^{\iota^\aleph}}[swap]{\simeq}\\
\Tor_*^{k[\A]}(F^\dag\circ \sigma,\overline{G}^{(rs^2-rs)}\circ\sigma)\ar{r}{\res^\sigma} & \Tor_*^{k[\Proj_k^\aleph]}(F^\dag,\overline{G}^{(rs^2-rs)})
\end{tikzcd}.\label{eqn-def-Thetaprime}
\end{equation}

The following lemma is proved exactly in the same way as lemmas \ref{lm-independant-card} and \ref{lm-nat-Theta}.
\begin{lm}
The map $\Theta^\dag_k$ does not depend on the cardinal $\aleph$. Moreover, it is natural with respect to $F$, $G$, $\pi$ and $\rho$.
\end{lm}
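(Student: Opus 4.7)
The plan is to imitate lemmas \ref{lm-independant-card} and \ref{lm-nat-Theta} line by line, since $\Theta'_k$ is built from exactly the same ingredients as $\Theta_\FF$ (a unit-of-adjunction map, a restriction along an additive functor with values in vector spaces, and the inverse of a Kan-extension restriction $\res^{\iota^\aleph}$), only with $\theta_\FF$ replaced by the direct sum $\theta'_k=\bigoplus_{0\le i<s^2}{}^{(ri)}\theta_k$ and with $\sigma={}^{(rs-rs^2)}\rho$ in the role of $\rho$.

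For the independence of $\aleph$, given a cardinal $\beth\ge\aleph$ I would draw the same triangle of inclusions $\Proj_k^\aleph\xrightarrow{\iota^{\aleph,\beth}}\Proj_k^\beth\xrightarrow{\iota^\beth}k\Md$ as in the proof of lemma \ref{lm-independant-card}, and write down the corresponding diagram of $\Tor$-groups with $F'$ and $\overline{G}^{(rs^2-rs)}$ in place of $\overline{F}\circ D_{\pi,\rho}$ and $\overline{G}$. Each restriction arrow $\res^{\iota^\aleph}$, $\res^{\iota^\beth}$, $\res^{\iota^{\aleph,\beth}}$ is an isomorphism by proposition \ref{pr-Kexact}, because $\overline{G}^{(rs^2-rs)}$ is a left Kan extension (the autoequivalence $-{}^{(rs^2-rs)}$ preserves filtered colimits). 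Composition of restriction maps along the triangle then forces the definition of $\Theta'_k$ via $\iota^\aleph$ to agree with its definition via $\iota^\beth$.

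Naturality in $F$, $G$ and $\pi$ is immediate from inspection of diagram \eqref{eqn-def-Thetaprime}: each $\pi_i={}^{(-ri)}\pi$ depends functorially on $\pi$, and the arrows $\overline{F}(\theta'_k)$, $\res^\sigma$, $\res^{\iota^\aleph}$ are evidently natural in the remaining variables. The only subtle part is naturality in $\rho$, which parallels lemma \ref{lm-nat-Theta}. Given $f:\rho\to\rho'$, setting $\sigma'={}^{(rs-rs^2)}\rho'$ and letting $D_i:=D_{\pi_i,\sigma}$, $D_i':=D_{\pi_i,\sigma'}$, I would form the hexagonal diagram of that proof with $D_{\pi,\rho}$ replaced by $\bigoplus_{0\le i<s^2}{}^{(ri)}D_i$; the key cell (the analogue of the upper-left parallelogram) commutes by dinaturality of each individual $\theta_k$ at the twisted functor ${}^{(rs-rs^2)}f$, taken degreewise in the direct sum, and the remaining cells are formal.

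The main obstacle is purely bookkeeping: one has to check that the various Frobenius autoequivalences ${}^{(ri)}-$ and ${}^{(rs-rs^2)}-$ commute through all the dinaturality squares and the Kan-extension identifications. This is automatic from the fact that these twists are exact autoequivalences of $k\Md$, but makes the diagrams visually heavier than in the model lemmas. No new idea beyond those of \ref{lm-independant-card} and \ref{lm-nat-Theta} is required.
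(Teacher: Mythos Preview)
Your proposal is correct and matches the paper's approach exactly: the paper simply states that the lemma ``is proved exactly in the same way as lemmas \ref{lm-independant-card} and \ref{lm-nat-Theta}'', and your outline carries out precisely that adaptation, with the right bookkeeping about the Frobenius twists and the Kan-extension identification for $\overline{G}^{(rs^2-rs)}$.
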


Next, we clarify the relation between $\Theta_k^\dag$ and the comparison map $\Theta_k$ from section \ref{sec-gen-prelim-comparison}.
Assume that we are given isomorphisms of additive functors $\pi\simeq{}^{(-ri)}\pi$ and $\rho\simeq{}^{(rs-rs^2)}\rho$. Then these isomorphisms induce isomorphisms (where $F^{(r|s^2)}$ stands for the strict polynomial functor $v\mapsto F(\bigoplus_{0\le i<s^2} {}^{(ri)}v)$ as in notation \ref{nota-twist2}):
\begin{align*}
&(\pi^{\oplus s^2})^*F\simeq\pi^*(F^{(r|s^2)})\;,&&\rho^*G\simeq \rho^*(G^{(rs^2-rs)})\;,
&&F^\dag\simeq D_{\pi,\rho}^*(F^{(r|s^2)})\;,
\end{align*} 
and the next lemma is a straightforward verification.
\begin{lm}\label{lm-ThetaprimeTheta}
There is a commutative square, whose vertical isomorphisms are induced by the above isomorphisms of functors:
\[
\begin{tikzcd}
\Tor_*^{k[\A]}((\pi^{\oplus s^2})^*F,\rho^* G)\ar{r}{\Theta^\dag_k}\ar{d}{\simeq}
 & \Tor_*^{k[\Proj_k]}(F^\dag,G^{(rs^2-rs)})\ar{d}{\simeq}\\
\Tor_*^{k[\A]}(\pi^*(F^{(r|s^2)}),\rho^*G^{(rs^2-rs)})\ar{r}{\Theta_k}
 & \Tor_*^{k[\Proj_k]}(D_{\pi,\rho}^*(F^{(r|s^2)}),G^{(rs^2-rs)})
\end{tikzcd}\;.
\]
\end{lm}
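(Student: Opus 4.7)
The plan is to unfold the defining diagrams \eqref{eqn-def-Thetaprime} and \eqref{eqn-def-Theta} of $\Theta'_k$ and $\Theta_k$, and to verify that under the transport induced by the given isomorphisms $\pi \simeq \pi_i := {}^{(-ri)}\pi$ and $\rho \simeq \sigma := {}^{(rs-rs^2)}\rho$, the two defining squares are identified. First I would make the identifications explicit: the isomorphism $\pi \simeq \pi_i$ yields ${}^{(ri)}\pi_i \simeq {}^{(ri)}{}^{(-ri)}\pi = \pi$, whence the identification $\pi^{\oplus s^2} \simeq \bigoplus_{0\le i<s^2} {}^{(ri)}\pi_i$; applying $\overline{F}$ gives the isomorphism $(\pi^{\oplus s^2})^*F \simeq \pi^*(F^{(r,s^2)})$. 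Similarly, $\pi\otimes_{\kA}\rho \simeq \pi_i \otimes_{\kA}\sigma$ for every $i$, so $D_{\pi_i,\sigma} \simeq D_{\pi,\rho}$, and thereby $\bigoplus_{0\le i<s^2} {}^{(ri)}D_{\pi_i,\sigma} \simeq D_{\pi,\rho}^{(r,s^2)}$ (on the source variable side), which after applying $\overline{F}$ gives $F' \simeq D_{\pi,\rho}^*(F^{(r,s^2)})$.

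Second, and this is the content of the lemma, I would verify that the morphism $\theta'_k = \bigoplus_i {}^{(ri)}\theta_k: \pi^{\oplus s^2} \to \bigoplus_{0\le i<s^2}{}^{(ri)} D_{\pi_i,\sigma}\circ\sigma$ defined in \eqref{eqn-def-thetaprime} corresponds, under the identifications above, to the morphism $\theta_k: \pi \to D_{\pi,\rho}\circ \rho$ of \eqref{eqn-def-theta} used in the construction of $\Theta_k$ for the pair $(F^{(r,s^2)}, G^{(rs^2-rs)})$. The point is that $\theta_k$ is the unit of the adjunction $-\otimes_{\kA}\rho \dashv \Hom_k(\rho,-)$ and is therefore natural in $\pi$ and $\rho$; the explicit formula $\theta_k(x)(y)=\llbracket x\otimes y\rrbracket$ shows at once that postcomposing with ${}^{(ri)}-$ (an autoequivalence of $k$-vector spaces, commuting with $\Hom_k$) takes $\theta_k$ for the pair $(\pi_i,\sigma)$ to the $i$-th summand of $\theta'_k$. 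After applying $\overline{F}$, the direct sum of these summands reassembles into $\overline{F^{(r,s^2)}}(\theta_k)$ under the identification $F' \simeq D_{\pi,\rho}^*(F^{(r,s^2)})$.

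Third, I would check that the restriction maps in the bottom row of the two defining squares match up: the restriction $\res^\sigma$ in \eqref{eqn-def-Thetaprime} becomes $\res^\rho$ in \eqref{eqn-def-Theta} via the isomorphism $\rho \simeq \sigma$, and the vertical restriction $\res^{\iota^\aleph}$ corresponds on both sides (this is essentially independence of the construction on $\aleph$, established just before the lemma). Putting the three steps together, every corner and every arrow of the $\Theta'_k$-square is identified with the corresponding data of the $\Theta_k$-square, which gives the claimed commutativity. The only real obstacle is careful bookkeeping of the Frobenius twists in the successive identifications; no new idea beyond naturality of $\theta_k$ and of the restriction maps is needed.
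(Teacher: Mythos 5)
Your proposal is correct and is exactly the ``straightforward verification'' that the paper itself invokes without writing out: unfold the two defining squares \eqref{eqn-def-Thetaprime} and \eqref{eqn-def-Theta}, match the corners via the stated isomorphisms of functors, and use the explicit formula (equivalently, naturality) for the adjunction unit $\theta_k$ to see that $\theta'_k=\bigoplus {}^{(ri)}\theta_k$ is carried to $F^{(r,s^2)}(\theta_k)$, with the restriction maps $\res^\sigma$ and $\res^{\iota^\aleph}$ matching by (di)naturality. Nothing is missing.
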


\subsection{Proof of the generalized comparison theorem \ref{thm-gen-comp}}\label{subsec-proof-gencomp}
The proof of the generalized comparison theorem uses the same ingredients as the proof of theorem \ref{thm-main-additive}. Before diving into technical details, let us give an overview of these ingredients.
\begin{enumerate}[1.]
\item The proof can be reduced to the special case where $\pi$ is a direct sum of functors $k\otimes_\Fq\A(-,a_i)$ and $\rho$ is a direct sum of functors $k\otimes_\Fq\A(b_j,-)$. This reduction step is proved by taking resolutions of $\pi$ and $\rho$ by such functors, and by examining the associated spectral sequences. The main difference with the proof of theorem \ref{thm-main-additive} is that in the context of theorem \ref{thm-gen-comp} we have to use \emph{simplicial} resolutions of $\pi$ and $\rho$. The $\Tor$-vanishing condition is used for this reduction step, to ensure that by tensoring a simplicial resolution of $^{(-ri)}\pi$ with a simplicial resolution of $^{(rs-rs^2)}\rho$ we obtain a simplicial resolution of $^{(-ri)}\pi\otimes_{\kA}{}^{(rs-rs^2)}\rho$.
\item When $\pi$ is a direct sum of functors $k\otimes_\Fq\A(-,a_i)$ and $\rho$ is a direct sum of functors $k\otimes_\Fq\A(b_j,-)$, the generalized comparison map can be rewritten in terms of the map $\Theta_\Fq$ of section \ref{sec-gen-prelim-comparison} and of the comparison map \eqref{eqn-verystrong} of section \ref{sec-Pol-Fq}. The former is an isomorphism by theorem \ref{thm-Theta} (or by the special case given in corollary \ref{cor-direct-sum-rep-Theta}), and the latter is an isomorphism by theorem \ref{thm-verystrong}, which proves that the generalized comparison map is an isomorphism. The details of this argument are given in proposition \ref{prop-base-case}.   
\end{enumerate}

Let us assume that the hypotheses of theorem \ref{thm-gen-comp} are satisfied, in particular $\A$ is $\Fq$-linear over some finite subfield $\Fq$ of $k$, and $\pi$ and $\rho$ are $\Fq$-linear. Next proposition is the base case of the proof.
\begin{pr}\label{prop-base-case}
Assume that
$$\pi=\bigoplus_{i\in I}k\otimes_\Fq\A(-,a_i)\;,\qquad \rho=\bigoplus_{j\in J}k\otimes_\Fq\A(b_j,-)$$
for some possibly infinite indexing sets $I$ and $J$. Then the generalized comparison map \eqref{eqn-gencompmap} is a graded isomorphism. 
\end{pr}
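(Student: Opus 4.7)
The proof follows the pattern of Proposition~\ref{pr-iso-rep-Theta} and Corollary~\ref{cor-direct-sum-rep-Theta}, with Corollary~\ref{cor-verystrong} (the strong comparison theorem over small fields) supplying the main homological input in place of a pure adjunction argument. First I would reduce to the case of single representables $\pi = k\otimes_\Fq\A(-,a)$ and $\rho = k\otimes_\Fq\A(b,-)$: finite direct sums collapse via biproducts in $\A$, and arbitrary direct sums are handled by the observation that each ingredient of \eqref{eqn-gencompmap} — $\Tor_*$, the left Kan extensions $\overline F$ and $\overline G$, the duality $D_{\pi,\rho}$, and the Frobenius twists ${}^{(-ri)}-$ — commutes with filtered colimits of monomorphisms in $\pi$ and $\rho$, just as in the proof of Corollary~\ref{cor-direct-sum-rep-Theta}.

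In the single-representable case, I would pass to the $\aleph$-additivization $\A^\aleph$ for sufficiently large $\aleph$ and exploit the adjunction $b\otimes_\Fq -\dashv \A^\aleph(b,-)$ provided by Proposition~\ref{pr-Eadjoint}, which is available because the $\Fq$-linearity of $\A$ makes $\A(x,y)$ an $\Fq$-vector space. Writing $\rho = t\circ\A(b,-)$ with $t:\Proj_\Fq\to\Proj_k$ the extension of scalars, and symmetrically on the $\pi$-side, an iterated application of Proposition~\ref{pr-iso-tens-explicit2} converts $\Tor^{k[\A]}_*(\pi^*F,\rho^*G)$ and its direct-sum variant into $\Tor$-groups of the form $\Tor^{k[\Proj_\Fq]}_*(t^*F,t^*G)$. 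A Yoneda-type computation gives $\pi\otimes_{\kA}\rho \simeq k\otimes_\Fq\A(b,a)$, from which the strict polynomial functor $F'$ of \eqref{eqn-def-Fprime} identifies with $D_{\pi,\rho}^*(F^{(r,s^2)})$ for $F^{(r,s^2)}$ as in Notation~\ref{nota-twist2}. Under these identifications I would check that the composition $\Phi_k\circ\Theta'_k\circ\Lambda_k$ coincides with the strong comparison map \eqref{eqn-verystrong-Tor}: the morphism $\theta'_k$, assembled from Frobenius-twisted copies of $\theta_k$, corresponds to the diagonal map $F(\diag)$ from Section~\ref{subsec-strong-small}; $\Lambda_k$ implements the sum-diagonal manipulation; and $\Phi_k$ realises the reindexing from ordinary $\Tor$ over $k[\Proj_k]$ to generic $\Tor$. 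Since $\stdeg(F),\stdeg(G)\le q^s$, Corollary~\ref{cor-verystrong} then delivers the isomorphism.

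The main obstacle will be the Frobenius-twist bookkeeping in the final matching step: one must check that the $s^2$ summands of $\pi^{\oplus s^2}$, each carrying its own Frobenius shift ${}^{(ri)}-$, combine under the adjunction and the Yoneda computation to produce exactly the strict polynomial functor $F^{(r,s^2)}$ in the target, and that the morphism built from the $\theta_k$'s matches the diagonal and sum maps appearing in the definition of \eqref{eqn-verystrong-Tor}. The $\Fq$-linearity of $\pi$ and $\rho$ is essential here: it provides the canonical $\Fq$-linear identifications ${}^{(-ri)}\pi\simeq\pi$ (up to twisting the $k$-action by the $(ri)$-th power Frobenius of $k$) that allow the adjunction argument to proceed uniformly across the $s^2$ summands, so that the small-field strong comparison theorem applies even though the weights of $F$ and $G$ may be much smaller than $|k|$.
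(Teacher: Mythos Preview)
Your plan is correct in outline and, like the paper, ultimately rests on Corollary~\ref{cor-verystrong}. The organization, however, differs from the paper's in a way worth noting.

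The paper does \emph{not} reduce to single representables via filtered colimits, nor does it re-run the adjunction argument of Proposition~\ref{pr-iso-rep-Theta}. Instead it writes $\pi=t\circ\mu$ and $\rho=t\circ\nu$ with $\mu,\nu$ valued in $\Fq$-vector spaces; the canonical isomorphisms $t\simeq{}^{(ri)}t$ then supply the identifications $\pi\simeq{}^{(ri)}\pi$, $\rho\simeq{}^{(rs-rs^2)}\rho$ needed to invoke Lemma~\ref{lm-ThetaprimeTheta} and replace $\Theta'_k$ by $\Theta_k$ for the pair $(F^{(r,s^2)},G^{(rs^2-rs)})$. At that point the base-change property (Proposition~\ref{prop-chgt-base}) factors $\Theta_k$ through $\Theta_\Fq$, and the latter is already known to be an isomorphism by Corollary~\ref{cor-direct-sum-rep-Theta} (which handles arbitrary direct sums, so no colimit reduction is needed). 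What remains is to match the residual composite $\Phi_k\circ\res^t\circ\Lambda''_k$ with the map \eqref{eqn-verystrong-Tor}; the paper isolates this in a single explicit natural isomorphism $\psi_v:{}^{(r,s^2)}D_{t\mu,t\nu}(v)\xrightarrow{\simeq} D_{t\mu,t\nu}({}^{(r,s^2)}v)$, after which Corollary~\ref{cor-verystrong} finishes.

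Your direct-adjunction route amounts to collapsing Lemma~\ref{lm-ThetaprimeTheta}, Proposition~\ref{prop-chgt-base}, and the proof of Proposition~\ref{pr-iso-rep-Theta} into one argument. This works, but the ``matching'' step you flag as the main obstacle is precisely where the paper's modular path pays off: by separating out $\Theta_\Fq$ (an isomorphism for structural reasons) from the Frobenius bookkeeping (the map $\psi_v$), the verification that one lands on \eqref{eqn-verystrong-Tor} becomes a short diagram chase rather than a monolithic identification. If you pursue your route, expect to reprove the content of those results inline.
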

\begin{proof}
If $t(v)=k\otimes_\Fq v$ denotes the extension of scalars from $\Fq$ to $k$, then we have $\pi\simeq t\circ\mu$ and $\rho\simeq t\circ \nu$ for some additive functors $\mu:\A^\op\to \Fq\Md$ and $\nu:\A\to \Fq\Md$. 
The canonical isomorphisms of functors $t\simeq {}^{(ri)}t$ induce isomorphisms $\pi\simeq {}^{(ri)}\pi$ and $\rho\simeq {}^{(rs-rs^2)}\rho$, so by lemma \ref{lm-ThetaprimeTheta} and by naturality of $\Phi_k$ with respect to the isomorphism $F^\dag\xrightarrow[]{\simeq} \overline{F}{}^{(r|s^2)}\circ D_{\pi,\rho}$, we have a commutative square (in which the composition operator of functors is omitted in the arguments of $\Tor$):
\[
\begin{tikzcd}[column sep=large]
\Tor_*^{k[\A]}(\overline{F}\pi,\overline{G}\rho)\ar{dd}{\text{\eqref{eqn-gencompmap}}}
\ar{r}{\Lambda'_k}
& 
\Tor_*^{k[\A]}(\overline{F}^{(r|s^2)}\pi,\overline{G}^{(rs^2-rs)}\rho)\ar{d}{\Theta_k}
\\
&
\Tor_*^{k[\Proj_k]}(\overline{F}^{(r|s^2)}D_{\pi,\rho}, G^{(rs^2-rs)})\ar{d}{\Phi_k}
\\
\Tor_*^{\gen}(F^\dag,G^{(rs^2-rs)})\ar{r}{\simeq}
&
\Tor_*^{\gen}(\overline{F}^{(r|s^2)}D_{\pi,\rho},G^{(rs^2-rs)})
\end{tikzcd}
\]
in which $\Lambda'_k$ is induced by the canonical isomorphism $t\simeq {}^{(rs^2-rs)}t$ and by the morphism of additive functors: 
\[\mathfrak{d}:t\xrightarrow[]{\diag} t^{\oplus s^2}\simeq \bigoplus_{0\le i<s^2}{}^{(ri)}t\;.\]
Therefore in order to prove proposition \ref{prop-base-case} it suffices to prove that the composition $\Phi_k\circ\Theta_k\circ\Lambda'_k$ in top right corner of the diagram is an isomorphism. 

Next, since $\pi=t\circ\mu$ and $\rho=t\circ\nu$, the base change property of proposition \ref{prop-chgt-base} gives a commutative square:
\[
\begin{tikzcd}
\Tor_*^{k[\A]}(\overline{F}^{(r|s^2)}\pi,\overline{G}^{(rs^2-rs)}\rho)\ar{d}{\Theta_k}\ar{r}{\Theta_\Fq}
&
\Tor_*^{k[\Proj_\Fq]}(\overline{F}^{(r|s^2)}t D_{\mu,\nu},\overline{G}^{(rs^2-rs)} t)
\ar{d}{\simeq}
\\
\Tor_*^{k[\Proj_k]}(\overline{F}^{(r|s^2)}D_{\pi,\rho}, G^{(rs^2-rs)})
&
\Tor_*^{k[\Proj_\Fq]}(\overline{F}^{(r|s^2)}D_{\pi,\rho}t,\overline{G}^{(rs^2-rs)} t)
\ar{l}{\res^t}
\end{tikzcd}\;.
\]
Hence, by naturality of $\Theta_{\Fq}$ with respect to the isomorphism $\overline{G}t\simeq \overline{G}{}^{(rs^2-rs)}t$ and the morphism $F(\mathfrak{d}): \overline{F}t\to \overline{F}{}^{(r|s^2)}$, we obtain a commutative square:
\[
\begin{tikzcd}
\Tor_*^{k[\A]}(\overline{F}\pi,\overline{G}\rho)\ar{d}{\Theta_k\circ\Lambda_k'}
\ar{r}{\Theta_\Fq}
&
\Tor_*^{k[\Proj_\Fq]}(\overline{F}t D_{\mu,\nu},\overline{G}t)\ar{d}{\Lambda''_k}
\\
\Tor_*^{k[\Proj_k]}(\overline{F}^{(r|s^2)}D_{\pi,\rho}, G^{(rs^2-rs)})
&
\Tor_*^{k[\Proj_\Fq]}(\overline{F}^{(r|s^2)}D_{\pi,\rho}t,\overline{G}^{(rs^2-rs)} t)
\ar{l}{\res^t}
\end{tikzcd}
\]
where $\Lambda''_\Fq$ is induced by $\mathfrak{d}$, by the isomorphism $t\simeq {}^{(rs^2-rs)}t$ and by the canonical isomorphism $\mathrm{can}: t\circ D_{\mu,\nu}\simeq D_{\pi,\rho}\circ t$.
The map $\Theta_\Fq$ on the top row of this square is an isomorphism by corollary \ref{cor-direct-sum-rep-Theta}. Hence, to prove proposition \ref{prop-base-case} it remains to prove that the composition $\Phi_k\circ\res^t\circ \Lambda''_k$ is an isomorphism.

For this purpose, we are going to rewrite the composition $\Phi_k\circ\res^t\circ \Lambda''_k$ into yet another form.
We claim that there is a $k$-linear isomorphism, natural with respect to $v$, $\mu$ and $\nu$:
\[\psi_v: {}^{(r|s^2)}D_{t\mu,t\nu}(v)\to{D_{t\mu,t\nu}}({}^{(r|s^2)}v)\;, \]
Indeed, we have isomorphisms of vector spaces, natural with respect to $\mu$ and $\nu$:
\[\phi_i:{}^{(ri)}\left(t\mu\otimes_{\kATor}t\nu\right)\xrightarrow[]{\simeq} t\mu\otimes_{\kATor}t\nu\]
which send the class  $\llbracket(\alpha\otimes x)\otimes (\beta\otimes y)\rrbracket$ where $\alpha,\beta\in k$, $x\in \mu(a)$ and $y\in \nu(a)$ to the class $\llbracket(\alpha^{p^{ri}}\otimes x)\otimes (\beta^{p^{ri}}\otimes y)\rrbracket$. We define $\psi_v$ as the following composition, where $T$ stands for $t\mu\otimes_{\kATor}t\nu$, the first and last isomorphisms are the canonical ones and the second morphism is induced by the $\phi_i$:
\begin{align*}
{}^{(r|s^2)}D_{t\mu,t\nu}(v)=\bigoplus_{0\le i<s^2} {}^{(ri)}\Hom_k(v,T) &\xrightarrow[]{\simeq} \bigoplus_{0\le i<s^2} \Hom_k({}^{(ri)}v,{}^{(ri)}T)\\
&\xrightarrow[]{\simeq} \bigoplus_{0\le i<s^2} \Hom_k({}^{(ri)}v,T)\\
&\xrightarrow[]{\simeq} \Hom_k(\bigoplus_{0\le i<s^2} {}^{(ri)}v,T)=D_{t\mu,t\nu}({}^{(r|s^2)}v)\;.
\end{align*}
Moreover, one readily checks that $\psi$ fits into a commutative diagram in the category of functors from $\Proj_\Fq$ to $k\Md$:
\begin{equation}
\begin{tikzcd}[column sep=large]
tD_{\mu,\nu}\ar{d}{\mathrm{can}}[swap]{\simeq}\ar{r}{\mathfrak{d} (D_{\mu,\nu})}& {}^{(r|s^2)}tD_{\mu,\nu}\ar{r}{{}^{(r|s^2)}\mathrm{can}}[swap]{\simeq}&{}^{(r|s^2)}D_{\pi,\rho}t\ar{d}{\psi(t)}[swap]{\simeq}\\
D_{\pi,\rho}t\ar{rr}{D_{\pi,\rho}(\mathfrak{s})}&& D_{\pi,\rho}{}^{(r|s^2)}t
\end{tikzcd}\label{eqn-diagram-fin}
\end{equation}
where $\mathfrak{s}$ denotes the composition $^{(r|s^2)}t\simeq t^{\oplus s^2}\xrightarrow[]{\summ}t$. 
Diagram \eqref{eqn-diagram-fin} and naturality of $\Phi_k$ and $\res^t$ with respect to the map $\overline{F}(\psi)$ yield a commutative square, in which the horizontal isomorphisms are induced by the isomorphisms $\overline{F}(\mathrm{can})$ and $\overline{F}(\psi)$ and the map $\Lambda'''_k$ is induced by $\overline{F}(D_{\pi,\rho}(\mathfrak{s}))$:
\[
\begin{tikzcd}
\Tor_*^{k[\Proj_\Fq]}(\overline{F}t D_{\mu,\nu},\overline{G}t)\ar{r}{\simeq}\ar{d}{\Phi_k\circ\res^t\circ \Lambda''_k}& \Tor_*^{k[\Proj_\Fq]}(\overline{F}D_{\pi,\rho}t,\overline{G}t)\ar{d}{\Phi_k\circ\res^t\circ \Lambda'''_k}\\
\Tor_*^{\gen}(\overline{F}{}^{(r|s^2)}D_{\pi,\rho}, G^{(rs^2-rs)})\ar{r}{\simeq}& \Tor_*^{\gen}(\overline{F}{D_{\pi,\rho}}^{(r|s^2)}, G^{(rs^2-rs)})
\end{tikzcd}\;.
\]

Thus, to prove proposition \ref{prop-base-case}, it suffices to prove that $\Phi_k\circ\res^t\circ \Lambda'''_k$ is an isomorphism. But $\Phi_k\circ\res^t\circ \Lambda'''_k$ equals the comparison map of equation \eqref{eqn-verystrong-Tor}, hence it is an isomorphism by corollary \ref{cor-verystrong}. This concludes the proof.
\end{proof}

We are going to extend the isomorphism of proposition \ref{prop-base-case} to more general $\Fq$-linear functors $\pi:\A^\op\to k\Md$ and $\rho:\A\to k\Md$ by taking simplicial resolutions. We first need an elementary lemma regarding the computation of $\Tor$ between $\Fq$-linear functors.
The functor $\rho$ is an object of the $k$-category of all additive functors from $\A$ to $k\Md$, which identifies with $\kA\Md$. But $\rho$ is also an object an object of the category of all $\Fq$-linear functors from $\A$ to $k\Md$, which identifies with $(k\otimes_\Fq \A)\Md$. Similarly, $\pi$ can be viewed as an object of $\Mdd \kA$ or $\Mdd (k\otimes_\Fq \A)$.
\begin{lm}\label{lm-FpFq}
For all $\Fq$-linear functors $\pi$ and $\rho$, there is an isomorphism, natural in $\pi$ and $\rho$:
\[\Tor_*^{k\otimes_\Fq \A}(\pi,\rho)\simeq \Tor_*^{\kATor}(\pi,\rho)\;.\]
\end{lm}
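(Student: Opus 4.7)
The plan is to exhibit $\kA$ as a product of $k$-linear categories, one factor of which is the quotient $k \otimes_\Fq \A$, and then deduce the Tor isomorphism from the corresponding product decomposition of module categories.

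Concretely, since $\A$ is $\Fq$-linear and $\Fq \subseteq k$ (as implicit in the $\Fq$-linearity of functors into $k\Md$), one rewrites
\[\kA(x,y) = k \otimes_{\mathbb{Z}} \A(x,y) = L \otimes_\Fq \A(x,y)\;,\]
where $L := k \otimes_\Fp \Fq$. The extension $\Fq/\Fp$ being finite separable, $L$ is a finite \'etale $k$-algebra; since $\Fq \subseteq k$, the multiplication map $m\colon L \to k$, $\lambda \otimes \mu \mapsto \lambda\mu$, splits off $k$ as a direct factor in the category of $k$-algebras. That is, there is an idempotent $e \in L$ with $m(e) = 1$ and $eL \simeq k$, so that $L = k \times J$ as $k$-algebras, with $J := (1-e)L$. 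In particular $k \cdot J = 0$ inside $L$, since $e(1-e) = 0$.

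Tensoring the decomposition $L = k \oplus J$ over $\Fq$ with $\A(x,y)$ yields a natural $k$-linear splitting
\[\kA(x,y) = (k \otimes_\Fq \A)(x,y) \;\oplus\; (J \otimes_\Fq \A)(x,y)\;,\]
and the relation $k \cdot J = 0$ in $L$ implies that composition in $\kA$ has no cross-terms between the two summands. Setting $\B(x,y) := J \otimes_\Fq \A(x,y)$, one obtains a $k$-linear category $\B$ (with identity at $x$ equal to $(1-e) \otimes \id_x$) and a product decomposition of $k$-linear categories sharing the same objects,
\[\kA \;\simeq\; (k \otimes_\Fq \A) \times \B\;,\]
with componentwise composition; the identity $1 \otimes \id_x$ of $\kA$ decomposes as $e \otimes \id_x + (1-e) \otimes \id_x$, a sum of two orthogonal idempotents corresponding to the two factors.

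As a formal consequence, the module category decomposes as $\kA\Md \simeq (k \otimes_\Fq \A)\Md \times \B\Md$, and an $\Fq$-linear functor is precisely one on which $(1-e) \otimes \id$ acts as zero, hence one corresponding to a pair of the form $(\pi,0)$; the same holds for $\rho$. Tor over a product of $k$-linear categories is computed componentwise (projective resolutions split, and the coend defining the tensor product splits accordingly), so
\[\Tor_*^{\kATor}(\pi,\rho) \;\simeq\; \Tor_*^{k \otimes_\Fq \A}(\pi,\rho) \;\oplus\; \Tor_*^{\B}(0,0) \;=\; \Tor_*^{k \otimes_\Fq \A}(\pi,\rho)\;,\]
which is the desired natural isomorphism. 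The only non-formal ingredient is the algebra decomposition $L = k \times J$, which rests on the separability of $\Fq/\Fp$ together with $\Fq \subseteq k$; the rest is routine verification.
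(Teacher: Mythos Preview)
Your argument is correct. Both your proof and the paper's rest on the separability of $\Fq/\Fp$, but they exploit it differently.

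The paper uses only that the multiplication map $\phi\colon k\otimes_{\Fp}\Fq\to k$ admits a section as a $(k,\Fq)$-bimodule map. From this it deduces that each standard projective $k\otimes_{\Fq}\A(a,-)$ of $(k\otimes_{\Fq}\A)\Md$ is a retract of the standard projective $k\otimes_{\Fp}\A(a,-)$ of $\kA\Md$, so a projective resolution in the former category remains one in the latter; combined with the fully faithful inclusion $(k\otimes_{\Fq}\A)\Md\hookrightarrow\kA\Md$ (which gives the degree-zero isomorphism via proposition-definition~\ref{cor-memechose2}), this yields the $\Tor$ comparison.

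You instead use the stronger fact that, because $\Fq\subseteq k$, the map $\phi$ splits as a $k$-\emph{algebra} map, so $L=k\otimes_{\Fp}\Fq\simeq k\times J$. This upgrades the retract of projectives to a clean product decomposition $\kA\simeq(k\otimes_{\Fq}\A)\times\B$ of $k$-linear categories, after which everything---the identification of $\Fq$-linear functors with pairs of the form $(\rho,0)$, and the componentwise computation of $\Tor$---is formal. Your route is more structural and handles all degrees at once; the paper's route is slightly more parsimonious in what it assumes about $\phi$ (only a bimodule section rather than an algebra section), though in the present setting both are available. One small point worth making explicit in your write-up is the verification that the $k$-span of the elements $1\otimes\mu-\mu\otimes 1$ for $\mu\in\Fq$ is exactly $J=\ker\phi$, which is what pins down the $\Fq$-linear functors as those killed by the idempotent $(1-e)\otimes\id$.
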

\begin{proof}
Let $\phi:k\otimes_\Fp\Fq\to k$ denote the surjective morphism of $(k,\Fq)$-bimodules such that $\phi(x\otimes y)=xy$. Restriction along the functor $\phi\otimes_\Fq\A: \kA\to k\otimes_\Fq\A$ yields a fully faithful functor $(k\otimes_\Fq \A)\Md\to \kA\Md$ hence an isomorphism $\pi\otimes_{k\otimes_\Fq\A}\rho\simeq \pi\otimes_{\kA}\rho$ for all $\Fq$-linear functors $\pi$ and $\rho$ by corollary \ref{cor-memechose2}. 
Moreover, $\phi$ admits a section (as a morphism of $(k,\Fq)$-bimodules) because $\Fq$ is a finite separable extension of $\Fp$. Hence for all objects $a$ in $\A$, the additive functor $k\otimes_\Fq\A(a,-)$ is a direct summand of the additive functor $k\otimes_\Fp\A(a,-)$. Thus, every projective resolution $P$ of $\rho$ in the category of $\Fq$-linear functors may be regarded as a projective resolution of $\rho$ in the category of additive functors. 
As a result we have
\[\Tor_*^{k\otimes_\Fq \A}(\pi,\rho)=H_*(\pi\otimes_{k\otimes_\Fq \A}P)\simeq 
H_*(\pi\otimes_{\kA}P)=\Tor_*^{\kA}(\pi,\rho)\;.\]
where the middle isomorphism is given by restriction along $\phi\otimes_\Fq\A$.
\end{proof}

We are now ready to prove the generalized comparison theorem.

\begin{proof}[Proof of theorem  \ref{thm-gen-comp}]

We set $\sigma={}^{(rs-rs^2)}\rho$ and $H=G^{(rs^2-rs)}$ for the sake of concision. Thus, $\rho^*G=\sigma^*H$. We also emphasize the dependence of $F^\dag$ on $\pi$ and $\sigma$ by setting:
\[F^\dag_{\pi,\sigma}(v):=\overline{F}\Big(\bigoplus_{0\le i< s^2}{}^{(ri)}\Hom_k\big(v,({}^{(-ri)}\pi)\otimes_{\kATor}\sigma\big)\,\Big)\;.\]
We fix an integer $n\gg 0$ ($n> \log_p(e/2)$ suffices) such that the canonical map
\begin{align}\Tor_*^{\gen}(F^\dag_{\pi,\sigma},H)\to \Tor_*^{\Gamma\Proj_k}((F^\dag_{\pi,\sigma})^{(n)},H^{(n)})
\label{eqn-cancomp}
\end{align}
is $e$-connected. Let 
\[\Psi_k:  \Tor_*^{k[\Proj_k]}(F^\dag_{\pi,\sigma},H)\to    \Tor_*^{\Gamma\Proj_k}((F^\dag_{\pi,\sigma})^{(n)},H^{(n)})\]
be the morphism given by restriction along $^{(-n)}-$ and by the restriction from ordinary functors to strict polynomial functors. Then $\Psi_k$ is the composition of the canonical map \eqref{eqn-cancomp} with $\Phi_k$, so that it suffices to prove that the composition $\Psi_k\circ\Theta^\dag_k\circ\Lambda_k$ is $e$-connected.

For this purpose we are going to realize the $\Tor$ vector spaces in play as homotopy groups of some simplicial vector spaces, and the maps $\Psi_k$, $\Theta^\dag_k$ and $\Lambda_k$ as the morphisms induced on the level of homotopy groups by some morphisms of simplicial vector spaces. To be more specific, we consider simplicial resolutions by direct sums of standard projectives 
\begin{align*}
\mathcal{H}\to H\;, && \mathcal{H}'\to H^{(n)}\;, && \varpi\to \pi\;, && \varrho\to \rho\;,
\end{align*}
respectively in the categories 
\begin{align*}
k[\Proj_k]\Md\;, && \Gamma\Proj_k\Md\;, && \Mdd (k\otimes_\Fq\A)\;, && (k\otimes_\Fq\A)\Md\;.
\end{align*}
Standard projectives in $ (k\otimes_\Fq\A)\Md$ are of the form $t\circ \A(a,-)$, and we have $^{(rs-rs^2)}(t\circ \A(a,-))\simeq t\circ \A(a,-)$, hence 
\[ \varsigma:={}^{(rs-rs^2)}\varrho\to {}^{(rs-rs^2)}\rho=\sigma\]
is also a simplicial resolution in $ (k\otimes_\Fq\A)\Md$.
Then it follows from lemmas \ref{lm-Whitehead-thm} and  \ref{lm-flat} that $\varsigma^*\mathcal{H}$ is a simplicial projective resolution of $\sigma^*H$ in $k[\A]\Md$, and that $(\varpi^{\oplus i})^*F$ is a simplicial (not projective) resolution of $(\pi^{\oplus i})^*F$ in $\Mdd k[\A]$ for all positive integers $i$. Therefore we have identifications of homotopy groups:
\begin{align*}
&\pi_*\left( \varpi^*F\otimes_{k[\A]}\varsigma^*\mathcal{H}\right)=\Tor_*^{k[\A]}(\pi^*F,\sigma^*H)\;,\\
&\pi_*\left( (\varpi^{\oplus s^2})^*F\otimes_{k[\A]}\varsigma^*\mathcal{H}\right)=\Tor_*^{k[\A]}((\pi^{\oplus s^2})^*F,\sigma^*H)\;,\\
&\pi_*\left( F_{\pi,\sigma}^\dag\otimes_{k[\A]}\mathcal{H}\right)=\Tor_*^{k[\Proj_k]}(F_{\pi,\sigma}'^\dag,H)\;,\\
&\pi_*\left( (F_{\pi,\sigma}^\dag)^{(n)}\otimes_{\Gamma\Proj_k}\mathcal{H}'\right)=\Tor_*^{k[\Proj_k]}((F_{\pi,\sigma}^\dag)^{(n)},H^{(n)})\;.
\end{align*}
Moreover, let $f:\mathcal{H}\to {\mathcal{H}'}^{(-n)}$ be a simplicial morphism in $k[\Proj_k]\Md$ lifting the identity morphism of $H$. Then the maps $\Lambda_k$, $\Theta_k^\dag$ and $\Psi_k$ are respectively induced by the morphisms of simplicial $k$-vector spaces:
\begin{align*}
&\varpi^*F\otimes_{k[\A]}\varsigma^*\mathcal{H}\xrightarrow[]{\widetilde{\Lambda_k}}  (\varpi^{\oplus s^2})^*F\otimes_{k[\A]}\varsigma^*\mathcal{H}\;,\\
&(\varpi^{\oplus s^2})^*F\otimes_{k[\A]}\varsigma^*\mathcal{H}\xrightarrow[]{\widetilde{\Theta_k^\dag}}F_{\varpi,\varsigma}^\dag\otimes_{k[\Proj_k]}\mathcal{H}\to F_{\pi,\sigma}^\dag\otimes_{k[\Proj_k]}\mathcal{H}\;,\\
&F_{\pi,\sigma}^\dag\otimes_{k[\Proj_k]}\mathcal{H}\xrightarrow[]{\id\otimes f} (F_{\pi,\sigma}^\dag)^{(n)(-n)}\otimes_{k[\Proj_k]}{\mathcal{H}'}^{(-n)}\xrightarrow[]{\widetilde{\Psi_k}} (F_{\pi,\sigma}^\dag)^{(n)}\otimes_{\Gamma\Proj_k}\mathcal{H}'\;.
\end{align*} 
Here, the unadorned simplicial morphism following $\widetilde{\Theta_k^\dag}$ on the second line is induced by the simplicial morphisms $\varpi\to \pi$ and $\varsigma\to \sigma$. The simplicial morphisms $\widetilde{\Lambda_k}$, $\widetilde{\Theta'_k}$ and $\widetilde{\Psi_k}$ are degreewise equal to the degree zero component of $\Lambda_k$, $\Theta'_k$ and $\Psi_k$, for example the component of $\widetilde{\Lambda_k}$ in simplicial degree $i$  is equal to 
$$\Tor_0^{k[\A]}(\varpi^*_iF,\varsigma^*_i\mathcal{H}_i)\xrightarrow[]{\Lambda_k} \Tor_0^{k[\A]}((\varpi_i^{\oplus s^2})^*F,\varsigma^*_i\mathcal{H}_i)\;.$$
(That these simplicial morphisms induce our maps $\Lambda_k$, $\Theta_k^\dag$ and $\Psi_k$ is obvious for the first one and the last one. For $\Theta^\dag_k$, this follows by the same reasoning as in the proof of theorem \ref{thm-Theta}).
We deduce that the comparison map $\Psi_k\circ\Theta^\dag_k\circ\Lambda_k$ equals the map induced on homotopy groups by the composition of the simplicial morphism:
\begin{align}
\left(\widetilde{\Psi_k}\circ (\id\otimes f)\circ \widetilde{\Theta^\dag_k}\circ \widetilde{\Lambda_k}\right)\;: \;\varpi^*F\otimes_{k[\A]}\varsigma^*\mathcal{H}\to (F_{\varpi,\varsigma}^\dag)^{(n)}\otimes_{\Gamma\Proj_k}\mathcal{H}'\label{eqn-morph1}
\end{align}
followed by the simplicial morphism induced by $\varpi\to \pi$ and $\varsigma\to \sigma$:
\begin{align}
(F_{\varpi,\varsigma}^\dag)^{(n)}\otimes_{\Gamma\Proj_k}\mathcal{H}'\to (F_{\pi,\sigma}^\dag)^{(n)}\otimes_{\Gamma\Proj_k}\mathcal{H}'
\label{eqn-morph2}
\end{align}

Now we are going to show that the two simplicial morphisms \eqref{eqn-morph1} and \eqref{eqn-morph2} are $e$-connected. 
Let us regard the source and the target of \eqref{eqn-morph1} as the diagonal of bisimplicial objects $\varpi_i^*F\otimes_{k[\A]}\varsigma^*_i\mathcal{H}_j$ and 
$(F_{\varpi_i,\varsigma_i}^\dag)^{(n)}\otimes_{\Gamma\Proj_k}\mathcal{H}_j'$ with bisimplicial degrees $(i,j)$. Then the simplicial morphism \eqref{eqn-morph1} actually comes from a bisimplicial morphism. Spectral sequences of bisimplicial $k$-modules as in \cite[IV section 2.2]{GoerssJardine} yield two spectral sequences:
\begin{align*}
& \mathrm{I}^1_{ij}=\Tor_j^{k[\A]}(\varpi_i^*F,\varsigma^*_i H)\Longrightarrow \pi_{i+j}\left(\varpi^*F\otimes_{k[\A]}\varsigma^*\mathcal{H} \right)\;,\\
& \mathrm{II}^1_{ij}=\Tor_j^{\Gamma\Proj_k}(F^\dag_{\varpi_i,\varsigma_i},H)\Longrightarrow \pi_{i+j}\left((F_{\varpi,\varsigma}^\dag)^{(n)}\otimes_{\Gamma\Proj_k}\mathcal{H}'\right)\;,
\end{align*}
And there is a morphism of spectral sequences $\mathrm{I}\to \mathrm{II}$ which coincides with the morphism \eqref{eqn-morph1} on the abutment, and with the map $\Psi_k\circ\Theta^\dag_k\circ\Lambda_k$ on the first page. 
By proposition \ref{prop-base-case}, the map $\Phi_k\circ\Theta^\dag_k\circ\Lambda_k$ is an isomorphism when $\pi$ and $\rho$ are direct sums of standard projectives, hence the morphism of spectral sequences is an isomorphism on the first page. Hence the simplicial morphism \eqref{eqn-morph1} is an isomorphism on the level of homotopy groups (hence $e$-connected).

Thus, it remains to prove that the simplicial morphism \eqref{eqn-morph2} is $e$-connected. The $\Tor$-vanishing hypothesis of theorem \ref{thm-gen-comp} and lemma \ref{lm-chgt-base-finitefield} imply that the maps $({}^{(-ri)}\varpi)\otimes_{\kATor}\varsigma \to ({}^{(-ri)}\pi)\otimes_{\kATor}\sigma$ are $e$-connected. Hence the map $F^\dag_{\varpi,\varsigma}\to F^\dag_{\pi,\sigma}$ is e-connected by lemma \ref{lm-Whitehead-thm}, hence the simplicial morphism \eqref{eqn-morph2} is $e$-connected by the usual bisimplicial spectral sequence argument.
\end{proof}

\subsection{Consequences of the generalized comparison theorem}\label{sec-special-cases}

We first prove theorem \ref{thm-intro-F-lin-case} from the introduction. We consider a simplification of the generalized comparison map \eqref{eqn-gencompmap}, namely for all strict polynomial functors $F$ and $G$ and all additive functors $\pi:\A^\op\to k\Md$ and $\rho:\A\to k\Md$ we consider the composition:
\begin{align}\Tor^{k[\A]}_*(\pi^*F, \rho^*G)\xrightarrow[]{\Theta_k}  \Tor_*^{k[\Proj_k]}(D_{\pi,\rho}^*F,G) \xrightarrow[]{\Phi_k}
\Tor_*^{\gen}(D_{\pi,\rho}^*F,G)\;,\label{eqn-comp-simplified}
\end{align}
where $\Theta_k$ is the auxiliary comparison map of section \ref{sec-gen-prelim-comparison}. Theorem \ref{thm-intro-F-lin-case} is a direct consequence of the following result.

\begin{thm}\label{thm-F-lin-case}
Let $k$ be an infinite perfect field of positive characteristic, containing a subfield $\FF$ and let $\A$ be an additive $\FF$-linear category. Let $\pi$ and $\rho$ be two $\FF$-linear functors from $\A$ to $k$-modules, respectively contravariant and covariant, and let $F$ and $G$ be two strict polynomial functors with degrees less or equal to the cardinal of $\FF$.
Assume furthermore that 
\[\Tor_i^{\kATor}(\pi,\rho)=0\quad \text{for $0<i<e$.}\]
Then the comparison map \eqref{eqn-comp-simplified} is $e$-connected.
\end{thm}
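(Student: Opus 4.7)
The plan is to obtain Theorem \ref{thm-F-lin-case} as a specialization of the generalized comparison theorem \ref{thm-gen-comp} to the case $s=1$. The first observation is that, when $s=1$, the generalized comparison map \eqref{eqn-gencompmap} coincides with the simplified comparison map \eqref{eqn-comp-simplified}. Indeed, $s^2=1$ collapses the direct sum in the definition of $F'$ to its single $i=0$ summand $T_0=\pi\otimes_{\kA}\rho$, so that $F'=D_{\pi,\rho}^*F$; the equality $rs-rs^2=0$ gives $G^{(rs^2-rs)}=G$; the morphism $\Lambda_k$ induced by $\diag\colon \pi\to\pi^{\oplus 1}$ is the identity; and $\Theta'_k=\Theta_k$ by its construction in section \ref{subsec-gen-comp-map}. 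Likewise, the Tor-vanishing hypothesis of Theorem \ref{thm-gen-comp} with $s=1$ reduces exactly to our assumption $\Tor_j^{\kA}(\pi,\rho)=0$ for $0<j<e$.

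Next, I would produce a finite subfield $\Fq\subseteq\FF$ whose cardinality is at least the maximum weight $d$ of $F$ and $G$, so that the weight hypothesis $q^s\geq d$ of Theorem \ref{thm-gen-comp} is met for $s=1$. When $\FF$ is finite, the hypothesis $d\leq|\FF|$ gives directly $\Fq=\FF$, and $\A$, $\pi$, $\rho$ are automatically $\Fq$-linear by the $\FF$-linearity assumption, so that Theorem \ref{thm-gen-comp} applies and concludes the proof. The same argument works whenever $\FF$ contains a sufficiently large finite subfield, for example whenever $\FF\supseteq\overline{\Fp}$.

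The hard part will be the remaining case, in which $\FF$ is infinite but contains no finite subfield of cardinality at least $d$, for instance $\FF=\Fp(t)$ with $d>p$. I expect to address it by a base-change argument in the spirit of the reduction at the start of the proof of Proposition \ref{pr-interm-strong}: choose a finite extension $k\hookrightarrow K$ such that $K$ contains $\Fp^n$ for some $p^n\geq d$, base change $\A$, $\pi$, $\rho$, $F$ and $G$ to $K$ (using in particular the exact base-change functor on strict polynomial functors together with Proposition \ref{prop-chgt-base}), apply the result of the previous paragraph over $K$ to obtain the $e$-connectedness of the comparison map after base change, and finally descend to $k$ by combining the K\"unneth base-change isomorphism of Proposition \ref{prop-Kunneth-tens} with the faithful flatness of the finite field extension $k\hookrightarrow K$.
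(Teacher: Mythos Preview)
Your reduction when $s=1$ is correct and matches the paper's treatment of the finite-$\FF$ case. The gap is in your handling of the case where $\FF$ is infinite but contains no finite subfield of cardinality at least $d$.

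Base-changing $k$ to a finite extension $K\supseteq k$ containing $\Fp^{n}$ does not help: the additive category $\A$ is unchanged by this operation and remains only $\FF$-linear. To apply Theorem \ref{thm-gen-comp} with $s=1$ you need $\A$ and $\pi,\rho$ to be $\Fq$-linear for a finite subfield $\Fq\subseteq K$ with $q\ge d$; but if $\Fp^{n}\not\subseteq\FF$ then $\A$ is simply not $\Fp^{n}$-linear, and the theorem does not apply. You might try to also replace $\A$ by $K\otimes_\FF\A$, but then the relation between $\Tor^{k[\A]}_*$ and $\Tor^{K[K\otimes_\FF\A]}_*$ (and between the respective comparison maps) is not the K\"unneth isomorphism of Proposition \ref{prop-Kunneth-tens}, and this point is not addressed in your sketch.

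The paper's route for infinite $\FF$ is genuinely different: it applies Theorem \ref{thm-gen-comp} with $q=p$ (the prime field, always contained in $\FF$) and with $s$ large enough that $p^s$ bounds the weights of $F$ and $G$. The crucial extra input is that, since $\FF$ is infinite, the homogeneity argument of Lemma \ref{lm-homogeneity-vanish} (combined with lemma \ref{lm-iso-dual}) gives $\Tor_*^{\kATor}({}^{(i)}\pi,{}^{(j)}\rho)=0$ whenever $i\ne j$. This forces all but one of the $T_i$ to vanish, so that $F'$ collapses to $\overline{F}^{(s^2-s)}\circ D_{\alpha,\beta}$ with $\alpha={}^{(s-s^2)}\pi$ and $\beta={}^{(s-s^2)}\rho$, and it also reduces the $\Tor$-hypothesis of Theorem \ref{thm-gen-comp} to the assumed vanishing of $\Tor^{\kATor}_*(\pi,\rho)$. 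One checks that $\Theta'_k\circ\Lambda_k$ then equals the map $\Theta_k$ for $\alpha,\beta$, and a diagram chase using the base-change property of Proposition \ref{prop-chgt-base} (for the Frobenius twist ${}^{(s^2-s)}-$) together with the naturality of $\Phi_k$ identifies the resulting generalized comparison map with the simplified map \eqref{eqn-comp-simplified}.
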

\begin{proof}
There are two cases. Assume first that $\FF$ is a finite field. Then we can apply theorem \ref{thm-gen-comp} with $s=1$. If $s=1$ the generalized comparison map \eqref{eqn-gencompmap} is equal to the simplified comparison map \eqref{eqn-comp-simplified} and the result follows. 

Now assume that $\FF$ is infinite. We claim that for all $\FF$-linear functors $\alpha,\beta:\A\to k\Md$, we have $\Ext^*_{\kATor}({}^{(i)}\alpha,{}^{(j)}\beta)=0$  for $i\ne j$. Indeed, this can be proved by repeating the argument of the vanishing lemma \ref{lm-homogeneity-vanish} in the case of the $k$-category $\kA$, or alternatively by combining this vanishing lemma \ref{lm-homogeneity-vanish} with theorem \ref{thm-main-additive}. By lemma \ref{lm-iso-dual}, this implies that 
\begin{align}\Tor_*^{\kATor}({}^{(i)}\pi,{}^{(j)}\rho)=0 \text{ for $i\ne j$.}
\label{eqn-vanishing}
\end{align}
Thus we may apply the generalized comparison theorem \ref{thm-gen-comp} with $q=p$ (i.e. $\Fq$ is the prime field), and with an integer $s$ such that $p^s$ is greater or equal to the degrees of $F$ and $G$. 
In this situation, the expression of $F^\dag$ simplifies because of equation \eqref{eqn-vanishing}. Namely if we let $\alpha={}^{(s-s^2)}\pi$ and $\beta={}^{(s-s^2)}\rho$ then we have: 
\[F^\dag=\overline{F}^{(s^2-s)}\circ D_{\alpha,\beta}\;.\]
Moreover one readily checks that the composite map 
\[\Theta^\dag_k\circ \Lambda_k\;:\; \Tor_*^{k[\A]}(\pi^*F,\rho^*G)\to \Tor_*^{k[\Proj_k]}(D_{\alpha,\beta}^*(F^{(s^2-s)}), G^{(s^2-s)})\]
is equal to the map $\Theta_k$ relative to $\alpha={}^{(s-s^2)}\pi$ and $\beta={}^{(s-s^2)}\rho$. 

Now we consider the following diagram, in which the composition of functors is omitted, $T_*$ stands for $\Tor_*$, and we use the following notations $x:=s^2-s$, $D=D_{\pi,\rho}$, $D'=D_{\alpha,\beta}$. The Frobenius twist functor $^{(x)}-:k\Md\to k\Md$ is isomorphic to the extension of scalars along the morphism of fields $k\to k$, $\lambda\mapsto \lambda^{p^x}$, hence we have a canonical isomorphism $\mathrm{can}:{}^{(x)}D'\simeq D^{(x)}$, and the isomorphisms $(*)$ in this diagram are induced by this canonical isomorphism.
\[
\begin{tikzcd}
T_*^{k[\A]}(\overline{F}\pi,\overline{G}\rho)\ar{r}{\Theta_k} \ar{d}{\Theta_k}
& T_*^{k[\Proj_k]}(\overline{F}^{(x)}D',G^{(x)})\ar{r}{\Phi_k}\ar{d}{\res^{^{(x)}-}}[swap]{\simeq}
& T_*^{\gen}(\overline{F}{}^{(x)}D',G^{(x)})\ar{d}{\res^{^{(x)}-}}[swap]{\simeq}\\
T_*^{k[\Proj_k]}(\overline{F}D,G) \ar{r}{\simeq}[swap]{(*)}\ar{d}{\Phi_k}
& T_*^{k[\Proj_k]}(\overline{F}{}^{(x)}D'{}^{(-x)},G)\ar{r}{\Phi_k}
& T_*^{\gen}(\overline{F}{}^{(x)}D'{}^{(-x)},G)\\
T_*^{\gen}(\overline{F}D,G)\ar{rru}{\simeq}[swap]{(*)}& &
\end{tikzcd}
\]
The diagram is commutative. To be more specific, the upper left square of the diagram commutes by the base change property of proposition \ref{prop-chgt-base}, the upper right square and the triangle commute by naturality of $\Phi_k$. As explained above, the composite map corresponding to the upper row is $e$-connected by theorem \ref{thm-gen-comp}. Therefore, the composite given by the first column is also $e$-connected. But this composite is nothing but the simplified comparison map \eqref{eqn-comp-simplified}. This finishes the proof of the theorem.
\end{proof}

\begin{cor}\label{cor-infinite-perfect-field}
Let $k$ be an infinite perfect field of positive characteristic, let $^\vee-:\Proj_k^{\op}\to \Proj_k$ denote the $k$-linear duality functor $^\vee-=\Hom_k(-,k)$, and let $F^\vee$ denote the composition $F\circ {}^\vee-$. The map 
\begin{align*}
&\Phi_k:\Tor_*^{k[\Proj_k]}(F^\vee,G)\to \Tor^\gen_*(F^\vee,G)
\end{align*}
is an isomorphism for all strict polynomial functors $F$ and $G$.
\end{cor}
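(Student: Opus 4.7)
The plan is to deduce the corollary from Theorem~\ref{thm-F-lin-case} applied with $\FF=k$, $\A=\Proj_k$, $\pi={}^\vee{-}$ and $\rho=\id$; all three are $k$-linear, and the weight condition on $F,G$ is vacuous since $k$ is infinite. First I would identify the dual $D_{\pi,\rho}$. Via the Eilenberg-Watts equivalence $\kA\Md\simeq(k\otimes_\mathbb{Z}k)\Md$ recalled in the introduction, both $\id$ and ${}^\vee{-}$ correspond to $k$ with its standard $(k,k)$-bimodule structure, so that ${}^\vee{-}\otimes_{\kA}\id\simeq k$ and therefore $D_{\pi,\rho}(v)=\Hom_k(v,k)=v^\vee$, whence $D_{\pi,\rho}^*F=F^\vee$. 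Consequently the simplified comparison map \eqref{eqn-comp-simplified} of Theorem~\ref{thm-F-lin-case} specializes to a composition $\Phi_k\circ\Theta_k$ in which $\Phi_k$ is precisely the map of the corollary.

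Next I would verify the Tor-vanishing hypothesis $\Tor_i^{\kATor}({}^\vee{-},\id)=0$ for all $i>0$. Under the Eilenberg-Watts equivalence above, this group identifies with the Hochschild homology $\mathrm{HH}_*(k/\mathbb{Z})=\mathrm{HH}_*(k/\Fp)$, the last equality because $\mathrm{char}\,k=p$. Since $k$ is a perfect field of characteristic $p$, the extension $\Fp\subset k$ is a filtered colimit of finite separable (hence étale) extensions, so $\Omega^1_{k/\Fp}=0$. By the Hochschild-Kostant-Rosenberg theorem, $\mathrm{HH}_i(k/\Fp)=\Omega^i_{k/\Fp}$, which vanishes for $i>0$.

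Theorem~\ref{thm-F-lin-case} then gives that $\Phi_k\circ\Theta_k$ is an isomorphism in every degree. To conclude that $\Phi_k$ is itself an isomorphism it suffices to check the same for $\Theta_k$. This I would do by Theorem~\ref{thm-Theta}, which reduces the question to the vanishing of $\Tor_i^{\A}({}^\vee{-},\id)$ in the $k$-linear functor category $\Proj_k\Md$ for $i>0$. A second application of Eilenberg-Watts (this time the plain equivalence $\Proj_k\Md\simeq k\Md$, under which both ${}^\vee{-}$ and $\id$ go to $k$) reduces this to $\Tor_i^k(k,k)=0$ for $i>0$. Hence $\Theta_k$, and therefore $\Phi_k$, is an isomorphism. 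The only non-formal step in the argument is the HKR-based vanishing of $\mathrm{HH}_*(k/\Fp)$; that is the place where the hypothesis that $k$ be perfect is crucially used, and it constitutes the main (if modest) obstacle.
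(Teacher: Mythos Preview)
Your approach is essentially identical to the paper's: apply Theorem~\ref{thm-F-lin-case} with $\FF=k$, $\A=\Proj_k$, $\pi={}^\vee{-}$, $\rho=\id$ to get that $\Phi_k\circ\Theta_k$ is an isomorphism, then apply Theorem~\ref{thm-Theta} (via the Eilenberg--Watts identification $\Proj_k\Md\simeq k\Md$) to see that $\Theta_k$ is an isomorphism, and conclude.

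There is one slip in your justification of the Hochschild vanishing. You write that ``the extension $\Fp\subset k$ is a filtered colimit of finite separable (hence \'etale) extensions''. This is false unless $k$ is algebraic over $\Fp$; an infinite perfect field such as $\overline{\Fp(t)}$ or $\Fp(t^{1/p^\infty})$ is not a union of finite extensions of $\Fp$. The conclusion $\Omega^1_{k/\Fp}=0$ is nevertheless correct, by the direct argument: every $a\in k$ is a $p$-th power $a=b^p$, so $da=pb^{p-1}\,db=0$. To invoke HKR you still need to know that $k$ is a filtered colimit of $\Fp$-algebras to which HKR applies; the paper handles this by writing $k$ as a filtered colimit of its finitely generated subfields, each of which is separably generated over the perfect field $\Fp$ and hence essentially smooth. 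With this correction your argument goes through and matches the paper's.
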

\begin{proof}
The Eilenberg-Watts theorem gives an equivalence of categories between the category of additive functors $\Proj_R\to k\Md$ and the category of $(R,k)$-bimodules. Under this equivalence, an $(R,k)$-bimodule $M$ corresponds to the functor $-\otimes_R M$. Therefore, if we let $R=k$, $\pi(v)=v^\vee$ and $\rho(v)=v$, we obtain: 
\[\Tor_*^{_{\,k\!}(\Proj_k)}(\pi,\rho)\simeq \Tor_*^{k\otimes_\mathbb{Z}k}(k,k)=\mathrm{HH}_*(k) \;.\]
But $k\otimes_\mathbb{Z} k=k\otimes_\Fp k$ and every field extension of $\Fp$ is a filtered colimit of $\Fp$-subalgebras which are smooth and essentially of finite type. Thus $\mathrm{HH}_*(k)$ is an exterior algebra over the $k$-vector space of Kähler forms $\Omega^1(k/\Fp)$ by the Hochschild-Kostant-Rosenberg theorem \cite[Cor 2.13]{Hubl}. Since $k$ is perfect, $\Omega^1(k/\Fp)$ is zero, so that $\mathrm{HH}_i(k)=k$ for $i=0$, and zero otherwise. Hence $D_{\pi,\rho}\simeq {}^\vee-$, and $\Phi_k\circ \Theta_k$ is an isomorphism by theorem \ref{thm-F-lin-case}. 

Furthermore, the categories of $k$-linear functors $\Mdd\Proj_k$ and $\Proj_k\Md$ are respectively equivalent to $\Mdd k$ and $k\Md$. Under these equivalences of categories, the functors $\pi$ and $\rho$ correspond to $k$. Hence $\Tor_i^{\Proj_k}(\pi,\rho)$ equals $k$ if $i=0$ and zero otherwise. Thus $\Theta_k$ is an isomorphism by theorem \ref{thm-Theta}.

Since $\Phi_k\circ\Theta_k$ and $\Theta_k$ are both isomorphisms, so is $\Phi_k$.
\end{proof}

\subsection{Comparison of $\Ext$}\label{subsec-Ext-comp}

We now indicate how the results of the previous section can be dualized to compare $\Ext$. Let $G$ and $K$ be two strict polynomial functors. 
We let $\Phi_k$ be the unique map making the following diagrams commute for all $i$ and all $n\gg 0$, where the vertical isomorphism on the left hand side is the canonical isomorphism, and the one on the right hand side is given by restriction along the Frobenius twist $^{(-n)}-$:
\[
\begin{tikzcd}
\Ext^i_{\Gamma\Proj_k}(G^{(n)},K^{(n)})\ar{d}{\simeq}\ar{r}&
\Ext^i_{k[\Proj_k]}(G^{(n)},K^{(n)})\ar{d}[swap]{\simeq}\\
\Ext^i_{\gen}(G,K)\ar[dashed]{r}{\Phi_k}&
\Ext^i_{k[\Proj_k]}(G,K)
\end{tikzcd}.
\] 
\begin{cor}\label{cor-infinite-perfect-field-Ext}
Let $k$ be an infinite perfect field of positive characteristic. For all strict polynomial functors $G$ and $K$ the comparison map 
$$\Phi_k:\Ext^i_{\gen}(G,K)\to 
\Ext^i_{k[\Proj_k]}(G,K)$$
is a graded isomorphism.
\end{cor}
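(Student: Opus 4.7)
The plan is to derive this $\Ext$ comparison from its $\Tor$-analogue, corollary~\ref{cor-infinite-perfect-field}, via $k$-linear duality. Since both sides decompose along homogeneous components (using that $\Ext^*_{k[\Proj_k]}(G_d, K_e) = 0$ for $d \neq e$ by the vanishing lemma~\ref{lm-homogeneity-vanish} applied with $\FF = k$, which is infinite hence of cardinal at least $\max(d,e)$), I may fix $d \geq 0$ and assume $G$ and $K$ are $d$-homogeneous. The core step consists in proving the isomorphism for $K$ of the special form $K = \Hom_k(H^\vee, k)$, where $H$ is any covariant $d$-homogeneous strict polynomial functor and $H^\vee(v) := H(v^\vee)$ is its contravariant pointwise dual. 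Applying lemma~\ref{lm-iso-dual} with $M = k$ in both $\K = k[\Proj_k]$ and $\K = \Gamma^{dp^n}\Proj_k$ (the latter computing generic $\Ext$ and $\Tor$ once $n$ is large with respect to the relevant cohomological degree), one obtains natural graded isomorphisms
$$\Hom_k\bigl(\Tor_*^\K(H^\vee, G), k\bigr) \simeq \Ext^*_\K\bigl(G, \Hom_k(H^\vee, k)\bigr).$$
Unwinding the definitions of the two versions of $\Phi_k$ (each combines the forgetful functor from strict polynomial to ordinary functors with the inverse Frobenius twist ${}^{(-n)}-$) and using the naturality statement of proposition~\ref{prop-Tor-Ext}, one checks that these isomorphisms intertwine $\Phi_k$ for $\Ext$ with the $k$-linear dual of $\Phi_k$ for $\Tor$. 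Since the latter is an isomorphism by corollary~\ref{cor-infinite-perfect-field}, so is the former for all $K$ of the form $\Hom_k(H^\vee, k)$.

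To extend this to arbitrary $K$, I use a hypercohomology spectral sequence. The standard injectives of $\Gamma^d\Proj_k\Md$ at $x \in \Proj_k$ are the functors $J_x(y) = \Hom_k(\Gamma^d(\Hom_k(y,x)), k)$, which are precisely of the form $\Hom_k(H^\vee, k)$ with $H(w) = \Gamma^d(w \otimes x)$. Since $\Hom_k(-, k)$ sends direct sums to products, arbitrary products of standard injectives remain of this form. Choosing an injective coresolution $K \to J^\bullet$ in $\Gamma^d\Proj_k\Md$ by products of standard injectives and viewing $J^\bullet$ simultaneously as a (no longer injective) coresolution of $K$ in $k[\Proj_k]\Md$, one obtains two compatible hypercohomology spectral sequences abutting to $\Ext^*_\gen(G, K)$ and $\Ext^*_{k[\Proj_k]}(G, K)$ respectively, related by a morphism induced by $\Phi_k$. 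Their first pages are matched by $\Phi_k$ on each $J^p$, which is an isomorphism by the core step combined with the fact that both functors $\Ext^*_\gen(G, -)$ and $\Ext^*_{k[\Proj_k]}(G, -)$ send products to products (here one uses that precomposition by Frobenius commutes with products on the generic side). Hence $\Phi_k$ is an isomorphism on the abutments.

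The delicate point is the compatibility check in the core step: one must verify that under the duality of lemma~\ref{lm-iso-dual}, the $k$-linear dual of $\Phi_k$ for $\Tor$ coincides exactly with $\Phi_k$ for $\Ext$. This is essentially bookkeeping, relying on the naturality of the adjunction isomorphism $\alpha$ with respect to both the forgetful functor from strict polynomial to ordinary functors and precomposition by ${}^{(-n)}-$, which is granted by proposition~\ref{prop-Tor-Ext} applied successively to these two operations.
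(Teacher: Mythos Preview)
Your proposal is correct and follows essentially the same route as the paper: reduce via an injective coresolution in $\Gamma^d\Proj_k\Md$ to the case $K=\Hom_k(H^\vee,k)$, then invoke proposition~\ref{prop-Tor-Ext} to identify $\Phi_k$ for $\Ext$ with the $k$-linear dual of the $\Tor$ comparison of corollary~\ref{cor-infinite-perfect-field}. Your write-up simply fleshes out the details (the homogeneous reduction, the explicit form of the standard injectives, the product compatibility) that the paper compresses into the phrase ``standard spectral sequence argument.''
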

\begin{proof}
By a standard spectral sequence argument, the proof reduces to the case where $K$ is a standard injective, hence when $K=\Hom_k(F^\vee,k)$, where $F^\vee$ is the precomposition of a standard projective $F$ in $\Gamma\Proj_k\Md$ by the duality functor $^\vee-=\Hom_k(-,k)$. In this latter case, $\Phi_k$ is an isomorphism because proposition \ref{prop-Tor-Ext} shows that it is dual to the isomorphism $\Phi_k$ of corollary \ref{cor-infinite-perfect-field}.  
\end{proof}

Similarly, one can dualize theorem \ref{thm-F-lin-case}. To be more specific, given two additive functors $\rho,\sigma:\A\to k\Md$ and a $k$-vector space $v$, we let 
$$T_{\rho,\sigma}(v)= \Hom_{\kATor}(\rho,\sigma)\otimes v\;.$$
Then for all strict polynomial functors $G$ and $K$ we have a map 
$$\Theta_k:\Ext^*_{k[\Proj_k]}(G,T^*_{\rho,\sigma}K)\to \Ext^*_{k[\A]}(\rho^*G,\sigma^*K)$$
induced by restriction along $\rho$ and by the canonical evaluation morphism $\mathrm{ev}:\Hom_{\kATor}(\rho,\sigma)\otimes \rho\to \sigma$. 
\begin{cor}\label{cor-thm-F-lin}
Let $k$ be an infinite perfect field of positive characteristic, containing a subfield $\FF$ and let $\A$ be an additive $\FF$-linear category. Let $\rho,\sigma:\A\to \Proj_k$ be two $\FF$-linear functors such that $\Hom_{\kATor}(\rho,\sigma)$ is finite-dimensional. 
Assume that  
\[\Ext^i_{\kATor}(\rho,\sigma)=0\quad \text{for $0<i<e$.}\]
Then for all strict polynomial functors $G$ and $K$ with degrees less or equal to the cardinal of $\FF$, the graded map
$$\Theta_k\circ\Phi_k: \Ext^*_{\gen}(G,T^*_{\rho,\sigma}K)\to \Ext^*_{k[\A]}(\rho^*G,\sigma^*K)$$
is $e$-connected.
\end{cor}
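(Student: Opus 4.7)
The plan is to deduce the corollary from Theorem~\ref{thm-F-lin-case} by $k$-linear duality. I set $\pi := \sigma^\vee := \Hom_k(\sigma,k)\colon \A^\op \to k\Md$; this is a well-defined $\FF$-linear additive functor because $\sigma$ takes values in $\Proj_k$, so that $\sigma(a)$ is finite-dimensional and $\sigma^{\vee\vee}\simeq\sigma$ canonically. The strategy is to apply Theorem~\ref{thm-F-lin-case} to this choice of $\pi$ and to identify the $k$-linear dual of the resulting $\Tor$-comparison with the $\Ext$-comparison of the corollary, using Lemma~\ref{lm-iso-dual} and Proposition~\ref{prop-Tor-Ext}.

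I first translate the hypotheses. Lemma~\ref{lm-iso-dual} with $M=k$ gives
\[\Hom_k\bigl(\Tor_i^{\kATor}(\sigma^\vee,\rho),k\bigr)\simeq \Ext_{\kATor}^i(\rho,\sigma^{\vee\vee})\simeq \Ext_{\kATor}^i(\rho,\sigma),\]
so the Ext-vanishing hypothesis is equivalent to $\Tor_i^{\kATor}(\sigma^\vee,\rho)=0$ for $0<i<e$, which is the hypothesis of Theorem~\ref{thm-F-lin-case}. Moreover, the adjunction~\eqref{eqn-Formule-Cartan} yields $\Hom_k(\sigma^\vee\otimes_{\kATor}\rho,k)\simeq W:=\Hom_{\kATor}(\rho,\sigma)$, and since $W$ is finite-dimensional by hypothesis, $\sigma^\vee\otimes_{\kATor}\rho\simeq W^\vee$ canonically, whence
\[D_{\sigma^\vee,\rho}(v)=\Hom_k(v,W^\vee)\simeq (W\otimes v)^\vee = \Hom_k(T_{\rho,\sigma}(v),k)\]
naturally in the finite-dimensional $v$. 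By a standard spectral sequence argument as in the proof of Corollary~\ref{cor-infinite-perfect-field-Ext}, I may assume that $K$ is a standard injective in $\Gamma\Proj_k\Md$, so $K=\Hom_k(F_0,k)$ for some standard projective $F_0\in\Mdd\Gamma\Proj_k$; in particular $K$ has finite-dimensional values. Setting $F(v):=F_0(v^\vee)$, a covariant strict polynomial functor of the same weight as $K$, the identities above yield canonical isomorphisms $\Hom_k(\pi^*F,k)\simeq\sigma^*K$ and $\Hom_k(D^*_{\sigma^\vee,\rho}F,k)\simeq T^*_{\rho,\sigma}K$. Applying Theorem~\ref{thm-F-lin-case} to $(\pi,\rho,F,G)$ and dualizing via Lemma~\ref{lm-iso-dual} then produces an $e$-connected map
\[\Ext^*_\gen(G,T^*_{\rho,\sigma}K)\to \Ext^*_{k[\A]}(\rho^*G,\sigma^*K),\]
using that $k$-linear duality is exact over the field $k$ and converts ``surjective in degree $e$'' for $\Tor$ into ``injective in degree $e$'' for $\Ext$.

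The main obstacle is to verify that this dualized map coincides with the composite $\Theta_k\circ\Phi_k$ defined just before the corollary. This will be a naturality check based on Proposition~\ref{prop-Tor-Ext}: restriction along $\rho$ on $\Tor$ dualizes to restriction along $\rho$ on $\Ext$; and the unit $\theta_k:\sigma^\vee\to D_{\sigma^\vee,\rho}\circ\rho$ of~\eqref{eqn-def-theta}, which enters the construction of the $\Tor$-$\Theta_k$ in Theorem~\ref{thm-F-lin-case}, has $k$-linear dual (under the identification $D_{\sigma^\vee,\rho}\simeq \Hom_k(T_{\rho,\sigma},k)$) equal to the evaluation morphism $\mathrm{ev}\colon W\otimes\rho\to\sigma$. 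Hence its dualization provides precisely the $\Ext$-$\Theta_k$ of the corollary. The identification of $\Phi_k$ on the two sides is direct from its explicit description via Frobenius twists.
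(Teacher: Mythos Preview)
Your proposal is correct and follows essentially the same approach as the paper's proof: set $\pi=\sigma^\vee$, use the finite-dimensionality hypotheses to identify $D_{\pi,\rho}$ with the dual of $T_{\rho,\sigma}$, verify that the $k$-linear dual of the unit $\theta_k$ is the evaluation map $\mathrm{ev}$, and then dualize Theorem~\ref{thm-F-lin-case} via Lemma~\ref{lm-iso-dual} and Proposition~\ref{prop-Tor-Ext}. The only cosmetic difference is the order of operations: you reduce to $K$ a standard injective before carrying out the dualization, whereas the paper first establishes the special case $K={}^\vee F^\vee$ explicitly (writing out the relevant commutative diagram) and only afterwards passes to arbitrary $K$ by an injective coresolution.
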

\begin{proof}
In this proof, we let $^\vee-=\Hom_k(-,k)$ and we omit the composition operator for functors, e.g. if $F$ is a strict polynomial functor, $^\vee F^\vee$ stands for $({}^\vee-)\circ F\circ ({}^\vee-)$. 

We first prove the result when $K={}^\vee F^\vee$ for some $F$ in $\Gamma\Proj_k\Md$ and $\sigma={}^\vee\pi$ for some additive functor $\pi:\A^\op\to k\Md$. Let  
$\xi: T_{\sigma,\rho}\to {}^\vee D_{\pi,\rho}$ be
the morphism of functors whose component $\xi_v$ at a vector space $v$ is given by the composition
$$\xi_v\;:\;v\otimes\Hom_{\kATor}(\rho,{}^\vee\pi)\xrightarrow[]{\simeq} v\otimes {}^{\vee}(\pi\otimes_{\kATor}\rho)\to {}^\vee\Hom_k(v, \pi\otimes_{\kATor}\rho)$$
where the first map is provided by lemma \ref{lm-iso-dual} and the second map is the canonical map $\mathrm{can}:v\otimes {}^\vee w\to {}^\vee\Hom_k(v,w)$ such that $\mathrm{can}(x\otimes f)(\phi)=f(\phi(x))$, and which is an isomorphism if $v$ is finite dimensional.
One readily checks that the composition 
$$ T_{\rho,\sigma}\rho\xrightarrow[]{\xi\rho} {}^\vee D_{\pi,\rho}\rho\xrightarrow[]{{}^\vee\theta_\FF} {}^{\vee}\pi=\sigma$$
equals the canonical evaluation map $\mathrm{ev}$. The finite dimensionality hypotheses on the values of $\rho$ and $\sigma$ and on $\Hom_{\kATor}(\rho,\sigma)$ respectively imply that: 
\begin{enumerate}
\item[i)] 
$\xi:T_{\sigma,\rho}\to {}^\vee D_{\pi,\rho}$ is an isomorphism,
\item[ii)] $\sigma^*(F^\vee)=F^{\vee}\sigma=F\pi=\pi^*F$,
\item[iii)]$^{\vee}{}^\vee D_{\pi,\rho}$ identifies with $D_{\pi,\rho}$.
\end{enumerate}
Hence we have a commutative diagram
\[
\begin{tikzcd}
\Ext^*_{k[\Proj_k]}(G, {}^\vee F^\vee T_{\rho,\sigma})\ar{r}{\rho^*}&
\Ext^*_{k[\A]}(G\rho, {}^\vee F^\vee T_{\rho,\sigma}\rho)\ar{r}{{}^\vee F^\vee(\mathrm{ev})}&
\Ext^*_{k[\A]}(G\rho, {}^\vee F^\vee \sigma)\\
{}^\vee\Tor_*^{k[\Proj_k]}(F^\vee T_{\rho,\sigma},G)\ar{u}[swap]{\alpha}{\simeq}\ar{r}{{}^\vee\res_\rho}\ar{d}{F^\vee(\xi)}[swap]{\simeq}&
{}^\vee\Tor_*^{k[\A]}(F^\vee T_{\rho,\sigma}\rho,G\rho)
\ar{u}[swap]{\alpha}{\simeq}\ar{r}{F^\vee(\mathrm{ev})}\ar{d}{F^\vee(\xi\rho)}[swap]{\simeq}
&{}^\vee\Tor_*^{k[\A]}(F^\vee\sigma,G\rho)\ar{u}[swap]{\alpha}{\simeq}
\\
{}^\vee\Tor_*^{k[\Proj_k]}(FD_{\pi,\rho},G)\ar{r}{{}^\vee\res_\rho}&
{}^\vee\Tor_*^{k[\A]}(FD_{\pi,\rho}\rho,G\rho) \ar{r}{F(\theta_\FF)}
& {}^\vee\Tor_*^{k[\A]}(F\pi,G\rho)\ar[equal]{u}
\end{tikzcd}
\]
from which we deduce that the graded map $\Theta_k\circ\Phi_k$ fits into a commutative square 
\begin{equation}
\begin{tikzcd}[column sep=large]
\Ext^*_{\gen}(G, T_{\rho,\sigma}^*({}^\vee F^\vee) )\ar{r}{\Theta_k\circ\Phi_k}\ar{d}{\simeq}&
\Ext^*_{k[\A]}(\rho^*G, \sigma^*({}^\vee F^\vee))\ar{d}{\simeq}\\
{}^\vee\Tor_*^{\gen}(D_{\pi,\rho}^*F,G)\ar{r}&{}^\vee\Tor_*^{k[\A]}(\pi^*F,\rho^*G)
\end{tikzcd}\label{cd-dual}
\end{equation}
where the bottom arrow is dual to the comparison map \eqref{eqn-comp-simplified} of theorem \ref{thm-F-lin-case}. Since $\Ext^*_{\kATor}(\rho,\sigma)\simeq {}^\vee\Tor_*^{\kATor}(\pi,\rho)$, we deduce from the latter theorem that this bottom map is $e$-connected, hence $\Theta_k\circ \Phi_k$ is $e$-connected.

The case $K={}^\vee F^\vee$ proves corollary \ref{cor-thm-F-lin} for all strict polynomial functors $K$ with finite dimensional values, in particular for the standard injectives. For an arbitrary $K$ one can consider an injective resolution and the result follows by a standard spectral sequence argument.
\end{proof}

With the same strategy, one can also dualize theorem \ref{thm-gen-comp}. Given a strict polynomial functor $K$, we denote by $K^\ddag$ the strict polynomial functor such that 
$$K^\ddag(v)=\overline{K}\left(\bigoplus_{0\le i<s^2} {}^{(ri)}\big(v\otimes \Hom_{_\kATor}({}^{(rs-rs^2)}\rho, {}^{(-ri)}\sigma)\big)\right)\;.$$
One defines a comparison map in the same fashion as the map of corollary \ref{cor-thm-F-lin}:
\begin{align}
\Ext^*_\gen(G^{(rs^2-rs)},K^\ddag)\to \Ext^*_{k[\A]}(\rho^*G,\sigma^*K)\;.
\label{eq-gen-comp-dual}
\end{align}
The proof of the following corollary is similar to the proof of corollary \ref{cor-thm-F-lin} and is left to the reader.
\begin{cor}\label{cor-thm-gencomp}
Let $k$ be an infinite perfect field of characteristic $p$, containing a finite field $\Fq$ of cardinal $q=p^r$. 
Let $\A$ be a small additive category, let $\rho,\sigma:\A\to \Proj_k$ be two additive functors such that $\Hom_{\kATor}(\rho,\sigma)$ is finite dimensional. Assume that there are positive integers $s$ and $e$ such that 
\[\Ext^j_{\kATor} \left(   {}^{(rs-rs^2)}\rho,  {}^{(ri)}\sigma  \right)=0\]
for $0< j<e$ and $0\le i<s^2$. Assume further that $\A$ is $\Fq$-linear,  that $\rho$ and $\sigma$ are $\Fq$-linear. Then for all strict polynomial functors $G$ and $K$ of degrees less or equal to $q^s$, the map \eqref{eq-gen-comp-dual} is $e$-connected.
\end{cor}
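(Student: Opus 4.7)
The plan is to mimic the dualization argument that produced corollary~\ref{cor-thm-F-lin} from theorem~\ref{thm-F-lin-case}, but starting from the generalized comparison theorem~\ref{thm-gen-comp} in place of theorem~\ref{thm-F-lin-case}. As in that earlier corollary, by a standard spectral sequence argument applied to an injective coresolution of $K$, it suffices to establish the result when $K$ is a standard injective, hence of the form $K={}^\vee F^\vee$ for some $F$ in $\Gamma\Proj_k\Md$, where $^\vee-=\Hom_k(-,k)$. Under the finite-dimensionality hypothesis on $\Hom_{\kATor}(\rho,\sigma)$, one then reduces to the case $\sigma={}^\vee\pi$ for some $\Fq$-linear additive functor $\pi:\A^\op\to\Proj_k$, using that both sides of the comparison map are compatible with direct summands and that $\sigma^*({}^\vee F^\vee)=\pi^*F$.

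The key identification is the natural morphism
\[
\xi_i:\;\Hom_{\kATor}\!\big({}^{(rs-rs^2)}\rho,{}^{(-ri)}\sigma\big)\;\xrightarrow{\ \simeq\ }\;{}^\vee T_i
\]
where $T_i=({}^{(-ri)}\pi)\otimes_{\kATor}({}^{(rs-rs^2)}\rho)$ is the vector space of \eqref{eqn-def-Ti}. It is constructed, as in the proof of corollary~\ref{cor-thm-F-lin}, by composing $\sigma={}^\vee\pi$ with lemma~\ref{lm-iso-dual} and commuting $^\vee-$ past ${}^{(-ri)}-$. The finite-dimensionality assumption makes $\xi_i$ an isomorphism. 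Feeding these $\xi_i$ (tensored with $v$ and twisted by ${}^{(ri)}-$) into the argument of $K''$ and using the canonical isomorphism $v\otimes{}^\vee W\xrightarrow{\mathrm{can}}{}^\vee\Hom_k(v,W)$ (an isomorphism on finite-dimensional $v$) yields a natural isomorphism $K''\simeq{}^\vee F'$, where $F'$ is the strict polynomial functor of \eqref{eqn-def-Fprime} attached to $\pi,\rho,F$. One verifies, as in the proof of corollary~\ref{cor-thm-F-lin}, that the composition $T_{\rho,\sigma}\rho\to{}^\vee D_{\pi,\rho}\rho\to\sigma$ giving the evaluation map has an analogue here: the composite of $\xi$ with the twisted $^\vee\theta'_k$ of \eqref{eqn-def-thetaprime} recovers the canonical evaluation used to define the map \eqref{eq-gen-comp-dual}.

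With these identifications in hand, I would then assemble a commutative diagram analogous to \eqref{cd-dual}: its top row is the comparison map $\Theta_k\circ\Phi_k$ of \eqref{eq-gen-comp-dual}, its bottom row is the $k$-linear dual of the generalized comparison map \eqref{eqn-gencompmap} of section~\ref{subsec-gen-comp-map}, and the vertical arrows are the duality isomorphisms provided by lemma~\ref{lm-iso-dual} together with $K''\simeq{}^\vee F'$. Note that the $\Ext$--$\Tor$ duality of proposition~\ref{prop-Tor-Ext} applies cleanly here because the additional operator ${}^\vee F^\vee$ appearing on the target is computed via an injective coresolution, and the comparison maps $\Lambda_k,\Theta'_k,\Phi_k$ were all constructed from elementary arrows ($\diag$, $\theta_k$, Frobenius precomposition, restriction) which each dualize to their counterparts in \eqref{eq-gen-comp-dual}. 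The hypothesis $\Ext^j_{\kATor}({}^{(rs-rs^2)}\rho,{}^{(ri)}\sigma)=0$ for $0<j<e$, $0\le i<s^2$ translates via lemma~\ref{lm-iso-dual} and the identification $\sigma={}^\vee\pi$ into precisely the Tor-vanishing hypothesis $\Tor^{\kATor}_j({}^{(-ri)}\pi,{}^{(rs-rs^2)}\rho)=0$ required by theorem~\ref{thm-gen-comp}; so the bottom map of the dual diagram is $e$-connected, and the top map is therefore $e$-connected as well.

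The main obstacle, as so often with dualization of generalized comparison statements, is not any single step but the careful bookkeeping of Frobenius twists: I must check that the twist $^{(ri)}$ inside $K''$ corresponds under $\xi_i$ to the twist $^{(ri)}$ inside $F'$, that the outer twist $G^{(rs^2-rs)}$ matches the twist on $G$ in \eqref{eqn-gencompmap}, and that the canonical morphism $\mathrm{can}$ above commutes with all twists applied. Once these compatibilities are recorded (using in particular that $^{(ri)}-$ commutes with $^\vee-$ up to canonical isomorphism, since $k$ is perfect and $^{(ri)}-$ is an autoequivalence), the diagram chase reduces the statement to theorem~\ref{thm-gen-comp} exactly as corollary~\ref{cor-thm-F-lin} was deduced from theorem~\ref{thm-F-lin-case}.
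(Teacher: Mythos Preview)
Your approach is correct and is exactly what the paper intends: the paper gives no proof of this corollary beyond the sentence ``similar to the proof of corollary~\ref{cor-thm-F-lin} and left to the reader'', and your outline carries out precisely that dualization, replacing the input theorem~\ref{thm-F-lin-case} by theorem~\ref{thm-gen-comp}.

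One bookkeeping point deserves a second look, since you yourself flag the Frobenius-twist accounting as the main obstacle. The identity you use for the $\Ext$-to-$\Tor$ translation is ${}^{(j)}({}^\vee\pi)\simeq({}^{(j)}\pi)^\vee$ (coming from ${}^{(j)}\Hom_k(v,w)\simeq\Hom_k({}^{(j)}v,{}^{(j)}w)$ together with ${}^{(j)}k\simeq k$), \emph{not} $({}^{(-j)}\pi)^\vee$. So lemma~\ref{lm-iso-dual} turns $\Ext^j_{\kATor}({}^{(rs-rs^2)}\rho,{}^{(-ri)}\sigma)=0$ into the $\Tor$-vanishing of theorem~\ref{thm-gen-comp}, which matches the twist ${}^{(-ri)}\sigma$ appearing in the definition of $K''$. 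The stated hypothesis of the corollary carries ${}^{(ri)}\sigma$; you should reconcile this (it looks like a sign slip in the statement rather than a gap in your argument, since your $\xi_i$ already uses ${}^{(-ri)}\sigma$ consistently with $K''$).
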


\begin{rk}\label{rk-limit-pb}
The finite dimensionality hypotheses on the values of $\rho$, $\sigma$ and on $\Hom_{\kATor}(\rho,\sigma)$ are necessary in the proof of corollary \ref{cor-thm-F-lin} in order that i), ii) and iii) are satisfied. Without them, we would not obtain a commutative square \eqref{cd-dual} with vertical \emph{isomorphisms}. Similarly, the finite dimensionality hypotheses are needed for the proof of corollary \ref{cor-thm-gencomp}. Instead of dualizing, one could try to prove corollaries \ref{cor-thm-F-lin} and \ref{cor-thm-gencomp} by a direct approach, following the same strategy as the proofs of theorems \ref{thm-gen-comp} and \ref{thm-F-lin-case}. However, such a direct approach seems to raise inextricable problems with (co)limits.
\end{rk}

\section{Applications}\label{sec-applic}
The goal of this section is to prove applications of the generalized comparison theorem, or to be more specific, of the corollaries \ref{cor-infinite-perfect-field} and \ref{cor-infinite-perfect-field-Ext} which deal with the very specific case $\A=\Proj_k$. We first generalize the computations of \cite{FFSS} over an infinite perfect field. We also generalize some results of \cite{FF} to infinite perfect fields. Finally, the most important applications are probably theorems \ref{thm-comp-GL} and \ref{thm-comp-OSp}. These theorems are the analogues for classical groups over infinite perfect fields of the main result of Cline Parshall Scott and van der Kallen \cite{CPSVdK} which compares the cohomology of and algebraic group with the cohomology of its underlying discrete group.
Throughout the section, $k$ is a field of positive characteristic $p$.

\subsection{A sample of functor homology computations}\label{subsec-sample}
Many computations of generic $\Ext$ can be found in the literature. Thus, the isomorphism of corollary \ref{cor-infinite-perfect-field-Ext}
provides many concrete $\Ext$-computations in $k[\Proj_k]\Md$ over an infinite field. We briefly illustrate this fact here. 

We first point out that computations of generic $\Ext$ between strict polynomial functors are insensitive to field extensions. To be more specific, if $k\to L$ is any field extension, the base change formula \cite[Section 2.7]{SFB} yields an isomorphism
\[\Ext^*_{\gen,k}(F,G)\otimes L \simeq \Ext^*_{\gen,L}(F_L,G_L)\]
where the generic extensions on the left hand side are computed in the $k$-category $\Gamma\Proj_k\Md$, while the generic extensions on the right hand side are computed in the $L$-category $\Gamma_L(\Proj_L)\Md$. The functors $F_L$ and $G_L$ obtained by base change from $F$ and $G$ are usually easy to compute, e.g. if $F$ is the $d$-th symmetric power over $k$ then $F_L$ is the $d$-th symmetric power over $L$. In particular, all the computations of generic $\Ext$ over finite fields actually hold over arbitrary fields $k$ of positive characteristic, and can therefore be converted into computations in $k[\Proj_k]\Md$ by corollary \ref{cor-infinite-perfect-field-Ext} when $k$ is infinite and perfect.
This is the case of the computations of generic $\Ext$ given in \cite[Thm 5.8]{FFSS} (which are established in \cite{TouzeBar} by different methods, without spectral sequences). To be more specific, let $C^*$ be a graded coalgebra in $k[\Proj_k]\Md$ and let $A^*$ be a graded algebra in $k[\Proj_k]\Md$. We consider the trigraded vector space
$$\mathrm{E}^*(C^*,A^*):=\bigoplus_{i,d,e\ge 0}\Ext^i_{k[\Proj_k]}(C^d,A^e)$$
equipped with the algebra structure given by convolution: 
$$\mathrm{E}^i(C^d,A^e)\otimes \mathrm{E}^j(C^f,A^g)\xrightarrow[]{\cup}\mathrm{E}^{i+j}(C^d\otimes C^f,A^e\otimes A^g)\to \mathrm{E}^{i+j}(C^{d+f},A^{e+g})\;.$$
By letting $s\to \infty$ in \cite[Thm 15.1]{TouzeBar} and by applying corollary \ref{cor-infinite-perfect-field-Ext} one obtains the following infinite field version of the computations of \cite[Thm 6.3]{FFSS}.
\begin{cor}\label{cor-FFSS-infinite}
Let $k$ be an infinite perfect field of positive characteristic $p$, and let $r$ be a nonnegative integer. Let $V_{s,r}$ denote the trigraded vector space with homogeneous basis $(e_i)_{i\ge 0}$ where each $e_i$ is placed in tridegree $(2ip^r+sp^r-s,1,p^r)$.
Then we have isomorphisms of trigraded algebras:
\begin{align*}
&\mathrm{E}^*(\Gamma^{*(r)},S^*)\simeq S(V_{0,r})\;,
&&
\mathrm{E}^*(\Gamma^{*(r)},\Lambda^*)\simeq \Lambda(V_{1,r})
\;, \\
&\mathrm{E}^*(\Lambda^{*(r)},S^*)\simeq \Lambda(V_{0,r})\;,
&&
\mathrm{E}^*(\Lambda^{*(r)},\Lambda^*)\simeq \Gamma(V_{1,r})
\;, \\
&\mathrm{E}^*(S^{*(r)},S^*)\simeq \Gamma(V_{0,r})\;,
&&
\mathrm{E}^*(\Gamma^{*(r)},\Gamma^*)\simeq \Gamma(V_{2,r})
\;. 
\end{align*}
\end{cor}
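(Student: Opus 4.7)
The plan is to deduce the corollary from Corollary \ref{cor-infinite-perfect-field-Ext} by transporting known computations of generic Ext to the ordinary functor category. I would organize the proof in four steps.

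First, I would note that the graded vector space $\mathrm{E}^*(C^*,A^*)$ has a generic analogue $\mathrm{E}^*_{\gen}(C^*,A^*)$ where the ordinary $\Ext^i_{k[\Proj_k]}(C^d,A^e)$ is replaced by $\Ext^i_{\gen}(C^d,A^e)$; this generic version carries a convolution product defined in exactly the same way, using the multiplication of $A^*$ and the comultiplication of $C^*$ together with the K\"unneth-style external product on generic Ext. The comparison map $\Phi_k$ of Corollary \ref{cor-infinite-perfect-field-Ext} is natural in both arguments, and naturality implies that $\Phi_k$ intertwines the convolution products on both sides; this assembles into an isomorphism of trigraded algebras
\[
\mathrm{E}^*_{\gen}(C^*,A^*) \xrightarrow[\simeq]{\Phi_k} \mathrm{E}^*(C^*,A^*).
\]

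Second, I would invoke the base change principle for generic Ext recalled just before the statement: for any extension $k\to L$, one has $\Ext^*_{\gen,k}(F,G)\otimes L \simeq \Ext^*_{\gen,L}(F_L,G_L)$, and the base change functor sends $\Gamma^d$, $S^d$, $\Lambda^d$ to the corresponding functors over $L$. Applied to $L = k$ seen as an extension of the prime field $\Fp$, this shows that the generic Ext algebras appearing in the six formulas are obtained from their counterparts over $\Fp$ by tensoring with $k$, so that proving the formulas over $\Fp$ (or any finite field) implies them over an arbitrary field of characteristic $p$.

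Third, I would appeal to the computation \cite[Thm 15.1]{TouzeBar}, which for every positive integer $s$ gives algebra isomorphisms for the Ext between $F^{(r)}$ and $G^{(r+s)}$ (with $F$, $G$ among $\Gamma^*$, $S^*$, $\Lambda^*$); letting $s\to \infty$ yields precisely the six generic Ext algebras of the corollary, with the indicated basis vectors $e_i$ in the correct tridegrees. This is exactly the infinite-field analogue of the computation of \cite[Thm 6.3]{FFSS}.

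Finally, combining the isomorphism of step one with the computations of step three (over $\Fp$, then base-changed to $k$ via step two) gives the six desired algebra isomorphisms. The only potentially nontrivial point of the argument is the multiplicativity of $\Phi_k$ from step one: it has to be checked that the construction of $\Phi_k$ (restriction along a Frobenius twist composed with the inclusion of strict polynomial functors into ordinary functors) is compatible with external products of Ext classes, since then the convolution product, which is built from the external product and the (co)multiplication morphisms, is automatically preserved. This compatibility is a direct consequence of the definition of $\Phi_k$ and of the naturality of external products with respect to restriction along $k$-functors, so it is the main bookkeeping obstacle but presents no conceptual difficulty.
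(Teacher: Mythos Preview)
Your proposal is correct and follows essentially the same route as the paper, which dispatches the corollary in one sentence: let $s\to\infty$ in \cite[Thm 15.1]{TouzeBar} and apply Corollary~\ref{cor-infinite-perfect-field-Ext}. Your steps simply unpack this sentence, making explicit the base change reduction (which the paper records just before the statement) and the multiplicativity of $\Phi_k$ needed to preserve the convolution algebra structure; both are straightforward but worth spelling out, and your identification of the multiplicativity check as the only point requiring care is accurate.
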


The approach of \cite{TouzeBar} relies on a formula computing extensions between twisted strict polynomial functors, see \cite{Chalupnik}, \cite{TouzeUnivNew} and \cite{Touze-Survey}. Namely, if $v$ is a finite-dimensional vector space and $G$ is a strict polynomial functor, we let $G_v$ be the strict polynomial functor `with parameter $v$' defined by $G_v(-):=G(v\otimes -)$. If $v$ is graded, then $G_v$ inherits a grading. It is the unique grading natural with respect to $G$ and $v$, which coincides with the usual grading on symmetric powers of a graded vector space see \cite[Section 2.5]{TouzeENS} and \cite[Section 4.2]{Touze-Survey}. Let $E_r$ denote the finite-dimensional graded vector space
$E_r=\Ext_{\Gamma^{p^r}\Proj_k}^*(I^{(r)},I^{(r)})$
which equals $k$ in degrees $2i$ for $0\le i<p^r$ and which is zero in the other degrees. Then we have a graded isomorphism, where the degree on the right hand side is obtained by totalizing the $\Ext$-degree with the degree of the functor $G_{E_r}$ (that is, if $G_{E_r}^j$ is the component of degree $j$ then the summand $\Ext^i_{\Gamma\Proj_k}(F,G_{E_r}^j)$ is placed in degree $i+j$):
\[\Ext^*_{\Gamma\Proj_k}(F^{(r)},G^{(r)})\simeq \Ext^*_{\Gamma\Proj_k}(F,G_{E_r})\;.\]

We can extend the parametrization of a strict polynomial functor $G$ to infinite-dimensional graded vector spaces $v$ by letting 
$G_v:=\colim G_u$, where the colimit is taken over the poset of all finite-dimensional graded vector spaces $u\subset v$. By taking the colimit over $r$ in the previous isomorphism, and by using corollary \ref{cor-infinite-perfect-field-Ext} we obtain the following result (in which no Frobenius twist appear in the $\Ext$ of the right hand side).
\begin{cor}\label{cor-calcul-kProjk}
Let $k$ be an infinite perfect field of positive characteristic. Let $E_\infty$ be the graded vector space equal to $k$ in even degrees and to $0$ in odd degrees. There is a graded isomorphism, natural with respect to the strict polynomial functors $F$ and $G$, and where the degree on the right hand side is computed by totalizing the $\Ext$-degree with the degree of the functor $G_{E_\infty}$:
\[\Ext^*_{k[\Proj_k]} (F,G)\simeq \Ext^*_{\Gamma\Proj_k}(F,G_{E_\infty})\;.\]
\end{cor}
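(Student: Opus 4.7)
The plan is to derive the desired isomorphism by taking a colimit over $r$ of the twisted $\Ext$ formula $\Ext^*_{\Gamma\Proj_k}(F^{(r)},G^{(r)})\simeq \Ext^*_{\Gamma\Proj_k}(F,G_{E_r})$ recalled just before the statement. On the left hand side, definition-proposition \ref{pdef-gen-Ext} identifies the colimit with $\Ext^*_\gen(F,G)$, which corollary \ref{cor-infinite-perfect-field-Ext} in turn identifies with $\Ext^*_{k[\Proj_k]}(F,G)$. So the work to be done is entirely on the right hand side.

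First I would check that the twisted formula of \cite{Chalupnik,TouzeUnivNew,Touze-Survey} is natural in $r$, in the sense that the transition map on the left given by precomposition by $I^{(1)}$ corresponds on the right to the morphism $G_{E_r}\to G_{E_{r+1}}$ induced by functoriality of the parametrization in the graded vector space together with the canonical inclusion $E_r\hookrightarrow E_{r+1}$ (this last inclusion being the injection established in definition-proposition \ref{pdef-gen-Ext}). Granted this naturality, the colimit of the right hand side over $r$ becomes $\colim_r \Ext^*_{\Gamma\Proj_k}(F, G_{E_r})$, and it remains to identify this colimit with $\Ext^*_{\Gamma\Proj_k}(F, G_{E_\infty})$.

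For this identification I would work bidegree by bidegree, since each total-degree $n$ piece of the right hand side is a finite direct sum $\bigoplus_{i+j=n} \Ext^i_{\Gamma\Proj_k}(F, G_{E_r}^j)$. The key observation is that $G_v^j$ depends functorially only on the truncation $v^{\le j}$ of the graded vector space $v$ in degrees at most $j$, because the internal grading on $G_v$ is the standard one totalizing the grading of $v$ through the polynomial structure of $G$. Since $E_r^{\le j}$ coincides with $E_\infty^{\le j}$ as soon as $2(p^r-1)\ge j$, the induced map $G_{E_r}^j\to G_{E_\infty}^j$ is an isomorphism for $r$ sufficiently large, and the colimit in each bidegree $(i,j)$ is therefore reached and equal to $\Ext^i_{\Gamma\Proj_k}(F, G_{E_\infty}^j)$. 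Summing over $i+j=n$ yields the desired isomorphism in each total degree.

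The main obstacle I anticipate is the bookkeeping in the compatibility check: one must verify that the diagram tying together the transition map on the left of the twisted formula, the inclusion $E_r\hookrightarrow E_{r+1}$, and the induced morphism on the parametrized functors actually commutes. Once this is in place, naturality with respect to $F$ and $G$ follows from the naturality of each ingredient, and the stabilization argument above delivers the graded isomorphism asserted in the statement.
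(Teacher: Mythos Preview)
Your proposal is correct and follows essentially the same approach as the paper: the paper's proof consists of the single sentence preceding the corollary, which says to take the colimit over $r$ in the twisted $\Ext$ formula and apply corollary \ref{cor-infinite-perfect-field-Ext}. You have simply supplied the details the paper leaves implicit, in particular the compatibility of the transition maps and the degreewise stabilization argument showing that $\colim_r \Ext^*_{\Gamma\Proj_k}(F,G_{E_r})\simeq \Ext^*_{\Gamma\Proj_k}(F,G_{E_\infty})$.
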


\subsection{Bifunctor cohomology}\label{subsec-bifunctor}
The words `bifunctor cohomology' are sometimes used \cite{FF,TvdK} to denote the Hochschild cohomology of $k[\Proj_k]$ or $\Gamma^d\Proj_k$.
The study of bifunctor cohomology was initiated in \cite{FF} for a finite field $k$. In this subsection, we extend two of the main results of \cite{FF} to infinite perfect fields of positive characteristic, in propositions \ref{pr-infinite-perfect-field-HH} and \ref{pr-Dja-HH}. We also provide some explicit bifunctor homology computations in corollary \ref{cor-calculbif}.

Let $\K$ denote either $k[\Proj_k]$ or $\Gamma^d\Proj_k$. The bifunctor cohomology of $B\in \K^\op\otimes\K\Md$ is defined as the extensions
\[\mathrm{HH}^*(\K,B):= \Ext^*_{\K^\op\otimes\K}(\K,B)\]
where the first argument in the $\Ext$ is the bifunctor given by homomorphisms in $\K$. Thus, if $\mathrm{gl}(v,w):=\Hom_k(v,w)$, then $\K(v,w)=k[\mathrm{gl}(v,w)]$ in the case of ordinary functors and $\K(v,w)=\Gamma^d(\mathrm{gl}(v,w))$ in the case of strict polynomial functors. If $B$ has \emph{separable type}, that is, if $B(v,w)=\Hom_k(F(v),G(w))$ for some functors $F$ and $G$, we have isomorphisms natural with respect to $F$ and $G$ \cite[Prop 2.2]{FF}:
\begin{align}\mathrm{HH}^*(\K,B)\simeq \Ext^*_{\K}(F,G)\;.\label{eqn-sep}
\end{align}
These isomorphisms can often be used to reduce questions regarding bifunctor cohomology to questions regarding functor cohomology, especially for strict polynomial bifunctors since the standard injectives of the category have separable type.

Just like for functors of one variable, we have a forgetful functor 
\[k[\Proj_k^\op]\otimes k[\Proj_k]\to \Gamma^d\Proj_k^\op\otimes\Gamma^d\Proj_k\]
induced by restriction along the functor $\gamma^d\otimes\gamma^d$, where $\gamma^d$ is defined in example \ref{ex-ordvsstrict}. If $k$ is infinite, this forgetful functor is fully faithful.
By restricting extensions along $\gamma^d\otimes\gamma^d$ and by pulling back along the morphism of functors $\gamma^d(\mathrm{gl}): k[\mathrm{gl}]\to\Gamma^d\mathrm{gl}$ we obtain a restriction map:
\[\mathrm{HH}^*(\Gamma^{d}\Proj_k,B)\to \mathrm{HH}^*(k[\Proj_k],B)\;.\]
For all $r\ge 0$, we let $\Phi_k'$ denote the composition of this restriction map together with the isomorphism induced by restriction along the $(-r)$-th Frobenius twist and by the canonical isomorphism $k[\mathrm{gl}]^{(-r)}=k[\mathrm{gl}^{(-r)}]\simeq k[{}^{(-r)}\mathrm{gl}]=k[\mathrm{gl}]$:
\begin{align}
\Phi_k':\mathrm{HH}^*(\Gamma^{dp^r}\Proj_k,B^{(r)})\to \mathrm{HH}^*(k[\Proj_k],B^{(r)})\xrightarrow[]{\simeq} \mathrm{HH}^*(k[\Proj_k],B).\label{eqn-phiprimek}
\end{align}
The next proposition is the analogue of \cite[Thm 7.6]{FF} for infinite perfect fields.
\begin{pr}\label{pr-infinite-perfect-field-HH}
Let $k$ be an infinite perfect field of positive characteristic $p$ and let $B$ be a strict polynomial bifunctor in $\Gamma^d\Proj_k^\op\otimes \Gamma^d\Proj_k\Md$. Then the map \eqref{eqn-phiprimek} is $2p^r$-connected.
\end{pr}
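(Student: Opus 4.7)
My plan is to reduce the assertion to the case where $B$ has \emph{separable type} and then treat that case using the strong comparison ingredients of Sections~\ref{sec-Pol-Fq} and~\ref{sec-generalized}. A direct computation of the standard injectives of the bifunctor category $\Gamma^d\Proj_k^\op\otimes\Gamma^d\Proj_k\Md$ (using $k$ as injective cogenerator) shows that each has the form
\[
I^{(v_0,w_0)}(v,w)\simeq\Hom_k\!\left(\Gamma^d\mathrm{gl}(v_0,v),\;\Hom_k(\Gamma^d\mathrm{gl}(w,w_0),k)\right),
\]
hence is separable of the form $\Hom_k(F(v),G(w))$ with $F,G$ covariant strict polynomial functors. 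Since $-^{(r)}$ is exact, any coresolution $J^{\bullet}\to B$ by direct products of standard injectives yields a coresolution $J^{\bullet\,(r)}\to B^{(r)}$ in $\Gamma^{dp^r}\Proj_k^\op\otimes\Gamma^{dp^r}\Proj_k\Md$.

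For a separable $B(v,w)=\Hom_k(F(v),G(w))$, the isomorphism~\eqref{eqn-sep} identifies $\mathrm{HH}^*(\Gamma^{dp^r}\Proj_k,B^{(r)})$ with $\Ext^*_{\Gamma^{dp^r}\Proj_k}(F^{(r)},G^{(r)})$ and $\mathrm{HH}^*(k[\Proj_k],B)$ with $\Ext^*_{k[\Proj_k]}(F,G)$. Unwinding definitions, I expect $\Phi_k'$ to correspond under these identifications to the composition
\[
\Ext^*_{\Gamma^{dp^r}\Proj_k}(F^{(r)},G^{(r)})\longrightarrow \Ext^*_{\gen}(F,G)\xrightarrow[\simeq]{\Phi_k}\Ext^*_{k[\Proj_k]}(F,G),
\]
where the first arrow is the canonical stabilization to generic extensions and the second is the isomorphism of Corollary~\ref{cor-infinite-perfect-field-Ext}. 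By Proposition-Definition~\ref{pdef-gen-Ext}, the first arrow is injective in all degrees and an isomorphism in degrees $<2p^r$; combined with the isomorphism given by $\Phi_k$, this shows that $\Phi_k'$ is $2p^r$-connected in the separable case.

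For a general $B$, I pass to the coresolution above and consider the two hypercohomology spectral sequences with first pages $\mathrm{HH}^q(\Gamma^{dp^r}\Proj_k,J^{p\,(r)})$ and $\mathrm{HH}^q(k[\Proj_k],J^p)$, converging respectively to $\mathrm{HH}^{p+q}(\Gamma^{dp^r}\Proj_k,B^{(r)})$ and $\mathrm{HH}^{p+q}(k[\Proj_k],B)$. The morphism of spectral sequences induced by $\Phi_k'$ is $2p^r$-connected on each first-page term by the separable case, using that $\mathrm{HH}^*(\K,-)$ preserves products in its second variable so that direct products of separable bifunctors still fall within the separable analysis. A standard spectral-sequence argument then transfers the $2p^r$-connectedness to the abutments.

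The main obstacle I anticipate is the compatibility check in the separable case: verifying rigorously that $\Phi_k'$, built from restriction along $\gamma^d\otimes\gamma^d$ and the Frobenius-twist identification $k[\mathrm{gl}]^{(-r)}\simeq k[\mathrm{gl}]$, really corresponds under~\eqref{eqn-sep} to the composition of the stabilization to $\Ext_{\gen}$ with $\Phi_k$. This requires tracing through the naturality of~\eqref{eqn-sep} with respect to both the forgetful functor from strict polynomial to ordinary bifunctors and Frobenius twists; once this matching is established, the rest of the argument is routine homological bookkeeping.
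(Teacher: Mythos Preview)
Your proposal is correct and follows essentially the same approach as the paper: reduce to standard injectives (which are separable), identify $\Phi_k'$ in the separable case with the composition of the canonical inclusion into $\Ext^*_{\gen}$ and the isomorphism $\Phi_k$ of Corollary~\ref{cor-infinite-perfect-field-Ext}, and conclude by Proposition-Definition~\ref{pdef-gen-Ext}. The paper's version is terser and omits the explicit compatibility check you flag as the main obstacle, simply asserting that $\Phi_k'$ ``identifies with'' the displayed composition.
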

\begin{proof}
By considering a coresolution of $B$ by products of standard injectives, we reduce the proof to the case where $B$ is a standard injective, hence when $B=\Hom_k(F,G)$. In this case, the map $\Phi_k'$ identifies with the composition
\[\Ext^*_{\Gamma^dp^r\Proj_k}(F^{(r)},G^{(r)})\to \Ext^*_\gen(F,G)\xrightarrow[]{\Phi_k} \Ext^*_{k[\Proj_k]}(F,G)\]
where the first map is the canonical inclusion (which is $2p^r$-connected by proposition-definition \ref{pdef-gen-Ext}) and the isomorphism $\Phi_k$ of corollary \ref{cor-infinite-perfect-field-Ext}. Whence the result.
\end{proof}

Proposition \ref{pr-infinite-perfect-field-HH} should be seen as a way to obtain explicit bifunctor cohomology over $k[\Proj_k]$, in the 
 spirit of section \ref{subsec-sample}. 
 For example, we obtain the following computation by letting $r\to \infty$ in \cite[Thm 1]{TouzeCRAS} and applying proposition \ref{pr-infinite-perfect-field-HH}. 
\begin{cor}\label{cor-calculbif}
Let $E_\infty$ denote the graded vector space which equals $k$ in even degrees and $0$ in odd degrees and consider $k[\Si_d]$ as a vector space placed in degree zero. Let the symmetric group $\Si_d$ act on $E_\infty^{\otimes d}$ by permuting the factors of the tensor product, and by conjugation on $k[\Si_d]$. There is an isomorphism of graded vector spaces
\[\mathrm{HH}^*(k[\Proj_k],S^d\mathrm{gl})\simeq (E_\infty^{\otimes d})\otimes_{\Si_d}k[\Si_d]\;.\]
\end{cor}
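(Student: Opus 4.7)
The plan is to combine the strong comparison result of proposition \ref{pr-infinite-perfect-field-HH} with the strict polynomial bifunctor cohomology computation of \cite[Thm 1]{TouzeCRAS}, and pass to the limit $r\to \infty$.

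First, I would apply proposition \ref{pr-infinite-perfect-field-HH} to the strict polynomial bifunctor $B = S^d\mathrm{gl}$, which yields for every $r \ge 0$ a $2p^r$-connected comparison map
\[\Phi'_k: \mathrm{HH}^*(\Gamma^{dp^r}\Proj_k, (S^d\mathrm{gl})^{(r)}) \longrightarrow \mathrm{HH}^*(k[\Proj_k], S^d\mathrm{gl}).\]
In particular, fixing a cohomological degree $n$ and choosing $r$ large enough that $2p^r > n$, this is an isomorphism in degree $n$.

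Next, \cite[Thm 1]{TouzeCRAS} provides for each $r$ an explicit isomorphism of graded vector spaces
\[\mathrm{HH}^*(\Gamma^{dp^r}\Proj_k, (S^d\mathrm{gl})^{(r)}) \simeq (E_r^{\otimes d}) \otimes_{\Si_d} k[\Si_d],\]
where $E_r = \Ext^*_{\Gamma^{p^r}\Proj_k}(I^{(r)}, I^{(r)})$ is the graded vector space recalled in section \ref{subsec-sample}, $\Si_d$ permutes the tensor factors and acts by conjugation on $k[\Si_d]$. By proposition-definition \ref{pdef-gen-Ext}, the canonical inclusion $E_r \hookrightarrow E_\infty$ is an isomorphism in degrees strictly less than $2p^r$, hence the induced map
\[(E_r^{\otimes d}) \otimes_{\Si_d} k[\Si_d] \longrightarrow (E_\infty^{\otimes d}) \otimes_{\Si_d} k[\Si_d]\]
is also an isomorphism in each such degree. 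Composing these three maps yields the desired identification in every fixed cohomological degree $n$, which is all that is needed for a graded vector space isomorphism.

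The only subtle point, which I expect to be the main obstacle, is a bookkeeping one: one must verify that the $\Si_d$-equivariant structure on the right-hand side of \cite[Thm 1]{TouzeCRAS} is natural with respect to the transition maps $E_r \hookrightarrow E_{r+1}$ induced by precomposition with the Frobenius twist $I^{(1)}$, so that the $\Si_d$-action obtained in the limit is indeed the one described in the statement (permutation of tensor factors together with conjugation on $k[\Si_d]$) rather than a twisted version. This compatibility is essentially built into the naturality properties of loc. cit., but is worth making explicit.
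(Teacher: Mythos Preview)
Your proposal is correct and follows precisely the argument the paper gives: apply proposition~\ref{pr-infinite-perfect-field-HH} to $B=S^d\mathrm{gl}$, invoke \cite[Thm~1]{TouzeCRAS} for each $r$, and let $r\to\infty$. The paper states this in one sentence without spelling out the connectivity bookkeeping you describe, so your version is a more detailed rendering of the same proof.
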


We finish our section on bifunctor cohomology by describing its relation  with the cohomology of $\GL_n(k)$. This relation provides a motivation for computing bifunctor cohomology over $k[\Proj_k]$, and we will also need it in the proof of theorem \ref{thm-comp-GL} in section \ref{subsec-ratdiscrete}. We will need the next lemma, which follows from the $p$-local Hurewicz theorem \cite[Thm 1.8.1]{Neisendorfer} and the unique $p$-divisibility of the homotopy groups of  $B\GL(k)^+$ \cite[Lm 5.2 and Cor 5.5]{Kratzer}.
\begin{lm}\label{lm-vanish-HGL}
Let $k$ be a perfect field of positive characteristic $p$. The mod $p$ homology of $\GL_\infty(k)$ is zero in positive degrees.
\end{lm}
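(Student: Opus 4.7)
The strategy is to translate the homology of the infinite general linear group into algebraic $K$-theory via Quillen's plus construction, and then invoke classical $p$-divisibility results for the $K$-theory of perfect fields in characteristic $p$.

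First, I would recall that the acyclic map $B\GL_\infty(k) \to B\GL_\infty(k)^+$ induces an isomorphism on homology with any coefficients, so that $H_*(\GL_\infty(k);\Fp) \simeq H_*(B\GL_\infty(k)^+;\Fp)$. The space $B\GL_\infty(k)^+$ is simple (it is an infinite loop space), and its positive-degree homotopy groups are the Quillen $K$-groups: $\pi_i B\GL_\infty(k)^+ = K_i(k)$ for $i\ge 1$.

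Next, I would invoke the classical theorem that for a perfect field $k$ of characteristic $p$, each $K_i(k)$ with $i\ge 1$ is uniquely $p$-divisible, meaning $\Fp\otimes_{\mathbb{Z}} K_i(k) = 0$ and $\Tor_1^{\mathbb{Z}}(\Fp,K_i(k)) = 0$. For $i=1$ this is elementary: $K_1(k)=k^\times$, $p$-divisibility is equivalent to $k$ being perfect, and the identity $x^p-1=(x-1)^p$ in characteristic $p$ shows that $k^\times$ has no nontrivial $p$-torsion. For $i\ge 2$, this is a consequence of the Geisser--Levine computation of $p$-adic motivic cohomology in characteristic $p$, combined with the Bloch--Kato--Gabber vanishing of mod $p$ Milnor $K$-theory for perfect fields and the motivic Atiyah--Hirzebruch spectral sequence.

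Finally, translate back to homology. Since $B\GL_\infty(k)^+$ is simple and its positive-degree homotopy groups $A=K_i(k)$ all satisfy $\Fp\otimes_{\mathbb{Z}}A = 0 = \Tor_1^{\mathbb{Z}}(\Fp,A)$, one concludes $\widetilde{H}_*(B\GL_\infty(k)^+;\Fp) = 0$ by an induction up the Postnikov tower: each Postnikov fiber is an Eilenberg--MacLane space $K(K_i(k),i)$ whose reduced mod $p$ homology vanishes by lemma \ref{lm-EML-vanish}, and the Serre spectral sequence propagates this vanishing to the total space.

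\textbf{Main obstacle.} The delicate point is the $p$-divisibility of $K_i(k)$ for general perfect fields of characteristic $p$ when $i\ge 2$. The naive reduction to finite subfields via filtered colimits (where Quillen's computation $K_{2i-1}(\Fq) = \mathbb{Z}/(q^i-1)$ immediately gives unique $p$-divisibility) works only when $k$ is algebraic over $\Fp$; it fails for perfect fields containing transcendentals, such as the perfect closure of $\Fp(t)$. So one genuinely has to invoke the motivic-cohomological input (or an equivalent deep result).
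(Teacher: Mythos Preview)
Your proposal is correct and follows essentially the same route as the paper: reduce to unique $p$-divisibility of $K_i(k)$ via Geisser--Levine, then pass back to homology by a Hurewicz-type argument. The differences are only in packaging. For the $K$-theory step, the paper argues slightly more elementarily on the Milnor side: $K_n^M(k)$ is $p$-divisible simply because it is a quotient of $(k^\times)^{\otimes n}$ and $k^\times$ is $p$-divisible for $k$ perfect, while the absence of $p$-torsion is Izhboldin's theorem; you reach the same conclusion through Bloch--Kato--Gabber and the motivic spectral sequence. For the passage from homotopy to homology, the paper cites the $p$-local Hurewicz theorem from Neisendorfer, whereas your Postnikov-tower argument via lemma~\ref{lm-EML-vanish} is equivalent and has the virtue of being self-contained within the paper.
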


%
If $B$ is an object of $k[\Proj_k^\op]\otimes k[\Proj_k]\Md$, then $B(k^n,k^n)$ is endowed with an action of $\GL_n(k)$. Namely, an element $g\in \GL_n(k)$ acts as $B(g^{-1},g)$ on $B(k^n,k^n)$. For example $\GL_n(k)$ acts by conjugation on $\mathrm{gl}(k^n,k^n)=\End_k(k^n)$. The identity of $k^n$ is invariant under conjugation, hence we have a morphism of representations
\[f_n:k\to k[\mathrm{gl}](k^n,k^n)=k[\End_k(k^n)]\]
defined by $f(\lambda)=\lambda f(\mathrm{id}_{k^n})$. Then evaluation on $k^n$ and pullback along $f_n$ yields a graded map
\begin{align}
\mathrm{HH}^*(k[\Proj_k],B)\to \rmH^*(\GL_n(k),B(k^n,k^n))\;.\label{eq-comp-GLn}
\end{align}
The next proposition \ref{pr-Dja-HH} and its corollary \ref{cor-Dja-Ext} generalize Suslin's comparison result \cite[Thm A.1]{FFSS} and its extension to bifunctors \cite[Thm 7.4]{FF} from finite fields to arbitrary perfect fields of positive characteristic. It is a consequence of the stable $K$-theory computations of Scorichenko \cite{Sco}, which are reformulated in terms of stable homological calculations in \cite{DjaR}, the homological stabilization result \cite[Thm 5.11]{RWW}, together with the vanishing lemma \ref{lm-vanish-HGL}.

\begin{pr}\label{pr-Dja-HH}
Let $k$ be a perfect field of characteristic $p$. 
Assume that $B$ is polynomial of degree $d$ with finite-dimensional values.
Then the comparison map \eqref{eq-comp-GLn} is $\frac{1}{2}(n-1-d)$-connected.
\end{pr}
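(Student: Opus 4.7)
The plan is to factor the comparison map \eqref{eq-comp-GLn} through the cohomology of $\GL_\infty(k):=\mathrm{colim}_n\GL_n(k)$ with coefficients in a suitable colimit of the $B(k^n,k^n)$, and then to control the two resulting maps by invoking Scorichenko's computation on the one hand, and homological stability on the other.

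More precisely, I would first observe that the choices of inclusions $k^n\hookrightarrow k^{n+1}$ and projections $k^{n+1}\twoheadrightarrow k^n$ yield, via the mixed variance of $B$, a compatible system of stabilization morphisms between the $B(k^n,k^n)$, hence a $\GL_\infty(k)$-module $B_\infty:=\mathrm{colim}_n\,B(k^n,k^n)$. The restriction maps along $\GL_n(k)\hookrightarrow\GL_\infty(k)$ then factor \eqref{eq-comp-GLn} as
\[
\mathrm{HH}^*(k[\Proj_k],B)\xrightarrow{\Psi}\rmH^*(\GL_\infty(k),B_\infty)\xrightarrow{\rho_n}\rmH^*(\GL_n(k),B(k^n,k^n))\,.
\]

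For $\Psi$, I would invoke Scorichenko's computation \cite{Sco}, reformulated in terms of stable (co)homology of $\GL_\infty(R)$ with polynomial bifunctor coefficients by Djament-Vespa \cite{DjaR}. In its cohomological form, this result provides a K\"unneth-type decomposition
\[\rmH^*(\GL_\infty(k),B_\infty)\simeq\mathrm{HH}^*(k[\Proj_k],B)\otimes\rmH^*(\GL_\infty(k),k)\]
natural in the polynomial bifunctor $B$, under which $\Psi$ identifies with the inclusion of the first tensor factor via the unit in degree $0$ of the second factor. Now by the vanishing lemma \ref{lm-vanish-HGL}, the mod $p$ cohomology $\rmH^*(\GL_\infty(k),k)$ is concentrated in degree $0$ where it equals $k$; consequently the second factor disappears and $\Psi$ is an isomorphism in every degree.

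For $\rho_n$, I would apply the homological stability theorem of Randal-Williams and Wahl \cite[Thm 5.11]{RWW}. The coefficient system $n\mapsto B(k^n,k^n)$ is polynomial of degree $d$ (in the sense of Dwyer/van der Kallen used in \emph{loc. cit.}), since $B$ is polynomial of degree $d$ as a bifunctor, so the cited theorem asserts that $\rho_n$ is $\tfrac{1}{2}(n-1-d)$-connected in the sense of definition \ref{def-connected-homology}. Composing with the isomorphism $\Psi$ gives the desired connectedness of \eqref{eq-comp-GLn}. The one nontrivial input is the first step: one must ensure that Scorichenko's theorem, originally formulated for stable $K$-theory and for bimodules, extends to the bifunctor cohomology setting with polynomial coefficients, and that the identification of $\Psi$ with the K\"unneth inclusion is sufficiently natural. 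This is precisely what is accomplished in \cite{DjaR}, on which the proof relies.
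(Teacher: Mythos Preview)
Your strategy coincides with the paper's: factor through the (co)homology of $\GL_\infty(k)$, then invoke \cite{DjaR} together with lemma~\ref{lm-vanish-HGL} for the stable comparison, and \cite[Thm~5.11]{RWW} for the passage between $\GL_n$ and $\GL_\infty$. The paper, however, first dualizes (via proposition~\ref{prop-Tor-Ext} and the Kuhn dual $B^\sharp$) to reduce everything to a statement about $\Tor$ and \emph{homology} of $\GL_n$ with coefficients in $B^\sharp(k^n,k^n)$. This is not a merely cosmetic choice: both \cite[Thm~5.6]{DjaR} and \cite[Thm~5.11]{RWW} are stated for homology, and homology commutes with the filtered colimit defining $B^\sharp(k^\infty,k^\infty)$, so the factorization through $H_*(\GL_\infty(k),-)$ and the identification with $\Tor$ go through cleanly. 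In your cohomological version the coefficient module $B_\infty=\mathrm{colim}_n B(k^n,k^n)$ is typically infinite-dimensional and cohomology does not commute with filtered colimits of coefficients; thus the existence of $\rho_n$ in the direction you need, and the K\"unneth-type identification of $\Psi$, would require separate justification---essentially the dualization that the paper performs up front.
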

\begin{proof}

Let $B^\sharp$ denote the Kuhn dual of $B$, that is the bifunctor defined by $B^\sharp(v,w)={}^\vee B({}^{\vee}v,{}^\vee w)$ where $^{\vee}v$ refers to the dual of a $k$-vector space $v$. 
By using proposition \ref{prop-Tor-Ext}, the symmetry of $\Tor$ (that is $\Tor_*^\K(F,G)\simeq \Tor_*^{\K^\op}(G,F)$) and the fact that $^{\vee}-:\Proj_k^\op\to \Proj_k$ is an equivalence of categories, the proof reduces to show that evaluation on $k^n$ and restriction along $f_{n}$ yields a $\frac{1}{2}(n-1-d)$-connected map:
\[\rmH_*(\GL_n(k),B^\sharp(k^n,k^n))\to \Tor_*^{k[\Proj_k]}(k[\mathrm{gl}],B^\sharp)\;.\] 
In order to achieve this, we compare the homology of $\GL_n(k)$ with the homology of $\GL_\infty(k)$. Namely 
we let $B^\sharp(k^\infty,k^\infty)$ denote representation of $\GL_\infty(k)$ obtained by taking the colimit of the $B^\sharp(k^n,k^n)$. 
Let $\rho_n$ denote the composition
\[\rmH_*(\GL_n(k),B^\sharp(k^n,k^n)) \to \rmH_*(\GL_n(k),B^\sharp(k^\infty,k^\infty))\to \rmH_*(\GL_\infty(k),B^\sharp(k^\infty,k^\infty))\]
where the first map is induced by the canonical inclusion $B^\sharp(k^n,k^n)\to B^\sharp(k^\infty,k^\infty)$ and the second one is given by restriction along $\GL_n(k)\hookrightarrow \GL_\infty(k)$. We have a commutative square, in which the vertical isomorphism on the right hand side is the base change isomorphism of \cite[Thm 14.2]{Mi72} and the bottom arrow $(\dag)$ is induced by the map $f_\infty: \mathbb{Z}\to \mathbb{Z}[\Proj_k](k^\infty,k^\infty)$ and by evaluation on $k^\infty$.
\[
\begin{tikzcd}
\rmH_*(\GL_n(k),B^\sharp(k^n,k^n))\ar{d}[swap]{\rho_n}\ar{r}& 
\Tor_*^{k[\Proj_k^\op\times\Proj_k]}(k[\Proj_k],B^\sharp)\ar{d}{\simeq}[swap]{}\\
\rmH_*(\GL_\infty(k),B^\sharp(k^\infty,k^\infty))\ar{r}{(\dag)}&
\Tor_*^{\mathbb{Z}[\Proj_k^\op\times\Proj_k]}(\mathbb{Z}[\Proj_k],B^\sharp)
\end{tikzcd}\;.
\]
The map $(\dag)$ is an isomorphism by \cite[Thm 5.6]{DjaR} and the vanishing lemma \ref{lm-vanish-HGL}, and $\rho_n$ is $\frac{1}{2}(n-1-d)$-connected by \cite[Thm 5.11]{RWW}. Whence the result.
\end{proof}

\begin{cor}\label{cor-Dja-Ext}
Let $k$ be a perfect field of characteristic $p$. Assume either that (i) both $F$ and $G$ are polynomial functors of degree less or equal to $d$ with finite-dimensional values, or that (ii) both $F$ and $G$ are strict polynomial functors of degree less or equal to $d$. Evaluation on $k^n$ yields a $\frac{1}{2}(n-1-2d)$-connected map
\[\mathrm{ev_n}:\Ext^*_{k[\Proj_k]}(F,G)\to \Ext^*_{\GL_n(k)}(F(k^n),G(k^n))\;.\] 
\end{cor}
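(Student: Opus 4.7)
The plan is to deduce the corollary from Proposition \ref{pr-Dja-HH} applied to the bifunctor of separable type
\[B(v,w)=\Hom_k(F(v),G(w)).\]
By the isomorphism \eqref{eqn-sep} applied to $\K=k[\Proj_k]$ one gets a natural identification
\[\mathrm{HH}^*(k[\Proj_k],B)\simeq \Ext^*_{k[\Proj_k]}(F,G),\]
while the canonical identification of $\Hom_k(F(k^n),G(k^n))$ as the $\GL_n(k)$-module underlying $B(k^n,k^n)$ yields
\[\rmH^*(\GL_n(k),B(k^n,k^n))\simeq \Ext^*_{\GL_n(k)}(F(k^n),G(k^n)).\]
A direct check from the construction of \eqref{eqn-sep} (given in \cite[Prop 2.2]{FF}) shows that under these identifications the comparison map \eqref{eq-comp-GLn} becomes the evaluation map $\mathrm{ev}_n$ of the corollary.

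Second, I would bound the polynomial degree of $B$ seen as a bifunctor on $\Proj_k^\op\times\Proj_k$. In case (i), the finite-dimensionality of $F(v)$ gives the natural isomorphism $B(v,w)\simeq F(v)^\vee\otimes G(w)$, which exhibits $B$ as the external tensor product of a contravariant polynomial functor of degree $\le d$ and a covariant polynomial functor of degree $\le d$; the total polynomial degree (in the sense of Eilenberg-Mac Lane) of $B$ as a bifunctor is therefore bounded by $2d$, and its values are finite-dimensional. Case (ii) reduces to case (i) upon passing to underlying ordinary functors: a strict polynomial functor of weight $\le d$ has Eilenberg-Mac Lane polynomial degree at most $d$, and the finite-dimensionality of values reduces to the case $F,G\in\Gamma^d\Proj_k\md$ by writing arbitrary strict polynomial functors of bounded weight as filtered colimits of their subfunctors with finite-dimensional values (and using that both sides of $\mathrm{ev}_n$ commute with such colimits in $G$, and turn them into limits in $F$).

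Once these identifications are in place, applying Proposition \ref{pr-Dja-HH} to the polynomial bifunctor $B$ of degree $\le 2d$ with finite-dimensional values gives exactly that \eqref{eq-comp-GLn} is $\tfrac{1}{2}(n-1-2d)$-connected, which under the identifications above is the desired connectivity for $\mathrm{ev}_n$.

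The only non-formal point is the compatibility check between the map \eqref{eq-comp-GLn} and the Ext-evaluation map $\mathrm{ev}_n$; this is the main thing to verify carefully. Explicitly, one must trace an extension class $[\xi]\in\Ext^*_{k[\Proj_k]}(F,G)$ through the sequence of natural isomorphisms $\mathrm{HH}^*(k[\Proj_k],\Hom_k(F,G))\simeq\Ext^*_{k[\Proj_k]}(F,G)$, and check that evaluation at $k^n$ followed by pullback along $f_n:k\to k[\End_k(k^n)]$ yields the class of $\xi(k^n)$ regarded as an extension of $\GL_n(k)$-modules via the action $g\cdot\phi=G(g)\phi F(g^{-1})$. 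This is a routine but somewhat tedious unwinding of definitions, and is the only real obstacle beyond the purely formal parts of the argument.
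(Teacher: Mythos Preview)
Your approach is essentially the paper's: define $B(v,w)=\Hom_k(F(v),G(w))$, identify $\mathrm{HH}^*(k[\Proj_k],B)$ with $\Ext^*_{k[\Proj_k]}(F,G)$ via \eqref{eqn-sep}, observe that $B$ is polynomial of degree at most $2d$ with finite-dimensional values, and apply Proposition~\ref{pr-Dja-HH}. The compatibility check you flag is exactly the commutative square the paper writes down; it is indeed routine.

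There is, however, a gap in your reduction of case~(ii) to case~(i). You propose writing an arbitrary strict polynomial functor as a filtered colimit of subfunctors with finite-dimensional values and then passing colimits (resp.\ limits) through both sides of $\mathrm{ev}_n$. But $\Ext^*_{k[\Proj_k]}(F,-)$ does not commute with filtered colimits unless $F$ is of type $fp_\infty$ in $k[\Proj_k]\Md$, and this is not given for an arbitrary strict polynomial $F$ over an infinite field (cf.\ Remark~\ref{rk-pfinfty}). Likewise, on the target side $\Ext^*_{\GL_n(k)}(F(k^n),-)$ need not commute with filtered colimits when $F(k^n)$ is infinite-dimensional. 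So this step is not justified as written.

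The paper handles case~(ii) differently and more cleanly: take a resolution of $F$ by direct sums of standard projectives and a coresolution of $G$ by products of standard injectives in $\Gamma\Proj_k\Md$. Since $\Ext$ always turns direct sums in the first variable and products in the second variable into products (on both sides of $\mathrm{ev}_n$), a standard spectral sequence argument reduces to $F$ a single standard projective and $G$ a single standard injective. These have finite-dimensional values, and strict polynomial functors of weight $\le d$ are polynomial of degree $\le d$, so one lands in case~(i). This avoids any appeal to commutation of $\Ext$ with filtered colimits.
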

\begin{proof}
Assume (ii). Take a resolution or $F$ by direct sums of standard projectives in $\Gamma\Proj_k\Md$ and a coresolution of $G$ by products of standard injectives in $\Gamma\Proj_k\Md$. Then by a standard spectral sequence argument we can restrict ourselves to the case where $F$ is a standard projective and $G$ is a standard injective, in particular to the case where $F$ and $G$ have finite-dimensional values. Morover strict polynomial functors of degree less or equal to $d$ are polynomial of degree less or equal to $d$, see remark \ref{rk-ordvsstrict}. Hence it suffices to prove the corollary under hypothesis (i).

Assume (i) and let $B$ denote the bifunctor $B(v,w)=\Hom_k(F(v),G(w))$. There is a commutative diagram whose horizontal maps are the canonical isomorphisms
\[
\begin{tikzcd}
\mathrm{HH}^*(k[\Proj_k],B)\ar{r}{\simeq}\ar{d}{\eqref{eq-comp-GLn}}&\Ext^*_{k[\Proj_k]}(F,G)\ar{d}{\mathrm{ev}_n}\\
\rmH^*(\GL_n(k),B(k^n,k^n))\ar{r}{\simeq}& \Ext^*_{\GL_n(k)}(F(k^n),G(k^n))
\end{tikzcd}.
\]
Hence the result follows from proposition \ref{pr-Dja-HH}. (Note that $B$ has degree less or equal to $2d$).
\end{proof}

\subsection{Orthogonal and symplectic cohomology} Bifunctor cohomology and its relation to the cohomology of general linear groups has an analogue for symplectic and orthogonal groups that we now describe. Here we assume that $k$ has odd characteristic $p$. 

Assume that $G=\orth_{n,n}(k)$ or $G=\Sp_{2n}(k)$. 
We associate to $G$ a `characteristic functor' $X:\Proj_k\to \Proj_k$, namely $X=S^2$ in the orthogonal case  $X=\Lambda^2$ in the symplectic case. We define an analogue of bifunctor cohomology as follows. Let $F$ be an object of $k[\Proj_k]\Md$ or of $\Gamma^{d}\Proj_k\Md$, we set:
\begin{align*}
&\rmH^*_X(k[\Proj_k],F)=\Ext^*_{k[\Proj_k]}(k[X],F), \\
&\rmH^*_X(\Gamma^d\Proj_k,F)=\begin{cases}
\Ext^*_{\Gamma^d\Proj_k}(\Gamma^{d/2}\circ X,F) & \text{ if $d$ is even,}\\
0 &\text{ if $d$ is odd.}
\end{cases}
\end{align*}
By restricting extensions along the functor $\gamma^d: k[\Proj_k]\to \Gamma^{d}\Proj_k$ and by pulling back along the morphism of functors $\gamma^{d/2}(X):k[X]\to \Gamma^{d/2}X$ we obtain a restriction map (which is the zero map if $d$ is odd):
\[\rmH^*_X(\Gamma^d\Proj_k,F)\to \rmH^*_X(k[\Proj_k],F)\;.\]
For all $r\ge 0$, we let $\Phi_{k,X}'$ denote the composition of this restriction map together with the isomorphism induced by restriction along the $(-r)$-th Frobenius twist and by the canonical isomorphism $k[X]^{(-r)}=k[X^{(-r)}]\simeq k[{}^{(-r)}X]=k[X]$:
\begin{align}
\Phi_{k,X}':\rmH_X^*(\Gamma^{dp^r}\Proj_k,F^{(r)})\to \rmH^*_X(k[\Proj_k],F^{(r)})\xrightarrow[]{\simeq} \rmH^*_X(k[\Proj_k],F).\label{eqn-phiprimekosp}
\end{align}
\begin{pr}\label{pr-infinite-perfect-field-Hosp}
Let $k$ be an infinite perfect field of odd positive characteristic $p$ and let $F$ be a $d$-homogeneous strict polynomial functor. The map \eqref{eqn-phiprimekosp} is $2p^r$-connected.
\end{pr}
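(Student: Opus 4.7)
The plan is to imitate the short proof of Proposition \ref{pr-infinite-perfect-field-HH} step by step, using the same two ingredients: Frobenius stabilization of generic $\Ext$ (Proposition-Definition \ref{pdef-gen-Ext}) and the generic versus ordinary comparison (Corollary \ref{cor-infinite-perfect-field-Ext}). First I would reduce to the case where $F$ is a single standard injective $S^d(v\otimes -)$ in $\Gamma^d\Proj_k\Md$. This is done by picking a coresolution of $F$ by products of such standard injectives and running the standard hypercohomology spectral sequence on both sides of $\Phi'_{k,X}$; the argument is the same as the opening sentence of the proof of Proposition \ref{pr-infinite-perfect-field-HH}, but one must check that products of standard injectives behave well in $k[\Proj_k]\Md$ (here $\rmH^*_X(k[\Proj_k],-)$ clearly turns products into products since $k[X]$ is a single object).

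Next, in the reduced case, the goal is to identify $\Phi'_{k,X}$ with a composition
\[
\Ext^*_{\Gamma^{dp^r}\Proj_k}(\Gamma^{dp^r/2}X,F^{(r)})\;\longrightarrow\;\Ext^*_\gen(\Gamma^{d/2}X,F)\;\xrightarrow{\;\Phi_k\;}\;\Ext^*_{k[\Proj_k]}(k[X],F),
\]
where the first arrow is induced by the natural transformation $(\Gamma^{d/2}X)^{(r)}\to \Gamma^{dp^r/2}X$ (which comes from the inclusion of the Frobenius twist into the divided power, followed by the multiplication $\Gamma^{d/2}\circ\Gamma^{p^r}\to\Gamma^{dp^r/2}$), and the second combines $\Phi_k$ with the identification $\Ext^*_{k[\Proj_k]}(\Gamma^{d/2}X,F)\simeq \Ext^*_{k[\Proj_k]}(k[X],F)$ given by the linearization map $\gamma^{d/2}(X)\colon k[X]\to\Gamma^{d/2}X$ and the inverse Frobenius twist. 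Assuming this identification, Proposition-Definition \ref{pdef-gen-Ext} makes the first arrow $2p^r$-connected, Corollary \ref{cor-infinite-perfect-field-Ext} makes $\Phi_k$ an isomorphism, and we are done.

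The main obstacle is the identification step. Unlike the general linear situation of Proposition \ref{pr-infinite-perfect-field-HH}, where the Frobenius twist on both sides is ``transparent'' (because $F^{(r)}$ and $G^{(r)}$ are honest twists and $t^*F\simeq t^*F^{(r)}$ as ordinary functors), here neither $\Gamma^{dp^r/2}X$ nor $k[X]$ is a Frobenius twist of the corresponding generic object: one has $(\Gamma^{d/2}X)^{(r)}(v)=\Gamma^{d/2}(X(v)^{(r)})$, genuinely different from $\Gamma^{dp^r/2}(X(v))$, and the linearization $k[X]\to\Gamma^{d/2}X$ is far from an isomorphism. The heart of the proof will therefore be showing that the induced comparison maps
\[
\Ext^*_{\Gamma^{dp^r}\Proj_k}(\Gamma^{dp^r/2}X,F^{(r)})\to\Ext^*_{\Gamma^{dp^r}\Proj_k}((\Gamma^{d/2}X)^{(r)},F^{(r)}),\qquad \Ext^*_{k[\Proj_k]}(\Gamma^{d/2}X,F)\to\Ext^*_{k[\Proj_k]}(k[X],F)
\]
are $2p^r$-connected (for the first, it suffices to check that the kernel and cokernel of $(\Gamma^{d/2}X)^{(r)}\to\Gamma^{dp^r/2}X$ have vanishing $\Ext$ with $F^{(r)}$ in that range, via a weight/Frobenius-kernel analysis in $\Gamma^{dp^r}\Proj_k\Md$). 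With these two technical connectivities in hand, combined with the commutativity of the resulting diagram (a routine verification from the explicit form of $\gamma^{d/2}(X)$ and the Frobenius twist stabilization map), the statement follows.
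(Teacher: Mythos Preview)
Your proposal correctly isolates the two target ingredients (Proposition-Definition \ref{pdef-gen-Ext} and Corollary \ref{cor-infinite-perfect-field-Ext}), but the route you sketch has a genuine gap precisely where you say ``the heart of the proof'' lies. Neither of your two technical connectivities is established, and both are substantial. For the first, the natural map between $\Gamma^{dp^r/2}X$ and $(\Gamma^{d/2}X)^{(r)}$ goes in the opposite direction to what you wrote (it is $\Gamma^{dp^r/2}X\to(\Gamma^{d/2}X)^{(r)}$, built from $\Gamma^{p^r}\twoheadrightarrow I^{(r)}$ and the divided-power comultiplication), so to factor $\Phi'_{k,X}$ through generic $\Ext$ you would need the induced map on $\Ext^*(-,F^{(r)})$ to be an \emph{isomorphism} in the range, not merely connected; your ``weight/Frobenius-kernel analysis'' is only a name, not an argument. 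For the second, the claim that pullback along $\gamma^{d/2}(X):k[X]\to\Gamma^{d/2}X$ induces an isomorphism $\Ext^*_{k[\Proj_k]}(\Gamma^{d/2}X,F)\simeq\Ext^*_{k[\Proj_k]}(k[X],F)$ is essentially the content of the proposition itself (modulo Corollary \ref{cor-infinite-perfect-field-Ext}): the functor $k[X]$ is not polynomial of bounded degree, its values are uncountable-dimensional over an infinite $k$, and there is no weight decomposition available that would let you discard everything outside degree $d$. You have restated the difficulty rather than resolved it.

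The paper's proof avoids both obstacles by a different reduction. Since $p$ is odd, $X$ is a retract of $\otimes^2$, so it suffices to treat $X=\otimes^2$. Then sum-diagonal adjunction (Example \ref{ex-sum-diagonal}) combined with the duality equivalence $\Proj_k^\op\simeq\Proj_k$ converts $\rmH^*_{\otimes^2}(k[\Proj_k],F)$ into the bifunctor cohomology $\mathrm{HH}^*(k[\Proj_k],B)$ with $B(v,w)=F({}^\vee v\oplus w)$, and similarly on the strict-polynomial side. The comparison then becomes $\Phi'_k$ for the homogeneous component $B^{d/2,d/2}$ (after checking via Lemma \ref{lm-homogeneity-vanish} that the off-diagonal pieces $B^{i,j}$ with $i\ne j$ contribute nothing), and Proposition \ref{pr-infinite-perfect-field-HH} applies directly. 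The point is that in the bifunctor formulation the first argument is the representing object $k[\mathrm{gl}]$ (resp.\ $\Gamma^d\mathrm{gl}$) itself, so no extra ``linearization'' comparison is needed.
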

\begin{proof}
Since $p$ is odd, $X$ is a direct summand of the second tensor power functor $\otimes^2$, hence $\Phi'_{k,X}$ is a retract of $\Phi'_{k,\otimes^2}$. Thus we have to show that  $\Phi'_{k,\otimes^2}$ is $2p^r$-connected. 
We achieve this by reformulating the problem in terms of bifunctor cohomology. Let $B$ be the object of $\Gamma^d(\Proj_k^\op\times\Proj_k)\Md$ such that $B(v,w)=F({}^\vee v\oplus w)$ where $^\vee v$ denotes the dual of the $k$-vector space $v$. 
We have a finite direct sum decomposition
\[\Gamma^d(\Proj_k^\op\times\Proj_k)\Md=\bigoplus_{i+j=d}\Gamma^i\Proj_k^\op\otimes\Gamma^j\Proj_k\Md\]
hence $B$ decomposes into a direct sum of homogeneous summands $B=\bigoplus_{i+j=d}B^{i,j}$.

We claim that for any bifunctor $B^{i,j}\in \Gamma^i\Proj_k^\op\otimes\Gamma^j\Proj_k\Md$, the bifunctor cohomology $\mathrm{HH}^*(k[\Proj_k],B^{i,j})$ is zero if $i\ne j$. Indeed, by considering an injective coresolution of $B$ in $\Gamma^i\Proj_k^\op\otimes\Gamma^j\Proj_k\Md$ it suffices to prove the result when $B$ is a standard injective, hence when $B$ has separable type. In that case, the question reduces to the vanishing of $\Ext$ in $k[\Proj_k]\Md$ between a $i$-homogeneous strict polynomial functor and a $j$-homogeneous strict polynomial functor hence the claim follows from the vanishing lemma \ref{lm-homogeneity-vanish}.

Sum-diagonal adjunction (as in example \ref{ex-sum-diagonal}) and restriction along the equivalence of categories $k[\Proj_k^\op]\otimes k[\Proj_k]\simeq k[\Proj_k]\otimes k[\Proj_k]$ given by the duality functor $^\vee -:\Proj_k^\op\to \Proj_k$ yields an isomorphism 
\[\rmH^*_{\otimes^2}(k[\Proj_k],F)\simeq \mathrm{HH}^*(k[\Proj_k],B)\]
Our claim implies that for $d$ odd, the right hand side is zero, hence that the comparison map \eqref{eqn-phiprimekosp} is an isomorphism. 

Assume now that $d$ is even. We have a commutative diagram where the bottom isomorphism is described above, and the top isomorphism is its analogue for strict polynomial functors (induced by sum-diagonal adjunction, restriction along the equivalence of categories $\Gamma^{dp^r}(\Proj_k^\op\times \Proj_k)\simeq \Gamma^{dp^r}(\Proj_k\times \Proj_k)$ provided by the duality functor and projection onto the summand $B^{d,d}$ of $B$):
\[
\begin{tikzcd}
\rmH^*_{\otimes^2}(\Gamma^{dp^r}\Proj_k,F^{(r)})\ar{d}{\Phi'_{k,\otimes^2}}\ar{rr}{\simeq}
&&  \mathrm{HH}^*(k[\Proj_k],B^{d,d})\ar{d}{\Phi'_k}\\
\rmH^*_{\otimes^2}(k[\Proj_k],F)\ar{r}{\simeq}&\mathrm{HH}^*(k[\Proj_k],B)\ar[equal]{r}&\mathrm{HH}^*(k[\Proj_k],B^{d,d})
\end{tikzcd}\;.
\]
Hence $\Phi'_{k,\otimes^2}$ is $2p^r$-connected by proposition \ref{pr-infinite-perfect-field-HH}.
\end{proof}

Now we explain the relation between the cohomology groups $\rmH^*_X(k[\Proj_k],F)$ and the cohomology of the symplectic and orthogonal group. We first need the following vanishing result. We gratefully thank Baptiste Calm\`es for helping us with the literature  relative to hermitian $K$-theory. 
\begin{lm}\label{lm-vanish-HOSp}
Let $k$ be a perfect field of odd characteristic $p$. Then the mod $p$ homology of the groups $\Sp_\infty(k)$ and $\orth_{\infty,\infty}(k)$ is zero in positive degrees.
\end{lm}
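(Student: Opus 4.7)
The plan is to mirror closely the proof of lemma \ref{lm-vanish-HGL}, replacing Quillen's algebraic $K$-theory by Karoubi's hermitian $K$-theory. As a first step, I would apply the $p$-local Hurewicz theorem \cite[Thm 1.8.1]{Neisendorfer} to the infinite loop spaces $B\Sp_\infty(k)^+$ and $B\orth_{\infty,\infty}(k)^+$. This reduces the statement to showing that for $n>0$ the higher hermitian $K$-groups
\[
KSp_n(k):=\pi_n B\Sp_\infty(k)^+\qquad\text{and}\qquad KO_n(k):=\pi_n B\orth_{\infty,\infty}(k)^+
\]
are uniquely $p$-divisible.

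Next, I would exploit the fact that, since $p$ is odd, unique $p$-divisibility of an abelian group $A$ is equivalent to unique $p$-divisibility of $A\otimes_{\mathbb{Z}}\mathbb{Z}[\tfrac{1}{2}]$. After inverting $2$, the classical comparison theorem of Karoubi between hermitian and ordinary $K$-theory provides an equivalence of spectra
\[
KO(k)\otimes\mathbb{Z}[\tfrac{1}{2}]\;\simeq\;K(k)^{hC_2}\otimes\mathbb{Z}[\tfrac{1}{2}],
\]
and similarly for $KSp$, where $C_2$ acts on $K(k)$ by the involution induced by $k$-linear duality. Because $|C_2|=2$ is invertible after inverting $2$, the associated homotopy fixed-point spectral sequence collapses onto its $s=0$ line. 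It follows that $KO_n(k)\otimes\mathbb{Z}[\tfrac{1}{2}]$ is a direct summand of $K_n(k)\otimes\mathbb{Z}[\tfrac{1}{2}]$, namely its $C_2$-invariants.

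Finally, lemma \ref{lm-vanish-HGL} shows that $K_n(k)$ is uniquely $p$-divisible in positive degrees, hence so is its localization $K_n(k)\otimes\mathbb{Z}[\tfrac{1}{2}]$, and hence so is any direct summand, which yields unique $p$-divisibility of $KO_n(k)\otimes\mathbb{Z}[\tfrac{1}{2}]$ (and of the analogous symplectic group) for $n>0$. Combined with the previous step, this concludes the argument.

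The main obstacle is locating the identification $KO(k)\otimes\mathbb{Z}[\tfrac{1}{2}]\simeq K(k)^{hC_2}\otimes\mathbb{Z}[\tfrac{1}{2}]$ in a form directly applicable to an infinite perfect field of odd characteristic; Karoubi's original work treats this kind of statement, and more modern accounts (e.g.\ via Schlichting's Grothendieck--Witt spectra) offer alternative routes. Everything else is formal once this comparison is available, which is precisely why the authors thank an expert on hermitian $K$-theory in the preamble to the lemma.
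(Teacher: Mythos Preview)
Your proposal is correct and follows the same overall strategy as the paper: reduce to showing that the relevant hermitian $K$-groups are uniquely $p$-divisible, invert $2$ (harmless since $p$ is odd), and then compare with ordinary $K$-theory to conclude via lemma \ref{lm-vanish-HGL}.

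The specific comparison result invoked differs slightly. The paper works at the level of homology: using that the Witt groups of $k$ are $\mathbb{F}_2$-vector spaces, it applies a theorem of Karoubi identifying $H_*(G,\mathbb{Z}[1/2])$ with the homology of a space $\mathcal{C}(k)$ that is a retract of a localization of $B\mathcal{P}_k'^+$, and then notes that $B\mathcal{P}_k'^+ \simeq K_0(k)\times B\GL_\infty(k)^+$. You instead invoke the homotopy fixed point comparison $KO(k)\otimes\mathbb{Z}[1/2]\simeq K(k)^{hC_2}\otimes\mathbb{Z}[1/2]$ and the collapse of the fixed-point spectral sequence. The paper even includes a remark mentioning a third variant, via Schlichting's decomposition of hermitian $K$-theory after inverting $2$ into a $K$-theory summand and a Witt-group summand. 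All three are repackagings of essentially the same circle of ideas in hermitian $K$-theory; your version is arguably the most streamlined, while the paper's has the advantage of citing a single explicit theorem from Karoubi's original work. Your caveat about pinning down a precise reference for the homotopy fixed point identification over an arbitrary perfect field of odd characteristic is well taken and is exactly the reason the authors opted for the more classical formulation.
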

\begin{proof}
Let $G=\Sp_\infty(k)$ or $\orth_{\infty,\infty}(k)$. By the universal coefficient theorem, it is equivalent to prove that $H_i(G,\mathbb{Z})$ is uniquely $p$-divisible for $i>0$. 
If $A$ is an abelian group, we let $A[1/2]$ denote the tensor product $A\otimes_{\mathbb{Z}}\mathbb{Z}[1/2]$. Since $p$ is odd, $A$ is uniquely $p$-divisible if and only if $A[1/2]$ is uniquely $p$-divisible. And since $\mathbb{Z}[1/2]$ is flat we have $H_*(G,\mathbb{Z})[1/2]=H_*(G,\mathbb{Z}[1/2])$. Thus the statement of the lemma is equivalent to $H_i(G,\mathbb{Z}[1/2])$ being uniquely $p$-divisible for $i>0$.

Since $k$ is a field of odd characteristic, the Witt groups $W(k)$ are an $\mathbb{F}_2$-vector space \cite[Chap. 2, Thm 6.4]{Scharlau}. Thus by \cite[Thm 3.18]{Karoubi}
$H_*(G,\mathbb{Z}[1/2])$ is equal to $T_*(k)$, that is, to the homology of a space $\mathcal{C}(k)$ which is a retract of the localized classifying space $({B\mathcal{P}_k'}^+)_{(2)}$, see \cite[p. 253]{Karoubi} for the latter point. Since the integral homology of $({B\mathcal{P}_k'}^+)_{(2)}$ is equal to $H_*({B\mathcal{P}_k'}^+,\mathbb{Z})[1/2]$, the lemma will be proved if we can prove that ${B\mathcal{P}_k'}^+$ has uniquely $p$-divisible positive integral homology groups.

But ${B\mathcal{P}_k'}^+$ has the weak homotopy type of $K_{0}(k)\times B\GL_\infty(k)^+$, hence its integral homology is direct sum of copies of the integral homology of $B\GL_\infty(k)^+$. Since $k$ is perfect, these integral homology groups are uniquely $p$-divisible in positive degrees by lemma \ref{lm-vanish-HGL} and by the universal coefficient theorem.
\end{proof}

\begin{rk}
Instead of relying on the results of \cite{Karoubi}, one could prove the lemma by using the formula of \cite[Rk 7.8]{Schlichting}, which says that after tensoring by $\mathbb{Z}[1/2]$, the hermitian $K$-theory (hence \cite[App A]{Schlichting} the homotopy groups of $BG^+$ for $G=\Sp_\infty(k)$ or $\orth_{\infty,\infty}(k)$) is the direct sum of a term computed from the $K$-theory of $k$ and a term given by Balmer's Witt groups of $k$ tensored with $\mathbb{Z}[1/2]$. 
\end{rk}
 
For all functors $F$ in $k[\Proj_k]\Md$, the vector space $F(k^{2n})$ has a natural action of $G$, where $g\in G$ acts as $F(g)$. The quadratic form on $k^{2n}$ used to define $G$ yields an invariant $\omega\in X(k^{2n})$ under the action of $G$, hence we have a $G$-equivariant map 
\[f_{2n,X}:k\to k[X](k^{2n})=k[X(k^{2n})]\] 
such that $f_{2n,X}(\lambda)=\lambda[\omega]$. Evaluation on $k^{2n}$ and pullback along $f_{2n,X}$ yields a graded map 
\begin{align}
\rmH^*_X(k[\Proj_k],F)\to \rmH^*(G,F(k^{2n})).
\label{eq-comp-OSp}
\end{align}
The following proposition and its corollary are proved exactly in the same fashion as proposition \ref{pr-Dja-HH} and corollary \ref{cor-Dja-Ext}, relying on the stable homology computations of \cite[Cor 5.4]{DjaR}, the homological stabilization result \cite[Thm 5.15]{RWW} and the vanishing lemma \ref{lm-vanish-HOSp}. 
\begin{pr}\label{pr-Dja-HOSp}
Let $k$ be a perfect field of odd characteristic $p$. 
Assume that $F$ is polynomial of degree $d$ with finite-dimensional values. Then the comparison map \eqref{eq-comp-OSp} is $\frac{1}{2}(n-2-d)$-connected.
\end{pr}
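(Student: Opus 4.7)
The plan is to mimic the proof of proposition \ref{pr-Dja-HH} step by step, substituting the orthogonal/symplectic ingredients for the general linear ones. First I would apply Kuhn duality $F \mapsto F^\sharp$ (with $F^\sharp(v) = \Hom_k(F({}^\vee v), k)$) together with proposition \ref{prop-Tor-Ext} (or the more elementary lemma \ref{lm-iso-dual}) to reduce the problem to showing that evaluation on $k^{2n}$, followed by pullback along the dual of $f_{2n,X}$, yields a $\frac{1}{2}(n-2-d)$-connected map
\[
H_*(G, F^\sharp(k^{2n})) \longrightarrow \Tor_*^{k[\Proj_k]}\bigl((k[X])^\sharp, F^\sharp\bigr).
\]
Here I use that the functor $(-)^\vee:\Proj_k^\op\to\Proj_k$ is an equivalence so that the duality on $\Ext$-groups is clean, and that $F^\sharp$ remains polynomial of degree at most $d$ with finite-dimensional values.

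Next I would interpose the stable homology of $G_\infty$, by defining $\rho_n$ to be the composition
\[
H_*(G_n, F^\sharp(k^{2n})) \to H_*(G_n, F^\sharp(k^{2\infty})) \to H_*(G_\infty, F^\sharp(k^{2\infty})),
\]
where the first arrow is induced by the canonical inclusion $k^{2n}\hookrightarrow k^{2\infty}$ and the second by the group inclusion $G_n\hookrightarrow G_\infty$. By the homological stability theorem \cite[Thm 5.15]{RWW} applied to symplectic and (split) orthogonal groups with polynomial coefficients of degree $d$, the map $\rho_n$ is $\frac{1}{2}(n-2-d)$-connected. Thus it suffices to fit $\rho_n$ into a commutative square with an isomorphism $(\dag)$ at the bottom:
\[
\begin{tikzcd}
H_*(G_n, F^\sharp(k^{2n})) \ar{d}[swap]{\rho_n}\ar{r} & \Tor_*^{k[\Proj_k]}((k[X])^\sharp, F^\sharp)\ar{d}{\simeq}\\
H_*(G_\infty, F^\sharp(k^{2\infty}))\ar{r}{(\dag)} & \Tor_*^{\mathbb{Z}[\Proj_k]}((\mathbb{Z}[X])^\sharp, F^\sharp),
\end{tikzcd}
\]
the right-hand vertical being Mitchell's base change isomorphism \cite[Thm 14.2]{Mi72}, and the bottom arrow being the stable comparison map. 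The orthogonal/symplectic Scorichenko-type theorem \cite[Cor 5.4]{DjaR} then identifies $H_*(G_\infty, F^\sharp(k^{2\infty}))$ with $H_*(G_\infty,\mathbb{F}_p)\otimes\Tor_*^{\mathbb{Z}[\Proj_k]}((\mathbb{Z}[X])^\sharp, F^\sharp)$; combined with the vanishing lemma \ref{lm-vanish-HOSp} which kills $H_{>0}(G_\infty,\mathbb{F}_p)$, this shows that $(\dag)$ is an isomorphism. Diagram chase then gives the claim.

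The main obstacle is essentially packaging: the technical heart of the result lies in \cite[Thm 5.15]{RWW} and \cite[Cor 5.4]{DjaR}, both of which are quoted, so no new homological algebra is required. What does need care is first, verifying that the invariant $[\omega]\in X(k^{2n})$ defining $f_{2n,X}$ matches, under stabilization, the canonical element used in the Scorichenko-type identification of \cite[Cor 5.4]{DjaR}, so that the square really commutes (not merely up to a sign or up to the action of $\pi_0$); and second, checking the precise numerology $\frac{1}{2}(n-2-d)$ against the conventions of \cite{RWW} for these families of groups, since this is the only point at which the orthogonal/symplectic bound differs from the linear one. Everything else is a direct transcription of the argument given for proposition \ref{pr-Dja-HH}.
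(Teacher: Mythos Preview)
Your proposal is correct and follows essentially the same approach as the paper: the paper does not spell out a detailed proof but simply states that the argument is ``proved exactly in the same fashion as proposition~\ref{pr-Dja-HH}'', invoking precisely the three ingredients you identify, namely \cite[Cor 5.4]{DjaR}, \cite[Thm 5.15]{RWW}, and lemma~\ref{lm-vanish-HOSp}. Your outline faithfully reproduces the structure of the $\GL$ proof with the appropriate substitutions, and your remarks about the two points requiring care (compatibility of the invariant $\omega$ with the stabilization and the numerology from \cite{RWW}) are apt but ultimately routine.
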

\begin{cor}\label{cor-Dja-HOSp}
Let $k$ be a perfect field of odd characteristic $p$. 
Assume that $F$ is a strict polynomial functor of degree less or equal to $d$. Then the comparison map \eqref{eq-comp-OSp} is $\frac{1}{2}(n-2-d)$-connected.
\end{cor}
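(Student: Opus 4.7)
The plan is to extend proposition \ref{pr-Dja-HOSp}, which handles polynomial functors of Eilenberg--Mac Lane degree at most $d$ with finite-dimensional values, to arbitrary strict polynomial functors of weight at most $d$. The strategy mirrors the reduction used in corollary \ref{cor-Dja-Ext}: embed $F$ into an injective coresolution by products of standard injectives $S^{e,t}=S^e(k^t\otimes -)$ in $\bigoplus_{e\le d}\Gamma^e\Proj_k\Md$, and use a spectral sequence argument to reduce to the case of a single $S^{e,t}$, which has finite-dimensional values on finite-dimensional inputs and is polynomial of Eilenberg--Mac Lane degree at most $d$ by remark \ref{rk-ordvsstrict}.

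More precisely, let $0\to F\to J^\bullet$ be such an injective coresolution. Since the forgetful functor to $k[\Proj_k]\Md$ is exact, this is also a coresolution in $k[\Proj_k]\Md$; since $k^{2n}$ is finite-dimensional, evaluation at $k^{2n}$ is exact and $J^\bullet(k^{2n})$ is a resolution of $F(k^{2n})$ as a $G$-representation. Hyperext spectral sequences built from Cartan--Eilenberg resolutions of $J^\bullet$ in $k[\Proj_k]\Md$ and in $k[G]\Md$ respectively converge to $\rmH^*_X(k[\Proj_k],F)$ and $\rmH^*(G,F(k^{2n}))$, and the naturality of the comparison map \eqref{eq-comp-OSp} in $F$ induces a morphism of spectral sequences between them.

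It then suffices to prove that this morphism is $\frac{1}{2}(n-2-d)$-connected on the $E_1$ page, i.e.\ that \eqref{eq-comp-OSp} is $\frac{1}{2}(n-2-d)$-connected for $F$ replaced by each $J^i$. Both functors $\rmH^*_X(k[\Proj_k],-)$ and $\rmH^*(G,-(k^{2n}))$ turn products into products of graded vector spaces: the former because $\Ext^*_{k[\Proj_k]}(k[X],-)$ preserves products in its second variable, the latter because $(-)^G$ preserves limits so its derived functors do too. Hence the question reduces to the $\frac{1}{2}(n-2-d)$-connectedness of \eqref{eq-comp-OSp} for a single standard injective $S^{e,t}$ with $e\le d$, which has finite-dimensional values and is polynomial of degree at most $d$; this is precisely the content of proposition \ref{pr-Dja-HOSp}.

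The main technical obstacle is the correct setup and strong convergence of the hyperext spectral sequences, since the $J^i$ are injective neither in $k[\Proj_k]\Md$ nor in $k[G]\Md$: one must construct the spectral sequences via Cartan--Eilenberg resolutions of the complex $J^\bullet$ and verify that the induced morphism between them is compatible with the comparison map \eqref{eq-comp-OSp} on $E_1$. This is standard homological algebra, but it is the only step requiring care beyond the algebraic reduction described above.
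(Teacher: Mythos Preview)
Your proposal is correct and follows essentially the same approach as the paper. The paper does not give a separate proof for this corollary; it simply states that it is ``proved exactly in the same fashion as corollary \ref{cor-Dja-Ext}'', i.e.\ by taking an injective coresolution of the strict polynomial functor by products of standard injectives in $\Gamma\Proj_k\Md$, reducing by a spectral sequence argument to a single standard injective $S^{e,t}$ (which has finite-dimensional values and Eilenberg--Mac Lane degree $\le d$), and then invoking proposition \ref{pr-Dja-HOSp}. Your write-up is more explicit about the Cartan--Eilenberg machinery underlying the spectral sequence comparison, but this is exactly what the paper's phrase ``standard spectral sequence argument'' is pointing at.
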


\subsection{Rational and discrete cohomology of classical groups}\label{subsec-ratdiscrete}
Let $G$ be an algebraic group over an infinite field $k$, let $\mathbf{Mod}_G$ denote the category of all $k$-linear representations of the discrete group $G$, and let $\mathbf{Rat}_G$ denote the full subcategory of $\mathbf{Mod}_G$ on the rational representations as in \cite{Jantzen}. 
Extensions between two rational representations $V$ and $W$ can be computed in $\mathbf{Rat}_G$ or $\mathbf{Mod}_G$. In the sequel, we let 
\begin{align*}
&\EExt_G^*(V,W):=\Ext^*_{\mathbf{Rat}_G}(V,W)\;,&&&\HH^*(G,k):=\EExt_G^*(k,k)\;,\\
&\Ext_G(V,W):=\Ext^*_{\mathbf{Mod}_G}(V,W)\;,&&&\rmH^*(G,k):=\Ext_G^*(k,k)\;.
\end{align*}
There is a canonical morphism:
\begin{align*}
\EExt^*_G(V,W)\to \Ext_G^*(V,W) 
\end{align*}
which is far from being an isomorphism in general. 
An important difference between the source and the target of the canonical morphism is the behaviour of Frobenius morphisms. Namely assume that $G$ is one of the classical groups $\GL_n(k)$, $\Sp_{2n}(k)$ or $\orth_{n,n}(k)$ (and if $k$ has characteristic $\ne 2$ in the latter case), and let let $\phi:G\to G$, $[a_{ij}]\mapsto [a_{ij}^p]$, denote the morphism of algebraic groups induced by the Frobenius endomorphism of $k$, and let $V^{[r]}$ denote the restriction of $V$ along $\phi^r$. We have a commutative ladder whose horizontal arrows are induced by restriction along $\phi$:
\[
\begin{tikzcd}[column sep=small]
\EExt^i_G(V,W)\ar{r}\ar{d}&\cdots \ar{r}&\EExt^i_G(V^{[r]},W^{[r]})\ar{d}\ar{r}&\EExt^i_G(V^{[r+1]},W^{[r+1]})\ar{r}\ar{d}&\cdots\\
\Ext^i_G(V,W)\ar{r}&\cdots \ar{r}&\Ext^i_G(V^{[r]},W^{[r]})\ar{r}&\Ext^i_G(V^{[r+1]},W^{[r+1]})\ar{r}&\cdots\;.
\end{tikzcd}
\]
Assume that $k$ is perfect. Then  $\phi$ has an inverse $\phi^{-1}([a_{ij}])=[a_{ij}^{-p}]$ so that the morphisms in the bottom row are all isomorphisms. However $\phi$ has no inverse in the sense of algebraic groups, so this argument does not apply to the top row. Instead, it is known \cite[II 10.14]{Jantzen} that the morphisms in the top row are all injective (and that they are isomorphisms for $r\gg 0$ only). Hence, if $k$ is perfect the ladder yields canonical maps:
\begin{align}
\Phi_{k,G}:\EExt^*_{G}(V^{[r]},W^{[r]})\to \Ext^*_G(V,W)\;.
\label{eqn-compare-gen}
\end{align}

Embed $G=\GL_n(k)$, $\Sp_{2n}(k)$ or $\orth_{n,n}(k)$ in the multiplicative monoïd of matrices $M_m(k)$ in the usual way (here $m=n$ for $\GL_n(k)$ and $m=2n$ for in the case of the symplectic or orthogonal groups). Then a finite-dimensional representation $V$ of  is called \emph{polynomial of degree less or equal to $d$} if it is the restriction to $G$ of a representation of $M_m(k)$, such that the coordinate maps of the action morphism
$$\rho_V: M_m(k)\to \End_k(V)\simeq M_{\dim V}(k)$$
are polynomials of degree less or equal to $d$ of the $m^2$ entries of $[a_{ij}]\in M_m(k)$. An infinite-dimensional representation $V$ is \emph{polynomial of degree less or equal to $d$} if every element of $V$ is contained in a finite-dimensional subrepresentation which is polynomial of degree less or equal to $d$.

\begin{thm}\label{thm-comp-GL}
Let $k$ be an infinite perfect field of positive characteristic $p$, let $G=\GL_n(k)$, let $V$ and $W$ be two polynomial representations of degree less or equal to $d$, and let $r$ be a nonnegative integer.
Assume that $n\ge \max\{dp^r, 4p^r+2d+1\}$.
Then the comparison map in equation \eqref{eqn-compare-gen} is $2p^r$-connected.
\end{thm}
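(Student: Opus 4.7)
The plan is to identify the comparison map $\Phi_{k,G}$ with an explicit composition of natural maps between functor categories, and to read off its connectivity from three results already established in the paper: proposition-definition \ref{pdef-gen-Ext} (stabilization of generic extensions under Frobenius twists), corollary \ref{cor-infinite-perfect-field-Ext} (the isomorphism $\Phi_k$ between generic and ordinary functor cohomology over $k[\Proj_k]$), and corollary \ref{cor-Dja-Ext} (the Suslin-type evaluation comparison with discrete $\GL_n(k)$-cohomology). As a first step, since $k$ is infinite, the scalar matrices split $V$ and $W$ into homogeneous components $V=\bigoplus_{i=0}^d V_i$ and $W=\bigoplus_{j=0}^d W_j$. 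The weights $ip^r$ of the $V_i^{[r]}$ being pairwise distinct, $\EExt^*_{\GL_n(k)}(V^{[r]},W^{[r]})$ splits as a direct sum of diagonal summands $\EExt^*_{\GL_n(k)}(V_i^{[r]},W_i^{[r]})$. On the discrete side, off-diagonal pieces $\Ext^*_{\GL_n(k)}(V_i,W_j)$ with $i\ne j$ may a priori be nonzero; but by corollary \ref{cor-Dja-Ext} they coincide in degrees $<\frac{1}{2}(n-1-2d)$ with $\Ext^*_{k[\Proj_k]}(F_i,H_j)$, which vanish by the homogeneity lemma \ref{lm-homogeneity-vanish}. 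This vanishing range contains $\{0,\dots,2p^r-1\}$ under the hypothesis $n\ge 4p^r+2d+1$, so it suffices to treat each homogeneous diagonal summand.

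For each $i\le d$, evaluation on $k^n$ is an equivalence between $\Gamma^i\Proj_k\md$ and the category of finite-dimensional $i$-homogeneous polynomial $\GL_n(k)$-representations (valid whenever $n\ge d$), so one can write $V_i=F_i(k^n)$ and $W_i=H_i(k^n)$. Compatibility of the two Frobenius twist conventions gives $V_i^{[r]}=F_i^{(r)}(k^n)$ and $W_i^{[r]}=H_i^{(r)}(k^n)$, and since $n\ge dp^r\ge ip^r$ evaluation produces an isomorphism $\EExt^*_{\GL_n(k)}(V_i^{[r]},W_i^{[r]})\simeq\Ext^*_{\Gamma^{ip^r}\Proj_k}(F_i^{(r)},H_i^{(r)})$. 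I then claim that under these identifications $\Phi_{k,G}$ restricted to the diagonal summand coincides with the composition
\[\Ext^*_{\Gamma^{ip^r}\Proj_k}(F_i^{(r)},H_i^{(r)}) \xrightarrow{\alpha} \Ext^*_\gen(F_i,H_i) \xrightarrow{\Phi_k} \Ext^*_{k[\Proj_k]}(F_i,H_i) \xrightarrow{\mathrm{ev}_n} \Ext^*_{\GL_n(k)}(V_i,W_i),\]
where $\alpha$ is the stabilization map, $\Phi_k$ is the isomorphism of corollary \ref{cor-infinite-perfect-field-Ext}, and $\mathrm{ev}_n$ is the evaluation map of corollary \ref{cor-Dja-Ext}. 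Granting the factorization, the theorem is immediate: $\alpha$ is $2p^r$-connected by proposition-definition \ref{pdef-gen-Ext} (always injective and bijective in degrees $<2p^r$), $\Phi_k$ is an isomorphism in all degrees, and $\mathrm{ev}_n$ is $\frac{1}{2}(n-1-2i)$-connected, hence at least $2p^r$-connected under the hypothesis $n\ge 4p^r+2d+1$.

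The main obstacle is precisely the verification that the proposed composition computes $\Phi_{k,G}$. By the explicit description of $\Phi_k$ given just before corollary \ref{cor-infinite-perfect-field-Ext}, $\Phi_k\circ\alpha$ equals the forgetful map $\Ext^*_{\Gamma^{ip^r}\Proj_k}(F_i^{(r)},H_i^{(r)})\to\Ext^*_{k[\Proj_k]}(F_i^{(r)},H_i^{(r)})$ followed by restriction along the equivalence $-^{(-r)}:\Proj_k\to\Proj_k$. After evaluation on $k^n$, the forgetful direction translates into forgetting the rational structure on $\GL_n(k)$-representations, while restriction along $-^{(-r)}$ translates into restriction along the discrete automorphism $\phi^{-r}$ of $\GL_n(k)$, because the identity $F^{(r)}(g)=F(\phi^r g)$ for $g\in\GL_n(k)$ is exactly the unravelling of the two Frobenius twist conventions at the level of $\GL_n(k)$-actions. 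This matches the definition of $\Phi_{k,G}$ as the forgetful map $\EExt\to\Ext$ composed with the inverse of the $(\phi^r)^*$-isomorphism on the discrete side. The check is formal but is the one conceptual step not packaged in a previous result.
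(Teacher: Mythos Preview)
Your proof is correct and follows essentially the same strategy as the paper: both identify $\Phi_{k,\GL_n(k)}$, via the evaluation equivalence $\mathrm{ev}_n$ on the rational side, with the composition $\mathrm{ev}_n\circ\Phi_k\circ\alpha$, and then read off the connectivity from proposition-definition \ref{pdef-gen-Ext}, corollary \ref{cor-infinite-perfect-field-Ext}, and corollary \ref{cor-Dja-Ext}. The only difference is that you decompose $V$ and $W$ into homogeneous components and treat the off-diagonal vanishing by hand, whereas the paper works directly with non-homogeneous strict polynomial functors $F,G\in\Gamma\Proj_k\Md$ and a single commutative square; this packaging absorbs the off-diagonal argument into the $2p^r$-connectedness of the bottom evaluation map (since the off-diagonal $\Ext$ already vanish in $k[\Proj_k]\Md$), so your detour is unnecessary but harmless.
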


\begin{proof}
Evaluation on $k^n$ yields an exact functor $\mathrm{ev}_n:k[\Proj_k]\Md\to \mathbf{Mod}_{\GL_n(k)}$, which restricts to a exact functor
$\mathrm{ev}_n:\Gamma\Proj_k\Md\to \mathbf{Rat}_{\GL_n(k)}$. Since $n\ge d$ we know from \cite[Lm 3.4]{FS} that there exists two strict polynomial functors $F$ and $G$ of degree less or equal to $d$ such that $V\simeq F(k^n)$ and $W=G(k^n)$, hence such that $V^{[r]}\simeq F^{(r)}(k^n)$ and $W^{[r]}\simeq G^{(r)}(k^n)$ for all $r\ge 0$. We consider the following commutative diagram:
\[
\begin{tikzcd}
\EExt_{\GL_n(k)}^*(V^{[r]},W^{[r]})\ar{d}[swap]{\Phi_{k,\GL_n(k)}}& 
\Ext_{\Gamma\Proj_k}^*(F^{(r)},G^{(r)})\ar{d}{\Phi_k'}\ar{l}{\mathrm{ev}_n}
\\
\Ext^*_{\GL_n(k)}(V,W)&
\Ext^*_{k[\Proj_k]}(F,G)\ar{l}{\mathrm{ev}_n}
\end{tikzcd}\;.
\]
in which $\Phi'_k$ is the composition
\[\Ext_{\Gamma\Proj_k}(F^{(r)},G^{(r)})\to \Ext^*_{\gen}(F,G)\xrightarrow[\simeq]{\Phi_k}\Ext_{k[\Proj_k]}^*(F,G)\]
where the second map is the isomorphism of corollary \ref{cor-infinite-perfect-field-Ext} and the first one is the canonical inclusion. This canonical inclusion is $2p^r$-connected by proposition-definition \ref{pdef-gen-Ext}, hence $\Phi'_k$ is $2p^r$-connected. Moreover, since $n\ge dp^r$ the top horizontal map is an isomorphism  by \cite[Cor 3.13]{FS}. Finally, the bottom horizontal map is $2p^r$-connected by corollary \ref{cor-Dja-Ext}. Thus $\Phi_{k,\GL_n(k)}$ is $2p^r$-connected.
\end{proof}

\begin{rk}[Generic extensions of $\GL_n(k)$]\label{rk-generic}
Assume that $V$ and $W$ are finite dimensional polynomial representations of degree $d$ of $\GL_n(k)$.
It is known \cite[II.10.16]{Jantzen} that the maps $\EExt^*_{\GL_n(k)}(V^{[r]},W^{[r]})\to \EExt^*_{\GL_n(k)}(V^{[r+1]},W^{[r+1]})$ are injective, their colimit is called the \emph{generic extensions} between $V$ and $W$. We denote it by $\EExt^*_{\gen}(V,W)$. The comparison map $\Phi_{k,\GL_n(k)}$ factors through generic extensions:
\[
\begin{tikzcd}[column sep=large]
\EExt^*_{\gen}(V,W)\ar[dashed]{rd}{\Phi_\gen}&\\
\EExt^*_{\GL_n(k)}(V^{[r]},W^{[r]})\ar[hook]{u}\ar{r}{\Phi_{k,\GL_n(k)}}&\Ext^*_{\GL_n(k)}(V,W)
\end{tikzcd}\;.
\]
The main theorem of \cite{CPSVdK} or rather the general linear group version given in \cite[Thm 7.3]{FS} imply that the vertical arrow is $((p-1)r +2)$-connected provided that (i) $V$ and $W$ are defined over $\mathbb{F}_p$ and (ii) $k$ is a big enough finite field (with respect to $r$, $V$ and $W$). By base change \cite[I.4.13]{Jantzen}, the vertical map is also an isomorphism when condition (ii) is replaced by: (ii') $k$ is an infinite field. Moreover, every finite dimensional polynomial representation has a filtration whose associated graded object is defined over $\Fp$, namely its Jordan-H\"older filtration. Hence, condition (i) can be removed by inspecting the long exact sequences associated to the Jordan-H\"older filtration. 
Thus we can state a version of theorem \ref{thm-comp-GL} in terms of generic cohomology, at the price of a worse connectivity bound. Namely, the comparison map $\Phi_\gen$ above is $((p-1)r+2)$-connected provided that $n\ge \max\{dp^r, 4p^r+2d+1\}$.
\end{rk}

Theorem \ref{thm-comp-GL} has an analogue for orthogonal and symplectic groups. Here we take $V=k$ hence $V^{[r]}=k$ and the comparison map \eqref{eqn-compare-gen} can be rewritten as a map
\begin{align}
\Phi_{k,G}:\rmH^*(G,W^{[r]})\to \HH^*(G,W)\;.
\label{eqn-compare-osp}
\end{align}

\begin{thm}\label{thm-comp-OSp}
Let $k$ be an infinite perfect field of odd characteristic $p$, let $G=\Sp_{2n}(k)$ or $\orth_{n,n}(k)$ and let $W$ be a polynomial representation of degree less or equal to $d$. Assume that $2n\ge \max\{dp^r, 8p^r+4+2d\}$.
Then the comparison map \eqref{eqn-compare-osp} is $2p^r$-connected.
\end{thm}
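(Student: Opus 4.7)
The plan is to mimic the proof of Theorem~\ref{thm-comp-GL} with bifunctor cohomology replaced by the orthogonal/symplectic cohomology $\rmH^*_X$ introduced in Subsection 11.3 (with $X = S^2$ when $G = \orth_{n,n}(k)$ and $X = \Lambda^2$ when $G = \Sp_{2n}(k)$), substituting Proposition~\ref{pr-infinite-perfect-field-Hosp} for Corollary~\ref{cor-infinite-perfect-field-Ext} and Corollary~\ref{cor-Dja-HOSp} for Corollary~\ref{cor-Dja-Ext}. Since $2n \ge dp^r \ge d$, the polynomial $M_{2n}(k)$-representation underlying $W$ has the form $W \simeq F(k^{2n})$ for some strict polynomial functor $F$ of weight $\le d$ (cf.\ \cite[Lm 3.4]{FS}); hence $W^{[r]} \simeq F^{(r)}(k^{2n})$ has weight $\le dp^r$.

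The key commutative square is
\[
\begin{tikzcd}[column sep=large]
\HH^*(G, W^{[r]}) \ar{d}[swap]{\Phi_{k,G}}&
\rmH^*_X(\Gamma^{dp^r}\Proj_k, F^{(r)}) \ar{l}{\mathrm{ev}_{2n}}  \ar{d}{\Phi'_{k,X}} \\
\rmH^*(G, W) &
\rmH^*_X(k[\Proj_k], F)\ar{l}{\mathrm{ev}_{2n}}
\end{tikzcd}
\]
whose horizontal arrows are obtained by evaluation on $k^{2n}$ followed by pullback along the $G$-invariant morphisms $k \to \Gamma^{dp^r/2}(X(k^{2n}))$ sending $1 \mapsto \omega^{\cdot dp^r/2}$ (top) and $k \to k[X(k^{2n})]$ sending $1 \mapsto [\omega]$ (bottom); commutativity follows from naturality of evaluation together with the compatibility of these invariants under the forgetful morphism $\gamma^{dp^r/2}(X) : k[X] \to \Gamma^{dp^r/2} \circ X$. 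The right vertical arrow is $2p^r$-connected by Proposition~\ref{pr-infinite-perfect-field-Hosp}, applied componentwise to the homogeneous decomposition of $F$. The bottom horizontal arrow is $\tfrac{1}{2}(n-2-d)$-connected by Corollary~\ref{cor-Dja-HOSp}, and the hypothesis $2n \ge 8p^r + 4 + 2d$ forces $n - 2 - d \ge 4p^r$, which makes it $\ge 2p^r$-connected.

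The crux is thus the top horizontal arrow, which must be shown to be a graded isomorphism. When $dp^r$ is odd (equivalently $d$ odd, since $p$ is odd), both sides vanish: the right hand side by definition of $\rmH^*_X$, and the left hand side because the central element $-\mathrm{Id} \in G$ acts on $W^{[r]}$ as the scalar $(-1)^{dp^r} = -1$, annihilating all rational cohomology in odd characteristic. When $dp^r$ is even, the isomorphism is the orthogonal/symplectic analogue of the Friedlander--Suslin comparison \cite[Cor 3.13]{FS}: under the bound $2n \ge dp^r$, evaluation on $k^{2n}$ yields a fully faithful functor from $\Gamma^{dp^r}\Proj_k\Md$ to $\mathbf{Rat}_{\GL_{2n}(k)}$, and combined with Frobenius reciprocity for the embedding $G \subset \GL_{2n}$ together with the first fundamental theorem of invariant theory (which describes $k[\GL_{2n}/G]$ as a symmetric algebra on the functor $X$ applied to $k^{2n}$), this translates $\Ext^*_{\Gamma^{dp^r}\Proj_k}(\Gamma^{dp^r/2} X, F^{(r)})$ into $\HH^*(G, F^{(r)}(k^{2n}))$. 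Assembling the three connectivity estimates on the square then shows that $\Phi_{k,G}$ is $2p^r$-connected, as asserted.
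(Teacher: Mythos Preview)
Your proof is correct and follows essentially the same strategy as the paper's. Two minor points of comparison: first, the paper makes the reduction to homogeneous $F$ explicit at the outset (noting that both sides of $\Phi_{k,G}$ are additive in $W$), which is needed for the top-right corner $\rmH^*_X(\Gamma^{dp^r}\Proj_k,F^{(r)})$ to be well-defined; your phrase ``applied componentwise'' acknowledges this but the square as written only typechecks after that reduction. Second, for the top horizontal isomorphism the paper simply cites \cite[Thm~3.17 and Thm~3.24]{TouzeClassical}, whereas you sketch the underlying mechanism (the $-\mathrm{Id}$ argument in odd degree, and invariant theory plus Frobenius reciprocity in even degree); your sketch is correct in spirit, though the precise identification of the induced representation with the symmetric algebra on $X(k^{2n})$ in the polynomial category requires the nontrivial work carried out in those references.
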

\begin{proof}
We proceed in the same way as in the proof of theorem \ref{thm-comp-OSp}. We know that $W=F(k^{2n})$ for some strict polynomial functor of degree less or equal to $d$. Furthermore, if $F_i$ is the $i$-homogeneous component of $F$ then $W=\bigoplus_{0\le i\le d}F_i(k^{2n})$, and since the source and the target of $\Phi_{k,G}$ are additive with respect to $W$, it suffices to prove the isomorphism when $F$ is homogeneous of degree (less or equal to) $d$.

We have a commutative diagram: 
\[
\begin{tikzcd}
\HH^*(G,W^{[r]})\ar{d}[swap]{\Phi_{k,G}}& 
\rmH^*_X(\Gamma^{dp^r}\Proj_k,F^{(r)})\ar{d}{\Phi_{k,X}'}\ar{l}
\\
\rmH^*(G,W)&
\rmH^*_X(k[\Proj_k],F)\ar{l}
\end{tikzcd}.
\]
To be more specific, the bottom horizontal map of the diagram is the comparison map of corollary \ref{cor-Dja-HOSp} hence it is $2p^r$-connected. The top horizontal map has a similar definition, namely it is zero if $d$ is odd, and if $d$ is even it is induced by evaluation on $k^{2n}$ and pullback along the $G$-equivariant morphism $f_{2n,X}':k\to \Gamma^{d/2}(X(k^{2n}))$ such that $f_{2n,X'}(\lambda)=\lambda\omega^{\otimes d/2}$, where $\omega\in X(k^{2n})$ is the invariant element associated to the quadratic form defining $G$. This top horizontal arrow is an isomorphism by \cite[Thm 3.17]{TouzeClassical} or \cite[Thm 3.24]{TouzeClassical}. Moreover $\Phi'_k$ is $2p^r$-connected by proposition \ref{pr-infinite-perfect-field-Hosp}. The connectivity of $\Phi_{k,G}$ follows.
\end{proof}

\begin{rk}
Theorem \ref{thm-comp-OSp} can be reformulated in terms of generic cohomology in the same fashion as we explained it for $\GL_n(k)$ in remark \ref{rk-generic}.
\end{rk}

\bibliographystyle{amsplain}

\bibliography{biblio-DT-JEMS}

\end{document}